\newcommand\org@maketitle{}
\newcommand\@authors{}
\let\org@maketitle\maketitle
\def\maketitle{%
	\let\@authors\authors
	\nxandlist{; }{ and }{; }\@authors
	\hypersetup{
		pdftitle={\@title},
                pdfauthor={\@authors},
                pdfsubject={\subjclassname. \@subjclass},
		pdfkeywords={\@keywords}
	}%
	\org@maketitle
}
\renewcommand{\PrintDOI}[1]{\doi{#1}}
\let\arXiv\arxiv
\numberwithin{equation}{section}
\newtheorem{maintheorem}{Theorem}
\newtheorem{theorem}{Theorem}[section]
\newtheorem{lemma}[theorem]{Lemma}
\newtheorem{proposition}[theorem]{Proposition}
\newtheorem{corollary}[theorem]{Corollary}
\theoremstyle{definition}
\newtheorem{definition}[theorem]{Definition}
\theoremstyle{remark}
\newtheorem{remark}[theorem]{Remark}
\newtheorem{example}[theorem]{Example}
\newcommand{\cR}{\mathcal{R}}
\newcommand{\cC}{\mathcal{C}}
\newcommand{\cK}{\mathcal{K}}
\newcommand{\mfK}{\mathfrak{K}}
\newcommand{\mfa}{\mathfrak{a}}
\newcommand{\e}{\varepsilon}
\newcommand{\g}{\gamma}
\newcommand{\de}{\delta}
\newcommand{\la}{\lambda}
\newcommand{\La}{\Delta}
\newcommand{\Ga}{\Gamma}
\newcommand{\Ld}{\Lambda}
\newcommand{\Si}{\Sigma}
\newcommand{\D}{\nabla}
\newcommand{\ra}{\rightarrow}
\newcommand{\pa}{\partial}
\newcommand{\dv}{\operatorname{div}}
\newcommand{\R}{\mathbb{R}}
\newcommand{\vp}{\varphi}
\newcommand{\al}{\alpha}
\newcommand{\be}{\beta}
\newcommand{\ka}{\kappa}
\newcommand{\si}{\sigma}
\newcommand{\sm}{\setminus}
\newcommand{\Om}{\Omega}
\newcommand{\om}{\omega}
\newcommand{\K}{\widetilde{K}}
\newcommand{\wDh}{\widehat{\D h}}
\newcommand{\wDU}{\widehat{\D U}}
\newcommand{\bx}{\bar{x}}
\newcommand{\by}{\bar{y}}
\newcommand{\loc}{\mathrm{loc}}
\newcommand{\dist}{\operatorname{dist}}
\newcommand{\mean}[1]{\langle{#1}\rangle}
\renewcommand{\Re}{\operatorname{Re}}
\def\Xint#1{\mathchoice
  {\XXint\displaystyle\textstyle{#1}}%
  {\XXint\textstyle\scriptstyle{#1}}%
  {\XXint\scriptstyle\scriptscriptstyle{#1}}%
  {\XXint\scriptscriptstyle\scriptscriptstyle{#1}}%
  \!\int}
\def\XXint#1#2#3{{\setbox0=\hbox{$#1{#2#3}{\int}$}
    \vcenter{\hbox{$#2#3$}}\kern-.5\wd0}}
\def\dashint{\Xint-}
\author{Seongmin Jeon}
\address{Department of Mathematics, Purdue University, West Lafayette,
  IN 47907, USA}
\email{jeon54@purdue.edu}
\thanks{S.J. is supported in part by Purdue Research Foundation}
\author{Arshak Petrosyan}
\address{Department of Mathematics, Purdue University, West Lafayette,
  IN 47907, USA}
\email{arshak@purdue.edu}
\thanks{A.P. is is supported in part by NSF Grant DMS-1800527.}
\author{Mariana Smit Vega Garcia}
\address{Department of Mathematics, Western Washington University, Bellingham, WA 98225, USA}
\email{Mariana.SmitVegaGarcia@wwu.edu}
\title[Almost minimizers for the thin obstacle problem with var.\ coeff.]{Almost minimizers for the thin obstacle problem with variable coefficients}
\subjclass[2010]{Primary 49N60, 35R35}
\keywords{Almost minimizers, thin obstacle problem, Signorini problem,
Weiss-type monotonicity formula, Almgren's frequency formula, regular set, singular set}
\begin{document}
\begin{abstract} We study almost minimizers for the thin obstacle problem with
  variable H\"older continuous coefficients and zero thin obstacle and establish their $C^{1,\beta}$
  regularity on the either side of the thin space. Under an additional
  assumption of quasisymmetry, we establish the optimal growth of almost
  minimizers as well as the regularity of the regular set and a
  structural theorem on the singular set. The proofs are based on the
  generalization of Weiss- and Almgren-type monotonicity formulas for
  almost minimizers established earlier in the case of constant
  coefficients. 
\end{abstract}
\maketitle
\tableofcontents

\newpage
\section{Introduction and Main Results}
\subsection{The thin obstacle (or Signorini) problem with variable coefficients} Let $D$ be a domain in $\R^n$, $n\geq 2$, and $\Pi$ a smooth hypersurface (the \emph{thin space}), that
splits $D$ into two subdomains $D^\pm$: $D\setminus\Pi=D^+\cup
D^-$. Let $\psi:\Pi\to\R$ be a certain (smooth) function (the \emph{thin
  obstacle}) and $g:\partial D\to \R$ (the \emph{boundary
  values}).
Let also $A(x)=(a_{ij}(x))$ be an $n\times n$ symmetric uniformly elliptic matrix,
$\alpha$-H\"older continuous as a function of $x\in D$, for some
$0<\alpha<1$, with ellipticity constants $0<\lambda\leq1\leq\Lambda<\infty$:
$$
\lambda|\xi|^2\leq \langle{A(x)\xi,\xi}\rangle\leq
\Lambda|\xi|^2,\quad x\in D,\ \xi\in\R^n.
$$
Then consider the minimizer $U$ of the energy functional
$$
\mathcal{J}_{A,D}(V)=\int_{D}\langle A(x)\nabla V,\nabla V\rangle dx,
$$
over a closed convex set $\mfK_{\psi,g}(D,\Pi)\subset W^{1,2}(D)$ defined by
$$
\mfK_{\psi,g}(D,\Pi):=\{V\in W^{1,2}(D): \text{$V=g$ on $\partial D$,
  $V\geq \psi$ on $\Pi\cap D$}\}.
$$
Because of the unilateral constraint on the thin space $\Pi$, the
problem is known as the \emph{thin obstacle problem}.
Away from $\Pi$, the minimizer solves a uniformly elliptic divergence
form equation with variable coefficients
$$
\dv (A(x)\nabla U)=0\quad\text{in }D^+\cup D^-.
$$
On the thin space, the minimizers satisfy
\begin{multline*}
U\geq \psi,\quad \langle{A\nabla U,\nu^+}\rangle+\langle{A\nabla
  U,\nu^-}\rangle\geq 0,\\*
(U-\psi)(\langle{A\nabla U,\nu^+}\rangle+\langle{A\nabla
  U,\nu^-}\rangle)=0\quad\text{on }D\cap\Pi,
\end{multline*}
in a certain weak sense, where $\nu^\pm$ are the exterior normals to $D^\pm$ on $\Pi$ and
$\langle{A\nabla U,\nu^\pm}\rangle$ are understood as the limits from inside
$D^\pm$. These are known as the \emph{Signorini complementarity conditions} and therefore the problem is often
referred to as the \emph{Signorini problem} with variable
coefficients (or \emph{$A$-Signorini problem}, for short).
One of the main objects of the study is the \emph{free boundary}
$$
\Gamma(U)=\partial_\Pi \{x\in \Pi: U(x)=\psi(x)\}\cap D,
$$
which separates the \emph{coincidence set}
$\{U=\psi\}$ from the 
\emph{noncoincidence set} $\{U>\psi\}$ in $D\cap \Pi$. The set $\Gamma(U)$ is also
called a \emph{thin} free boundary as it lives in $\Pi$ and is expected to be of
codimension two with respect to the domain $D$.

These types of problems go back to the original Signorini problem in
elastostatics \cite{Sig59}, but also appear in many applications ranging from math
biology (semipermeable membranes) to boundary heat control \cite{DuvLio76} or more
recently in math finance, with connection to the obstacle problem for
the fractional Laplacian, through the Caffarelli-Silvestre extension
\cites{CafSil07}. The presence of the free
boundary makes the problem particularly challenging and while the
$C^{1,\beta}$ regularity of the minimizers (on the either side of the
thin space) was known already in \cites{Caf79,Kin81,Ura85}, the study of the free
boundary became possible only after the breakthrough work of
\cite{AthCaf04} on the optimal $C^{1,1/2}$ regularity of the
minimizers. Since then there has been a significant effort in the
literature to understand the structure and regularity properties of
the free boundary in many different settings including equations with
variable coefficients, problems for the fractional Laplacian, as well
as the time-dependent problems, see e.g.\
\cites{Sil07,AthCafSal08,CafSalSil08,GarPet09,GarSVG14,KocPetShi15,PetPop15,DeSSav16,GarPetSVG16,KocRueShi16,BanSVGZel17,CafRosSer17,DanGarPetTo17,GarPetPopSVG17,KocRueShi17a,KocRueShi17b,RueShi17,AthCafMil18,DanPetPop18,FocSpa18,GarPetSVG18,PetZel19,ColSpoVel20},
and many others.

\subsection{Almost minimizers}

The approach we take in this paper is by considering the so-called
almost minimizers of the functional $\mathcal{J}_{A,D}$ in the sense
of Anzellotti \cite{Anz83}. For this we need a \emph{gauge} function
$\omega:(0,r_0)\to [0,\infty)$, $r_0>0$, which is a nondecreasing
function with $\omega(0+)=0$, as well as a family
$\{E_r(x_0)\}_{0<r<r_0}$ of open sets for any $x_0\in D$, comparable
to balls centered at $x_0$ (in what comes next, we will take it to be a family of ellipsoids).

\begin{definition}[Almost minimizers]\label{def:almost-min-A}
We say $U$ is an \emph{almost minimizer for the $A$-Signorini problem}
in $D$ if $U\in W^{1,2}_{\loc}(D)$, $U\geq \psi$ on $D\cap\Pi$, and for any
$E_r(x_0)\Subset D$ with $0<r<r_0$, we have 
\begin{equation}\label{eq:almost-min-A}
\int_{E_r(x_0)}\langle A\nabla U,\nabla U \rangle\leq (1+\omega(r)) \int_{E_r(x_0)}\langle A\nabla V,\nabla V \rangle,
\end{equation}
for any competitor function $V\in\mfK_{\psi,U}(E_{r}(x_0),\Pi)$, i.e.,
$V$ satisfying
$$
V=U\quad\text{on }\partial E_r(x_0),\quad V\geq \psi\quad\text{on
}E_r(x_0)\cap \Pi.
$$
\end{definition}
In fact, observing that for $x,x_0\in D$, and $\xi\in\R^n$, $\xi\neq 0$
$$
(1-C|x-x_0|^\alpha)\leq\frac{\langle
  A(x_0)\xi,\xi\rangle}{\langle A(x)\xi,\xi\rangle}\leq
(1+C|x-x_0|^\alpha),
$$
with $C$ depending on the ellipticity of $A$ and
$\|A\|_{C^{0,\alpha}(D)}$,
we can rewrite \eqref{eq:almost-min-A} in
the form  \emph{with frozen coefficients}
\begin{equation}\label{eq:almost-min-A-frozen}
\int_{E_r(x_0)}\langle A(x_0)\nabla U,\nabla U \rangle\leq (1+\omega(r)) \int_{E_r(x_0)}\langle A(x_0)\nabla V,\nabla V \rangle,
\end{equation}
by replacing the gauge
$\omega(r)$ with $C(\omega(r)+r^\alpha)$ if necessary.

An example of an almost minimizer is given in Appendix. Generally, we view almost minimizers as perturbations of minimizers in
a certain sense, but in the case of variable coefficients there are even some
advantages of treating minimizers themselves as almost minimizers,
particularly in the sense of frozen coefficients \eqref{eq:almost-min-A-frozen}.

Almost minimizers for the Signorini problem have already been studied
in
\cite{JeoPet19a} in the case $A(x)\equiv I$, where their $C^{1,\beta}$-regularity (on the either side of the thin space) has been established and a number of
technical tools such as Weiss- and Almgren-type monotonicity formulas
were proved. In combination with the epiperimetric and
$\log$-epiperimetric inequalities these tools allowed to establish the optimal
growth and prove the $C^{1,\gamma}$-regularity of the regular set and a
structural theorem on the singular set. 
The aim of this paper is to extend these results to the variable
coefficient case.
It is noteworthy that the results that we obtain (see
Theorems~\ref{mthm:I}--\ref{mthm:IV} below) for almost minimizers improve even on some of the results
available for the minimizers. For example, we only need the coefficients $A(x)$ to be
$C^{0,\alpha}$ with arbitrary $0<\alpha<1$ in order to study the free
boundary, compared to $W^{1,p}$, $p>n$, in \cite{KocRueShi17b} or
$C^{0,\alpha}$, $1/2<\alpha<1$, in \cite{RueShi17} for the regular
part of the free boundary and $C^{0,1}$ in \cite{GarPetSVG18} for the
singular set. 

A related notion of almost minimizers has been considered recently in \cite{JeoPet19b} for the obstacle problem for the fractional Laplacian, with the help of the Caffarelli-Silvestre extension. While the $C^{1,\beta}$ regularity of almost minimizers holds for the fractional orders $1/2\leq s<1$, the study of the free boundary still remains open.

Almost minimizers have been studied also for other free boundary
problems,  particularly Alt-Caffarelli-type (or Bernoulli-type) problems \cites{DavTor15,DavEngTor19,DeSSav19}, their thin
counterpart \cite{DeSSav18}, as well as the variable coefficient
versions \cites{deQTav18,DavEngSVGTor19}. We have to mention that the
Signorini problem is quite different from Alt-Caffarelli-type
problems, as the solutions may grow at different rates near the free
boundary (such as $3/2$, $2$, $7/2$, $4$, \dots,  powers of the distance), as opposed to a specific
rate in Alt-Caffarelli-type problems (linear in the classical case and
the square root of the distance in the thin counterpart). Therefore, it is quite
important that the almost minimizing property that we impose for the
Signorini problem is \emph{multiplicative}, to allow the capture of all
possible rates, while the almost minimizing property in the Alt-Caffarelli-type
problems can be also imposed in an \emph{additive} way, see \cite{DavEngSVGTor19}.

\subsection{Main results} Since we are interested in local regularity
results, we will assume that $D=B_1$, the unit ball in $\R^n$, and that
$$
\Pi=\R^{n-1}\times\{0\}
$$
after a local diffeomorphism. In this paper, we will consider only the
case when the thin obstacle is identically zero: $\psi\equiv 0$. 

Further, we will assume $r_0=1$ in Definition~\ref{def:almost-min-A}
and take $\{E_r(x_0)\}$ to be the family of ellipsoids associated
with the positive symmetric matrix $A(x_0)$:
$$
E_r(x_0):=A^{1/2}(x_0)(B_r)+x_0.
$$
By the ellipticity of $A(x_0)$, we have
$$
B_{\lambda^{1/2}r}(x_0)\subset E_r(x_0)\subset B_{\Lambda^{1/2}r}(x_0).
$$
To simplify the tracking of the constants, we will assume that there
is $M>0$ such that
\begin{equation}\label{eq:M}
\|A\|_{C^{0,\alpha}(B_1)}\leq M,\quad\lambda^{-1},\Lambda\leq M,\quad
\omega(r)\leq M r^\alpha,\quad 0<\alpha<1.
\end{equation}
Then we can go between almost minimizing properties
\eqref{eq:almost-min-A} and \eqref{eq:almost-min-A-frozen} by changing
$M$ if necessary.

\medskip
Then our first result is as follows.

\begin{maintheorem}[$C^{1,\beta}$-regularity of almost minimizers]
  \label{mthm:I}
  Let $U\in W^{1,2}(B_1)$ be an almost minimizer for the $A$-Signorini problem in $B_1$,
  under the assumptions above. Then,
  $U\in C^{1,\beta}_\loc(B_1^\pm\cup B_1')$ for
  $\beta=\beta(\alpha,n)\in (0,1)$ and
  $$
  \|U\|_{C^{1,\beta}(K)}\leq C\|U\|_{W^{1,2}(B_1)},
$$
for any $K\Subset B_1^\pm\cup B_1'$ and $C=C(n,\alpha,M,K)$.
\end{maintheorem}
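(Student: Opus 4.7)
The plan is to adapt the classical almost-minimizer regularity technique of Anzellotti~\cite{Anz83} to the Signorini setting with variable coefficients, using the frozen-coefficient reformulation~\eqref{eq:almost-min-A-frozen} to reduce to constant-coefficient Signorini problems at each scale.

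Fix $x_0 \in K$ and $0 < r < \dist(K, \partial B_1)$. By~\eqref{eq:almost-min-A-frozen} together with~\eqref{eq:M}, $U$ is an almost minimizer of the frozen functional $\mathcal{J}_{A(x_0), E_r(x_0)}$ with gauge $\omega(r) + Cr^\alpha \leq Cr^\alpha$. Let $V_{x_0, r}$ be the unique minimizer of this frozen functional over $\mfK_{0, U}(E_r(x_0), \Pi)$; the first variational inequality for $V_{x_0, r}$ combined with the almost-minimizing property for $U$ yields the standard energy-closeness estimate
\[
\int_{E_r(x_0)} \langle A(x_0) \nabla(U - V_{x_0, r}), \nabla(U - V_{x_0, r}) \rangle \, dx \leq C r^\alpha \int_{E_r(x_0)} |\nabla V_{x_0, r}|^2 \, dx.
\]
For the regularity of $V_{x_0, r}$, I apply the linear change of variables $y = T_{x_0}^{-1}(x - x_0)$, where $T_{x_0}$ is the explicit block-upper-triangular factor of $A(x_0) = T_{x_0} T_{x_0}^T$: writing $A(x_0) = \bigl(\begin{smallmatrix} A' & c \\ c^T & d \end{smallmatrix}\bigr)$, the nontrivial blocks of $T_{x_0}$ are $(A' - cc^T/d)^{1/2}$, $c/\sqrt{d}$, and $\sqrt{d}$. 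This map preserves $\Pi$, has norms controlled by $M$, and (differing from $A(x_0)^{1/2}$ by an orthogonal factor) sends Euclidean balls onto $E_r(x_0) - x_0$; hence $V_{x_0, r}$ pulls back to a minimizer of the standard Signorini problem on $B_r$ with zero thin obstacle, and the classical theory~\cite{Caf79, Kin81, Ura85} gives $C^{1, \beta_0}$ regularity on either side of the thin space for some universal $\beta_0 \in (0, 1)$, which transfers back to $V_{x_0, r}$ with uniform bounds in $M$.

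Combining the energy closeness with the $C^{1, \beta_0}$ regularity of $V_{x_0, r}$ in a Campanato-type iteration (following~\cite{Anz83, JeoPet19a}), one produces, uniformly for $x_0 \in K$ and every small enough $\rho$, an affine function $L_{x_0, \rho}$ such that
\[
\frac{1}{\rho^n} \int_{E_\rho(x_0)} |\nabla U - \nabla L_{x_0, \rho}|^2 \, dx \leq C \rho^{2\beta}, \qquad \beta = \beta(\alpha, n) \in (0, 1).
\]
By a variant of Campanato's theorem applied separately on each side of the thin space (since $\nabla U$ may jump across $\Pi$ and must be approximated from each side independently), this decay yields $U \in C^{1, \beta}$ on $K \cap (B_1^\pm \cup B_1')$, together with the stated bound in terms of $\|U\|_{W^{1, 2}(B_1)}$, via a covering argument.

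The main obstacle I anticipate is the Campanato iteration itself when $x_0 \in \Pi$: the comparison function $V_{x_0, r}$ is only $C^{1, \beta_0}$ on either side of $\Pi$, with $\nabla V_{x_0, r}$ possibly discontinuous across $\Pi$, so the affine approximation $L_{x_0, \rho}$ must be constructed one-sidedly and then transferred across scales, while ensuring that the decomposition into the two half-space contributions remains compatible with the Signorini complementarity. Balancing the decay rate $\rho^{\beta_0}$ of $\nabla V_{x_0, r}$ against the $r^\alpha$ closeness error at each step gives the final exponent $\beta = \beta(\alpha, \beta_0, n)$; since $\beta_0$ is universal (arising from the constant-coefficient Signorini problem), this reduces to $\beta = \beta(\alpha, n)$ as claimed.
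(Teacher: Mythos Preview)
Your overall strategy---freeze coefficients, compare with a constant-coefficient Signorini replacement after a linear change of variables that straightens $A(x_0)$ while preserving $\Pi$, then run a Campanato iteration---matches the paper's approach; your block upper-triangular factor $T_{x_0}$ plays the same role as the paper's $\bar{\mfa}_{x_0}=\mfa_{x_0}O_{x_0}$ obtained by Gram--Schmidt. The energy-closeness estimate you write is also correct and used in the paper.

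The genuine gap is in the sentence ``combining the energy closeness with the $C^{1,\beta_0}$ regularity of $V_{x_0,r}$ in a Campanato-type iteration.'' For a \emph{harmonic} replacement $h$ one has the self-improving estimate
\[
\int_{B_\rho}|\nabla h-\mean{\nabla h}_{B_\rho}|^2\le \Bigl(\tfrac{\rho}{r}\Bigr)^{n+2}\int_{B_r}|\nabla h-\mean{\nabla h}_{B_r}|^2,
\]
which is what makes Anzellotti's iteration close. For a Signorini replacement this fails: subtracting a generic affine function destroys the Signorini structure, and the interior $C^{1,\beta_0}$ bound controls $[\nabla h]_{C^{0,\beta_0}}$ only in terms of $\|h\|$, not in terms of the oscillation of $\nabla h$. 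What actually holds (the paper's Proposition~\ref{prop:nonsym-sig-est}, extending \cite{JeoPet19a}*{Proposition~4.4} to the nonsymmetric case) is
\[
\int_{B_\rho}|\wDh-\mean{\wDh}_{B_\rho}|^2\le C\Bigl(\tfrac{\rho}{s}\Bigr)^{n+\alpha}\int_{B_s}|\wDh-\mean{\wDh}_{B_s}|^2+C\Bigl(\int_{B_r}h^2\Bigr)\frac{s^{n+1}}{r^{n+3}},
\]
with an unavoidable additive term. Absorbing that term is the heart of the argument and is \emph{not} a matter of one-sided affine approximation: the paper introduces an auxiliary scale $R=r^{2n/(2n+\alpha)}$ and runs a dichotomy---either $\sup_{\partial E_R(x_0)}|U|\le C R^{\alpha'}$, in which case the additive term is small enough after first proving almost-Lipschitz regularity (Theorem~\ref{thm:var-holder}); or $U>0$ throughout $E_R(x_0)$, in which case the Signorini replacement is in fact harmonic and one reverts to the clean harmonic Campanato estimate (Case~1.2 in Theorem~\ref{thm:var-grad-holder}). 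Your proposal does not contain this mechanism, and the obstacle you flag in your last paragraph (transferring one-sided affine approximations across scales) is not the real one.

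A secondary point: you cite \cite{JeoPet19a} for the iteration, but that paper assumes even symmetry in $x_n$; removing that assumption is one of the new ingredients here (see Propositions~\ref{prop:nonsym-sol-Sig-est} and~\ref{prop:nonsym-sig-est}, which split the replacement into even and odd parts $h=h^*+h^\sharp$ and handle the odd harmonic part separately). Your outline does not address this either.
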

The proof is obtained by using Morrey and Campanato space estimates,
following the original idea of Anzellotti \cite{Anz83} that was
successfully used in the constant coefficient case of our problem in
\cite{JeoPet19a}. We explicitly mention, however, that in the above
theorem we do not require the even symmetry of the almost minimizer in
the $x_n$-variable, so Theorem~\ref{mthm:I} extends the corresponding
result in \cite{JeoPet19a} also in that respect.

To state our results related to the free boundary, we need to assume
the following quasisymmetry condition. For $x_0\in B_1'=B_1\cap \Pi$,
let
$$
P_{x_0}=I-2\frac{A(x_0)e_n\otimes e_n }{a_{nn}(x_0)}
$$
be a matrix corresponding to the reflection with respect to $\Pi$ in
the conormal direction $A(x_0)e_n$ at $x_0$. Note that $P_{x_0}x=x$ for
any $x\in\Pi$ and $P_{x_0}E_r(x_0)=E_r(x_0)$. Then, for a function
$U$ in $B_1$ define
$$
U^*_{x_0}(x):=\frac{U(x)+U(P_{x_0}x)}{2}.
$$
Note that $U^*_{x_0}$ may not be defined in all of $B_1$, but is
defined in any ellipsoid $E_r(x_0)$ as long as it is contained in $B_1$. Note also that $U=U^*_{x_0}$ on $\Pi$.

\begin{definition}[Quasisymmetry]\label{def:A-quasisym}

  We say that $U\in W^{1,2}(B_1)$ is
  $A$-\emph{quasisymmetric} with respect to $\Pi$, if there is a
  constant $Q$ such that
$$
\int_{E_r(x_0)}\langle A(x_0)\nabla U,\nabla U\rangle\leq Q
\int_{E_r(x_0)}\langle A(x_0)\nabla U^*_{x_0},\nabla U^*_{x_0}\rangle,
$$
for any ellipsoid $E_r(x_0)\Subset B_1$ centered at any $x_0\in B_1'$.

We will assume
$Q\leq M$ throughout the paper, in addition to \eqref{eq:M}. 
\end{definition}

Note that when $A(x)\equiv I$ and $U$ is even in $x_n$, then it
is automatically quasisymmetric in the sense of the above definition. The quasisymmetry condition will also hold for even minimizers if $e_n$ is an eigenvector of $A(x_0)$ for any $x_0\in B_1'$, i.e., when
$$
a_{in}(x_0)=0,\quad\text{for }i=1,\dots,n-1,\ x_0\in B_1'.
$$
This condition is typically imposed in the existing literature and can be satisfied with an application of a local $C^{1,\alpha}$-diffeomorphism that preserves $\Pi$, see \cites{Ura86,GarSVG14,RueShi17}.
The reason for a quasisymmetry condition is that the growth rate of the symmetrization $U_{x_0}^*$ over the ellipsoids $E_r(x_0)$ captures that of $U=U^*_{x_0}$ on the thin space $\Pi$ at $x_0\in\Gamma(U)$,
while in the nonsymmetric case there could be a mismatch in those rates
caused by the odd component of $U$, vanishing on $\Pi$. 

More specifically, the growth rate of $U$ on $\Pi$ at $x_0\in\Gamma(U)$
is determined by the following quantity
$$
N^A(r,U^*_{x_0},x_0):=\frac{r\int_{E_r(x_0)}\langle A(x_0)\nabla U^*_{x_0},\nabla U^*_{x_0}\rangle}{\int_{\partial E_r(x_0)}(U_{x_0}^*)^2\mu_{x_0}(x-x_0)},
$$
which is a version of \emph{Almgren's frequency functional}
\cite{Alm00} written in the geometric terms determined by $A(x_0)$,
where
$\mu_{x_0}(z)=\frac{|A^{-1/2}(x_0)z|}{|A^{-1}(x_0)z|}$
is the \emph{conformal factor}. As in the constant coefficient case,
this quantity is of paramount importance for the classification of free
boundary points. 

\begin{maintheorem}[Monotonicity of the truncated frequency]
  \label{mthm:II}
  Let $U$ be as in Theorem~\ref{mthm:I} and assume additionally that
  $U$ is $A$-quasisymmetric with respect to $\Pi$. Then for any
  $\kappa_0\geq 2$, there is $b=b(n,\alpha,M,\kappa_0)$ such that the
  \emph{truncated frequency}
$$
r\mapsto\widehat{N}^A_{\kappa_0}(r,U_{x_0}^*,x_0) :=
\min\left\{\frac{1}{1-br^\alpha}N^A(r,U^*_{x_0},x_0),\kappa_0\right\}
$$ 
is monotone increasing for $x_0\in B_{1/2}'\cap\Gamma(u)$, and
$0<r<r_0(n,\alpha,M,\kappa_0)$. Moreover, if we define 
$$
\kappa(x_0):=\widehat{N}^A_{\kappa_0}(0+,U_{x_0}^*,x_0),
$$
the \emph{frequency of $U$ at $x_0$}, then we have that either
$$
\kappa(x_0)=3/2\quad\text{or}\quad
\kappa(x_0)\geq 2.
$$
\end{maintheorem}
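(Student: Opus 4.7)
The plan is to reduce the variable-coefficient monotonicity to the constant-coefficient case treated in \cite{JeoPet19a} by freezing the coefficients at $x_0$ and passing to isotropic coordinates. Set $T:=A^{1/2}(x_0)$ and $y:=T^{-1}(x-x_0)$. Under this linear change of variables, the ellipsoid $E_r(x_0)$ becomes the ball $B_r$, the frozen quadratic form $\langle A(x_0)\nabla_x V,\nabla_x V\rangle$ becomes $|\nabla_y\widetilde V|^2$ (with Jacobian $\det T$), and a direct calculation using the surface Jacobian on $\partial E_r(x_0)$ together with $\mu_{x_0}(z)=|T^{-1}z|/|T^{-2}z|$ shows that $\mu_{x_0}(x-x_0)\,d\mathcal{H}^{n-1}(x)$ transforms to exactly $\det T\,d\mathcal{H}^{n-1}(y)$ on $\partial B_r$. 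Consequently, with $\widetilde U(y):=U^*_{x_0}(Ty+x_0)$, the quantity $N^A(r,U^*_{x_0},x_0)$ coincides with the classical Almgren frequency of $\widetilde U$ on $B_r$. The exact $P_{x_0}$-symmetry of $U^*_{x_0}$ translates to exact evenness of $\widetilde U$ across the hyperplane $\widetilde\Pi_{x_0}:=\{y\cdot Te_n=0\}$, while $\widetilde U\ge 0$ there.

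Using \eqref{eq:almost-min-A-frozen}, the $A$-quasisymmetry assumption, and an even/odd decomposition of competitors (the odd parts contribute the same Dirichlet energy on the symmetric ellipsoid $E_r(x_0)$), I would first verify that $U^*_{x_0}$ itself satisfies a frozen almost-minimizing inequality with gauge $Cr^\alpha$. Pushing this forward through the map above, $\widetilde U$ becomes an almost minimizer of the standard Dirichlet energy on $B_r$ with zero thin obstacle on $\widetilde\Pi_{x_0}$ and with the same $r^\alpha$-gauge. Then, adapting the rotation-invariant arguments of \cite{JeoPet19a}, setting $H(r):=\int_{\partial B_r}\widetilde U^2$ and $D(r):=\int_{B_r}|\nabla\widetilde U|^2$, I would obtain the near-equality
$$
\Bigl|D(r)-\int_{\partial B_r}\widetilde U\,\partial_\nu\widetilde U\Bigr|\le Cr^\alpha D(r)
$$
by testing against a harmonic replacement, together with the perturbed Rellich identity
$$
D'(r)\ge \tfrac{n-2}{r}D(r)+2\int_{\partial B_r}(\partial_\nu\widetilde U)^2-Cr^{\alpha-1}D(r).
$$
Combined with the elementary identity $H'(r)=\tfrac{n-1}{r}H(r)+2\int_{\partial B_r}\widetilde U\,\partial_\nu\widetilde U$ and the Cauchy--Schwarz inequality, these estimates yield
$$
\bigl(\log N^A(r,U^*_{x_0},x_0)\bigr)'\ge -\frac{\alpha b\, r^{\alpha-1}}{1-br^\alpha}
$$
for sufficiently small $r$, which integrates to the monotonicity of $(1-br^\alpha)^{-1}N^A(r,U^*_{x_0},x_0)$; the truncation at $\kappa_0$ absorbs the ranges where $H$ nearly vanishes.

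For the frequency gap, suppose $\kappa:=\kappa(x_0)<\kappa_0$ (otherwise there is nothing to prove). Consider the Almgren rescalings
$$
\widetilde U_r(y):=\frac{\widetilde U(ry)}{\bigl(r^{1-n}H(r)\bigr)^{1/2}},
$$
so that $\|\widetilde U_r\|_{L^2(\partial B_1)}=1$. The $C^{1,\beta}$ estimates from Theorem~\ref{mthm:I}, combined with the frequency monotonicity just proved, give uniform bounds on $\widetilde U_r$ on compact subsets of $\R^n$ and uniform frequency bounds. Passing to a subsequential limit as $r\to 0$, one obtains a nonzero global solution $\widetilde U_0$ of the standard Signorini problem on $\R^n$ with thin space $\widetilde\Pi_{x_0}$, homogeneous of degree $\kappa$. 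The classical classification of homogeneous global Signorini solutions then forces $\kappa\in\{3/2\}\cup[2,\infty)$. The main technical obstacle throughout is consolidating three layers of perturbation---the Hölder freezing of $A(x)$ to $A(x_0)$, the almost-minimizing gauge $\omega(r)$, and the absence of genuine $e_n$-symmetry (handled via quasisymmetry)---into a single $O(r^\alpha)$ error in the Euler--Lagrange and Rellich identities, so that the clean multiplicative correction $1/(1-br^\alpha)$ survives.
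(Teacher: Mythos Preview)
Your overall strategy---freeze coefficients at $x_0$, pass to isotropic coordinates via $A^{1/2}(x_0)$, use quasisymmetry to show the symmetrization satisfies an almost Signorini property with gauge $Cr^\alpha$, then invoke the constant-coefficient theory of \cite{JeoPet19a}---is exactly the route the paper takes. The paper additionally composes with an orthogonal matrix $O_{x_0}$ so that the image of $\Pi$ is again $\{y_n=0\}$ rather than a tilted hyperplane $\widetilde\Pi_{x_0}$; this is cosmetic, and your ``rotation-invariant'' remark handles it. Your blowup argument for the gap $\kappa\in\{3/2\}\cup[2,\infty)$ also matches Proposition~\ref{prop:var-exist-Alm-blowup} and Corollary~\ref{cor:var-gap}.

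The substantive divergence is in how you propose to obtain the monotonicity itself. The paper does \emph{not} prove Almgren's formula directly; it first proves the one-parameter family of Weiss-type monotonicity formulas $\{W_\kappa\}_{0<\kappa<\kappa_0}$ (Theorem~\ref{thm:var-weiss}) by comparing $\widetilde U$ with its \emph{$\kappa$-homogeneous replacement} $w(y)=(|y|/t)^\kappa\widetilde U(ty/|y|)$, and then deduces the truncated Almgren monotonicity as a corollary (Theorem~\ref{thm:var-Almgren}). The point is that $\int_{B_t}|\nabla w|^2$ is expressible purely in terms of the boundary data of $\widetilde U$ on $\partial B_t$ (the tangential gradient and $H(t)$), so the almost-minimizing inequality $D(t)\le(1+Ct^\alpha)\int_{B_t}|\nabla w|^2$ yields a clean differential inequality linking $D(t)$, $D'(t)$, $H(t)$ and $\int_{\partial B_t}(\partial_\nu\widetilde U)^2$.

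Your proposed route---derive $\bigl|D(r)-\int_{\partial B_r}\widetilde U\,\partial_\nu\widetilde U\bigr|\le Cr^\alpha D(r)$ and a perturbed Rellich identity by ``testing against a harmonic replacement''---has a genuine gap. If $h$ is the Signorini (or harmonic) replacement of $\widetilde U$ on $B_r$, the almost-minimizing property controls $\|\nabla(\widetilde U-h)\|_{L^2(B_r)}^2\le Cr^\alpha D(r)$ and gives $\int_{B_r}|\nabla h|^2=\int_{\partial B_r}\widetilde U\,\partial_\nu h$, but it gives \emph{no} control on $\partial_\nu(\widetilde U-h)$ along $\partial B_r$, so you cannot pass from $\partial_\nu h$ to $\partial_\nu\widetilde U$ with an $O(r^\alpha)$ error. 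Almost minimizers satisfy no Euler--Lagrange equation, even approximately in a pointwise sense, and the Pohozaev/Rellich identity for $h$ cannot be transferred to $\widetilde U$ this way. The $\kappa$-homogeneous replacement is precisely the device that sidesteps this, because its energy depends only on quantities already shared with $\widetilde U$ on $\partial B_t$. (A minor related point: the truncation at $\kappa_0$ is not there to ``absorb ranges where $H$ nearly vanishes''---$H(r)>0$ for all $r$ at free boundary points---but because the constant $b$ and the interval of monotonicity for $W_\kappa$ are uniform only over $0<\kappa<\kappa_0$.)
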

The monotonicity of the truncated frequency follows from that of an
one-parametric family of so-called Weiss-type energy functionals
$\{W_{\kappa}^A\}_{0<\kappa<\kappa_0}$, see
Section~\ref{sec:weiss-almgren-type}, which also play a fundamental role in the analysis of the free boundary.

The theorem above gives the following
decomposition of the free boundary 
$$
\Gamma(U)=\Gamma_{3/2}(U)\cup\bigcup_{\kappa\geq 2}
\Gamma_\kappa(U),
$$
where
$$
\Gamma_\kappa(U):=\{x_0\in\Gamma(U):\kappa(x_0)=\kappa\}. 
$$
The set $\Gamma_{3/2}(U)$, where the frequency is minimal is known as
the \emph{regular set} and is also denoted $\mathcal{R}(U)$.

\begin{maintheorem}[Regularity of the regular set]
  \label{mthm:III}
  Let $U$ be as in Theorem~\ref{mthm:II}. Then $\mathcal{R}(U)$ is a relatively
  open subset of the free boundary $\Gamma(U)$ and is an
  $(n-2)$-dimensional manifold of class $C^{1,\gamma}$.
\end{maintheorem}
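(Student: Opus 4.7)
The plan is to follow the standard scheme \emph{blowup $+$ Weiss energy $+$ epiperimetric inequality $+$ Whitney extension} from the constant-coefficient setting of \cite{JeoPet19a}, executing every step in the $A(x_0)$-adapted geometry already built into the truncated frequency of Theorem~\ref{mthm:II} and into the Weiss-type energies $W^A_\kappa$ to be introduced in Section~\ref{sec:weiss-almgren-type}.

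First, at a fixed $x_0\in \mathcal{R}(U)=\Gamma_{3/2}(U)$ I would consider the $A(x_0)$-adapted rescalings of the symmetrization
$$
\widetilde U^{x_0}_r(y):=\frac{U^*_{x_0}(x_0+rA^{1/2}(x_0)y)}{\bigl(r^{1-n}\int_{\partial E_r(x_0)}(U^*_{x_0})^2\mu_{x_0}\bigr)^{1/2}}.
$$
Theorem~\ref{mthm:II} forces $N^A(r,U^*_{x_0},x_0)\to 3/2$, which yields uniform $W^{1,2}_\loc(\R^n)$ bounds on $\widetilde U^{x_0}_r$ and hence subsequential blowups $\widetilde U_0=\lim_{r_j\to 0^+}\widetilde U^{x_0}_{r_j}$. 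Each such $\widetilde U_0$ is $3/2$-homogeneous and solves the constant-coefficient Signorini problem for $A(x_0)$. After reducing to the Laplacian via the linear change $y\mapsto A^{1/2}(x_0)y$, the classification of $3/2$-homogeneous global solutions forces
$$
\widetilde U_0(y)=c(x_0)\,\he\bigl(A^{1/2}(x_0)y\bigr),\qquad \he(z)=\mathrm{Re}(z\cdot e+i|z_n|)^{3/2},
$$
for some unit vector $e=e(x_0)\in\Pi$ and constant $c(x_0)>0$.

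To promote this to a unique blowup, I would combine the Weiss-type functional $W^A_{3/2}(r,U^*_{x_0},x_0)$ with an epiperimetric inequality at $\kappa=3/2$. The Weiss monotonicity (with error governed by $\omega(r)$ and by the $C^{0,\alpha}$-modulus of $A$), together with an epiperimetric inequality, should yield a power-rate decay
$$
W^A_{3/2}(r,U^*_{x_0},x_0)-W^A_{3/2}(0+,U^*_{x_0},x_0)\le C r^\tau,
$$
for some $\tau=\tau(n,\alpha,M)>0$. Standard dyadic summation then produces uniqueness of the blowup at every $x_0\in\mathcal{R}(U)$ together with the Hölder estimate
$$
|e(x_0)-e(x_0')|+|c(x_0)-c(x_0')|\le C|x_0-x_0'|^\gamma
$$
on compact subsets of $\mathcal{R}(U)$, with $\gamma=\gamma(n,\alpha,M)\in(0,1)$. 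This is the main obstacle: the variable-coefficient Weiss energy picks up error terms of order $\omega(r)+r^\alpha$ from the almost minimality and from freezing the coefficients, and the epiperimetric inequality must beat both to produce a genuine power-rate closure gap rather than a gap polluted by the gauge.

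Openness of $\mathcal{R}(U)$ inside $\Gamma(U)$ then follows from the upper semicontinuity of $x_0\mapsto\kappa(x_0)$, which is a direct consequence of the monotonicity of $\widehat N^A_{\kappa_0}$, together with the frequency gap $\kappa\in\{3/2\}\cup[2,\infty)$ of Theorem~\ref{mthm:II}: any sequence $x_0^k\in\Gamma(U)$ with $x_0^k\to x_0\in\mathcal{R}(U)$ must satisfy $\kappa(x_0^k)=3/2$ for large $k$. Finally, with the uniform expansion
$$
U^*_{x_0}(x)=c(x_0)\,\he(x-x_0)+o(|x-x_0|^{3/2}),\qquad x_0\in\mathcal{R}(U),
$$
where $e=e(x_0)$ and the $o$-remainder is uniform on compact subsets, the Hölder dependence of $e(x_0)$ and $c(x_0)$ on $x_0$ lets me apply the Whitney extension theorem to produce a $C^{1,\gamma}$ function on $\Pi$ whose zero set locally coincides with $\mathcal{R}(U)$ and whose gradient is parallel to $e(x_0)$. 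The implicit function theorem then exhibits $\mathcal{R}(U)$ as an $(n-2)$-dimensional $C^{1,\gamma}$ submanifold of $\Pi$, as claimed.
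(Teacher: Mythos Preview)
Your overall strategy matches the paper's: Weiss-type monotonicity plus the $3/2$-epiperimetric inequality yield a power decay $W^A_{3/2}(t,U^*_{x_0},x_0)\le Ct^\delta$ (Lemma~\ref{lem:var-W-gr-est}), which feeds a rotation estimate and hence uniqueness and H\"older dependence of the blowup direction $\nu_{x_0}$ (Lemmas~\ref{lem:var-rotation-est}--\ref{lem:var-blowup-est}, \ref{lem:var-blowup-hol}); openness of $\cR(U)$ follows from the frequency gap and upper semicontinuity exactly as you say (Corollaries~\ref{cor:var-gap}, \ref{cor:reg-set-open}).

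The divergence is in the \emph{final step}. The paper does \emph{not} use Whitney extension for the regular set; it runs a direct cone argument (Theorem~\ref{thm:var-C1g-regset}, Steps~2--6): the uniform $C^1$-closeness of $(u^*_{\bar x})^\phi_r$ to the explicit model $a_{\bar x}\Re(z'\cdot\nu_{\bar x}+i|z_n|)^{3/2}$ forces $\{u^*_{\bar x}(\cdot,0)>0\}$ to contain a cone $\cC_\e(\nu_{\bar x})$ and, via the complementarity condition (Lemma~\ref{u_ex0-comp-cond}), $\{u^*_{\bar x}(\cdot,0)=0\}$ to contain the opposite cone. Transferring these cones back through $\bar{\mfa}_{\bar x}$ yields a Lipschitz graph, and the H\"older continuity of the transformed normal $\nu^A_{\bar x}$ upgrades this to $C^{1,\gamma}$.

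Your Whitney route, as written, has a gap. Whitney extension of the data $(0,e(x_0))$ on $\cR(U)$ gives a $C^{1,\gamma}$ function $F$ with $\cR(U)\subset\{F=0\}$, but your claim that the zero set \emph{locally coincides} with $\cR(U)$ is not automatic: you must first verify the Whitney compatibility $|e(x_0)\cdot(x_1-x_0)|\le C|x_1-x_0|^{1+\gamma}$ for nearby $x_0,x_1\in\cR(U)$, and then still argue that $\Gamma(U)$ fills out the level set. Checking compatibility amounts to showing that nearby free boundary points lie in a thin slab around the tangent hyperplane---which is exactly the cone condition the paper proves directly. A second point you gloss over: the blowup direction $\nu_{x_0}$ lives in the $\bar{\mfa}_{x_0}$-deskewed coordinates, so comparing $\nu_{x_0}$ with $\nu_{x_1}$ requires tracking the $C^{0,\alpha}$ variation of $\bar{\mfa}_{x_0}$; the paper does this explicitly (Lemma~\ref{lem:var-blowup-est}, Steps~3--6, and Theorem~\ref{thm:var-C1g-regset}, Step~4, where the physical-space normal $\nu^A_{\bar x}=(\bar{\mfa}_{\bar x}^{-1})^{\rm tr}\nu_{\bar x}/|(\bar{\mfa}_{\bar x}^{-1})^{\rm tr}\nu_{\bar x}|$ appears).
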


Finally, we state our main result for the so-called \emph{singular
  set}. A free boundary point $x_0\in \Gamma(U)$ is called
\emph{singular} if the \emph{coincidence set}
$\Lambda(U):=\{x\in B_1': U(x)=0\}$ has $H^{n-1}$-density zero at $x_0$,
i.e.,
$$
\lim_{r\to 0+} \frac{H^{n-1}(\Lambda(U)\cap
  B_r'(x_0))}{H^{n-1}(B_r')}=0.
$$
We denote the set of all singular points by $\Sigma(U)$ and call it
the \emph{singular set}. It can be shown that if
$\kappa(x_0)<\kappa_0$, then $x_0\in\Sigma(U)$ if and only if $\kappa(x_0)=2m$, $m\in\mathbb{N}$ (see
Proposition~\ref{prop:var-sing-char}). For such values of $\kappa$, we then define
$$
\Sigma_{\kappa}(U):=\Gamma_\kappa(U).
$$

\begin{maintheorem}[Structure of the singular set]
  \label{mthm:IV}
  Let $U$ be as in Theorem~\ref{mthm:II}. Then, for any
  $\kappa=2m<\kappa_0$, $m\in\mathbb{N}$, $\Sigma_\kappa(U)$ is
  contained in a countable union of $(n-2)$-dimensional manifolds of
  class $C^{1,\log}$.
\end{maintheorem}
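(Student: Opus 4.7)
The plan is to follow the blowup + Whitney-extension paradigm developed in \cite{GarPetSVG18} for minimizers with Lipschitz coefficients and in \cite{JeoPet19a} for almost minimizers with constant coefficients, now carried through the frozen-coefficient reformulation \eqref{eq:almost-min-A-frozen}. Four ingredients are needed: (i) a Monneau-type quasi-monotonicity formula against any admissible $\kappa$-homogeneous polynomial at the base point $x_0$; (ii) the resulting nondegeneracy and classification of blowups; (iii) a logarithmic uniqueness rate for the blowup, derived from the log-epiperimetric inequality of \cite{ColSpoVel20}; and (iv) Whitney's extension theorem for $C^{1,\log}$ jets, combined with the implicit function theorem.

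\textbf{Monneau formula and blowups.} For $x_0\in\Sigma_\kappa(U)$, let $\mathfrak{P}_\kappa^{A(x_0)}$ denote the convex cone of polynomials on $\R^n$ of degree $\kappa$ that are $\kappa$-homogeneous with respect to the rescaling $x\mapsto x_0+A(x_0)^{1/2}rx$, $A(x_0)$-harmonic, $P_{x_0}$-even, and nonnegative on $\Pi$. Using the Weiss-type monotonicity of $W^A_\kappa$ underlying Theorem~\ref{mthm:II}, together with the identity relating $W^A_\kappa$ to the $r$-derivative of $r\mapsto r^{-n+1-2\kappa}\int_{\partial E_r(x_0)}(U^*_{x_0}-p)^2\mu_{x_0}(x-x_0)$, one obtains the quasi-monotonicity of the Monneau-type functional
\begin{equation*}
M_\kappa^A(r,U_{x_0}^*-p,x_0):=\frac{1}{r^{n-1+2\kappa}}\int_{\partial E_r(x_0)}(U_{x_0}^*-p)^2\mu_{x_0}(x-x_0)+Cr^{\alpha}
\end{equation*}
for any $p\in\mathfrak{P}_\kappa^{A(x_0)}$. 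This yields the nondegeneracy estimate $\int_{\partial E_r(x_0)}(U^*_{x_0})^2\mu_{x_0}\ge cr^{n-1+2\kappa}$, compactness of the normalized rescalings $\tilde U_{x_0,r}$, and identification of every subsequential blowup as some $p_{x_0}\in\mathfrak{P}_\kappa^{A(x_0)}$.

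\textbf{Log-rate uniqueness and continuity.} After the affine change of variable $y=A(x_0)^{-1/2}(x-x_0)$, the functional $W^A_\kappa(\cdot,U^*_{x_0},x_0)$ reduces, modulo an $O(r^\alpha)$ error absorbed into the almost-minimality gauge, to the standard constant-coefficient Weiss energy on $B_r$. Applying the log-epiperimetric inequality of \cite{ColSpoVel20} to the $\kappa$-homogeneous extension of the trace of $\tilde U_{x_0,r}$ yields, upon integration of the resulting differential inequality, the decay $W^A_\kappa(r,U^*_{x_0},x_0)\le C(\log(1/r))^{-1/\gamma}$ for some $\gamma=\gamma(n)>0$. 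Feeding this back into the Monneau formula produces a logarithmic convergence rate $\|\tilde U_{x_0,r}-p_{x_0}\|_{L^2(\partial B_1)}\le C(\log(1/r))^{-\delta}$, which in particular proves uniqueness of $p_{x_0}$. Comparing Monneau at two nearby base points $x_0,x_0'\in\Sigma_\kappa(U)$ and using the triangle inequality gives continuity of $x_0\mapsto p_{x_0}$ on $\Sigma_\kappa(U)$ with the same logarithmic modulus.

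\textbf{Whitney extension and stratification.} The family $\{p_{x_0}\}_{x_0\in\Sigma_\kappa(U)}$ forms a compatible $\kappa$-jet in the sense of Whitney's extension theorem with $C^{\kappa,\log}$ modulus, by combining the pointwise Taylor expansion $U^*_{x_0}(x)=p_{x_0}(x-x_0)+o(|x-x_0|^\kappa)$ with the log-continuity just established. Stratifying
\begin{equation*}
\Sigma_\kappa^d(U):=\{x_0\in\Sigma_\kappa(U):\dim L_{p_{x_0}}=d\},\qquad L_p:=\{\xi\in\Pi:p(\cdot+\xi)\equiv p\},
\end{equation*}
the implicit function theorem applied to the $(n-1-d)$ components of the Whitney extension that are transverse to $L_{p_{x_0}}$ shows that each $\Sigma_\kappa^d(U)$ is locally contained in a $d$-dimensional $C^{1,\log}$ submanifold of $\Pi$; a countable union over $d=0,\ldots,n-2$ then gives the result. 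The main technical hurdle will be the derivation of the log-epiperimetric inequality in the perturbed geometry, most cleanly handled by the pullback above, absorbing the $O(r^\alpha)$ coefficient perturbation and the gauge $\omega(r)\le Mr^\alpha$ into the epiperimetric decay at the cost of possibly shrinking $r_0$ and decreasing $\gamma$.
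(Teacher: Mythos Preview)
Your proposal has a genuine gap in the Monneau step. The identity relating $W_\kappa^A$ to the $r$-derivative of the Monneau functional is
\[
\frac{d}{dr}\Bigl[\frac{1}{r^{n-1+2\kappa}}\int_{\partial E_r(x_0)}(U^*_{x_0}-p)^2\mu_{x_0}\Bigr]
=\frac{2}{r}\,W_\kappa^A(r,U^*_{x_0}-p,x_0)
+\frac{2}{r^{n+2\kappa-1}}\int_{E_r(x_0)}(U^*_{x_0}-p)\,\dv\bigl(A(x_0)\nabla U^*_{x_0}\bigr),
\]
and it is the last integral that kills the approach: an almost minimizer satisfies no PDE and no variational inequality, so $\dv(A(x_0)\nabla U^*_{x_0})$ is an uncontrolled distribution (not even supported on $\Pi$). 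In \cite{GarPetSVG18}, which you invoke, this term is handled precisely because $U$ is a \emph{minimizer} and hence solves the Signorini complementarity system; the sign condition $\int(U-p)\,\dv(A\nabla U)\ge 0$ comes from $p\ge 0$ on $\Pi$ together with $U\,\partial_{\nu_A}U=0$ and $\partial_{\nu_A}U\le 0$. None of this is available here, and the $O(r^\alpha)$ gauge does not absorb a term of unknown sign and unknown order. Consequently your derivation of nondegeneracy, which you base on Monneau, also collapses.

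The paper bypasses Monneau entirely, following \cite{JeoPet19a}. The route is: (a) a bootstrapping argument based on the log-epiperimetric inequality of \cite{ColSpoVel20} that removes the logarithmic loss from the weak growth estimate and gives the optimal bound $\int_{\partial E_t}(U^*_{x_0})^2\le Ct^{n+2\kappa-1}$ directly; (b) a rotation estimate $\int_{\partial B_1}|(u^*_{x_0})^\phi_t-(u^*_{x_0})^\phi_s|\le C(\log(1/t))^{-1/(n-2)}$ obtained from the Weiss decay and the log-epiperimetric inequality, with no reference to $u-p$; (c) nondegeneracy proved by a contradiction argument applied to \emph{Almgren} rescalings rather than Monneau; and (d) the logarithmic continuity of $x_0\mapsto p_{x_0}$ on $\Sigma_\kappa(U)$ followed by Whitney extension as in \cite{ColSpoVel20}. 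Step (d) coincides with your final paragraph, but steps (a)--(c) replace your Monneau machinery and are where the actual work for almost minimizers lies.
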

A more refined version of this result is given in
Theorem~\ref{thm:var-sing}.

Theorems~\ref{mthm:III} and \ref{mthm:IV} follow by establishing the
uniqueness and continuous dependence of \emph{almost homogeneous blowups} with H\"older modulus of continuity in the case of
regular free boundary points and a logarithmic one in the case of the
singular points. These follow from optimal growth and rotation
estimates which are based on the use of Weiss-type monotonicity
formulas in conjunction with so-called \emph{epiperimetric}
\cite{GarPetSVG16} and \emph{$\log$-epiperimetric} \cite{ColSpoVel20}
inequalities for the solutions of the Signorini problem.

\subsubsection{Proofs of Theorems~\ref{mthm:I}--\ref{mthm:IV}} While we
don't give formal proofs of the theorems above in the main body of the paper, they are contained in the following results proved there:
\begin{enumerate}[label=$\circ$,leftmargin=*,labelindent=\parindent]
\item Theorem~\ref{mthm:I} is essentially the same as Theorem~\ref{thm:var-grad-holder}.
\item Theorem~\ref{mthm:II} follows by combining
  Theorem~\ref{thm:var-Almgren} and Corollary~\ref{cor:var-gap}.
\item The statement of Theorem~\ref{mthm:III} is contained in that of
  Theorem~\ref{thm:var-C1g-regset}.
\item The statement of Theorem~\ref{mthm:IV} is contained in that of
  Theorem~\ref{thm:var-sing}.
\end{enumerate}

\subsection{Notation}
We use the following notation throughout the paper.  

$\R^n$ stands for
the $n$-dimensional Euclidean space. The points of $\R^n$ are denoted by
$x=(x', x_n)$, where $x'=(x_1, \ldots, x_{n-1})\in \R^{n-1}$. We
often identify $x'\in \R^{n-1}$ with
$(x', 0)\in \R^{n-1}\times \{0\}$. $\R^{n}_\pm$ stand for open
halfspaces $\{x\in\R^n: \pm x_n>0\}$.

For $\xi,\eta\in\R^n$, the standard inner product is denoted by $\langle\xi,\eta\rangle$. Thus, $|\xi|^2=\langle\xi,\xi\rangle$, where $|\xi|$ is the Euclidean norm of $\xi$.

For $x\in\R^n$, $r>0$, we denote
\begin{alignat*}{2}
  B_r(x)&:=\{y\in \R^n:|x-y|<r\},&\quad&\text{ball in $\R^n$,}\\
  B^{\pm}_r(x')&:=B_r(x',0)\cap \{\pm x_n>0\},&& \text{half-ball in $\R^n$,}\\
  B'_r(x')&:=B_r(x',0)\cap \{ x_n=0\}, &&\text{ball in $\R^{n-1}$, or
    thin ball.}
\end{alignat*}
We typically drop the center from the notation if it is the
origin. Thus, $B_r:=B_r(0)$, $B'_r:=B'_r(0)$, etc.

For a function $f$ in $\R^n$, $\nabla f$ denotes its gradient (in the classical or weak sense)
$$
\nabla f:=(\partial_{x_1}f,\partial_{x_2}f,\dots,\partial_{x_n}f),
$$
where $\partial_{x_i}f$ are the partial derivatives in the variables $x_i$, $i=1,2,\dots,n$.

In integrals, we often drop the variable and the measure of
integration if it is with respect to the Lebesgue measure or the
surface measure. Thus,
$$
\int_{B_r} f=\int_{B_r} f(x)dx,\quad \int_{\partial B_r}
f=\int_{\partial B_r} f(x)dS_x,
$$
where $S_x$ stands for the surface measure.

If $E$ is a set of positive and finite Lebesgue measure, we indicate by $\mean{f}_{E}$ the integral mean value of a function
$u$ over $E$. That is,
$$
\mean{f}_{E}:=\dashint_{E}f=\frac{1}{|E|}\int_{E}f.
$$

\section{Coordinate transformations}
In order to use the results available for almost minimizers in the
case of $A\equiv I$, proved in \cite{JeoPet19a}, in this section we
describe a ``deskewing procedure'' or coordinate
transformations to straighten $A(x_0)$, $x_0\in B_1$.

For the notational convenience, we will denote
$$
\mfa_{x_0}=A^{1/2}(x_0),\quad x_0\in B_1
$$
so that
$$
\langle{A(x_0)\xi,\xi}\rangle=|\mfa_{x_0}\xi|^2,\quad \xi\in\R^n.
$$
Then $\mfa_{x_0}$ is a symmetric positive definite matrix, with
eigenvalues between $\lambda^{1/2}$ and $\Lambda^{1/2}$ and the
mapping $x_0\mapsto
\mfa_{x_0}$ is $\alpha$-H\"older continuous for $x_0\in B_1$. 
For every $x_0\in B_1$, we
define an affine transformation $T_{x_0}$ by
$$
T_{x_0}(x)=\mfa_{x_0}^{-1}(x-x_0).
$$
Note that $T_{x_0}^{-1}(y)=\mfa_{x_0}y+x_0$.
Then for the ellipsoids $E_r(x_0)$, we have
$$
E_r(x_0)=T_{x_0}^{-1}(B_r)=\mfa_{x_0} B_r+x_0,\quad T_{x_0}(E_r(x_0))=B_r.
$$
Further, we let
$$
\Pi_{x_0}:=T_{x_0}(\Pi).
$$
Then $\Pi_{x_0}$ is a hyperplane parallel to a linear subspace
$\mfa_{x_0}^{-1}\Pi$ spanned by the vectors $\mfa_{x_0}^{-1}e_1$, $\mfa_{x_0}^{-1}e_2$, \dots,
$\mfa_{x_0}^{-1}e_{n-1}$ and with a normal $\mfa_{x_0} e_n$. Generally, this
hyperplane will be tilted with respect to $\Pi$, unless $\mfa_{x_0} e_n$
is a multiple of $e_n$, or equivalently that $e_n$ is an eigenvector
of the matrix $A(x_0)$, or that $a_{in}(x_0)=0$ for
$i=1,\dots,n-1$ for its entries. To rectify
that, we construct a family of 
orthogonal transformations $O_{x_0}$, $x_0\in B_1$, by applying the
Gram-Schmidt process to the ordered basis $\{\mfa_{x_0}^{-1}
e_1, \mfa_{x_0}^{-1} e_2,\dots, \mfa_{x_0}^{-1} e_{n-1}\}$ of
$\mfa_{x_0}^{-1}\Pi$.
Namely, let
\begin{align*}
    e_1^{x_0}&:=\frac{\mfa_{x_0}^{-1} e_1}{|\mfa_{x_0}^{-1} e_1|},\\
    e_2^{x_0}&:=\frac{\mfa_{x_0}^{-1} e_2-\langle \mfa_{x_0}^{-1} e_2,e_1^{x_0}\rangle
      e_1^{x_0}}{|\mfa_{x_0}^{-1} e_2-\langle \mfa_{x_0}^{-1} e_2,e_1^{x_0}\rangle e_1^{x_0}|},\\
    e_3^{x_0}&:=\frac{\mfa_{x_0}^{-1} e_3-\langle{\mfa_{x_0}^{-1} e_3,e_1^{x_0}}\rangle
      e_1^{x_0}-\langle{\mfa_{x_0}^{-1} e_3,e_2^{x_0}}\rangle e_2^{x_0}}{|\mfa_{x_0}^{-1}
      e_3- \langle{\mfa_{x_0}^{-1} e_3,e_1^{x_0}}\rangle e_1^{x_0}
      - \langle{\mfa_{x_0}^{-1} e_3,e_2^{x_0}}\rangle e_2^{x_0}|}\\
    &\vdots\\
\intertext{Moreover, letting}
e_n^{x_0}&:=\frac{\mfa_{x_0}e_n}{|\mfa_{x_0} e_n|},
\end{align*}
we obtain an ordered orthonormal basis $\{e_1^{x_0},\dots,e_{n-1}^{x_0},e_n^{x_0}\}$ of $\R^n$. Then consider the rotation $O_{x_0}$ of $\R^n$ that takes the standard basis $\{e_1,e_2,\dots,e_n\}$ to the one above, i.e.,
$$
O_{x_0}:\R^n\to\R^n,\quad O_{x_0}(e_i)=e_i^{x_0},\ i=1,2,\dots,n.
$$
Note that the Gram-Schmidt process above guarantees that $x_0\mapsto
O_{x_0}$ is $\alpha$-H\"older continuous. We also have that by construction
$$
O_{x_0}^{-1}\mfa_{x_0}^{-1}\Pi=\Pi.
$$
In particular, when $x_0\in\Pi$, we have $\Pi_{x_0}=\mfa_{x_0}^{-1}\Pi$ and
therefore
$$
O_{x_0}^{-1}(\Pi_{x_0})=\Pi.
$$
Because of this property, we also define the modifications of the
matrices $\mfa_{x_0}$ and the 
transformations $T_{x_0}$ as follows:
$$
\bar{\mfa}_{x_0}=\mfa_{x_0}O_{x_0},\quad \bar{T}_{x_0}=
O_{x_0}^{-1}\circ T_{x_0},
$$
so that $\bar{T}_{x_0}(x)=\bar{\mfa}_{x_0}^{-1}(x-x_0)$.
Since $O_{x_0}$ is a rotation, we still have
$$
E_r(x_0)=\bar{T}_{x_0}^{-1}(B_r),\quad \bar{T}_{x_0} (E_r(x_0))=B_r,
$$
see Fig.~\ref{fig:deskew}.
\begin{figure}[t]
\centering
\begin{picture}(375,125)(0,0)
  \put(0,0){\includegraphics[height=125pt]{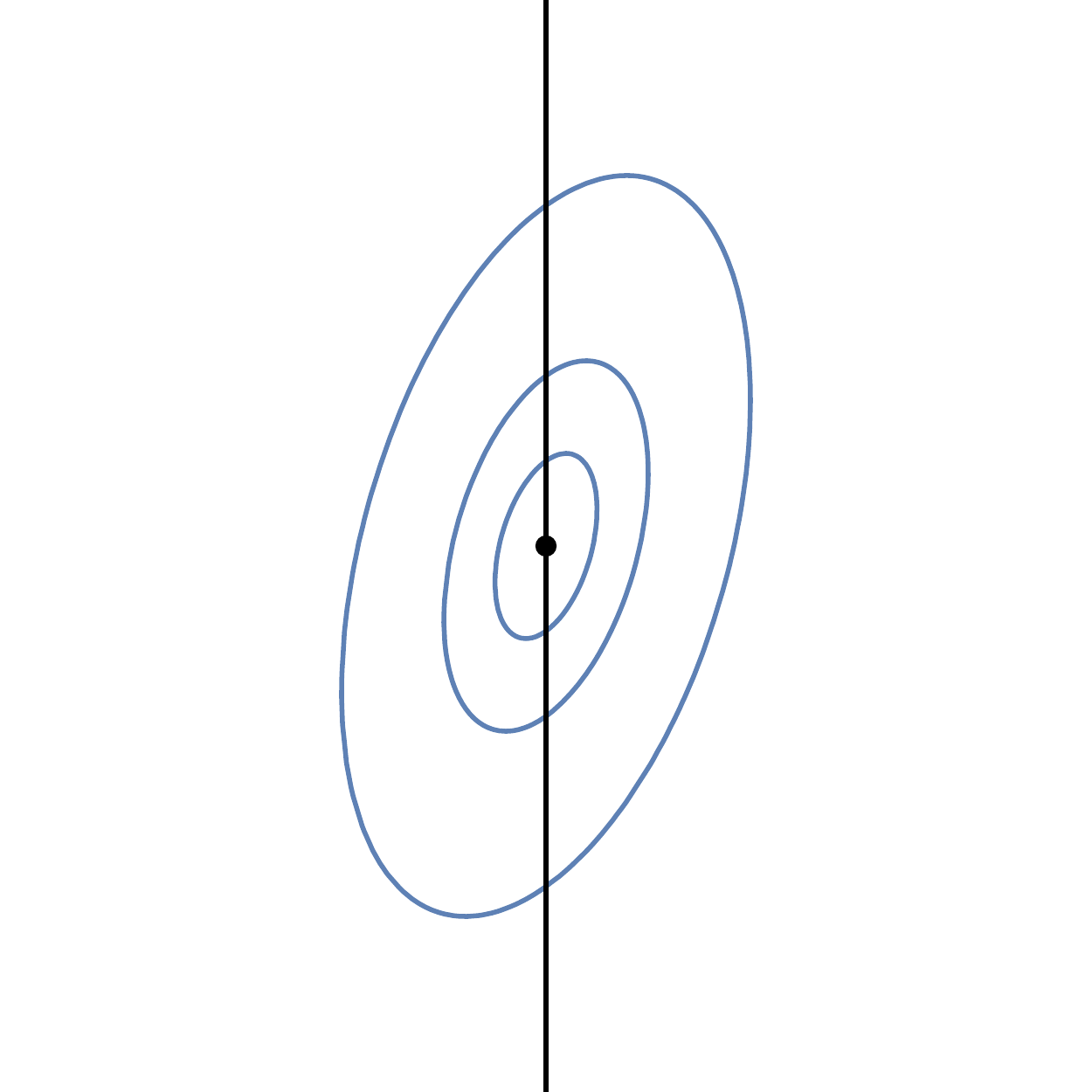}}
  \put(125,0){\includegraphics[height=125pt]{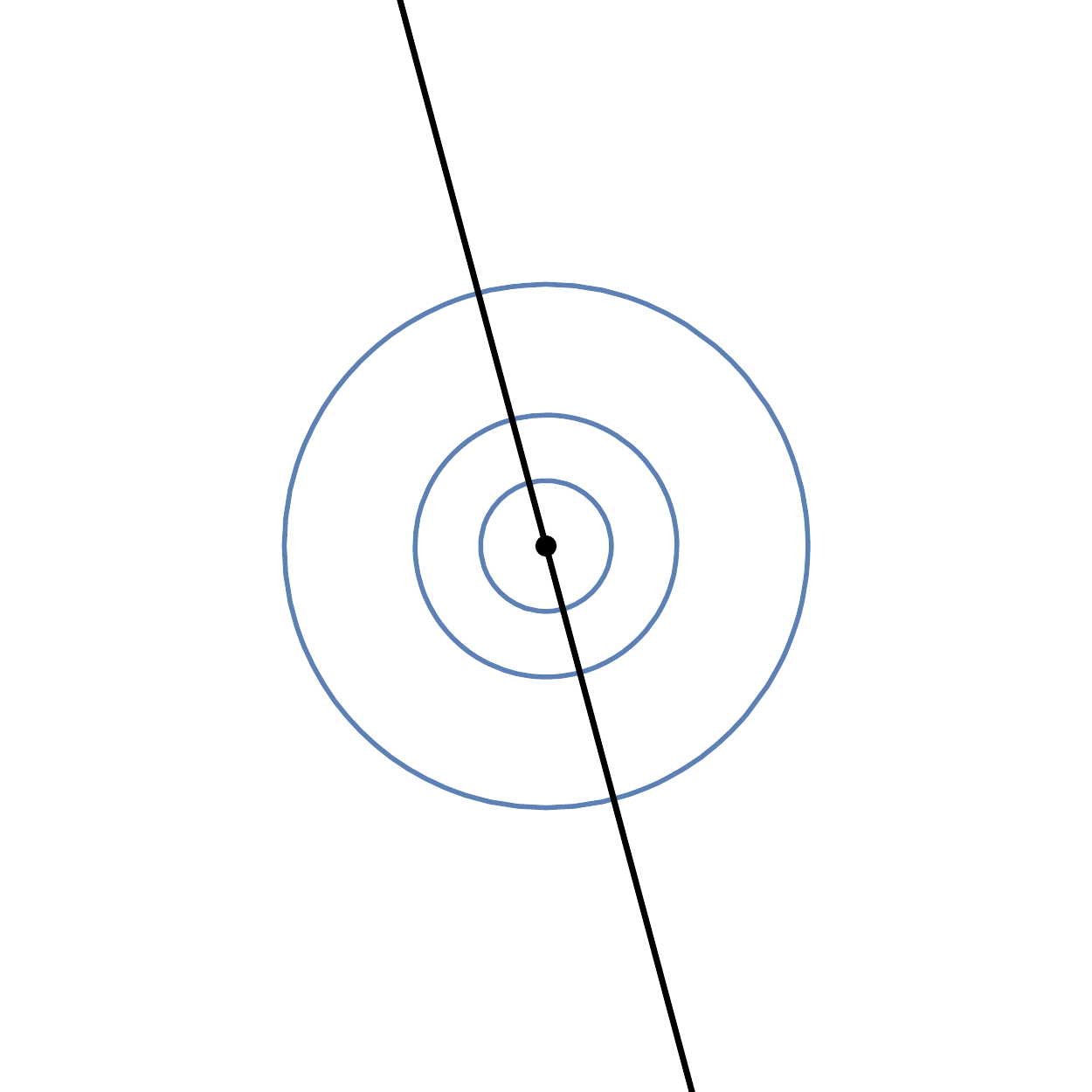}}
  \put(250,0){\includegraphics[height=125pt]{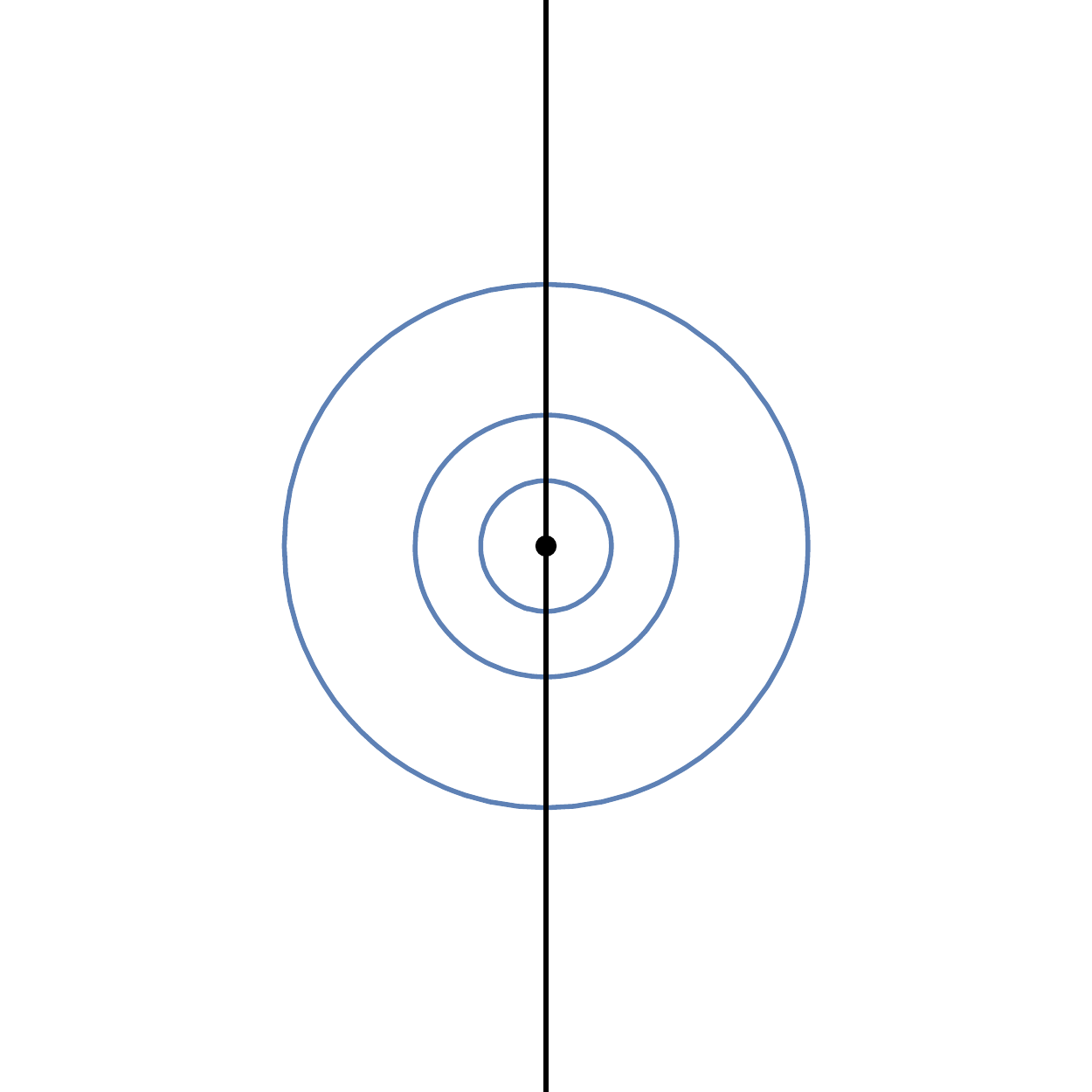}}
  \put(89,83){$\xymatrix{\ar@/^/[rr]^{T_{x_0}}_{} & & } $}
  \put(216,83){$\xymatrix{\ar@/^/[rr]^{O_{x_0}^{-1}}_{} & & } $}
  \put(89,23){$\xymatrix{\ar@/_/[rrrrrr]^{}_{\bar{T}_{x_0}}
      & & & & & &} $}
  \put(10,60){\scriptsize $E_r(x_0)$}
  \put(350,60){\scriptsize $B_r$}
  \put(50,110){\scriptsize $\Pi$}
  \put(180,110){\scriptsize $\Pi_{x_0}$}
  \put(300,110){\scriptsize $\Pi$}
\end{picture}
\caption{Deskewing: coordinate transformations $T_{x_0}$,
  $O_{x_0}^{-1}$, $\bar T_{x_0}$.}
\label{fig:deskew}
\end{figure}%

Next, for a function $U:B_1\to \R$ and a point $x_0\in B_1$, we define
its ``deskewed'' version at $x_0$ by
$$
u_{x_0}=U\circ \bar{T}_{x_0}^{-1}.
$$
As we will see, if $U$ is an almost minimizer, the transformed
function $u_{x_0}$ will satisfy an almost minimizing property with the
identity matrix $I$ at the origin. Before we state and prove that fact, 
we need the following basic change of variable formulas:
\begin{align}
\label{eq:u-u_x-0}
  \int_{E_r(x_0)} U^2
  &=\det \mfa_{x_0}\int_{B_r}u_{x_0}^2\\
  \label{eq:u-u_x-1}
  \int_{E_r(x_0)}\mean{A(x_0)\D U,\D U}
  &=\det \mfa_{x_0}\int_{B_r}|\D u_{x_0}|^2\\
\label{eq:u-u_x-bdry}
  \int_{\pa E_r(x_0)}U^2\mu_{x_0}(x-x_0)
  &=\det \mfa_{x_0}\int_{\pa B_r}u_{x_0}^2,                     
\end{align}
with the conformal factor
\begin{equation}\label{eq:mu}
\mu_{x_0}(z):=\frac{|\mfa_{x_0}^{-1}z|}{|A^{-1}(x_0)z|}.
\end{equation}
We also have the following modified version of \eqref{eq:u-u_x-1}. 
\begin{align}
\label{eq:u-u_x-2}
  \int_{E_r(x_0)}|\mfa_{x_0}\D U-\mean{\mfa_{x_0}\D
    U}_{E_r(x_0)}|^2
  =\det \mfa_{x_0}\int_{B_r}|\D u_{x_0}-\mean{\D u_{x_0}}_{B_r}|^2.
\end{align}
While \eqref{eq:u-u_x-0}--\eqref{eq:u-u_x-1} and \eqref{eq:u-u_x-2} are clear, let us give more details
on \eqref{eq:u-u_x-bdry}.
If we let $f(x):=|\mfa_{x_0}^{-1}(x-x_0)|$, then $\{f=t\}=\pa
E_t(x_0)$, $t>0$, and by the coarea formula
$$
\int_{E_r(x_0)}U^2dx=\int_0^r\int_{\pa E_t(x_0)}\frac{U^2}{|\D
  f(x)|}dS_xdt.
$$
Using now that $1/|\nabla
f(x)|=\frac{|\mfa_{x_0}^{-1}(x-x_0)|}{|A^{-1}(x_0)(x-x_0)|}=\mu_{x_0}(x-x_0)$
and then differentiating \eqref{eq:u-u_x-0}, we obtain \eqref{eq:u-u_x-bdry}.

We will also need the following estimate for the conformal factor $\mu_{x_0}$:
\begin{equation}\label{eq:mu-bounds}
  \lambda^{1/2}\leq\mu_{x_0}(z)\leq \Lambda^{1/2}.
\end{equation}
Indeed, if $y=A^{-1}(x_0)z$, then
$$
\mu_{x_0}(z)=\frac{|\mfa_{x_0}y|}{|y|}\in[\lambda^{1/2},\Lambda^{1/2}].
$$

\begin{definition}[Almost Signorini property at a point] We say that a function $u\in
W^{1, 2}(B_R)$ satisfies the \emph{almost Signorini property at $0$}
in $B_R$ if
$$
\int_{B_r}|\D u|^2\le (1+\omega(r))\int_{B_r}|\D v|^2,
$$
for all $0<r<R$ and $v\in \mathfrak{K}_{0,u}(B_r, \Pi)$.
\end{definition}

\begin{lemma}\label{lem:u_x-alm-min}
Suppose $U$ is an almost minimizer of the $A$-Signorini problem in
$B_1$. Let $x_0\in B'_{1}$ be such that $E_R(x_0)\subset B_1$. Then $u_{x_0}=U\circ \bar{T}^{-1}_{x_0}$ satisfies the almost Signorini property at $0$ in $B_R$.
\end{lemma}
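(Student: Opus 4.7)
The plan is to convert, via the affine change of variables $\bar{T}_{x_0}$, the almost-minimality of $U$ on the ellipsoid $E_r(x_0)$ into the almost Signorini property of $u_{x_0}$ on $B_r$. Given any competitor $v\in\mathfrak{K}_{0,u_{x_0}}(B_r,\Pi)$, I would define the pulled-back function
$$V(x):=v(\bar{T}_{x_0}(x)),\qquad x\in E_r(x_0),$$
so that $V\in W^{1,2}(E_r(x_0))$ and $V\circ \bar{T}_{x_0}^{-1}=v$. The aim is then to show that $V$ is an admissible competitor for $U$ on $E_r(x_0)$, invoke \eqref{eq:almost-min-A-frozen}, and push the resulting inequality back to $B_r$ using the change-of-variables identity \eqref{eq:u-u_x-1}.

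The first step is to verify that $V\in\mathfrak{K}_{0,U}(E_r(x_0),\Pi)$. Since $\bar{T}_{x_0}$ maps $\partial E_r(x_0)$ bijectively onto $\partial B_r$, the equality $v=u_{x_0}$ on $\partial B_r$ translates to $V=U$ on $\partial E_r(x_0)$. For the unilateral constraint, the key observation is that the rotation $O_{x_0}$ was constructed via Gram--Schmidt precisely so that $O_{x_0}^{-1}(\mfa_{x_0}^{-1}\Pi)=\Pi$, and combined with $x_0\in\Pi$ this gives $\bar{T}_{x_0}(\Pi\cap E_r(x_0))=\Pi\cap B_r$. Hence $v\geq 0$ on $\Pi\cap B_r$ forces $V\geq 0$ on $\Pi\cap E_r(x_0)$, so $V$ is admissible.

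Next I would apply the frozen-coefficient almost minimality \eqref{eq:almost-min-A-frozen} to the pair $(U,V)$ on $E_r(x_0)$, obtaining
$$\int_{E_r(x_0)}\langle A(x_0)\nabla U,\nabla U\rangle\leq (1+\tilde\omega(r))\int_{E_r(x_0)}\langle A(x_0)\nabla V,\nabla V\rangle,$$
where $\tilde\omega(r)\leq C(\omega(r)+r^\alpha)$ is the modified gauge produced by the freezing. The identity \eqref{eq:u-u_x-1} applies to both pairs $(U,u_{x_0})$ and $(V,v)$, since its derivation only uses $\bar{\mfa}_{x_0}\bar{\mfa}_{x_0}^T=A(x_0)$ and $|\det\bar{\mfa}_{x_0}|=\det\mfa_{x_0}$. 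Dividing through by the common factor $\det\mfa_{x_0}$ yields
$$\int_{B_r}|\nabla u_{x_0}|^2\leq (1+\tilde\omega(r))\int_{B_r}|\nabla v|^2,$$
which is the almost Signorini property at $0$ in $B_R$ with gauge $\tilde\omega$. The only point that requires any care is the preservation of $\Pi$ by $\bar{T}_{x_0}$ when $x_0\in\Pi$, and that is built into the construction of $O_{x_0}$; the rest reduces to routine bookkeeping with the change-of-variables formulas \eqref{eq:u-u_x-0}--\eqref{eq:u-u_x-2}, so I do not anticipate any genuine obstacle.
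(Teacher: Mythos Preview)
Your proof is correct and follows essentially the same route as the paper: both arguments rest on the fact that $\bar T_{x_0}$ carries $\Pi\cap E_r(x_0)$ onto $\Pi\cap B_r$ when $x_0\in\Pi$, together with the change-of-variables identity \eqref{eq:u-u_x-1}. The only cosmetic difference is that the paper works with the energy minimizer $V$ on $\mathfrak{K}_{0,U}(E_r(x_0),\Pi)$ and observes that its pushforward $v_{x_0}=V\circ\bar T_{x_0}^{-1}$ is the energy minimizer on $\mathfrak{K}_{0,u_{x_0}}(B_r,\Pi)$, which suffices since the inequality against the minimizer implies it against all competitors; you instead pull back an arbitrary competitor, which is equally valid and slightly more explicit.
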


\begin{proof}
Let $V$ be the energy minimizer of $\int_{E_r(x_0)}\langle A(x_0)\D V,\D
V\rangle$ on $\mathfrak{K}_{0,U}(E_r(x_0),\Pi)$,
$0<r<R$. Then $v_{x_0}=V\circ \bar{T}_{x_0}^{-1}$ is the
energy minimizer of $\int_{B_r}|\D v_{x_0}|^2$ on
$\mathfrak{K}_{0,u_{x_0}}(B_r,\Pi)$. Moreover, by
\eqref{eq:u-u_x-1},
\begin{align*}
\int_{B_r}|\D u_{x_0}|^2 &= \det \mfa_{x_0}^{-1}\int_{E_r(x_0)}\langle A(x_0)\D U, \D U\rangle\\
&\le (1+\omega(r))\det \mfa_{x_0}^{-1}\int_{E_r(x_0)}\langle A(x_0)\D V, \D V\rangle\\
&= (1+\omega(r))\int_{B_r}|\D v_{x_0}|^2.
\end{align*}
This completes the proof.
\end{proof}

\section{Almost $A$-harmonic functions}

We start our analysis of almost minimizers in the absence of the thin obstacle. We call such functions almost $A$-harmonic functions. In this section, we establish their $C^{1,\alpha/2}$ regularity (Theorem~\ref{thm:var-alm-har-reg}). A similar result has already been proved by Anzellotti \cite{Anz83}, but for almost minimizers over balls
$\{B_r(x_0)\}$ rather than ellipsoids $\{E_r(x_0)\}$; nevertheless,
the proofs are similar. The proofs in this section also illustrate how
we are going to use the results available for ``deskewed''
functions $u_{x_0}=U\circ \bar{T}_{x_0}^{-1}$ to infer the corresponding
results for almost minimizers $U$.

\begin{definition}[Almost $A$-harmonic functions] We say that $U$ is an \emph{almost $A$-harmonic function} in $B_1$ if
  $U\in W^{1, 2}(B_1)$ and
$$
\int_{E_r(x_0)}\mean{A\D U, \D U}\le (1+\omega(r))\int_{E_r(x_0)}\mean{A\D V, \D V},
$$
whenever $E_r(x_0)\Subset B_1$ and $V\in \mathfrak{K}_{U}(E_r(x_0)):= U+W^{1,2}_0(E_r(x_0))$.
\end{definition}
Note that similarly to the case of $A$-Signorini
problem, we can write the almost minimizing property above in the form with frozen coefficients 
$$
\int_{E_r(x_0)}\mean{A(x_0)\D U, \D U}\le (1+\omega(r))\int_{E_r(x_0)}\mean{A(x_0)\D V, \D V}.
$$

\begin{definition}[Almost harmonic property at a point]
We say that a function $u\in W^{1, 2}(B_R)$ satisfies \emph{almost harmonic property at $0$} in $B_R$ if 
$$
\int_{B_r}|\D u|^2\le (1+\omega(r))\int_{B_r}|\D v|^2,
$$
for all $0<r<R$ and $v\in \mathfrak{K}_u(B_r)$.
\end{definition}

\begin{lemma}
If $U$ is an almost $A$-harmonic function in $B_1$ and $x_0\in B_1$
with $E_R(x_0)\subset B_1$, then $u_{x_0}$ satisfies the almost harmonic
property at $0$ in $B_R$.
\end{lemma}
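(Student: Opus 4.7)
The plan is to mimic the argument of Lemma~\ref{lem:u_x-alm-min}, using the change of variable formula \eqref{eq:u-u_x-1} to convert the almost harmonic property of $u_{x_0}$ on balls $B_r\subset B_R$ into the almost $A$-harmonic property of $U$ on the ellipsoids $E_r(x_0)\subset B_1$, and then back. Since there is no thin obstacle to preserve here, the argument is even more direct: one simply transports competitors between the two settings via the affine map $\bar{T}_{x_0}$.

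First, fix $0<r<R$ and a competitor $v\in\mathfrak{K}_{u_{x_0}}(B_r)=u_{x_0}+W^{1,2}_0(B_r)$. Define
$$
V:=v\circ \bar{T}_{x_0}\quad\text{on }E_r(x_0).
$$
Since $\bar{T}_{x_0}(E_r(x_0))=B_r$ and $u_{x_0}=U\circ \bar{T}_{x_0}^{-1}$, it is immediate that $V\in W^{1,2}(E_r(x_0))$ with $V=U$ on $\partial E_r(x_0)$, i.e., $V\in \mathfrak{K}_U(E_r(x_0))$.

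Next, the frozen-coefficient form of the almost $A$-harmonic property of $U$ in $E_r(x_0)$ yields
$$
\int_{E_r(x_0)}\langle A(x_0)\nabla U,\nabla U\rangle \leq (1+\omega(r))\int_{E_r(x_0)}\langle A(x_0)\nabla V,\nabla V\rangle.
$$
Applying \eqref{eq:u-u_x-1} on both sides (to $U$ producing $u_{x_0}$ and to $V$ producing $v$), and dividing by $\det\mfa_{x_0}>0$, we obtain
$$
\int_{B_r}|\nabla u_{x_0}|^2 \leq (1+\omega(r))\int_{B_r}|\nabla v|^2.
$$
Since $v\in\mathfrak{K}_{u_{x_0}}(B_r)$ was arbitrary, this is exactly the almost harmonic property of $u_{x_0}$ at $0$ in $B_R$.

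The only subtle point is that \eqref{eq:u-u_x-1} is stated with $A(x_0)$ rather than a variable matrix, so one must use the frozen version of the almost $A$-harmonic property rather than \eqref{eq:almost-min-A} directly; this is precisely the observation made after the definition of an almost $A$-harmonic function, and it introduces no additional difficulty beyond absorbing a constant into $\omega$.
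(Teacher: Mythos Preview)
Your proof is correct and follows essentially the same route as the paper, which simply refers back to the argument of Lemma~\ref{lem:u_x-alm-min}. The only cosmetic difference is that you transport an arbitrary competitor $v$ from $B_r$ to $E_r(x_0)$, whereas the paper's template argument works with the energy minimizer; both variants are equivalent and rely on the same change-of-variables identity~\eqref{eq:u-u_x-1}.
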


\begin{proof}
The proof is similar to that of Lemma~\ref{lem:u_x-alm-min}.
\end{proof}

\begin{proposition}[cf.~\cite{JeoPet19a}*{Proposition~2.3}]\label{prop:var-Anz-Mor-Camp-est}
Let $U$ be an almost $A$-harmonic function in
$B_1$. Then for any $B_r(x_0)\Subset B_1$ and $0<\rho<r$, we have
\begin{align}\label{eq:var-Anz-Mor-Camp-est1}
  \int_{B_{\rho}(x_0)}|\D U|^2
  &\le C\left[\left(\frac\rho
    r\right)^n+r^\al\right]\int_{B_r(x_0)}|\D U|^2,\\
\label{eq:var-Anz-Mor-Camp-est2}
\int_{B_\rho(x_0)}\left|\D U-\mean{\D U}_{B_\rho(x_0)}\right|^2
  &
\begin{multlined}[t]
    \le C\left(\frac\rho r\right)^{n+2}\int_{B_r(x_0)}\left|\D
      U-\mean{\D U}_{B_r(x_0)}\right|^2\\+Cr^\al\int_{B_r(x_0)}|\D
    U|^2,
  \end{multlined}
\end{align}
with $C=C(n,\alpha,M)$.
\end{proposition}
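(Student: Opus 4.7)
The plan is to reduce \eqref{eq:var-Anz-Mor-Camp-est1}--\eqref{eq:var-Anz-Mor-Camp-est2} to the corresponding flat estimates in \cite{JeoPet19a}*{Proposition~2.3} via the deskewing transformation. Given $B_r(x_0)\Subset B_1$, choose $R$ comparable to $r$ with $E_R(x_0)\Subset B_1$; by the preceding lemma, $u_{x_0}=U\circ\bar T_{x_0}^{-1}$ satisfies the almost harmonic property at $0$ in $B_R$. Applying the cited flat proposition to $u_{x_0}$ and translating through the change-of-variable identities \eqref{eq:u-u_x-1}--\eqref{eq:u-u_x-2}, we obtain the ellipsoidal analogues of \eqref{eq:var-Anz-Mor-Camp-est1}--\eqref{eq:var-Anz-Mor-Camp-est2}: for $0<\rho_1<r_1<R$,
\begin{equation*}
\int_{E_{\rho_1}(x_0)}\mean{A(x_0)\D U,\D U}\le C\bigl[(\rho_1/r_1)^n+r_1^\al\bigr]\int_{E_{r_1}(x_0)}\mean{A(x_0)\D U,\D U},
\end{equation*}
and similarly for the oscillation of $\mfa_{x_0}\D U$ over the ellipsoids $E_{\rho_1}(x_0),E_{r_1}(x_0)$.

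To convert back to balls, I would use the inclusions $B_{\lambda^{1/2}s}(x_0)\subset E_s(x_0)\subset B_{\Lambda^{1/2}s}(x_0)$ together with the two-sided bound $\lambda|\xi|^2\le\mean{A(x_0)\xi,\xi}\le\Lambda|\xi|^2$. Setting $\rho_1=\lambda^{-1/2}\rho$ and $r_1=\Lambda^{-1/2}r$ yields $B_\rho(x_0)\subset E_{\rho_1}(x_0)$ and $E_{r_1}(x_0)\subset B_r(x_0)$, so that the ellipsoidal integrals are sandwiched by integrals over balls at the cost of absorbing $\lambda,\Lambda$-dependent factors into $C$ (the ratio becomes $\rho_1/r_1=(\Lambda/\lambda)^{1/2}\rho/r$, still bounded by a constant multiple of $\rho/r$). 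This chain is valid whenever $\rho/r\le(\lambda/\Lambda)^{1/2}$; in the complementary regime, \eqref{eq:var-Anz-Mor-Camp-est1}--\eqref{eq:var-Anz-Mor-Camp-est2} hold trivially, since then $(\rho/r)^n$ and $(\rho/r)^{n+2}$ are bounded below by positive constants depending only on $M$, and the right-hand sides dominate the left-hand sides up to such constants.

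For the Campanato estimate \eqref{eq:var-Anz-Mor-Camp-est2}, the substitution between $\mfa_{x_0}\D U$ with ellipsoidal mean and $\D U$ with ball mean is performed via the standard inequality $\int_E|f-\mean{f}_E|^2\le\int_E|f-c|^2$, valid for any constant $c$. On the left I would take $c=\mfa_{x_0}^{-1}\mean{\mfa_{x_0}\D U}_{E_{\rho_1}(x_0)}$ and use $|\mfa_{x_0}^{-1}|\le\lambda^{-1/2}$, while on the right I would take $c=\mfa_{x_0}\mean{\D U}_{B_r(x_0)}$ and use $|\mfa_{x_0}|\le\Lambda^{1/2}$, so that all matrix-norm factors are absorbed into $C$.

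The main obstacle is the bookkeeping in the Campanato case, where one must verify that the exponent $n+2$ is preserved through the rescaling and that the two substitutions of the mean value create no new error terms beyond the $r^\al\int_{B_r(x_0)}|\D U|^2$ defect already delivered by the flat proposition. The Morrey-type estimate \eqref{eq:var-Anz-Mor-Camp-est1} is notably simpler since no oscillation needs to be tracked; the $r^\al$ defect there arises automatically from the almost harmonic property of $u_{x_0}$, which itself inherits its modulus from $\omega(r)\le Mr^\al$.
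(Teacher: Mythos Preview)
Your proposal is correct and follows essentially the same approach as the paper: apply the flat estimates of \cite{JeoPet19a}*{Proposition~2.3} to $u_{x_0}$ (the paper phrases this as ``repeating the arguments,'' since $u_{x_0}$ only has the almost harmonic property at the single point $0$), transfer via \eqref{eq:u-u_x-1}--\eqref{eq:u-u_x-2} to ellipsoids, then convert to balls using the same inclusions $B_\rho(x_0)\subset E_{\lambda^{-1/2}\rho}(x_0)\subset E_{\Lambda^{-1/2}r}(x_0)\subset B_r(x_0)$ and the same case split at $\rho/r=(\lambda/\Lambda)^{1/2}$. Your treatment of the mean substitution in the Campanato case via $\int_E|f-\mean{f}_E|^2\le\int_E|f-c|^2$ makes explicit what the paper absorbs into ``using the ellipticity of $A(x_0)$.''
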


\begin{proof}
Since $u_{x_0}$ satisfies the almost harmonic property at $0$, if $h$ is the harmonic replacement of $u_{x_0}$ in $B_r$ (i.e.,\ $h$ is harmonic in $B_r$ with $h=u_{x_0}$ on $\pa B_r$), then $$
\int_{B_r}|\D u_{x_0}|^2\le (1+Mr^\al)\int_{B_r}|\D h|^2.
$$
This is enough to repeat the arguments in
\cite{JeoPet19a}*{Proposition~2.3}, to obtain
\begin{align*}
  \int_{B_\rho}|\D u_{x_0}|^2
  &\le 2\left[\left(\frac{\rho}r\right)^n+Mr^\al\right]\int_{B_r}|\D u_{x_0}|^2,\\
    \int_{B_\rho}|\D u_{x_0}-\langle \D u_{x_0}\rangle_{B_\rho}|^2
  &
\begin{multlined}[t]
    \le 9\left(\frac{\rho}r\right)^{n+2}\int_{B_r}|\D u_{x_0}-\mean{\D u_{x_0}}_{B_r}|^2\\+ 24Mr^\al\int_{B_r}|\D u_{x_0}|^2.
  \end{multlined}
\end{align*}
Then, by the change of variables formulas \eqref{eq:u-u_x-1} and
\eqref{eq:u-u_x-2}, we have
\begin{align}\label{eq:var-Anz-Mor-Camp-est3}
  &\int_{E_\rho(x_0)}\langle A(x_0)\D U, \D U\rangle
  \le 2\left[\left(\frac{\rho}r\right)^n+Mr^\al\right]\int_{E_r(x_0)}\langle A(x_0)\D U, \D U\rangle,\\
\label{eq:var-Anz-Mor-Camp-est4}
&\begin{multlined}[t]
  \int_{E_\rho(x_0)}\left|\mfa_{x_0}\D U-\mean{\mfa_{x_0}\D U}_{E_\rho(x_0)}\right|^2\\
    \qquad\qquad\le 9\left(\frac\rho r\right)^{n+2}\int_{E_r(x_0)}\left|\mfa_{x_0}\D U-\mean{\mfa_{x_0}\D U}_{E_r(x_0)}\right|^2\\+24Mr^\al\int_{E_r(x_0)}\langle A(x_0)\D U,\D U\rangle.
  \end{multlined}
\end{align}
To show now that
\eqref{eq:var-Anz-Mor-Camp-est3}--\eqref{eq:var-Anz-Mor-Camp-est4}
imply
\eqref{eq:var-Anz-Mor-Camp-est1}--\eqref{eq:var-Anz-Mor-Camp-est2}, we
first consider the case
$$
0<\rho<(\lambda/\Lambda)^{1/2}r.
$$
Then, using the inclusions
$$
B_\rho(x_0)\subset E_{\lambda^{-
    1/2}\rho}(x_0) \subset E_{\Lambda^{-
    1/2}r}(x_0)\subset B_r(x_0),
$$
applying
\eqref{eq:var-Anz-Mor-Camp-est3}--\eqref{eq:var-Anz-Mor-Camp-est4}
with $\lambda^{-1/2}\rho$ and $\Lambda^{-1/2}r$ in  place of $\rho$
and $r$, and using the ellipticity of $A(x_0)$, we obtain
\eqref{eq:var-Anz-Mor-Camp-est1}--\eqref{eq:var-Anz-Mor-Camp-est2} in
this case.

In the remaining case
$$
(\lambda/\Lambda)^{1/2}r\leq \rho\leq r,
$$
the inequalities
\eqref{eq:var-Anz-Mor-Camp-est1}--\eqref{eq:var-Anz-Mor-Camp-est2}
hold readily, as
\begin{align*}
\int_{B_\rho(x_0)}|\D U|^2&\le\left(\frac\Ld\la\right)^{n/2}\left(\frac\rho r\right)^n\int_{B_r(x_0)}|\D U|^2,\\
    \int_{B_\rho(x_0)}|\D U-\mean{\D U}_{B_\rho(x_0)}|^2 & \le\int_{B_\rho(x_0)}|\D U-\mean{\D U}_{B_r(x_0)}|^2\\
    &\le\left(\frac\Ld\la\right)^{\frac{n+2}2}\left(\frac\rho r\right)^{n+2}\int_{B_r(x_0)}|\D U-\mean{\D U}_{B_r(x_0)}|^2.\qedhere
\end{align*}
\end{proof}

We now recall a useful lemma, the proof of which can be found e.g.\ in
\cite{HanLin97}.
\begin{lemma}\label{lem:var-HL}
  Let $r_0>0$ be a positive number and
  $\vp:(0,r_0)\to (0, \infty)$ a nondecreasing function. Let $a$,
  $\beta$, and $\gamma$ be such that $a>0$, $\gamma >\beta >0$. There
  exist two positive numbers $\e=\e(a,\gamma,\beta)$,
  $c=c(a,\gamma,\beta)$ such that, if
$$
\vp(\rho)\le
a\Bigl[\Bigl(\frac{\rho}{r}\Bigr)^{\gamma}+\e\Bigr]\vp(r)+b\, r^{\be},
$$
for all $\rho$, $r$ with $0<\rho\leq r<r_0$, where $b\ge 0$,
then one also has, still for $0<\rho<r<r_0$,
$$
\vp(\rho)\le
c\Bigl[\Bigl(\frac{\rho}{r}\Bigr)^{\be}\vp(r)+b\rho^{\be}\Bigr].
$$
\end{lemma}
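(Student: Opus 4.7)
The plan is to run the standard Campanato-type dyadic iteration. First I would pick an auxiliary exponent $\beta' \in (\beta, \gamma)$, say $\beta' = (\beta+\gamma)/2$, and choose $\tau \in (0,1)$ small enough that $2a\tau^{\gamma-\beta'} \le 1$, followed by $\varepsilon > 0$ satisfying $2a\varepsilon \le \tau^{\beta'}$. Applying the hypothesis with $\rho = \tau r$ and invoking these choices collapses it to the clean one-step recurrence
\[
\varphi(\tau r) \le \tau^{\beta'}\,\varphi(r) + b r^\beta, \qquad 0 < r < r_0.
\]

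Iterating this $k$ times and reorganizing the resulting geometric sum yields
\[
\varphi(\tau^k r) \le \tau^{k\beta'}\varphi(r) + b r^\beta\, \tau^{(k-1)\beta}\sum_{i=0}^{k-1}\tau^{i(\beta'-\beta)}.
\]
Because $\beta' > \beta$ and $\tau < 1$, the series is bounded by $(1-\tau^{\beta'-\beta})^{-1}$ uniformly in $k$. Using also that $\tau^{k\beta'} \le \tau^{k\beta}$ (same reason), this reduces to $\varphi(\tau^k r) \le C\bigl[\tau^{k\beta}\varphi(r) + b(\tau^k r)^\beta\bigr]$ with $C = C(a,\gamma,\beta)$. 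For an arbitrary $\rho \in (0, r)$, I would then pick the integer $k \ge 0$ with $\tau^{k+1} r < \rho \le \tau^k r$ and combine the monotonicity of $\varphi$ with the estimate $\tau^k r < \tau^{-1}\rho$ to convert the bound at $\tau^k r$ into the asserted inequality, with $c = C\,\tau^{-\beta}$.

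The only delicate point is the introduction of the intermediate exponent $\beta'$. A naive iteration using the contraction factor $\tau^\beta$ directly produces a geometric sum whose terms do not decay, contributing an extra factor $k \sim \log(r/\rho)$ after the optimal choice of $k$, and thereby failing the desired bound. Interposing a $\beta'$ \emph{strictly} between $\beta$ and $\gamma$ (possible precisely because $\gamma > \beta$) makes the series absolutely summable and eliminates the logarithmic loss; everything else in the argument is routine.
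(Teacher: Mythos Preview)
Your proof is correct and is precisely the standard dyadic iteration argument from \cite{HanLin97} that the paper cites in lieu of a proof; the introduction of the intermediate exponent $\beta'\in(\beta,\gamma)$ to make the geometric series summable is exactly the key step there. There is nothing to add.
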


\begin{theorem}\label{thm:var-alm-har-reg}
  Let $U$ be an almost $A$-harmonic function  in $B_1$. Then $U\in
  C^{1,\al/2}(B_1)$ with
  $$
\|U\|_{C^{1,\alpha/2}(K)}\leq C\|U\|_{W^{1,2}(B_1)},
$$
for any $K\Subset B_1$, with $C=C(n,\alpha,M, K)$.
\end{theorem}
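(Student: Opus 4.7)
The plan is to run the classical Morrey--Campanato iteration of Anzellotti in three stages. Both estimates in Proposition~\ref{prop:var-Anz-Mor-Camp-est} are precisely of the form to which the iteration Lemma~\ref{lem:var-HL} applies, once the factor $r^\al$ is treated as the small parameter $\e$ in that lemma by restricting to sufficiently small radii.

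\emph{Stage 1 (Morrey estimate for $\D U$).} Fix $K\Subset B_1$ and $x_0\in K$, and set $\vp(r):=\int_{B_r(x_0)}|\D U|^2$. Then \eqref{eq:var-Anz-Mor-Camp-est1} reads $\vp(\rho)\le C[(\rho/r)^n+r^\al]\vp(r)$. For any $\be\in(n-\al,n)$, Lemma~\ref{lem:var-HL} applied with $\ga=n$ and $b=0$ (after choosing $r_1$ so that $Cr_1^\al$ is below the threshold $\e(C,n,\be)$) yields
$$
\int_{B_\rho(x_0)}|\D U|^2\le C\rho^\be\,\|U\|_{W^{1,2}(B_1)}^2,\qquad 0<\rho<r_1,
$$
with $r_1$ and $C$ depending only on $n,\al,M$ and $\dist(K,\pa B_1)$.

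\emph{Stage 2 (first Campanato estimate).} Setting $\psi(r):=\int_{B_r(x_0)}|\D U-\mean{\D U}_{B_r(x_0)}|^2$ and inserting the Morrey bound from Stage~1 into \eqref{eq:var-Anz-Mor-Camp-est2} gives
$$
\psi(\rho)\le C(\rho/r)^{n+2}\psi(r)+Cr^{\al+\be}\|U\|_{W^{1,2}(B_1)}^2.
$$
Since $n<\al+\be<n+2$, a second application of Lemma~\ref{lem:var-HL} (with $\ga=n+2$ and exponent $\al+\be$) produces $\psi(\rho)\le C\rho^{\al+\be}\|U\|_{W^{1,2}(B_1)}^2$. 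By Campanato's characterization of H\"older classes, $\D U\in C^{0,(\al+\be-n)/2}_\loc(B_1)$, and in particular $\D U\in L^\iy_\loc(B_1)$ with $\|\D U\|_{L^\iy(K)}\le C\|U\|_{W^{1,2}(B_1)}$.

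\emph{Stage 3 (bootstrap to the sharp exponent).} With $\D U$ locally bounded, we now have $\int_{B_r(x_0)}|\D U|^2\le Cr^n\|\D U\|_{L^\iy(K)}^2$. Substituting this improved Morrey bound into \eqref{eq:var-Anz-Mor-Camp-est2} and applying Lemma~\ref{lem:var-HL} once more with $\ga=n+2$ and exponent $n+\al$ yields
$$
\int_{B_\rho(x_0)}\bigl|\D U-\mean{\D U}_{B_\rho(x_0)}\bigr|^2\le C\rho^{n+\al}\|U\|_{W^{1,2}(B_1)}^2.
$$
Campanato's theorem then gives $\D U\in C^{0,\al/2}_\loc(B_1)$ with the stated quantitative bound; the $L^\iy$-control on $U$ itself follows from the Lipschitz estimate combined with Poincar\'e's inequality on $B_1$.

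The only real difficulty is bookkeeping: one must verify that the thresholds in each application of Lemma~\ref{lem:var-HL} can be chosen uniformly in $x_0\in K$, depending only on $n,\al,M$ and $\dist(K,\pa B_1)$, and that the bootstrap in Stage~3 is genuinely needed to reach the sharp exponent $\al/2$ rather than the weaker $(\al+\be-n)/2$ coming out of Stage~2.
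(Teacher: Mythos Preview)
Your proposal is correct and follows essentially the same three-stage Morrey--Campanato iteration as the paper's proof: the paper parametrizes the intermediate exponent via $\si\in(1-\al/2,1)$ with $\be_{\rm paper}=n-2+2\si$, which is exactly your $\be\in(n-\al,n)$, and then performs the identical bootstrap to reach $C^{1,\al/2}$.
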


\begin{proof}
Let $K\Subset B_1$ and $x_0\in \widetilde{K}:=\{y\in B_1:\dist(y,\pa B_1)\ge r_0\}$, where $r_0=\frac12\dist(K,\pa B_1)$. For $\si\in (0,1)$, a direct application of Lemma~\ref{lem:var-HL} to \eqref{eq:var-Anz-Mor-Camp-est1} gives $$
\int_{B_r(x_0)}|\D U|^2\le C\|\D U\|_{L^2(B_1)}^2r^{n-2+2\si},
$$
for any $0<r<r_0$, with $C$ depending on $n$,
$\al$, $\si$, $M$, $K$. Combining this with
\eqref{eq:var-Anz-Mor-Camp-est2} also gives,
\begin{multline}\label{eq:var-alm-har-reg}
\int_{B_\rho(x_0)}\left|\D U-\mean{\D U}_{B_\rho(x_0)}\right|^2\le C\left(\frac\rho r\right)^{n+2}\int_{B_r(x_0)}\left|\D U-\mean{\D U}_{B_r(x_0)}\right|^2\\+C\|\D U\|_{L^2(B_1)}^2r^{n-2+2\si+\al},
\end{multline}
for any $0<\rho<r<r_0$.
If we take $\si\in (0,1)$ such that $\al':=\frac{-2+2\si+\al}2>0$, then Lemma~\ref{lem:var-HL} produces $$
\int_{B_\rho(x_0)}\left|\D U-\mean{\D U}_{B_\rho(x_0)}\right|^2 \le C\|\D U\|_{L^2(B_1)}^2\rho^{n+2\al'}
$$
and this readily implies $\D U\in C^{0, \al'}(\widetilde{K})$. Now we know that $\D U$ is bounded in $\widetilde{K}$, and thus $\int_{B_r(x_0)}|\D U|^2\le C\|\D U\|_{L^2(B_1)}^2r^n$. Plugging this in the last term of \eqref{eq:var-Anz-Mor-Camp-est2} and repeating the arguments above, we conclude that $U\in C^{1,\al/2}$.
\end{proof}


\section{Almost Lipschitz regularity of almost minimizers}
\label{sec:var-almost-lipsch-regul}

In this section, we make the first step towards the regularity of almost
minimizers for the $A$-Signorini problem and show that they are almost Lipschitz, i.e., $C^{0,\sigma}$ for every
$0<\sigma<1$ (Theorem~\ref{thm:var-holder}). The proof is based on the Morrey space embedding,
similar to the case of almost $A$-harmonic functions, as well as the case
of almost minimizers with $A=I$, treated in \cite{JeoPet19a}. We want to
emphasize, however, that the results on almost Lipschitz and
$C^{1,\beta}$ regularity of almost minimizers (in the next section) do
not require any symmetry condition that was imposed in \cite{JeoPet19a}. 

\medskip
We start with an auxiliary result on the solutions of the Signorini problem.
\begin{proposition}\label{prop:nonsym-sol-Sig-est}
Let $h$ be a solution of the Signorini problem in $B_1$. Then
\begin{equation}\label{eq:nonsym-sol-Sig-est}
\int_{B_\rho}|\D h|^2\le \left(\frac\rho R\right)^{n}\int_{B_R}|\D h|^2,\quad 0<\rho<R<1.
\end{equation}
\end{proposition}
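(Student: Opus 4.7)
The plan is to exploit the invariance of the Signorini problem under the reflection $P(x',x_n)=(x',-x_n)$ across $\Pi$ by splitting $h$ into its even and odd parts. Write $h=h^*+\widetilde h$ with
$$h^*(x)=\tfrac{1}{2}(h(x)+h(Px)),\qquad \widetilde h(x)=\tfrac{1}{2}(h(x)-h(Px)).$$
Since $P$ preserves $B_1$ and the Signorini complementarity conditions with zero obstacle, the first step is to verify that $h^*$ is again a solution of the Signorini problem in $B_1$ (with even boundary data), while $\widetilde h$ is odd in $x_n$, vanishes on $\Pi$, and—using that its normal derivative $\partial_{x_n}\widetilde h = \tfrac12(\partial_{x_n}h|_+ + \partial_{x_n}h|_-)$ is continuous across $\Pi$—is in fact harmonic throughout $B_1$.

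Next, I would exploit the parity. Because $B_\rho$ and $B_R$ are symmetric under $P$ and $\nabla h^*\cdot\nabla\widetilde h$ is odd in $x_n$, the cross term integrates to zero:
$$\int_{B_r}|\nabla h|^2 = \int_{B_r}|\nabla h^*|^2 + \int_{B_r}|\nabla \widetilde h|^2,\qquad 0<r<1.$$
It therefore suffices to prove the estimate separately for $h^*$ and for $\widetilde h$, and both statements follow from the same mechanism: subharmonicity of $|\nabla u|^2$. For the harmonic part, $\Delta|\nabla\widetilde h|^2=2|D^2\widetilde h|^2\ge 0$ classically. For the Signorini part, $\Delta|\nabla h^*|^2=2|D^2h^*|^2\ge 0$ in $B_1\setminus\Pi$, and the evenness of $h^*$ makes $|\nabla h^*|^2$ continuous across $\Pi$ (since $\partial_{x_n}h^*|_+=-\partial_{x_n}h^*|_-$ and the tangential gradient agrees from the two sides), killing any $\partial_{x_n}\delta_\Pi$ contribution to the distributional Laplacian.

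Once subharmonicity is in hand, the standard fact that $\rho\mapsto \rho^{-n}\int_{B_\rho}f$ is nondecreasing for any nonnegative subharmonic $f$ gives
$$\int_{B_\rho}|\nabla h^*|^2 \le \Bigl(\tfrac{\rho}{R}\Bigr)^n\int_{B_R}|\nabla h^*|^2,\qquad \int_{B_\rho}|\nabla \widetilde h|^2 \le \Bigl(\tfrac{\rho}{R}\Bigr)^n\int_{B_R}|\nabla \widetilde h|^2,$$
and summing these and using the orthogonality identity yields \eqref{eq:nonsym-sol-Sig-est}.

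The main obstacle is step three: verifying subharmonicity of $|\nabla h^*|^2$ across $\Pi$ in the distributional sense. Continuity handles the jump part, but one must also show that no nonnegative $\delta_\Pi$ measure is missed, i.e.\ that the jump $\partial_{x_n}|\nabla h^*|^2|_+-\partial_{x_n}|\nabla h^*|^2|_-$ gives no negative singular part. Here the Signorini conditions are used pointwise: on the noncoincidence set $h^*$ extends smoothly across $\Pi$; on (the interior of) the coincidence set $\nabla'h^*=0$ and, since $h^*\equiv 0$ there, $\Delta'h^*=0$, forcing $\partial_{x_nx_n}h^*=-\Delta'h^*=0$ from above, so $\partial_{x_n}|\nabla h^*|^2|_+=0$ by direct computation, and by evenness the jump vanishes. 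Only the free boundary $\Gamma(h^*)$, of codimension at least two in $\R^n$, is unaccounted for, and it contributes no $(n-1)$-dimensional singular mass on $\Pi$. This relies on the $C^{1,1/2}$ regularity of Signorini solutions.
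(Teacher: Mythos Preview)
Your approach is essentially identical to the paper's: decompose $h$ into its even and odd parts in $x_n$, observe that the odd part is harmonic while the even part is an even Signorini solution, invoke subharmonicity of $|\nabla h^*|^2$ and $|\nabla \widetilde h|^2$, and combine via the orthogonality $\int_{B_t}|\nabla h|^2=\int_{B_t}|\nabla h^*|^2+\int_{B_t}|\nabla \widetilde h|^2$. The only difference is that the paper simply cites \cite{JeoPet19a}*{Proposition~3.2} for the subharmonicity of $|\nabla h^*|^2$ rather than arguing it directly; your sketch of this point is on the right track, but the treatment near the free boundary---where second derivatives of $h^*$ need not exist pointwise and the integration-by-parts identity you implicitly use is not immediately justified---would require an approximation or capacity argument to be made fully rigorous.
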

\begin{proof} The difference of this proposition from
  \cite{JeoPet19a}*{Proposition~3.2} is that $h(y)$ is not assumed to be even
  symmetric in $y_n$-variable. To circumvent that, we decompose $h$
  into the sum of even
  and odd functions in $y_n$, i.e.,
  \begin{align}\label{eq:h-split-he-ho}
    h(y',y_n)&=\frac{h(y',y_n)+h(y',-y_n)}2+\frac{h(y',y_n)-h(y',-y_n)}2\\
    &=:h^*(y',y_n)+h^\sharp(y',y_n).\nonumber
\end{align}
It is easy to see that $h^*$ is a solution of the Signorini problem, even in $y_n$-variable, and $h^\sharp$ is a harmonic function, odd in $y_n$-variable.

Then both $|\nabla h^*|^2$ and $|\nabla h^\sharp|^2$ are subharmonic
functions in $B_1$ (see \cite{JeoPet19a}*{Proposition~3.2} for $h^*$), which implies
that for $0<\rho<R<1$
\begin{align*}
  \int_{B_\rho}|\D h^*|^2
  &\le \left(\frac\rho R\right)^{n}\int_{B_R}|\D h^*|^2,\\
   \int_{B_\rho}|\D h^\sharp|^2
  &\le \left(\frac\rho R\right)^{n}\int_{B_R}|\D h^\sharp|^2.
\end{align*}
Now observing that
$\int_{B_t}|\D h|^2=\int_{B_t}\left(|\D h^*|^2+|\D h^\sharp|^2\right)$, for $0<t\leq R$,
we obtain \eqref{eq:nonsym-sol-Sig-est}.
\end{proof}

\begin{proposition}[cf.~\cite{JeoPet19a}*{Proposition~3.3}]\label{prop:var-alm-min-Sig-Mor-est}
Let $U$ be an almost minimizer for the $A$-Signorini problem in $B_1$, and $B_R(x_0)\Subset B_1$. Then, there is
$C_1=C_1(n,M)>1$ such that
\begin{equation}\label{eq:var-alm-min-Sig-Mor-est}
\int_{B_\rho(x_0)}|\D U|^2\le C_1\left[\left(\frac\rho R\right)^{n}+R^\al\right]\int_{B_R(x_0)}|\D U|^2,\quad 0<\rho<R.
\end{equation}
\end{proposition}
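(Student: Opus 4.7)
The plan is to reduce to the constant-coefficient setting via the deskewing of Section~2 and adapt the Signorini comparison argument of \cite{JeoPet19a}*{Proposition~3.3}, then transfer the estimate back to the original variables. The argument naturally splits according to whether $B_R(x_0)\cap \Pi$ is empty: if it is, $U$ is almost $A$-harmonic on $B_R(x_0)$ and \eqref{eq:var-alm-min-Sig-Mor-est} follows immediately from Proposition~\ref{prop:var-Anz-Mor-Camp-est}. Otherwise, a routine comparison between $B_\rho(x_0)$, $B_R(x_0)$ and concentric balls around the orthogonal projection $y_0\in B_1'$ of $x_0$ onto $\Pi$ (using $|x_0-y_0|<R$ and, when $\rho<\dist(x_0,\Pi)$, the almost $A$-harmonicity of $U$ in $B_{\dist(x_0,\Pi)}(x_0)$) reduces the estimate at $x_0$ to the estimate at $y_0\in B_1'$, so I focus on that case.

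For $x_0\in B_1'$, Lemma~\ref{lem:u_x-alm-min} supplies the deskewed function $u_{x_0}=U\circ\bar T_{x_0}^{-1}$ satisfying the almost Signorini property at $0$ in $B_R$. Let $h$ denote its Signorini replacement in $B_R$, i.e., the minimizer of the Dirichlet energy over $\mfK_{0,u_{x_0}}(B_R,\Pi)$. Testing the variational inequality for $h$ against the admissible competitor $u_{x_0}$ and invoking the almost Signorini property gives the comparison bound
\begin{equation*}
\int_{B_R}|\nabla(u_{x_0}-h)|^2\le \omega(R)\int_{B_R}|\nabla u_{x_0}|^2.
\end{equation*}
Proposition~\ref{prop:nonsym-sol-Sig-est} applied to $h$ furnishes the decay $\int_{B_\rho}|\nabla h|^2\le(\rho/R)^n\int_{B_R}|\nabla h|^2$ without any symmetry requirement in $y_n$. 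Plugging both bounds into $|\nabla u_{x_0}|^2\le 2|\nabla h|^2+2|\nabla(u_{x_0}-h)|^2$ and using $\int_{B_R}|\nabla h|^2\le\int_{B_R}|\nabla u_{x_0}|^2$ yields the constant-coefficient Morrey estimate
\begin{equation*}
\int_{B_\rho}|\nabla u_{x_0}|^2\le C\left[\left(\frac{\rho}{R}\right)^n+R^\alpha\right]\int_{B_R}|\nabla u_{x_0}|^2,\quad 0<\rho<R,
\end{equation*}
with $C=C(n,M)$.

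To conclude, I pass back to $U$ through the change-of-variables formula \eqref{eq:u-u_x-1}, which rewrites the above display as the same Morrey inequality with ellipsoids $E_\rho(x_0)$, $E_R(x_0)$ and the weighted integrand $\langle A(x_0)\nabla U,\nabla U\rangle$ in place of balls and $|\nabla u_{x_0}|^2$. The conversion from ellipsoids and the $A(x_0)$-weighted integrand to Euclidean balls and $|\nabla U|^2$ then proceeds by the two-regime device used at the end of the proof of Proposition~\ref{prop:var-Anz-Mor-Camp-est}: for $\rho<(\lambda/\Lambda)^{1/2}R$, apply the ellipsoid estimate with $\lambda^{-1/2}\rho$ and $\Lambda^{-1/2}R$ in place of $\rho$ and $R$ and combine with the ellipticity of $A(x_0)$ via the inclusions $B_{\lambda^{1/2}r}(x_0)\subset E_r(x_0)\subset B_{\Lambda^{1/2}r}(x_0)$; for $(\lambda/\Lambda)^{1/2}R\le \rho<R$, use the trivial containment $B_\rho(x_0)\subset B_R(x_0)$ together with the lower bound $(\rho/R)^n\ge(\lambda/\Lambda)^{n/2}$, which may be absorbed into $C_1$.

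The main technical obstacle is the lack of symmetry of $u_{x_0}$ in $y_n$: since $U$ is not assumed even about $\Pi$, the almost minimizer $u_{x_0}$ is not symmetric either, and the Signorini Morrey decay of \cite{JeoPet19a}*{Proposition~3.3} cannot be invoked as stated. This is precisely what Proposition~\ref{prop:nonsym-sol-Sig-est} circumvents, by decomposing a general Signorini solution into its even Signorini part $h^*$ and its odd harmonic part $h^\sharp$ and obtaining the $(\rho/R)^n$ decay for each separately from the subharmonicity of $|\nabla h^*|^2$ and $|\nabla h^\sharp|^2$. Once this input is in hand, the comparison argument above delivers \eqref{eq:var-alm-min-Sig-Mor-est} with constants depending only on $n$ and $M$.
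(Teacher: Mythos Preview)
Your proposal is correct and follows essentially the same approach as the paper: deskew via $\bar T_{x_0}$, compare $u_{x_0}$ with its Signorini replacement using the variational inequality and the almost Signorini property, invoke Proposition~\ref{prop:nonsym-sol-Sig-est} for the (nonsymmetric) Morrey decay of $h$, then transfer back to $U$ via the two-regime device of Proposition~\ref{prop:var-Anz-Mor-Camp-est}; the off-thin-space reduction via the projection $y_0\in B_1'$ and almost $A$-harmonicity in $B_1^\pm$ is exactly the paper's Case~2. One small slip: from $B_R(x_0)\Subset B_1$ you only get $E_{\Lambda^{-1/2}R}(x_0)\subset B_R(x_0)\subset B_1$, so Lemma~\ref{lem:u_x-alm-min} gives the almost Signorini property for $u_{x_0}$ in $B_{\Lambda^{-1/2}R}$ rather than $B_R$; this is precisely why the paper sets $r=\Lambda^{-1/2}R$, and the discrepancy is absorbed into $C_1$ by your two-regime step anyway.
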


\begin{proof}
 \emph{Case 1.} Suppose $x_0\in B'_1$. Note that $u_{x_0}$ satisfies
 the Signorini property at $0$ in $B_r$ with $r=\Lambda^{-1/2}R$. 
 If $h$ is the Signorini replacement of $u_{x_0}$ in $B_r$ (that is, $h$ solves the Signorini problem in $B_r$ with thin obstacle 0 on $\Pi$ and boundary values $h = u_{x_0}$ on $\partial B_r$), then $h$ satisfies 
 $$
 \int_{B_r}\langle{\D h,\D(v-h)}\rangle\ge 0,
 $$
 for any $v\in \mathfrak{K}_{0,u_{x_0}}(B_r,\Pi)$, which easily
 follows from the standard first variation argument. Plugging in
 $v=u_{x_0}$, we obtain
 $$
 \int_{B_r}\langle{\D h,\D u_{x_0}}\rangle\ge\int_{B_r}|\D h|^2.
$$
Then it follows that \begin{align*}
\int_{B_r}|\D(u_{x_0}-h)|^2&=\int_{B_r}\left(|\D u_{x_0}|^2+|\D h|^2-2\langle{\D u_{x_0},\D h}\rangle\right)\\
&\le \int_{B_r}|\D u_{x_0}|^2-\int_{B_r}|\D h|^2\\
&\le (1+Mr^\al)\int_{B_r}|\D h|^2-\int_{B_r}|\D h|^2\\
&\le Mr^\al\int_{B_r}|\D u_{x_0}|^2,
\end{align*}
where in the last inequality we have used that $h$ is the energy
minimizer of the Dirichlet integral in
$\mathfrak{K}_{0,u_{x_0}}(B_r,\Pi)$.
Then, for $\rho\leq r$, we have
\begin{align*}
  \int_{B_\rho}|\D u_{x_0}|^2
  &\le 2\int_{B_\rho}|\D h|^2+2\int_{B_\rho}|\D(u_{x_0}-h)|^2\\
  &\le 2\left(\frac\rho r\right)^n
    \int_{B_r}|\D h|^2+2Mr^\al\int_{B_r}|\D u_{x_0}|^2\\
&\le C\left[\left(\frac\rho r\right)^{n}+r^\al\right]\int_{B_r}|\D u_{x_0}|^2.
\end{align*}
Now, we transform back from $u_{x_0}$ to $U$ as we did in Proposition~\ref{prop:var-Anz-Mor-Camp-est} to obtain
\eqref{eq:var-alm-min-Sig-Mor-est} in this case.

\medskip\noindent \emph{Case 2.} Now consider the case
  $x_0\in B_1^+$. If $\rho\ge r/4$, then we simply have
$$
\int_{B_{\rho}(x_0)}|\D U|^2 \le
4^{n}\left(\frac{\rho}{r}\right)^{n}\int_{B_r(x_0)}|\D U|^2.$$ Thus, we
may assume $\rho<r/4$. Then, let $d:=\dist(x_0, B'_1)>0$ and choose
$x_1\in \pa B_d(x_0)\cap B'_1$.

\medskip\noindent \emph{Case 2.1.} If $\rho\ge d$, then we use
$B_{\rho}(x_0)\subset B_{2\rho}(x_1)\subset B_{r/2}(x_1)\subset
B_r(x_0)$ and the result of Case 1 to write
\begin{align*}
  \int_{B_{\rho}(x_0)}|\D U|^2
  &\le \int_{B_{2\rho}(x_1)}|\D U|^2
    \le C\left[\left(\frac{2\rho}{r/2}\right)^{n} +
    (r/2)^{\al}\right]\int_{B_{r/2}(x_1)}|\D U|^2\\
  &\le
    C\left[\left(\frac{\rho}{r}\right)^{n}+r^{\al}\right]\int_{B_r(x_0)}|\D U|^2.
\end{align*}

\medskip\noindent \emph{Case 2.2.}  Suppose now $d>\rho$. If $d>r$,
then $B_r(x_0)\Subset B_1^+$. Since $U$ is almost harmonic  in $B_1^+$, 
we can apply Proposition~\ref{prop:var-Anz-Mor-Camp-est} to obtain
$$
\int_{B_{\rho}(x_0)}|\D U|^2\le C\left[\left(\frac \rho
    r\right)^n+r^{\al}\right]\int_{B_r(x_0)}|\D U|^2.
$$
Thus, we may assume $d\le r$. Then we note that
$B_d(x_0)\subset B_1^+$ and by a limiting argument from the previous
estimate, we obtain
$$
\int_{B_{\rho}(x_0)}|\D U|^2 \le C\left[\left(\frac \rho
    d\right)^n+r^{\al}\right]\int_{B_d(x_0)}|\D U|^2.
$$
To estimate $\int_{B_d(x_0)}|\D U|^2$ in the right-hand side of the above inequality, we further consider the two subcases.

\medskip\noindent \emph{Case 2.2.1.} If $r/4\le d$, then
$$
\int_{B_d(x_0)}|\D U|^2 \le
4^{n}\left(\frac{d}{r}\right)^{n}\int_{B_r(x_0)}|\D U|^2,
$$ 
which immediately implies \eqref{eq:var-alm-min-Sig-Mor-est}.

\medskip\noindent \emph{Case 2.2.2.}  It remains to consider the case
$\rho<d<r/4$. Using Case 1 again, we have
\begin{align*}
  \int_{B_d(x_0)}|\D U|^2
  &\le \int_{B_{2d}(x_1)}|\D U|^2 \le C\left[\left(\frac{2d}{r/2}\right)^{n}+(r/2)^{\al}\right]\int_{B_{r/2}(x_1)}|\D U|^2 \\
  &\le C\left[\left(\frac{d}{r}\right)^{n}+r^{\al}\right]\int_{B_r(x_0)}|\D U|^2,
\end{align*}
which also implies \eqref{eq:var-alm-min-Sig-Mor-est}.  This concludes the proof of
the proposition.
\end{proof}

As we have seen in \cite{JeoPet19a},
Proposition~\ref{prop:var-alm-min-Sig-Mor-est} implies the almost Lipschitz regularity of almost minimizers. 

\begin{theorem}\label{thm:var-holder}
Let $U$ be an almost minimizer for the $A$-Signorini problem  in $B_1$. Then $U\in C^{0, \si}(B_1)$ for all $0<\si<1$. Moreover, for any $K\Subset B_1$, $$
\|U\|_{C^{0, \si}(K)}\le C\|U\|_{W^{1, 2}(B_1)},
$$
with $C=C(n, \al, M,\si, K)$.
\end{theorem}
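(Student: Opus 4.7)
The proof should follow the standard Morrey--Campanato scheme: combine the decay estimate of Proposition~\ref{prop:var-alm-min-Sig-Mor-est} with the Han--Lin iteration (Lemma~\ref{lem:var-HL}) to get a Morrey-type bound on $|\nabla U|$, and then invoke the classical Morrey embedding. This is exactly the recipe used in \cite{JeoPet19a} in the constant-coefficient case; the only new ingredient here is Proposition~\ref{prop:var-alm-min-Sig-Mor-est}, which has already been established, so there is no real obstacle.

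In more detail, fix $K \Subset B_1$ and $\sigma \in (0,1)$. Set $\varphi(\rho) := \int_{B_\rho(x_0)}|\nabla U|^2$, $\gamma := n$, and $\beta := n - 2 + 2\sigma$, so $0 < \beta < \gamma$. Proposition~\ref{prop:var-alm-min-Sig-Mor-est} gives
$$
\varphi(\rho) \le C_1\bigl[(\rho/R)^{\gamma} + R^{\alpha}\bigr]\varphi(R), \quad 0 < \rho < R,
$$
whenever $B_R(x_0) \Subset B_1$. Take the parameters $\varepsilon_0 = \varepsilon_0(C_1,\gamma,\beta)$ and $c = c(C_1,\gamma,\beta)$ from Lemma~\ref{lem:var-HL}, and choose $R_0 = R_0(n,\alpha,M,\sigma,K) \in (0,\dist(K,\partial B_1))$ small enough that $R_0^\alpha \le \varepsilon_0$. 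For each $x_0 \in K$, the Han--Lin lemma applied with $b = 0$ then yields
$$
\int_{B_\rho(x_0)}|\nabla U|^2 \le c\bigl(\rho/R_0\bigr)^{\beta}\varphi(R_0) \le C \|U\|_{W^{1,2}(B_1)}^{2}\, \rho^{n-2+2\sigma},\quad 0 < \rho < R_0,
$$
with $C$ depending on $n,\alpha,M,\sigma,K$.

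Finally, this uniform Morrey-type decay of $|\nabla U|$ on $K$ translates, via the classical Morrey embedding, into $U \in C^{0,\sigma}(K)$ together with the quantitative bound $\|U\|_{C^{0,\sigma}(K)} \le C\|U\|_{W^{1,2}(B_1)}$. The only bookkeeping points are the choice of $R_0$ to absorb the variable ``$\varepsilon$'' $= R^\alpha$ from Proposition~\ref{prop:var-alm-min-Sig-Mor-est} into the fixed-$\varepsilon_0$ hypothesis of Lemma~\ref{lem:var-HL}, and tracking the dependence of the constants on $\dist(K,\partial B_1)$; there is no genuine analytic difficulty beyond what has already been handled upstream.
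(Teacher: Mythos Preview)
Your proposal is correct and follows essentially the same approach as the paper: apply Proposition~\ref{prop:var-alm-min-Sig-Mor-est}, choose $R_0$ small so that $R_0^\alpha$ is below the threshold $\varepsilon$ of Lemma~\ref{lem:var-HL}, iterate to obtain the Morrey decay $\int_{B_\rho(x_0)}|\nabla U|^2\le C\rho^{n-2+2\sigma}$, and conclude via the Morrey embedding. The paper's proof is virtually identical in structure and content.
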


\begin{proof}
The proof is essentially identical to that of \cite{JeoPet19a}*{Theorem~3.1}. Let $K\Subset B_1$ and $x_0\in K$. Take $r_0=r_0(n,\al,M,\si,K)>0$ such that $r_0<\dist(K, \pa B_1)$ and $r_0^\al\le\e(C_1, n, n+2\si-2)$, where $\e=\e(C_1, n, n+2\si-2)$ is as in Lemma~\ref{lem:var-HL} and $C_1=C_1(n,M)$ is as in Proposition~\ref{prop:var-alm-min-Sig-Mor-est}. Then for all $0<\rho<r<r_0$, by Proposition~\ref{prop:var-alm-min-Sig-Mor-est}, $$
\int_{B_\rho(x_0)}|\D U|^2\le C_1\left[\left(\frac{\rho}r\right)^{n}+r^\al\right]\int_{B_r(x_0)}|\D U|^2.
$$
By Lemma~\ref{lem:var-HL}, we get $$
\int_{B_\rho(x_0)}|\D U|^2\le C(n,M,\si)\left(\frac\rho r\right)^{n+2\si-2}\int_{B_r(x_0)}|\D U|^2.
$$
Taking $r\nearrow r_0$, we conclude that \begin{equation}\label{eq:var-holder-est}
\int_{B_\rho(x_0)}|\D U|^2 \le C(n,\al,M,\si,K)\|\D U\|^2_{L^2(B_1)}\rho^{n+2\si-2}.
\end{equation}
By the Morrey space embedding
\cite{HanLin97}*{Corollary~3.2}, we obtain $U\in C^{0,\si}(K)$ with \begin{equation}\label{eq:var-holder} \|U\|_{C^{0,\si}(K)}\le C(n,\al,M,\si,K)\|U\|_{W^{1,2}(B_1)}.\qedhere
\end{equation}
\end{proof}


\section{$C^{1,\be}$ regularity of almost minimizers}
\label{sec:var-c1-beta-regularity}

In this section we prove $C^{1,\beta}$ regularity of the almost
minimizers for the $A$-Signorini problem (Theorem~\ref{thm:var-grad-holder}). While we take advantage of
the results available for the even symmetric almost minimizers with
$A=I$ in \cite{JeoPet19a}, removing the symmetry condition requires
new additional steps, combined with ``deskewing'' arguments to
generalize to the variable coefficient case.

\medskip
We start again with an auxiliary result for the solutions of the
Signorini problem.

\begin{proposition}\label{prop:nonsym-sig-est} Let $h$ be a solution of the
  Signorini problem in $B_r$, $0<r<1$.  Define
$$
\wDh:=
\begin{cases}\D h(y', y_n),& y_n\ge 0 \\
  \D h(y', -y_n), &y_n<0,
\end{cases}
$$
the even extension of $\D h$ from $B_r^+$ to $B_r$.  Then
for $0<\al <1$, there are $C_1=C_1(n, \al)$, $C_2=C_2(n, \al)$ such
that for all $0<\rho\leq s\le (3/4)r$,
\begin{multline}\label{eq:nonsym-sig-est}\int_{B_{\rho}}|\wDh-\mean{\wDh}_{B_\rho}|^2
  \le C_1
  \Bigl(\frac{\rho}{s}\Bigr)^{n+\al}\int_{B_s}|\wDh-\mean{\wDh}_{B_
    s}|^2\\+C_2\left(\int_{B_r}h^2\right)\frac{s^{n+1}}{r^{n+3}}.
\end{multline}
\end{proposition}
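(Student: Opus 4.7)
The plan is to reduce to the even-symmetric setting treated in \cite{JeoPet19a} via the odd/even splitting already used in the proof of Proposition~\ref{prop:nonsym-sol-Sig-est}. Decompose $h=h^*+h^\sharp$ as in \eqref{eq:h-split-he-ho}. Then $h^*$ is an even-in-$y_n$ solution of the Signorini problem in $B_r$, while $h^\sharp$ is a harmonic function in $B_r$ that is odd in $y_n$ and hence vanishes on $\Pi$. Because the even extension is linear, $\wDh=\wDh[^*]+\wDh[^\sharp]$ on $B_r$, where I use the shorthand notation in the obvious way. Applying $|a+b|^2\le 2|a|^2+2|b|^2$ to the integrand twice, I can estimate
\[
\int_{B_\rho}\bigl|\wDh-\mean{\wDh}_{B_\rho}\bigr|^2\le 2\int_{B_\rho}\bigl|\wDh[^*]-\mean{\wDh[^*]}_{B_\rho}\bigr|^2+2\int_{B_\rho}\bigl|\wDh[^\sharp]-\mean{\wDh[^\sharp]}_{B_\rho}\bigr|^2,
\]
and, symmetrically, bound $\wDh[^*]$ on $B_s$ by $\wDh$ and $\wDh[^\sharp]$. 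This reduces the problem to (a) a Campanato-type estimate for the symmetric Signorini solution $h^*$ and (b) a smoothness estimate for the odd harmonic part $h^\sharp$.

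For (a), since $h^*$ is a \emph{symmetric} Signorini solution, the analogous estimate from \cite{JeoPet19a} applies directly and yields
\[
\int_{B_\rho}\bigl|\wDh[^*]-\mean{\wDh[^*]}_{B_\rho}\bigr|^2\le C_1\Bigl(\frac{\rho}{s}\Bigr)^{n+\al}\int_{B_s}\bigl|\wDh[^*]-\mean{\wDh[^*]}_{B_s}\bigr|^2.
\]
For (b), I use standard interior estimates for harmonic functions: for $s\le (3/4)r$,
\[
\|D^2 h^\sharp\|_{L^\iy(B_s)}^2\le\frac{C}{r^{n+4}}\int_{B_r}(h^\sharp)^2\le\frac{C}{r^{n+4}}\int_{B_r}h^2,
\]
where the second inequality uses the $L^2$-orthogonality of the even and odd parts on the symmetric domain $B_r$. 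Since $h^\sharp\equiv 0$ on $\Pi$, the tangential derivatives $\pa_{y_i}h^\sharp$ ($i<n$) also vanish on $\Pi$, so $|\pa_{y_i}h^\sharp(y)|\le C|y_n|\,\|D^2h^\sharp\|_{L^\iy(B_s)}$ on $B_s$. The normal component $\pa_{y_n}h^\sharp$ is already even in $y_n$, so its even extension equals itself and is smooth. These two facts combine to give
\[
\int_{B_s}\bigl|\wDh[^\sharp]-\mean{\wDh[^\sharp]}_{B_s}\bigr|^2\le Cs^{n+2}\|D^2h^\sharp\|_{L^\iy(B_s)}^2\le C\frac{s^{n+2}}{r^{n+4}}\int_{B_r}h^2\le C\frac{s^{n+1}}{r^{n+3}}\int_{B_r}h^2,
\]
where the last step uses $s\le r$. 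The analogous bound holds on $B_\rho$ with an even smaller constant. Chaining the two reductions and using $\rho\le s$ absorbs the unwanted factor $(\rho/s)^{n+\al}$ in front of the remainder, producing the claimed \eqref{eq:nonsym-sig-est}.

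The principal obstacle is the asymmetry of $h$: the Campanato estimate in \cite{JeoPet19a} requires $y_n$-evenness, which $h$ lacks. The remedy is the even/odd split, but it introduces a subtlety in the odd piece: the even extension $\wDh[^\sharp]$ is not Lipschitz on $B_r$ — its tangential components have a kink on $\Pi$. The key observation that saves the argument is that those tangential components \emph{vanish} on $\Pi$ by the boundary condition $h^\sharp|_\Pi=0$, which upgrades the trivial $L^\iy$ bound to a one-sided factor of $|y_n|$ and produces the required $s^{n+2}$-type decay on the remainder side, after which comparison with $s^{n+1}/r^{n+3}$ is free.
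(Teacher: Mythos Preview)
Your approach is correct and essentially identical to the paper's: split $h=h^*+h^\sharp$, apply the symmetric-case Campanato estimate to $h^*$, and control the odd harmonic piece $h^\sharp$ by interior $D^2$ bounds. There is one slip you should fix, however.

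In step (a) you quote the estimate from \cite{JeoPet19a}*{Proposition~4.4} for the even Signorini solution $h^*$ \emph{without} a tail term. That is not what the cited result says, and in fact the tail-free inequality is false for $\al>1/2$: near a free boundary point $\wDh[^*]$ is only $C^{0,1/2}$, so the pure decay rate $(\rho/s)^{n+\al}$ cannot hold for $\al$ close to $1$. The correct statement carries the additional term $C_2\bigl(\int_{B_r}(h^*)^2\bigr)\,s^{n+1}/r^{n+3}$, exactly of the form of the tail in \eqref{eq:nonsym-sig-est}. Since $\int_{B_r}(h^*)^2\le\int_{B_r}h^2$ by $L^2$-orthogonality of the even/odd parts, restoring this term changes nothing in your chaining; it simply contributes another copy of the final tail.

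As a minor remark, your handling of $\wDh[^\sharp]$ via the vanishing of the tangential derivatives on $\Pi$ is correct but more elaborate than necessary. The paper instead bounds $\int_{B_\rho^+}|\D h^\sharp-\mean{\D h^\sharp}_{B_\rho^+}|^2$ directly from the Lipschitz bound $|D^2 h^\sharp|\le C r^{-(n+4)/2}\|h^\sharp\|_{L^2(B_r)}$ and then uses the identity $\int_{B_\rho}|\wDh[^\sharp]-\mean{\wDh[^\sharp]}_{B_\rho}|^2=2\int_{B_\rho^+}|\D h^\sharp-\mean{\D h^\sharp}_{B_\rho^+}|^2$, which sidesteps the component-by-component analysis.
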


\begin{proof} This proposition differs from \cite{JeoPet19a}*{Proposition~4.4} only by not requiring $h(y)$ to be even in the
  $y_n$-variable. As in the proof of
  Proposition~\ref{prop:nonsym-sol-Sig-est} we split $h$ into its even
  and odd parts
  $$
h(y)=h^*(y)+h^\sharp(y),\quad y\in B_r.
$$
Recall that $h^*$ is still a solution of the Signorini problem in
$B_r$, but now even in $y_n$ and $h^\sharp$ is a harmonic function in
$B_r$, odd in $y_n$.
Then, by \cite{JeoPet19a}*{Proposition~4.4} we have
\begin{multline}\label{eq:nonsym-sig-est-even}\int_{B_{\rho}}|\widehat{\D
  h^*}-\mean{\widehat{\D h^*}}_{B_\rho}|^2
  \le C_1
  \Bigl(\frac{\rho}{s}\Bigr)^{n+\al}\int_{B_s}|\widehat{\D h^*}
  -\mean{\widehat{\D h^*}}_{B_
    s}|^2\\+C_2\left(\int_{B_r}(h^*)^2\right)\frac{s^{n+1}}{r^{n+3}}.
\end{multline}
Now we need a similar estimate for $h^\sharp$. Since $h^\sharp$ is harmonic, by
the standard interior estimates, we have
$$
\sup_{B_{(3/4)r}}|D^2h^\sharp|\leq \frac{C(n)}{r^2}\left(\frac1{r^{n}}\int_{B_r}(h^\sharp)^2\right)^{1/2}.
$$
Thus, taking the averages on $B_\rho^+$, we will therefore have
\begin{align*}
  \int_{B_\rho^+}|\nabla h^\sharp-\mean{\nabla h^\sharp}_{B_\rho^+}|^2
  &\leq C(n)\Bigl(\sup_{B_{\rho}}|D^2h^\sharp|\Bigr)^2\rho^{n+2}\leq
    C(n)\left(\int_{B_r} (h^\sharp)^2\right)\frac{\rho^{n+2}}{r^{n+4}}\\
  & \nonumber\leq C(n)\left(\int_{B_r}
    (h^\sharp)^2\right)\frac{s^{n+1}}{r^{n+3}},\quad 0<\rho<s\leq (3/4)r,
\end{align*}
which can be rewritten as
\begin{equation}\label{eq:nonsym-sig-est-odd}\int_{B_{\rho}}|\widehat{\D h^\sharp}-\mean{\widehat{\D h^\sharp}}_{B_\rho}|^2
  \le C(n)\left(\int_{B_r}(h^\sharp)^2\right)\frac{s^{n+1}}{r^{n+3}}.
\end{equation}
Now using that $\widehat{\D h}-\mean{\widehat{\D h}}_{B_\rho}=[\widehat{\D h^*}-\mean{\widehat{\D h^*}}_{B_\rho}]
+[\widehat{\D h^\sharp}-\mean{\widehat{\D h^\sharp}}_{B_\rho}]$ in $B_\rho$, we
deduce from \eqref{eq:nonsym-sig-est-odd} that
\begin{align}\label{eq:nonsym-sig-est-odd-2}\int_{B_{\rho}}|\widehat{\D
    h}-\mean{\widehat{\D h}}_{B_\rho}|^2
  &\le 2\int_{B_{\rho}}|\widehat{\D h^*}-\mean{\widehat{\D h^*}}_{B_\rho}|^2+2\int_{B_{\rho}}|\widehat{\D h^\sharp}-\mean{\widehat{\D h^\sharp}}_{B_\rho}|^2\\
  &\le 2\int_{B_{\rho}}|\widehat{\D h^*}-\mean{\widehat{\D h^*}}_{B_\rho}|^2+C(n)\left(\int_{B_r}(h^\sharp)^2\right)\frac{s^{n+1}}{r^{n+3}}.\nonumber
\end{align}
Similarly, representing  $\widehat{\D h^*}-\mean{\widehat{\D
    h^*}}_{B_s}=[\widehat{\D h}-\mean{\widehat{\D
    h}}_{B_s}]-[\widehat{\D h^\sharp}-\mean{\widehat{\D h^\sharp}}_{B_s}]$ in $B_s$,
we deduce from \eqref{eq:nonsym-sig-est-odd} (by taking $\rho=s$) that
\begin{align}\label{eq:nonsym-sig-est-odd-3}\int_{B_{s}}|\widehat{\D
    h^*}-\mean{\widehat{\D h^*}}_{B_s}|^2
  &\le 2\int_{B_{s}}|\widehat{\D h}-\mean{\widehat{\D h}}_{B_s}|^2+C(n)\left(\int_{B_r}(h^\sharp)^2\right)\frac{s^{n+1}}{r^{n+3}}.
\end{align}
Hence, combining
\eqref{eq:nonsym-sig-est-even}--\eqref{eq:nonsym-sig-est-odd-3}, and
using that both $\int_{B_r} (h^*)^2$ and $\int_{B_r} (h^\sharp)^2$ cannot
exceed $\int_{B_r} h^2$, we obtain the claimed estimate \eqref{eq:nonsym-sig-est}. 
\end{proof}

\begin{theorem}\label{thm:var-grad-holder}
  Let $U$ be an almost minimizer of the $A$-Signorini problem  in $B_1$. Then $$
U\in C^{1,\be}(B_1^\pm\cup B_1')\quad\text{with }\be=\frac\al{4(2n+\al)}.
$$
Moreover, for any $K\Subset B_1^\pm\cup B'_1$, we have \begin{equation}\label{eq:var-grad-holder}
\|U\|_{C^{1,\be}(K)}\le C(n,\al,M,K)\|U\|_{W^{1,2}(B_1)}.
\end{equation}
\end{theorem}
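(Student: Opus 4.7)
The plan is to establish a Campanato-type decay estimate for the gradient of $U$ near points of the thin space, and then to conclude $C^{1,\beta}$ regularity via the Campanato embedding. Away from $B_1'$, the function $U$ is almost $A$-harmonic and the conclusion already follows from Theorem~\ref{thm:var-alm-har-reg}; so the whole difficulty is concentrated in a neighborhood of $B_1'$, and I would treat points $x_0\in B_1'$ via the ``deskewing'' machinery of Section~2.

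First I would fix $x_0\in B_{3/4}'$ and work with $u=u_{x_0}=U\circ \bar T_{x_0}^{-1}$, which by Lemma~\ref{lem:u_x-alm-min} satisfies the almost Signorini property at $0$ in some $B_R$. Because $x_0\in\Pi$ forces $\bar T_{x_0}^{-1}$ to preserve $\Pi$ and map $B_r^{\pm}$ into $B_1^{\pm}$, obtaining $u\in C^{1,\beta}(\overline{B_{R/2}^{\pm}})$ translates into one-sided $C^{1,\beta}$ regularity of $U$ at $x_0$, with at most an $\alpha$-Hölder loss from the $\alpha$-Hölder dependence of $x_0\mapsto \bar{\mfa}_{x_0}$. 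Then, for each $0<r<R$, I would let $h$ be the Signorini replacement of $u$ in $B_r$. As in Case~1 of the proof of Proposition~\ref{prop:var-alm-min-Sig-Mor-est},
\[
\int_{B_r}|\nabla(u-h)|^2\leq Mr^\alpha\int_{B_r}|\nabla u|^2.
\]
On the other hand, Proposition~\ref{prop:nonsym-sig-est} (which crucially does not require symmetry in $y_n$) applied to $h-u(0)$ gives, for $\rho\leq s\leq (3/4)r$,
\[
\int_{B_\rho}|\widehat{\nabla h}-\mean{\widehat{\nabla h}}_{B_\rho}|^2 \leq C_1\Bigl(\tfrac{\rho}{s}\Bigr)^{n+\alpha}\int_{B_s}|\widehat{\nabla h}-\mean{\widehat{\nabla h}}_{B_s}|^2 + C_2\Bigl(\int_{B_r}(h-u(0))^2\Bigr)\frac{s^{n+1}}{r^{n+3}}.
\]
Splitting
\[
\widehat{\nabla u}-\mean{\widehat{\nabla u}}_{B_\rho}=\bigl[\widehat{\nabla h}-\mean{\widehat{\nabla h}}_{B_\rho}\bigr]+\bigl[\widehat{\nabla (u-h)}-\mean{\widehat{\nabla(u-h)}}_{B_\rho}\bigr]
\]
and using the energy bound on $u-h$ converts this into a three-scale decay inequality for $\widehat{\nabla u}$, with an extra error proportional to $r^\alpha\int_{B_r}|\nabla u|^2$.

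To close the iteration I would invoke Theorem~\ref{thm:var-holder} to bound $\int_{B_r}(h-u(0))^2\leq Cr^{n+2\sigma}$ for any $\sigma<1$ (by Poincaré, after subtracting the value at $0$), which controls the $r$-scaling of the error term. A bootstrap application of Lemma~\ref{lem:var-HL} (Han-Lin), alternating between improved bounds on $\int_{B_r}|\nabla u|^2$ and on $\int_{B_\rho}|\widehat{\nabla u}-\mean{\widehat{\nabla u}}_{B_\rho}|^2$, would then produce
\[
\int_{B_\rho}|\widehat{\nabla u}-\mean{\widehat{\nabla u}}_{B_\rho}|^2\leq C\rho^{n+2\beta}
\]
with the specific value $\beta=\alpha/(4(2n+\alpha))$ arising from optimizing the exponent trade-off between the $(\rho/s)^{n+\alpha}$ decay and the two competing error terms. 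The Campanato embedding then gives $\widehat{\nabla u}\in C^{0,\beta}(B_{R/2})$, which by the definition of the even extension amounts to $\nabla u\in C^{0,\beta}(\overline{B_{R/2}^{\pm}})$, i.e.\ $u\in C^{1,\beta}(\overline{B_{R/2}^{\pm}})$.

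Finally, pulling back through $\bar T_{x_0}$ transfers this to one-sided $C^{1,\beta}$ regularity of $U$ at $x_0$, and a standard covering argument combining the near-thin-space estimate with the interior estimate of Theorem~\ref{thm:var-alm-har-reg} yields \eqref{eq:var-grad-holder}. I expect the main obstacle to be the book-keeping in the Han-Lin bootstrap: the $s^{n+1}/r^{n+3}$ factor multiplying $\int(h-u(0))^2$ carries an unfavorable $r^{-2}$, so the interplay between the gradient energy decay and the $L^2$-decay of $u-u(0)$ has to be arranged very carefully to land precisely at $\beta=\alpha/(4(2n+\alpha))$ rather than at some smaller exponent.
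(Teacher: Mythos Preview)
Your overall strategy is the same as the paper's: deskew at $x_0\in B_1'$, compare $u_{x_0}$ with its Signorini replacement $h$, feed the estimate of Proposition~\ref{prop:nonsym-sig-est} into a Campanato iteration via Lemma~\ref{lem:var-HL}, and combine with the interior almost-$A$-harmonic regularity. The gap is in the step where you apply Proposition~\ref{prop:nonsym-sig-est} to $h-u(0)$. That proposition requires its input to be a solution of the Signorini problem with zero obstacle, but when $u(0)=U(x_0)>0$ the function $h-u(0)$ need not satisfy $h-u(0)\geq 0$ on $\Pi$, and even where it does the complementarity condition fails; the Signorini problem is not invariant under vertical translation. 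Without subtracting $u(0)$ you cannot hope for $\int_{B_r}h^2\leq Cr^{n+2\sigma}$ either, since $h=u_{x_0}$ on $\partial B_r$ and $u_{x_0}(0)$ may be positive. So the three-scale inequality you write down is not available uniformly in $x_0\in B_1'$, and your bootstrap does not start.

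The paper fixes this with a quantitative dichotomy that replaces your subtraction of $u(0)$. At each scale $r$ one sets $R=r^{2n/(2n+\alpha)}$ and $\alpha'=1-\alpha/(8n)$, and compares $\sup_{\partial E_R(x_0)}|U|$ to $C_3(\Lambda^{1/2}R)^{\alpha'}$ with $C_3=2[U]_{C^{0,\alpha'}}$. If the supremum is small, one applies Proposition~\ref{prop:nonsym-sig-est} to $h$ itself (no shift) and uses $\sup_{\partial B_R}h^2\leq C_3^2(\Lambda^{1/2}R)^{2\alpha'}$ to control the error term. If the supremum is large, the $C^{0,\alpha'}$ oscillation bound forces $U>0$ throughout $E_R(x_0)$, so $h$ is harmonic in $B_R$ and one uses second-derivative estimates for harmonic functions instead. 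The particular choices of $R$ and $\alpha'$ are exactly what balance the two cases and produce $\beta=\alpha/(4(2n+\alpha))$; this is the ``careful book-keeping'' you anticipated, but it hinges on the dichotomy, not on a Poincar\'e argument after subtracting $u(0)$.
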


\begin{proof}
Let $K$ be a ball centered at $0$. Fix a small $r_0=r_0(n,\al,M,K)>0$ to be determined later. In particular, we will ask $r_1:=r_0^{\frac{2n}{2n+\al}}\Lambda^{1/2}\le (1/2)\dist(K,\pa B_1)$, which implies that $$
\widetilde{K}:=\{y\in B_1:\dist(y,K)\le r_1\}\Subset B_1.
$$
Define $$
\wDU(y',y_n):=\begin{cases}\D U(y',y_n), &y_n\ge 0\\ \D U(y',-y_n), &y_n<0.\end{cases}
$$
Our goal is to show that for $x_0\in K$, $0<\rho<r<r_0$,
\begin{multline}\label{eq:var-grad-holder-estimate}
 \int_{B_{\rho}(x_0)}|\wDU-\mean{\wDU}_{B_\rho(x_0)}|^2 
\\\le
C(n,\al,M)\left(\frac{\rho}{r}\right)^{n+\al}\int_{B_r(x_0)}|\wDU-\mean{\wDU}_{B_r(x_0)}|^2\\+C(n,
\al, M, K)\|U\|^2_{W^{1, 2}(B_1)}r^{n+2\be}.
\end{multline}

\medskip\noindent\emph{Case 1.} Suppose $x_0\in K\cap B_1'$. For given $0<r<r_0$, we denote $\al':=1-\frac\al{8n}\in (0, 1)$, $R:=r^{\frac{2n}{2n+\al}}$. We then consider two cases: $$
\sup_{\pa E_R(x_0)}|U|\le C_3(\Ld^{1/2}R)^{\al'}\quad\text{and}\quad \sup_{\pa E_R(x_0)}|U|>C_3(\Ld^{1/2}R)^{\al'},
$$
where $C_3=2[U]_{0,\al',\widetilde{K}}=2\sup_{\substack{y, z\in \K\\ y\neq z}}\frac{|U(y)-U(z)|}{|y-z|^{\al'}}$.

\medskip\noindent \emph{Case 1.1.} Assume that $\sup_{\pa
  E_R(x_0)}|U|\le C_3(\Ld^{1/2}R)^{\al'}$. Then $u_{x_0}$ satisfies
almost Signorini property at $0$ in $B_R$ with
$$
\sup_{\pa B_R}|u_{x_0}|\le C_3(\Ld^{1/2}R)^{\al'}.
$$
Let $h$ be the Signorini replacement of $u_{x_0}$ in $B_R$. If we define
\begin{align*}
\widehat{\D u_{x_0}}(y',y_n)&:=\begin{cases}\D u_{x_0}(y',y_n),& y_n\ge 0\\ \D
  u_{x_0}(y',-y_n),& y_n<0\end{cases}\\
  \intertext{and}
  \wDh(y',y_n)&:=\begin{cases}\D h(y',y_n),& y_n\ge 0\\ \D h(y',-y_n),& y_n<0,\end{cases}
\end{align*}
then we have
\begin{align}\label{u}
\int_{B_\rho}|\widehat{\D u_{x_0}}-\mean{\widehat{\D u_{x_0}}}_{B_\rho}|^2&\le 3\int_{B_\rho}|\wDh-\mean{\wDh}_{B_\rho}|^2+6\int_{B_\rho}|\widehat{\D u_{x_0}}-\wDh|^2,\\
\label{h}
\int_{B_r}|\wDh-\mean{\wDh}_{B_r}|^2&\le 3\int_{B_r}|\widehat{\D u_{x_0}}-\mean{\widehat{\D u_{x_0}}}_{B_r}|^2+6\int_{B_r}|\widehat{\D u_{x_0}}-\wDh|^2.
\end{align}
Note that if $r_0\le (3/4)^{\frac{2n+\al}\al}$, then $r< (3/4)R$, thus by Proposition~\ref{prop:nonsym-sig-est}, the Signorini replacement $h$ satisfies, for $0<\rho<r$,
\begin{multline*}\int_{B_{\rho}}|\wDh-\mean{\wDh}_{B_\rho}|^2\le C(n,\al)\Bigl(\frac{\rho}{r}\Bigr)^{n+\al}\int_{B_r}|\wDh-\mean{\wDh}_{B_r}|^2\\+C(n,\al)\left(\sup_{\pa B_R}h^2\right)\frac{r^{n+1}}{R^3}.
\end{multline*}
Combining the above three inequalities, we obtain
\begin{multline}\label{eq:var-grad-holder-est1}
\int_{B_\rho}|\widehat{\D u_{x_0}}-\mean{\widehat{\D u_{x_0}}}_{B_\rho}|^2
\le  C(n,\al)\left(\frac\rho r\right)^{n+\al}\int_{B_r}|\widehat{\D u_{x_0}}-\mean{\widehat{\D u_{x_0}}}_{B_r}|^2\\
+C(n,\al)\left(\sup_{\pa B_R}h^2\right)\frac{r^{n+1}}{R^3}+C(n,\al)\int_{B_r}|\widehat{\D u_{x_0}}-\wDh|^2.
\end{multline}
Let us estimate the last term in the right-hand side of \eqref{eq:var-grad-holder-est1}.
Take $\de=\de(n,\al,M,K)>0$ such that $\de<\dist(K,\pa B_1)$ and $\de^\al\le\e=\e(C_1,n,n+2\al'-2)$, where $C_1=C_1(n,M)$ is as in Proposition~\ref{prop:var-alm-min-Sig-Mor-est} and $\e$ is as in Lemma~\ref{lem:var-HL}. If $r_0\le\left(\Ld^{-1/2}\de\right)^{\frac{2n+\al}{2n}}$, then $\Ld^{1/2}R<\de$, thus, by following the proof of Theorem~\ref{thm:var-holder} up to \eqref{eq:var-holder-est}, we have $$
\int_{B_{\Ld^{1/2}R}(x_0)}|\D U|^2\le C(n,\al, M, K)\|\D U\|_{L^2(B_1)}^2\left(\Ld^{1/2}R\right)^{n+2\al'-2}.
$$
It follows that \begin{align*}
\int_{E_R(x_0)}\langle A(x_0)\D U,\D U\rangle &\le
                                                \Ld\int_{B_{\Ld^{1/2}R}(x_0
                                                )}|\D U|^2\\
&\le C\|\D U\|_{L^2(B_1)}^2R^{n+2\al'-2}.
\end{align*}
Then by the change of variables \eqref{eq:u-u_x-1}, we have 
\begin{align}\label{eq:var-grad-holder-u_x-u-est}
\int_{B_R}|\D u_{x_0}|^2\le C\|\D U\|^2_{L^2(B_1)}R^{n+2\al'-2}.
\end{align}
Now we can estimate the third term in the right-hand side of
\eqref{eq:var-grad-holder-est1}:
\begin{multline}\label{eq:var-u_x-h-est}
\int_{B_r}|\widehat{\D u_{x_0}}-\wDh|^2  = 2\int_{B^+_r}|\D u_{x_0}-\D h|^2\\
\begin{aligned}
&\le 2\int_{B_{R}}|\D u_{x_0}-\D h|^2\le 2\left(\int_{B_R}|\D u_{x_0}|^2-\int_{B_R}|\D h|^2\right)\\
&\le 2MR^{\al}\int_{B_{R}}|\D h|^2\le 2MR^{\al}\int_{B_{R}}|\D u_{x_0}|^2\\
&\le  C\|\D U\|^2_{L^2(B_1)}R^{n+\al+2\al'-2} \\
&= C\|\D U\|^2_{L^2(B_1)}r^{n+\frac{\al}{2n+\al}(n-\frac{1}{2})}.
\end{aligned}
\end{multline}
To estimate the second term in the right-hand side of \eqref{eq:var-grad-holder-est1}, we observe that $$
\sup_{\pa B_R}h^2=\sup_{\pa B_R}u_{x_0}^2=\sup_{\pa E_R(x_0)}U^2\le C_3^2(\Ld^{1/2}R)^{2\al'}.
$$
Note that by \eqref{eq:var-holder}, $C_3\le C(n,\al,M,K)\|U\|_{W^{1,2}(B_1)}$. Thus, 
\begin{align*}
\left(\sup_{\pa B_R}h^2\right)\frac{r^{n+1}}{R^{3}}&\le C\|U\|_{W^{1,2}(B_1)}^2r^{n+\frac{\al}{2(2n+\al)}}.
\end{align*}
Now \eqref{eq:var-grad-holder-est1} becomes
\begin{multline}\label{eq:var-grad-holder-est2}
\int_{B_\rho}|\widehat{\D u_{x_0}}-\mean{\widehat{\D u_{x_0}}}_{B_\rho}|^2
\le C(n,\al)\left(\frac\rho
  r\right)^{n+\al}\int_{B_r}|\widehat{\D u_{x_0}}-\mean{\widehat{\D u_{x_0}}}_{B_r}|^2\\+C\|U\|^2_{W^{1,
    2}(B_1)}r^{n+\frac{\al}{2(2n+\al)}}.
\end{multline}
We now want to deduce \eqref{eq:var-grad-holder-estimate} from
\eqref{eq:var-grad-holder-est2}. The complication here is that the mapping $\bar{T}_{x_0}^{-1}$ does not preserve the even
symmetry with respect to the thin plane, since the conormal direction
$A(x_0)e_n$ might be different from the normal direction $e_n$ to
$\Pi$ at $x_0$.
To address this issue, by using the even symmetry of $\widehat{\D u_{x_0}}$, we
rewrite \eqref{eq:var-grad-holder-est2} in terms of halfballs $B_r^+=B_r\cap \R^n_+$
\begin{multline}\label{eq:var-grad-holder-est3}
        \int_{B_\rho^+}|\D u_{x_0}-\mean{\D u_{x_0}}_{B_\rho^+}|^2
\le C(n,\al)\left(\frac\rho r\right)^{n+\al}\int_{B_r^+}|\D u_{x_0}-\mean{\D u_{x_0}}_{B_r^+}|^2\\+C\|U\|^2_{W^{1, 2}(B_1)}r^{n+\frac{\al}{2(2n+\al)}}.
\end{multline}
Similarly, if we denote $E_r^+(x_0)=E_r(x_0)\cap \R^n_+$, then using
that $\bar{T}_{x_0}(E_t^+(x_0))=B_t^+$, $t>0$, 
\eqref{eq:var-grad-holder-est3}
becomes
\begin{multline*} 
        \int_{E_\rho^+(x_0)}|\mfa_{x_0}\D U-\mean{\mfa_{x_0}\D U}_{E_\rho^+(x_0)}|^2
\\\le C(n,\al)\left(\frac\rho r\right)^{n+\al}\int_{E_r^+(x_0)}|\mfa_{x_0}\D U-\mean{\mfa_{x_0}\D U}_{E_r^+(x_0)}|^2\\
+C\det \mfa_{x_0}\|U\|^2_{W^{1, 2}(B_1)}r^{n+\frac{\al}{2(2n+\al)}}.
\end{multline*}
Repeating the argument that \eqref{eq:var-Anz-Mor-Camp-est4} implies \eqref{eq:var-Anz-Mor-Camp-est2} in the proof of Proposition~\ref{prop:var-Anz-Mor-Camp-est}, we have
\begin{multline}\label{eq:var-grad-holder-est4}
  \int_{B_\rho^+(x_0)}|\D U-\mean{\D U}_{B_\rho^+(x_0)}|^2\le C\left(\frac\rho r\right)^{n+\al}\int_{B_r^+(x_0)}|\D U-\mean{\D U}_{B_r^+(x_0)}|^2\\+C\|U\|^2_{W^{1, 2}(B_1)}r^{n+\frac{\al}{2(2n+\al)}}.
\end{multline}
      Then by the even symmetry of $\wDU$, \eqref{eq:var-grad-holder-est4} implies \eqref{eq:var-grad-holder-estimate}. 

\medskip\noindent \emph{Case 1.2.} Now we assume that $\sup_{\pa
  E_R(x_0)}|U|> C_3(\Ld^{1/2}R)^{\al'}$. By the choice of
$C_3=2[U]_{0,\al',\widetilde{K}}$, we have either
\begin{align*}
&U\ge \left(C_3/2\right)(\Ld^{1/2}R)^{\al'}\quad\text{in }E_R(x_0),\
  \text{or}\\
&U\le -\left(C_3/2\right)(\Ld^{1/2}R)^{\al'}\quad\text{in } E_R(x_0).
\end{align*}
However, from $U\ge 0$ on $B_1'$, the only possibility is $$
U\ge \left(C_3/2\right)(\Ld^{1/2}R)^{\al'}\quad\text{in } E_R(x_0).
$$
Consequently,
$$
u_{x_0}\ge \left(C_3/2\right)(\Ld^{1/2}R)^{\al'}\quad\text{in } B_R.
$$
If we let $h$ again be the Signorini replacement of $u_{x_0}$ in $B_R$, then the positivity of $h=u_{x_0}>0$ on $\pa B_R$ and superharmonicity of $h$ in $B_R$ give that $h>0$ in $B_R$, and hence $h$ is harmonic in $B_R$. Thus, $$
\int_{B_\rho}|\D h-\mean{\D h}_{B_\rho}|^2\le\left(\frac\rho r\right)^{n+2}\int_{B_r}|\D h-\mean{\D h}_{B_r}|^2,\quad 0<\rho<r.
$$
We next decompose $h=h^*+h^\sharp$ in $B_R$ as in \eqref{eq:h-split-he-ho}. Note
that since both $h$ and $h^\sharp$ are harmonic, $h^*$ must be harmonic as well.
Then we have
\begin{align*}
 \int_{B_\rho}|\wDh-\mean{\wDh}_{B_\rho}|^2&\le 3\int_{B_\rho}|\D h-\mean{\D h}_{B_\rho}|^2+6\int_{B_\rho}|\wDh-\D h|^2\\
 &= 3\int_{B_\rho}|\D h-\mean{\D h}_{B_\rho}|^2+6\int_{B^-_\rho}\left(|2\D_{y'}h^\sharp|^2+|2\pa_{y_n}h^*|^2\right)\\
 &= 3\int_{B_\rho}|\D h-\mean{\D h}_{B_\rho}|^2+12\int_{B_\rho}\left(|\D_{y'}h^\sharp|^2+|\pa_{y_n}h^*|^2\right),
\end{align*}
and similarly,
\begin{align*}
 \int_{B_r}|\D h-\mean{\D h}_{B_r}|^2 \le 3\int_{B_r}|\wDh-\mean{\wDh}_{B_r}|^2+12\int_{B_r}\left(|\D_{y'}h^\sharp|^2+|\pa_{y_n}h^*|^2\right).
 \end{align*}
Combining the above three inequalities, we have that for all
$0<\rho<r$
\begin{multline}\label{eq:var-grad-holder-h-est1}\int_{B_\rho}|\wDh-\mean{\wDh}_{B_\rho}|^2 \le 3\left(\frac\rho r\right)^{n+2}\int_{B_r}|\wDh-\mean{\wDh}_{B_r}|^2\\ +48\int_{B_r}\left(|\D_{y'}h^\sharp|^2+|\pa_{y_n}h^*|^2\right).
\end{multline}
Now, note that if $r_0\le \left(1/2\right)^{\frac{2n+\al}\al}$, then
$r\le R/2$. By the harmonicity
of both $h^*$ and $h^\sharp$ in $B_R$, we have
\begin{align*}
 \sup_{B_{R/2}}|D^2h^*|+\sup_{B_{R/2}}|D^2h^\sharp|
&\le \frac{C(n)}R\left(\sup_{B_{(3/4)R}}|\D h^*|+\sup_{B_{(3/4)R}}|\D h^\sharp|\right)\\
&\le \frac{C(n)}{R^{1+\frac n 2}}\left(\int_{B_R}|\D h^*|^2+\int_{B_R}|\D h^\sharp|^2\right)^{1/2}\\
&= \frac{C(n)}{R^{1+\frac n 2}}\left(\int_{B_R}|\D h|^2\right)^{1/2}\le \frac{C(n)}{R^{1+\frac n 2}}\left(\int_{B_R}|\D u_{x_0}|^2\right)^{1/2}\\
&\le C(n,\al,M,K)\|\D U\|_{L^2(B_1)}R^{\al'-2},
\end{align*}
where the last inequality follows from \eqref{eq:var-grad-holder-u_x-u-est}. Also, note that $\D_{y'}h^\sharp=\pa_{y_n}h^*=0$ on $B'_{R/2}$. Thus, for $y=(y',y_n)\in B_r$, we have
\begin{align*}
|\D_{y'}h^\sharp|+|\pa_{y_n}h^*|&\le |y_n|\left(\sup_{B_{R/2}}|D^2h^*|+\sup_{B_{R/2}}|D^2h^\sharp|\right)\\
&\le C\|\D U\|_{L^2(B_1)}rR^{\al'-2}\\
&=C\|\D U\|_{L^2(B_1)}r^{1+\frac{2n}{2n+\al}(\al'-2)},
\end{align*}
with $C=(n,\al,M,K)$.
Hence, it follows that
\begin{align}\label{eq:var-grad-holder-h-est2}
\int_{B_r}|\D_{y'}h^\sharp|^2+|\pa_{y_n}h^*|^2 &\le C\|\D U\|_{L^2(B_1)}^2r^{n+2+\frac{4n}{2n+\al}(\al'-2)}\\
&\le C\|\D U\|_{L^2(B_1)}^2r^{n+\frac\al{2(2n+\al)}}.\nonumber
\end{align}
Combining \eqref{eq:var-grad-holder-h-est1} and
\eqref{eq:var-grad-holder-h-est2}, we obtain
\begin{multline}\label{eq:var-grad-holder-h-est3}
\int_{B_\rho}|\wDh-\mean{\wDh}_{B_\rho}|^2 \le 3\left(\frac\rho r\right)^{n+2}\int_{B_r}|\wDh-\mean{\wDh}_{B_r}|^2\\
+C\|\D U\|_{L^2(B_1)}^2r^{n+\frac\al{2(2n+\al)}}.
\end{multline}
Note that \eqref{eq:var-u_x-h-est} was induced in Case 1.1 without the use of the assumption $\sup_{\pa E_r(x_0)}|U|\le C_3\left(\Ld^{1/2}R\right)^{\al'}$, so it is also valid in this case. Finally, \eqref{u}, \eqref{h}, \eqref{eq:var-u_x-h-est} and \eqref{eq:var-grad-holder-h-est3} give
\begin{multline*}
  \int_{B_\rho}|\widehat{\D u_{x_0}}-\mean{\widehat{\D u_{x_0}}}_{B_\rho}|^2\\*
  \begin{aligned}
    &\le 3\int_{B_\rho}|\wDh-\mean{\wDh}_{B_\rho}|^2+6\int_{B_\rho}|\widehat{\D u_{x_0}}-\wDh|^2    \\
    &\le 9\left(\frac{\rho}r\right)^{n+2}\int_{B_r}|\wDh-\mean{\wDh}_{B_r}|^2+C\|\D U\|^2_{L^2(B_1)}r^{n+\frac{\al}{2(2n+\al)}}\\
    &\qquad +6\int_{B_\rho}|\widehat{\D u_{x_0}}-\wDh|^2    \\
      &\le 27\left(\frac{\rho}r\right)^{n+2}\int_{B_r}|\widehat{\D u_{x_0}}-\mean{\widehat{\D u_{x_0}}}_{B_r}|^2+C\|\D U\|^2_{L^2(B_1)}r^{n+\frac{\al}{2(2n+\al)}}\\
    &\qquad +60\int_{B_r}|\widehat{\D u_{x_0}}-\wDh|^2    \\
 &\le 27\left(\frac{\rho}r\right)^{n+2}\int_{B_r}|\widehat{\D u_{x_0}}-\mean{\widehat{\D u_{x_0}}}_{B_r}|^2+C\|\D U\|^2_{L^2(B_1)}r^{n+\frac{\al}{2(2n+\al)}}\\
    &\qquad +C \|\D U\|^2_{L^2(B_1)}r^{n+\frac\al{2n+\al}\left(n-1/2\right)}  \\    
      &\le
      27\left(\frac{\rho}r\right)^{n+2}\int_{B_r}|\widehat{\D u_{x_0}}-\mean{\widehat{\D u_{x_0}}}_{B_r}|^2+C\|\D
      U\|^2_{L^2(B_1)}r^{n+\frac{\al}{2(2n+\al)}}.
    \end{aligned}
  \end{multline*}
    As we have seen in Case 1.1, this implies \eqref{eq:var-grad-holder-estimate}. This completes the proof of \eqref{eq:var-grad-holder-estimate} when $x_0\in K\cap B_1'$.

\medskip\noindent \emph{Case 2.} The extension of \eqref{eq:var-grad-holder-estimate} to general $x_0\in K$ follows from the combination of Case 1 and \eqref{eq:var-alm-har-reg}. The argument is the same as Case 2 in the proof of Theorem~4.6 in \cite{JeoPet19a}.

\medskip
Thus, the estimate \eqref{eq:var-grad-holder-estimate} holds in all possible cases.

\medskip

To complete the proof of the theorem, we now apply Lemma~\ref{lem:var-HL}
to the estimate \eqref{eq:var-grad-holder-estimate} to obtain for $0<\rho<r<r_0$
\begin{multline*}
\int_{B_{\rho}(x_0)}|\wDU-\mean{\wDU}_{B_\rho(x_0)}|^2 
\le C\biggl[\Bigl(\frac{\rho}{r}\Bigr)^{n+2\be}\int_{B_{r}(x_0)}|\wDU-\mean{\wDU}_{B_r(x_0)}|^2\\+\|U\|_{W^{1, 2}(B_1)}^2\rho^{n+2\be}\biggr].
\end{multline*}
Taking $r\nearrow r_0=r_0(n,\al,M,K)$, we have
\begin{align*}
  \int_{B_{\rho}(x_0)}|\wDU-\mean{\wDU}_{B_\rho(x_0)}|^2 &\le C\|U\|_{W^{1, 2}(B_1)}^2\rho^{n+2\be},
\end{align*}
with $C=C(n, \al,M, K)$.
Then by the Campanato space embedding this readily implies that
$\wDU\in C^{0,\beta}(K)$
with 
\begin{equation*}
  \|\wDU\|_{C^{0,\be}(K)}\le C\|U\|_{W^{1, 2}(B_1)}.
\end{equation*}
Since $\wDU=\D U$ in $B_1^+\cup B_1'$, we therefore conclude that
$$
U\in C^{1, \be}(K\cap(B_1^+\cup B'_1)),
$$
and combining with the bound in Theorem~\ref{thm:var-holder}, we also
deduce that
\begin{equation*}
  \|U\|_{C^{1,\be}(K\cap(B_1^+\cup B'_1))}\le C(n, \al,M, K)\|U\|_{W^{1, 2}(B_1)}.
\end{equation*}
To see the $C^{1,\be}$ regularity of $U$ in $B_1^-\cup B_1'$, we
simply observe that the function $U(y',-y_n)$ is also an almost minimizer of the
Signorini problem with the appropriately modified coefficient matrix
$A$.
\end{proof}


\section{Quasisymmetric almost minimizers}

In the study of the free boundary in the Signorini problem, the even
symmetry of the minimizer with respect to the thin space plays a
crucial role. The even symmetry guarantees that the growth rate of the
minimizer $u$ over ``thick'' balls $B_r(x_0)\subset\R^n$ matches the growth rate
over thin balls $B_r'(x_0)\subset\Pi$. This allows to use tools such as
Almgren's monotonicity formula (see the next section) to classify the
free boundary 
points. Without even symmetry, minimizers may have an odd
component, vanishing on the thin space $\Pi$ that may create a mismatch of growth
rates on the thick and thin spaces.

In the case of minimizers of the Signorini problem (with $A=I$) or
harmonic functions, it is easy to see that the even symmetrization 
$$
u^*(x)=\frac{u(x',x_n)+u(x',-x_n)}2
$$
is still a minimizer. Unfortunately, the even symmetrization
may destroy the almost minimizing property, as well as the minimizing property with variable coefficients, as can be seen from the
following simple example.

\begin{example}\label{ex:counter-ex-alm-min}
Let $u:(-1,1)\to \R$ be defined by $u(x)=x+x^2/4$. Then $u$ is an
almost harmonic function in $(-1,1)$ with a gauge function
$\omega(r)=C(\alpha)r^\alpha$ for $0<\alpha<1$. In fact, $u$ is a
minimizer of the energy functional
$$
\int (1+x/2)^{-1} (v')^2
$$
with a Lipschitz function $A(x)=(1+x/2)^{-1}$ in $(-1,1)$. On the other hand, the even symmetrization
$$
u^*(x)=\frac{u(x)+u(-x)}2=\frac{x^2}4
$$
is not almost harmonic for any gauge function $\omega(r)$. Indeed, for
any small $\delta>0$, if we take a competitor $v=\delta^2/4$ in
$(-\delta, \delta)$, then it satisfies
$
\int_{-\delta}^{\delta}|v'|^2=0
$ and if $u^*$ were almost harmonic, we would
have that $\int_{-\delta}^{\delta}|(u^*)'|^2=0$ as well, implying that
$u^*$ is constant in $(-\delta,\delta)$, a contradiction.
\end{example}

To overcome this difficulty, we need to impose the $A$-\emph{quasisymmetry}
condition on almost minimizers $U$, that we have already stated in
Definition~\ref{def:A-quasisym}. In this section, we give more details
on quasisymmetric almost minimizers.

Recall that for each $x_0\in B_1'$, we defined a reflection matrix
$P_{x_0}$ by
$$
P_{x_0}=I-2\frac{A(x_0)e_n \otimes e_n}{a_{nn}(x_0)}.
$$
From the ellipticity of $A$, we have $a_{nn}(x_0)\ge\la$, thus
$P_{x_0}$ is well-defined. Note that $P_{x_0}^2=I$. Besides,
$P_{x_0}\big|_\Pi = I\big|_\Pi$ and $P_{x_0}E_r(x_0)=E_r(x_0)$.
We then define the ``skewed'' even/odd symmetrizations of the almost
minimizer $U$ in $B_1$ by
\begin{align*}
    U_{x_0}^*(x)&:=\frac{U(x)+U(P_{x_0}x)}2,\\
    U_{x_0}^\sharp(x)&:=\frac{U(x)-U(P_{x_0}x)}2.
\end{align*}
\begin{figure}[t]
\centering
\begin{picture}(250,125)(0,0)
  \put(0,0){\includegraphics[height=125pt]{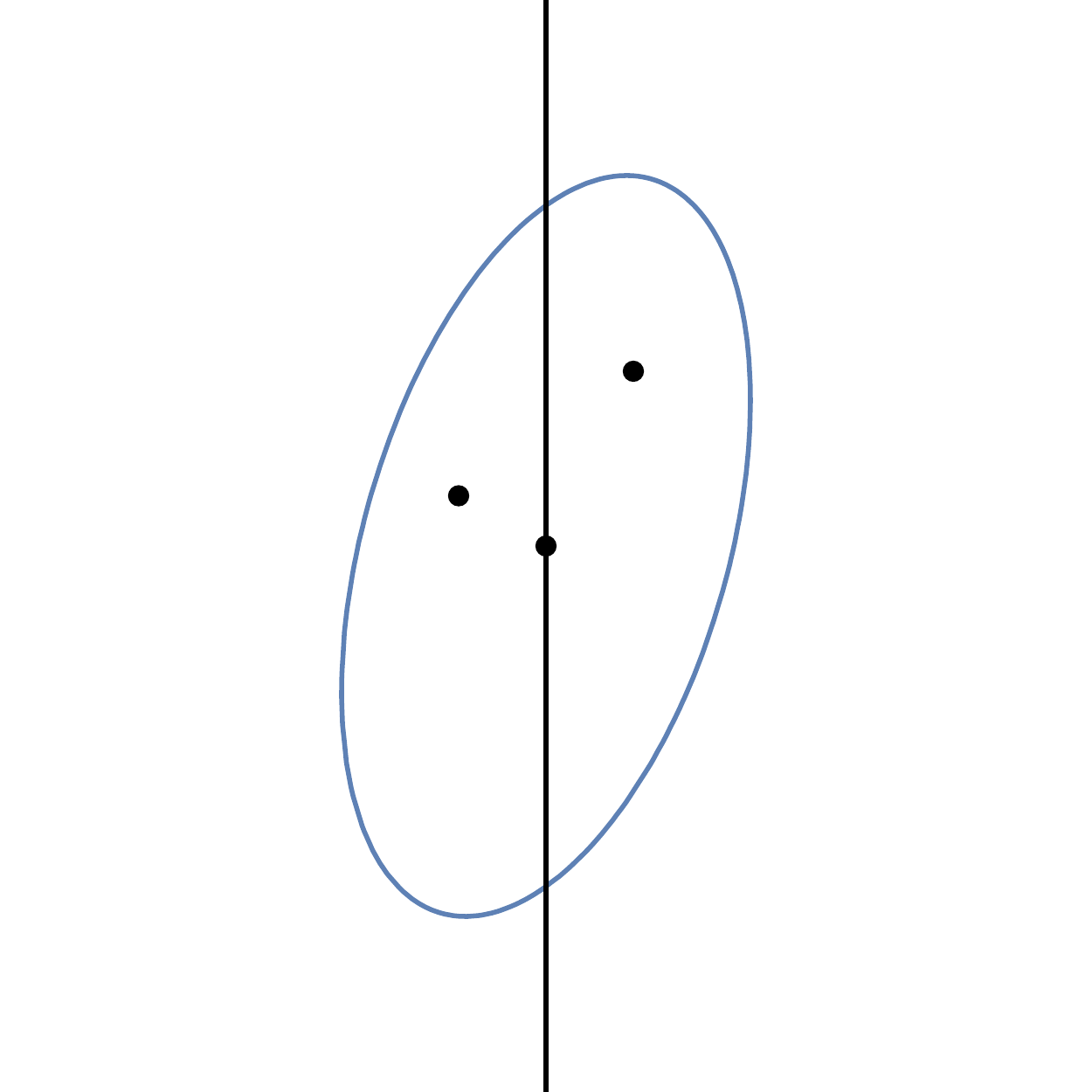}}
  \put(125,0){\includegraphics[height=125pt]{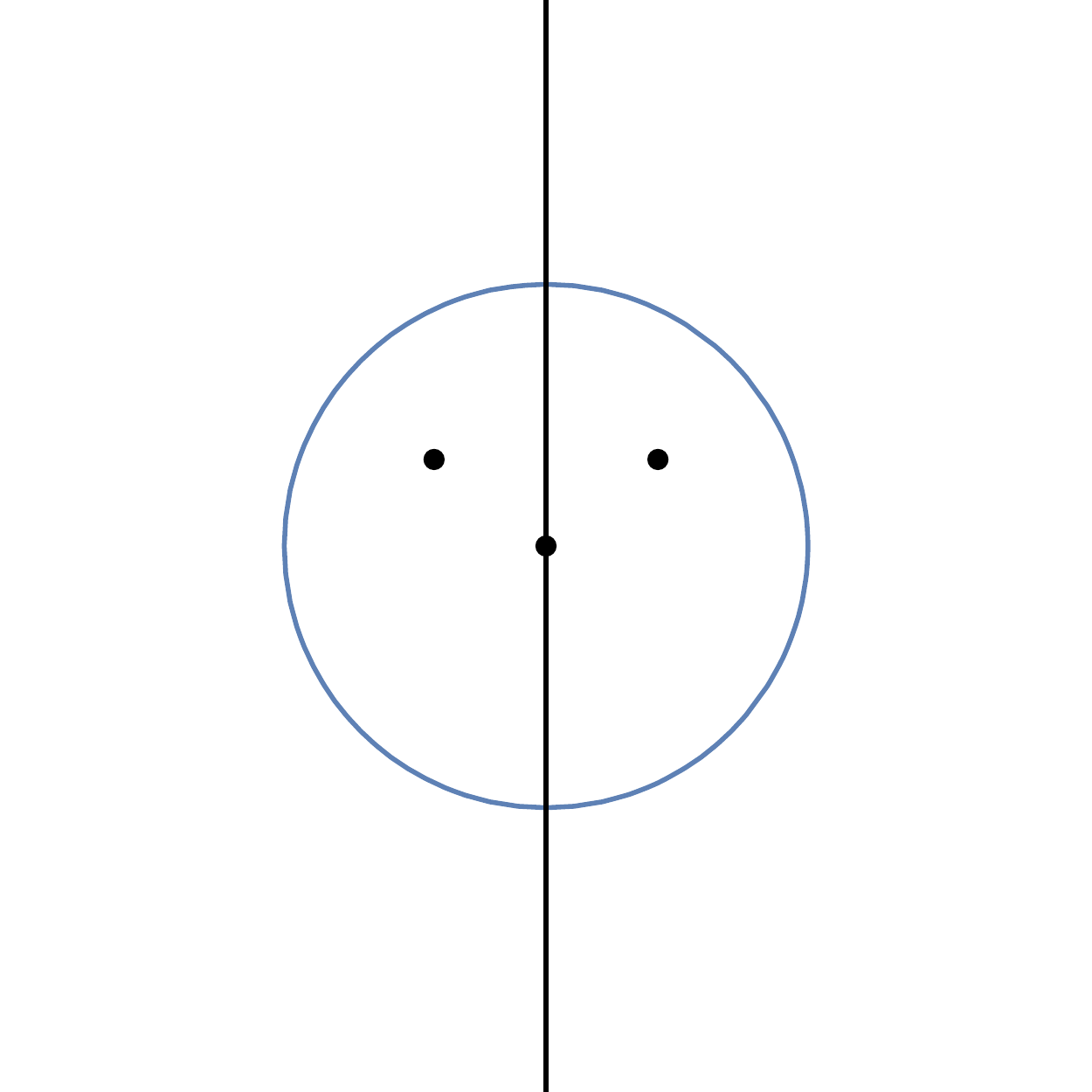}}
  \put(91,83){$\xymatrix{\ar@/^/[rr]^{\bar{T}_{x_0}}_{} & & } $}
  \put(10,60){\scriptsize $E_r(x_0)$}
  \put(225,60){\scriptsize $B_r$}
  \put(50,110){\scriptsize $\Pi$}
  \put(175,110){\scriptsize $\Pi$}
  \put(65,57){\scriptsize $x_0$}
  \put(74,76){\scriptsize $x$}
  \put(47,60){\scriptsize $\bar x$}
  \put(190,57){\scriptsize $0$}
  \put(203,68){\scriptsize $y$}
  \put(166,68){\scriptsize $\bar y$}
\end{picture}
\caption{Reflection $P_{x_0}$: here $\bar x=P_{x_0}x$, $y=\bar{T}_{x_0}(x)$,
  and $\bar y=(y',-y_n)=\bar{T}_{x_0}(\bar{x})$}
\label{fig:refl}
\end{figure}%
Note that $U^*_{x_0}$ and $U^\sharp_{x_0}$ may not be defined in all
of $B_1$, but are defined in any ellipsoid $E_r(x_0)$ as long as it is
contained in $B_1$. Note also that $U=U^*_{x_0}$ and
$U^\sharp_{x_0}=0$ on $\Pi$. 
Further, we note that transformed with $\bar{T}_{x_0}$, $P_{x_0}$
becomes an even reflection with respect to $\Pi$, i.e.,
$$
\bar{T}_{x_0}\circ P_{x_0}\circ \bar{T}_{x_0}^{-1}(y)=(y',-y_n),
$$
see Fig~\ref{fig:refl}.
Therefore, denoting
\begin{align*}
  u_{x_0}^*(y)&:=\frac{u_{x_0}(y',y_n)+u_{x_0}(y',-y_n)}2,\\
  u_{x_0}^\sharp(y)&:=\frac{u_{x_0}(y',y_n)-u_{x_0}(y',-y_n)}2,
\end{align*}
the even/odd symmetrizations of $u_{x_0}$ about $\Pi$, we will have
\begin{align*}
    U_{x_0}^*\circ \bar{T}_{x_0}^{-1}=u_{x_0}^*,\qquad U_{x_0}^\sharp\circ \bar{T}_{x_0}^{-1}=u_{x_0}^\sharp.
\end{align*}
We also observe that the symmetries of $u_{x_0}^*$ and
$u_{x_0}^\sharp$ imply the following decompositions
\begin{align}
  \int_{B_r} u_{x_0}^2
  &=\int_{B_r} (u_{x_0}^*)^2+\int_{B_r}(u_{x_0}^\sharp)^2,\\
  \int_{B_r} |\nabla u_{x_0}|^2
  &=\int_{B_r}|\nabla u_{x_0}^*|^2+\int_{B_r} |\nabla u_{x_0}^\sharp|^2,
\end{align}
which after a change of variables, can also be written as
\begin{align}\label{eq:U2-sym-dec}
  \int_{E_r(x_0)} U^2
  &=\int_{E_r(x_0)} (U_{x_0}^*)^2+\int_{E_r(x_0)}(U_{x_0}^\sharp)^2,\\
\label{eq:DU2-sym-dec}
  \int_{E_r(x_0)} \langle A(x_0)\nabla U,\nabla U\rangle
  &=
\begin{multlined}[t]
    \int_{E_r(x_0)}\langle{A(x_0)\nabla U_{x_0}^*},\nabla
    U_{x_0}^*\rangle\\+\int_{E_r(x_0)}\langle{A(x_0)\nabla
      U_{x_0}^\sharp},\nabla U_{x_0}^\sharp\rangle.
  \end{multlined}
\end{align}
We now recall that by Definition~\ref{def:A-quasisym}, $U\in
W^{1,2}(B_1)$ is called
$A$-\emph{quasisymmetric} if there is a constant $Q>0$ such
that
\begin{equation}\label{eq:quasisymmetry}
\int_{E_r(x_0)}\langle A(x_0)\D U,\D U\rangle \le Q\int_{E_r(x_0)}\langle A(x_0)\D U_{x_0}^*,\D U_{x_0}^*\rangle,
\end{equation}
whenever $E_r(x_0)\Subset B_1$ and $x_0\in B_1'$.
By the uniform ellipticity of $A$, \eqref{eq:quasisymmetry} is equivalent to $$
  \int_{E_r(x_0)}|\D U|^2\le Q\int_{E_r(x_0)}|\D U_{x_0}^*|^2,
  $$
  by changing $Q$ to $Q(\Lambda/\lambda)$, if necessary. Besides, using \eqref{eq:DU2-sym-dec}, 
  \eqref{eq:quasisymmetry} is also equivalent to
 \begin{equation}\label{eq:Q-sym-alt}
  \int_{E_r(x_0)}\langle A(x_0)\D U_{x_0}^\sharp,\D
  U_{x_0}^\sharp\rangle \le C\int_{E_r(x_0)}\langle A(x_0)\D
  U_{x_0}^*,\D U_{x_0}^*\rangle,
\end{equation}
with some $C=C(Q)$.

\begin{lemma}\label{lem:quasisym-alm-min}
Let $U$ be an $A$-quasisymmetric almost minimizer for the
$A$-Signorini problem  in $B_1$, with constant $Q>0$. Then there are
$r_1=r_1(n,\al,M,Q)>0$ and $M_1=M_1(n,M,Q)>0$ such that
\begin{align}\label{eq:e_u-alm-min-property}
\int_{E_r(x_0)}\langle  A(x_0)\D U_{x_0}^*,\D U_{x_0}^*\rangle\le(1+M_1r^\al)\int_{E_r(x_0)}\langle A(x_0)\D W,\D W\rangle,
\end{align}
whenever $E_r(x_0)\Subset B_1$, $x_0\in B_1'$, $0<r<r_1$, and $W\in \mathfrak{K}_{0,U_{x_0}^*}(E_r(x_0),\Pi)$ .
\end{lemma}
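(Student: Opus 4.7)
The strategy is to pass to deskewed coordinates via $\bar{T}_{x_0}$, build a competitor for the almost Signorini problem satisfied by $u_{x_0}$, and then use $A$-quasisymmetry to absorb the resulting error term. Set $u := u_{x_0} = U\circ\bar{T}_{x_0}^{-1}$ on $B_R$, where $R$ is chosen so that $E_R(x_0)\subset B_1$, and decompose $u = u_{x_0}^* + u_{x_0}^\sharp$ into its even and odd parts in $y_n$. By Lemma~\ref{lem:u_x-alm-min}, $u$ satisfies the almost Signorini property at $0$ in $B_R$. Pulling back the quasisymmetry \eqref{eq:quasisymmetry} via the change of variables \eqref{eq:u-u_x-1}, and combining with the orthogonal decomposition \eqref{eq:DU2-sym-dec}, I get the deskewed form of quasisymmetry:
\begin{equation*}
\int_{B_r}|\D u_{x_0}^\sharp|^2 \le (Q-1)\int_{B_r}|\D u_{x_0}^*|^2.
\end{equation*}

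Given an arbitrary competitor $W\in\mfK_{0,U_{x_0}^*}(E_r(x_0),\Pi)$, I would set $w := W\circ\bar{T}_{x_0}^{-1}$, let $w^*$ denote its even part in $y_n$, and define the competitor $v := w^* + u_{x_0}^\sharp$ on $B_r$. Since $u_{x_0}^*$ is even in $y_n$, the condition $w = u_{x_0}^*$ on $\partial B_r$ forces $w^* = u_{x_0}^*$ on $\partial B_r$, and hence $v = u$ there; together with $u_{x_0}^\sharp \equiv 0$ on $\Pi$ and $w^* = w \ge 0$ on $\Pi$, this gives $v\in\mfK_{0,u}(B_r,\Pi)$. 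The almost Signorini property then yields
\begin{equation*}
\int_{B_r}|\D u|^2 \le (1+\omega(r))\int_{B_r}|\D v|^2.
\end{equation*}
Expanding the right-hand side and using the $L^2$-orthogonality identity $\int_{B_r}|\D u|^2 = \int_{B_r}|\D u_{x_0}^*|^2 + \int_{B_r}|\D u_{x_0}^\sharp|^2$, together with the vanishing cross term $\int_{B_r}\D w^*\cdot\D u_{x_0}^\sharp = 0$ (each component-wise product is odd in $y_n$, by the even/odd parity of $\D w^*$ and $\D u_{x_0}^\sharp$), I obtain
\begin{equation*}
\int_{B_r}|\D u_{x_0}^*|^2 \le (1+\omega(r))\int_{B_r}|\D w^*|^2 + \omega(r)\int_{B_r}|\D u_{x_0}^\sharp|^2.
\end{equation*}

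Choosing $r_1 = r_1(n,\al,M,Q)$ so that $(Q-1)\omega(r) \le 1/2$ for $r<r_1$ (trivial if $Q=1$), I would apply the deskewed quasisymmetry to control the last term and absorb it to the left, yielding
\begin{equation*}
\int_{B_r}|\D u_{x_0}^*|^2 \le \frac{1+\omega(r)}{1-(Q-1)\omega(r)}\int_{B_r}|\D w^*|^2 \le (1 + M_1 r^\al)\int_{B_r}|\D w|^2,
\end{equation*}
where the last inequality uses orthogonality once more. Undoing the change of variables via \eqref{eq:u-u_x-1} then produces \eqref{eq:e_u-alm-min-property} with $M_1 = M_1(n,M,Q)$. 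The main subtlety is the bookkeeping of symmetries: one must ensure that even/odd parity in $y_n$ under $\bar{T}_{x_0}$ corresponds exactly to $P_{x_0}$-symmetry in the original coordinates. This is precisely why the Gram--Schmidt construction of $O_{x_0}$ in Section~2 arranges for $\bar{T}_{x_0}$ to send $\Pi$ onto $\Pi$, so that the cross-term vanishing and the $L^2$-orthogonality identities lift cleanly through the deskewing.
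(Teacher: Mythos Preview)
Your proof is correct and follows the same overall strategy as the paper: deskew, exploit the $L^2$-orthogonality of even and odd parts, and absorb the odd-part error via quasisymmetry. The organizational difference is that the paper first passes to the full Signorini minimizer $v_{x_0}$ on $\mfK_{0,u_{x_0}}(B_r,\Pi)$, splits it as $v_{x_0}=v_{x_0}^*+v_{x_0}^\sharp$, and then needs the additional observation that $v_{x_0}^\sharp$ is the harmonic replacement of $u_{x_0}^\sharp$ (so that $\int_{B_r}|\D v_{x_0}^\sharp|^2\le\int_{B_r}|\D u_{x_0}^\sharp|^2$). Your construction $v=w^*+u_{x_0}^\sharp$ for an arbitrary competitor $W$ bypasses this step entirely: by keeping the original odd part you avoid the harmonic-replacement comparison, and the argument becomes a direct two-line cancellation. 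Both routes give the same constants and the same conclusion; yours is marginally more elementary.
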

\begin{remark} Since we are interested in local results, in what
  follows, we will assume without loss of generality that $r_1=1$ and $M_1=M$.
\end{remark}

\begin{proof}
Let $V$ be the energy minimizer of $$
\int_{E_r(x_0)}\langle A(x_0)\D V,\D V\rangle\quad\text{on } \mathfrak{K}_{0,U}(E_r(x_0),\Pi).
$$
Then $v_{x_0}=V\circ \bar{T}_{x_0}^{-1}$ is the energy minimizer of $$
\int_{B_r}|\D v_{x_0}|^2\quad\text{on } \mathfrak{K}_{0,u_{x_0}}(B_r,\Pi).
$$
Note that $v_{x_0}^*$ is a solution of the Signorini problem, even in $y_n$, with $v_{x_0}^*=u_{x_0}^*$ on $\pa B_r$. Similarly, $v_{x_0}^\sharp$ is a harmonic function, odd in $y_n$, with $v_{x_0}^\sharp=u_{x_0}^\sharp$ on $\pa B_r$. Thus, $v_{x_0}^*$ is the energy minimizer of $$
\int_{B_r}|\D v_{x_0}^*|^2\quad\text{on } \mathfrak{K}_{0,u_{x_0}^*}(B_r,\Pi), 
$$
and so $V_{x_0}^*$ is the energy minimizer of  $$
\int_{E_r(x_0)}\langle A(x_0)\D V_{x_0}^*,\D V_{x_0}^*\rangle\quad\text{on } \mathfrak{K}_{0,U_{x_0}^*}(E_r(x_0),\Pi).
$$
Thus, to show \eqref{eq:e_u-alm-min-property}, it is enough to show $$
\int_{B_r}|\D u_{x_0}^*|^2\le (1+M_1r^\al)\int_{B_r}|\D v_{x_0}^*|^2.
$$
To this end, we first observe that the quasisymmetry of $U$ implies the quasisymmetry of $u_{x_0}$: $$
\int_{B_r}|\D u_{x_0}^\sharp|^2\le C\int_{B_r}|\D u_{x_0}^*|^2.
$$
Using this, together with the symmetry of $u_{x_0}^*$, $u_{x_0}^\sharp$, $v_{x_0}^*$ and $v_{x_0}^\sharp$, we have
\begin{align*}
    \int_{B_r}|\D u_{x_0}^*|^2
    &=\int_{B_r}|\D u_{x_0}|^2-\int_{B_r}|\D u_{x_0}^\sharp|^2\\
    &\qquad\le (1+Mr^\al)\int_{B_r}|\D v_{x_0}|^2-\int_{B_r}|\D u_{x_0}^\sharp|^2\\
    &\qquad=(1+Mr^\al)\int_{B_r}|\D v_{x_0}^*|^2+(1+Mr^\al)\int_{B_r}|\D v_{x_0}^\sharp|^2-\int_{B_r}|\D u_{x_0}^\sharp|^2\\
    &\qquad\le(1+M r^\al)\int_{B_r}|\D v_{x_0}^*|^2+Mr^\al\int_{B_r}|\D u_{x_0}^\sharp|^2\\
    &\qquad\le (1+Mr^\al)\int_{B_r}|\D v_{x_0}^*|^2+CMr^\al\int_{B_r}|\D u_{x_0}^*|^2.
\end{align*}
Therefore, \begin{align*}
    \int_{B_r}|\D u_{x_0}^*|^2\le \frac{1+Mr^\al}{1-CMr^\al}\int_{B_r}|\D v_{x_0}^*|^2\le (1+M_1r^\al)\int_{B_r}|\D v_{x_0}^*|^2,
\end{align*}
for $0<r<r_1=(2CM)^{-1/\al}$, as desired.
\end{proof}

\begin{remark}
If $U$ satisfies the following \emph{weak quasisymmetry} with order $-\g$: $$
\int_{E_r(x_0)}|\D U|^2\le Q\,r^{-\g}\int_{E_r(x_0)}|\D U_{x_0}^*|^2,
$$
whenever $E_r(x_0)\Subset B_1$, $x_0\in B'_1$ for some $0<\g<\al$, then it is easy to see from the proof of Lemma~\ref{lem:quasisym-alm-min} that $U_{x_0}^*$ satisfies \eqref{eq:e_u-alm-min-property}, but with $\al-\g>0$ instead of $\al$.
\end{remark}

\begin{theorem}\label{thm:u_e-grad-holder}
Let $U$ be an $A$-quasisymmetric almost minimizer for the $A$-Signorini problem  in $B_{1}$. Then
for $x_0\in B_{1/2}'$ and $0<r\leq (1/2)\Lambda^{-1/2}$, we have $U_{x_0}^*\in C^{1,\be}(E^\pm_r(x_0)\cup
E'_{r}(x_0))$ with $\be=\frac\al{4(2n+\al)}$. Moreover,
$$
\| U_{x_0}^*\|_{C^{1,\be}(K)}\le C(n,\al,M,K,r)\|U_{x_0}^*\|_{W^{1,2}(E_r(x_0))},
$$
for any $K\Subset E_r^\pm(x_0)\cup E'_r(x_0)$.
Similarly,
$u_{x_0}^*\in
C^{1,\be}(B_r^\pm\cup B_r')$ with 
\[
\|u_{x_0}^*\|_{C^{1,\be}(K)}\le C(n,\al,M,K,r)\|u_{x_0}^*\|_{W^{1,2}(B_r)},
\]
for any $K\Subset B_r^\pm\cup B_r'$.
\end{theorem}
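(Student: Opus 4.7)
My plan is to deduce both claims as consequences of the $C^{1,\beta}$ regularity of $U$ itself (Theorem~\ref{thm:var-grad-holder}), exploiting that $U^*_{x_0}=(U+U\circ P_{x_0})/2$ is built from $U$ via the affine involution $P_{x_0}$, and that $u^*_{x_0}=U^*_{x_0}\circ \bar T_{x_0}^{-1}$ is an affine change of coordinates of $U^*_{x_0}$.

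First I would localize Theorem~\ref{thm:var-grad-holder} from $B_1$ to the ellipsoid $E_r(x_0)$ by rescaling. Under $y\mapsto x_0+r\bar{\mfa}_{x_0}y$, the ellipsoid $E_r(x_0)$ becomes $B_1$ and $U$ transforms to an almost minimizer on $B_1$ for a new H\"older continuous coefficient matrix $\tilde A(y)=\bar{\mfa}_{x_0}^{-1}A(x_0+r\bar{\mfa}_{x_0}y)\bar{\mfa}_{x_0}^{-T}$ (with $\tilde A(0)=I$) still satisfying \eqref{eq:M} with adjusted constants. Pulling back the resulting estimate from Theorem~\ref{thm:var-grad-holder} yields
\[
\|U\|_{C^{1,\beta}(K)}\leq C(n,\alpha,M,K,r)\,\|U\|_{W^{1,2}(E_r(x_0))}
\]
for any $K\Subset E_r^\pm(x_0)\cup E_r'(x_0)$. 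Next, since $P_{x_0}$ is a linear involution fixing $\Pi$ pointwise and interchanging the half-ellipsoids $E_r^\pm(x_0)$, it restricts to a bi-Lipschitz diffeomorphism between their closures in $E_r(x_0)\cup E_r'(x_0)$; hence $U\circ P_{x_0}\in C^{1,\beta}(E_r^\pm(x_0)\cup E_r'(x_0))$ with the analogous bound, and summing gives $U^*_{x_0}\in C^{1,\beta}(E_r^\pm(x_0)\cup E_r'(x_0))$ with a bound in terms of $\|U\|_{W^{1,2}(E_r(x_0))}$.

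To replace $\|U\|_{W^{1,2}(E_r(x_0))}$ by $\|U^*_{x_0}\|_{W^{1,2}(E_r(x_0))}$ I would invoke the quasisymmetry of $U$. The equivalent form \eqref{eq:Q-sym-alt} controls the gradient of the odd part by that of the even part, $\|\nabla U^\sharp_{x_0}\|_{L^2(E_r(x_0))}\leq C\|\nabla U^*_{x_0}\|_{L^2(E_r(x_0))}$, and since $U^\sharp_{x_0}\equiv 0$ on $E_r'(x_0)$, a Poincar\'e inequality --- cleanest in the deskewed coordinates, where $u^\sharp_{x_0}$ is the standard odd reflection vanishing on $B_r'$ --- gives $\|U^\sharp_{x_0}\|_{L^2(E_r(x_0))}\leq Cr\|\nabla U^\sharp_{x_0}\|_{L^2(E_r(x_0))}$. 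Combined with the orthogonal decompositions \eqref{eq:U2-sym-dec}--\eqref{eq:DU2-sym-dec}, this yields $\|U\|_{W^{1,2}(E_r(x_0))}\leq C\|U^*_{x_0}\|_{W^{1,2}(E_r(x_0))}$, producing the desired estimate for $U^*_{x_0}$. The corresponding statement for $u^*_{x_0}$ on $B_r^\pm\cup B_r'$ then follows by composing with $\bar T_{x_0}^{-1}$, which is affine, preserves $\Pi$, and sends $B_r^\pm$ onto $E_r^\pm(x_0)$ (because the $n$th coordinate scales by the positive factor $|\mfa_{x_0}e_n|$), via the change of variables formulas \eqref{eq:u-u_x-0}--\eqref{eq:u-u_x-1}.

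I expect the main obstacle to be the localization step: converting the $\|U\|_{W^{1,2}(B_1)}$ bound on the right-hand side of Theorem~\ref{thm:var-grad-holder} into a $\|U\|_{W^{1,2}(E_r(x_0))}$ bound via the rescaling above, while carefully tracking how the coefficient matrix, the $C^{1,\beta}$-seminorm, and the $W^{1,2}$-norm all transform under the affine change of coordinates and how these combine with the quasisymmetry and Poincar\'e bounds to produce a clean $r$-dependent constant.
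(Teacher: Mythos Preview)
Your proposal is correct and follows essentially the same route as the paper's proof: both start from Theorem~\ref{thm:var-grad-holder} applied to $U$, pass to $U_{x_0}^*$ via the reflection $P_{x_0}$, then use the orthogonal decomposition \eqref{eq:U2-sym-dec}--\eqref{eq:DU2-sym-dec}, quasisymmetry \eqref{eq:Q-sym-alt}, and a Poincar\'e inequality on the odd part to convert $\|U\|_{W^{1,2}(E_r(x_0))}$ into $\|U_{x_0}^*\|_{W^{1,2}(E_r(x_0))}$, with the $u_{x_0}^*$ statement following by the affine change $\bar T_{x_0}$. The only substantive difference is that you spell out the localization/rescaling step explicitly (and correctly---the pullback ellipsoids do match, since $\mfa_{x_0}^{-1}\mfa_{y_0}$ has polar part $(\mfa_{x_0}^{-1}A(y_0)\mfa_{x_0}^{-1})^{1/2}$), whereas the paper tacitly passes from $\|U\|_{W^{1,2}(B_1)}$ to $\|U\|_{W^{1,2}(E_r(x_0))}$; and the paper justifies Poincar\'e via $\mean{u_{x_0}^\sharp}_{B_r}=0$ rather than vanishing on $B_r'$, which is equivalent here.
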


\begin{proof}
From Theorem~\ref{thm:var-grad-holder}, we have $ U\in
C^{1,\be}(B_{1}^\pm\cup B_{1}')$, which immediately gives $U_{x_0}^*\in C^{1,\be}(E^\pm_{r}(x_0)\cup
E'_{r}(x_0))$, by using the inclusion $E_{r}(x_0)\subset
B_{\Lambda^{1/2}r}(x_0)\subset B_{1}$. Thus, for $$
\widehat{\D U_{x_0}^*}(x',x_n):=\begin{cases} \D U_{x_0}^*(x',x_n),&x_n\ge 0\\ \D U_{x_0}^*(x',-x_n),& x_n<0,
\end{cases} 
$$
we have $\widehat{\D U_{x_0}^*}\in C^{0,\be}(E_r(x_0))$ with
$$
\|\widehat{\D U_{x_0}^*}\|_{C^{0,\be}(K)}\le C(n,\al,M,K,r)\|U\|_{W^{1,2}(E_r(x_0))},
$$
for any $K\Subset E_r(x_0)$. Hence, it is enough to show that
$$
\|U\|_{W^{1,2}(E_r(x_0))}\le C\|U_{x_0}^*\|_{W^{1,2}(E_r(x_0))}.
$$
Now, note that by \eqref{eq:U2-sym-dec}--\eqref{eq:DU2-sym-dec}, we
readily have
$$
\|U\|_{W^{1,2}(E_r(x_0))}\le C\left(\|U_{x_0}^*\|_{W^{1,2}(E_r(x_0))}+\|U_{x_0}^\sharp\|_{W^{1,2}(E_r(x_0))}\right),
$$
and thus, it will suffice to show that
$$
\|U_{x_0}^\sharp\|_{W^{1,2}(E_r(x_0))}\le C\|U_{x_0}^*\|_{W^{1,2}(E_r(x_0))}.
$$
By the symmetry again, $$
\mean{U_{x_0}^\sharp}_{E_r(x_0)}=\mean{u_{x_0}^\sharp}_{B_r}=0,
$$
thus by Poincare's inequality, \begin{align}\label{eq:poin-ineq}
\|U_{x_0}^\sharp\|_{L^2(E_r(x_0))}\le C(n,M)r\|\D U_{x_0}^\sharp\|_{L^2(E_r(x_0))}.
\end{align}
Finally, by the quasisymmetry of $U$, we have
\begin{align*}
    \|\D U_{x_0}^\sharp\|_{L^2(E_r(x_0))}\le C\|\D U_{x_0}^*\|_{L^2(E_r(x_0))},
\end{align*}
see \eqref{eq:Q-sym-alt}.
This completes the proof of the theorem for $U_{x_0}^*$.

Applying now
the affine transformation $\bar{T}_{x_0}$, we obtain the part of the
theorem for $u_{x_0}^*$.
\end{proof}

We complete this section with a version of Signorini's complementarity condition that will play an important role in the analysis of the free boundary.

\begin{lemma}[Complementarity condition]\label{u_ex0-comp-cond}
Let $U$ be an $A$-quasisymmetric almost minimizer for the $A$-Signorini problem  in $B_{1}$, and $x_0\in B'_{1/2}$. Then $u_{x_0}^*$ satisfies the following \emph{complementarity condition}
$$
u_{x_0}^*(\partial_{y_n}^+u_{x_0}^*)
=0 \quad\text{on } B_{R_0}',\quad R_0=(1/2)\Ld^{-1/2},
$$
where $\partial_{y_n}^+u_{x_0}^*$ on $B_{R_0}'$ is computed as the limit from inside $B_{R_0}^+$.
Moreover, if $x_0\in \Gamma(U)$, then
$$
u_{x_0}^*(0)=0\quad\text{and}\quad |\widehat{\D u_{x_0}^*}(0) |=0.
$$
\end{lemma}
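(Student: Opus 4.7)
The plan is to establish the complementarity condition via a first-variation (Euler--Lagrange) argument applied to the almost-Signorini property of $u_{x_0}^*$, and then to deduce the second statement from the first by continuity together with the definition of the free boundary.

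For the first assertion, the task reduces to showing that at each $y_0'\in B_{R_0}'$ with $u_{x_0}^*(y_0',0)>0$, the one-sided normal derivative $\partial_{y_n}^+ u_{x_0}^*(y_0',0)$ vanishes (the case $u_{x_0}^*(y_0',0)=0$ being trivial). By Theorem~\ref{thm:u_e-grad-holder}, the continuity of $u_{x_0}^*$ provides $c,\delta>0$ with $u_{x_0}^*\ge c$ on $B_\delta'(y_0',0)$. For each test function $\phi\in C_c^\infty(B_1)$, the rescaled perturbation $\phi_r(y):=r\,\phi\bigl((y-(y_0',0))/r\bigr)$ is supported in $B_r((y_0',0))$ with $\|\phi_r\|_\infty\le r\|\phi\|_\infty$, so for $r$ small $u_{x_0}^*+\phi_r$ is admissible. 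Applying the almost-Signorini property of $u_{x_0}^*$ (inherited from Lemma~\ref{lem:quasisym-alm-min} after a localization to a ball centered at $(y_0',0)$), expanding the square, and rearranging gives
\[
-\omega(r)\int|\nabla u_{x_0}^*|^2 \le 2(1+\omega(r))\int\langle\nabla u_{x_0}^*,\nabla\phi_r\rangle + (1+\omega(r))\int|\nabla\phi_r|^2.
\]
Dividing by $r^n$ and sending $r\to 0^+$, the $C^{1,\beta}$ regularity on each side of the thin space ensures $\nabla u_{x_0}^*(y_0'+r\,\cdot)$ converges uniformly on each of $B_1^\pm$ to the one-sided limits $\nabla u_{x_0}^*(y_0',0^\pm)$, and the divergence theorem on $B_1^\pm$ combined with the evenness of $u_{x_0}^*$ in $y_n$ yields
\[
\frac{1}{r^n}\int\langle\nabla u_{x_0}^*,\nabla\phi_r\rangle \;\longrightarrow\; -2\,\partial_{y_n}^+ u_{x_0}^*(y_0',0)\int_{B_1'}\phi.
\]
The limit inequality is
\[
0\le -4\,\partial_{y_n}^+ u_{x_0}^*(y_0',0)\int_{B_1'}\phi + \|\nabla\phi\|_{L^2(B_1)}^2,
\]
and replacing $\phi$ by $\epsilon\phi$, dividing by $\epsilon$, and letting $\epsilon\to 0^+$ (then reversing the sign of $\phi$) forces $\partial_{y_n}^+ u_{x_0}^*(y_0',0)=0$.

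For the second assertion, when $x_0\in\Gamma(U)$: the continuity of $U$ together with the closedness of the coincidence set gives $U(x_0)=0$, hence $u_{x_0}^*(0)=U(x_0)=0$. Since $\Gamma(U)$ is the $\Pi$-boundary of $\{U=0\}\cap\Pi$, there exist $x_k\to x_0$ with $x_k\in\Pi$ and $U(x_k)>0$; setting $y_k':=\bar{T}_{x_0}(x_k)\to 0$ in $\{y_n=0\}$, one has $u_{x_0}^*(y_k',0)=U(x_k)>0$, so Part~1 applied at each $y_k'$ gives $\partial_{y_n}^+ u_{x_0}^*(y_k',0)=0$. By the continuity of this trace on $B_{R_0}'$ (from $C^{1,\beta}$ regularity), $\partial_{y_n}^+ u_{x_0}^*(0)=0$. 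The tangential gradient $\nabla'_y u_{x_0}^*(0)$ vanishes because $u_{x_0}^*|_{\{y_n=0\}}=U\circ\bar{T}_{x_0}^{-1}\ge 0$ achieves its minimum value $0$ at $y'=0$ and is $C^1$ there. Combining, $|\widehat{\nabla u_{x_0}^*}(0)|=0$.

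The hard part is the localization of the almost-Signorini property referenced above: Lemma~\ref{lem:quasisym-alm-min} is stated for ellipsoids centered at the deskewing point $x_0$, whereas Part~1 needs an analogous inequality for balls centered at arbitrary $(y_0',0)\in B_{R_0}'$. I would handle this by extending the local competitor $u_{x_0}^*+\phi_r$ to a larger ball (of radius comparable to $|y_0'|+r$) by $u_{x_0}^*$, applying Lemma~\ref{lem:quasisym-alm-min} there, and then using the gauge decay $\omega(r)\le M r^\alpha$ together with the $C^{1,\beta}$-control on $\nabla u_{x_0}^*$ to absorb the excision error into an effective gauge small enough to drop out in the scaling limit.
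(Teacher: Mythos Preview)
There is a genuine gap in your localization step. The almost Signorini property of $u_{x_0}^*$ furnished by Lemma~\ref{lem:quasisym-alm-min} is only available on balls $B_t(0)$ \emph{centered at the origin}, with gauge $\omega(t)$. Your proposed fix --- extending the competitor $u_{x_0}^*+\phi_r$ by $u_{x_0}^*$ to a ball $B_t(0)$ with $t\sim |y_0'|+r$ --- yields, after expanding the square and rearranging,
\[
0 \le \omega(t)\int_{B_t}|\nabla u_{x_0}^*|^2 + (1+\omega(t))\Bigl[2\int\langle\nabla u_{x_0}^*,\nabla\phi_r\rangle + \int|\nabla\phi_r|^2\Bigr].
\]
As $r\to 0$ the radius $t\to |y_0'|>0$ is fixed, so the first term stays of order $\omega(|y_0'|)\int_{B_{|y_0'|}}|\nabla u_{x_0}^*|^2>0$, while the bracketed terms are $O(r^n)$. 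After dividing by $r^n$, the error term blows up and swamps the useful information; no $C^{1,\beta}$ control on $\nabla u_{x_0}^*$ can remedy this, because the obstruction is in the gauge (which does not decay at the fixed scale $|y_0'|$), not in the regularity of the gradient. The multiplicative form of the almost-minimizing inequality simply does not produce a usable first-variation identity at points away from the center.

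The paper takes a different and much simpler route that bypasses first variation entirely. At $y_0\in B_{R_0}'$ with $u_{x_0}^*(y_0)>0$, one has $U>0$ on the open set $\Omega=\bar{T}_{x_0}^{-1}(B_\delta(y_0))$. The key observation is that $U$ is then \emph{almost $A$-harmonic} in $\Omega$: for any $E_r(y)\Subset\Omega$, the $A(y)$-harmonic replacement $V$ of $U$ has positive boundary values, hence $V>0$ on $\overline{E_r(y)}$ by the minimum principle, so $V\in\mathfrak{K}_{0,U}(E_r(y),\Pi)$ is an admissible Signorini competitor and the almost-minimizing inequality applies. Theorem~\ref{thm:var-alm-har-reg} then gives $U\in C^{1,\alpha/2}(\Omega)$ --- in particular $C^1$ \emph{across} $\Pi$ --- so $u_{x_0}$ and $u_{x_0}^*$ are $C^1$ in a full neighborhood of $y_0$, and even symmetry in $y_n$ forces $\partial_{y_n}^+u_{x_0}^*(y_0)=0$. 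The missing idea in your approach is this use of the maximum principle to remove the obstacle constraint locally on $\{U>0\}$, upgrading almost-Signorini to almost-harmonic.

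Your argument for the second assertion (vanishing of $u_{x_0}^*(0)$ and of $\widehat{\nabla u_{x_0}^*}(0)$ when $x_0\in\Gamma(U)$) is correct and more detailed than the paper's.
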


\begin{proof} 
  Let $y_0\in B_{R_0}'$ be such
  that $u_{x_0}^*(y_0)>0$. Then we need to show that $\partial_{y_n}^+u_{x_0}^*(y_0)=0$. Since $u_{x_0}=u_{x_0}^*$ on $\Pi$,
  we have $u_{x_0}(y_0)>0$ and by continuity $u_{x_0}>0$ in a small
  ball $B_{\delta}(y_0)$. Then $U>0$ in
  $\Omega=\bar{T}_{x_0}^{-1}(B_{\delta}(y_0))$. We claim now that $U$ is
  almost $A$-harmonic  in $\Omega$. Indeed,  if $E_r(y)\Subset \Om$ (not necessarily with $y\in B_1'$) and $V$ is $A(y)$-harmonic replacement of $U$ on $E_r(y)$ (i.e.\ $\dv(A(y)\D V) =0$ in $E_r(y)$ with $V=U$ on $\pa E_r(y)$), then since $V=U>0$ on $\pa E_r(y)$, by the minimum principle $V>0$ on $\overline{E_r(y)}$. This means that $V\in \mathfrak{K}_{0,U}(E_r(y),\Pi)$ and therefore we must have $$
  \int_{E_r(y)}\langle A(y)\D U, \D U\rangle \le (1+\om({r}))\int_{E_r(y)}\langle A(y)\D V,\D V\rangle,
  $$
which also implies that $U$ is an almost $A$-harmonic function in $\Om$. Hence, $U\in C^{1, \alpha/2}(\Om)$ by
Theorem~\ref{thm:var-alm-har-reg}, implying also that $u_{x_0}\in
C^{1,\alpha/2}(B_\delta(y_0))$. Consequently, also $u_{x_0}^*\in
C^{1,\alpha/2}(B_\delta(y_0))$ and by even symmetry in the $y_n$-variable,
we therefore conclude that $\partial_{y_n}^+u_{x_0}^*(y_0)=0$.

The second part of the lemma now follows by the $C^{1,\be}$ regularity and the complementarity condition.
\end{proof}


\section{Weiss- and Almgren-type monotonicity formulas}
\label{sec:weiss-almgren-type}

In this section we introduce two technical tools: Weiss- and
Almgren-type monotonicity formulas, that will play a fundamental role
in the analysis of the free boundary. In fact, the proofs of these
formulas follow immediately from the case $A\equiv I$,
following the deskewing procedure. 

To proceed, we fix a constant $\ka_0>0$. We can take it as large as we
want, however, some constants in what follows, will depend on
$\ka_0$. Then for $0<\ka<\ka_0$, we consider the Weiss-type energy functional introduced in \cite{JeoPet19a}: \begin{align*}
    W_{\ka}(t, v, x_0):=
    &\frac{e^{a t^{\al}}}{t^{n+2\ka-2}}\left[\int_{B_{t}(x_0)}|\D v|^2-\kappa\frac{1-b t^\alpha}{t}\int_{\pa B_{t}(x_0)}v^2\right],
  \end{align*}
  with
$$
a=a_\kappa=\frac{M(n+2\ka-2)}{\al},\quad b=\frac{M(n+2\ka_0)}\alpha.
$$
(The formula in \cite{JeoPet19a} corresponds to the case $M=1$.)
Based on that, we define an appropriate version of
Weiss's functional for our problem.  For a function $V$ in $E_r(x_0)$, let
\begin{multline}\label{eq:Weiss-def2}
    W_{\ka}^A(t, V, x_0):=
    \frac{e^{a t^{\al}}}{t^{n+2\ka-2}}\left[\int_{E_{t}(x_0)}\langle
      A(x_0)\D V, \D V\rangle\right.\\ -\kappa\left.\frac{1-b t^\alpha}{t}\int_{\pa
        E_{t}(x_0)}V^2\mu_{x_0}(x-x_0)\right],
  \end{multline}
for $0<t<r$, with $a$, $b$ same as above, where the weight $\mu_{x_0}$ is as in
\eqref{eq:mu}. Note that by the change of variables formulas
\eqref{eq:u-u_x-0}--\eqref{eq:u-u_x-bdry}, we have
\begin{equation}\label{eq:Weiss-def1}
W_\kappa^A(t,V,x_0):=\det \mfa_{x_0} W_\kappa(t,v_{x_0},0),\quad v_{x_0}=V\circ
\bar{T}_{x_0}^{-1}.
\end{equation}

Let now $U$ be an $A$-quasisymmetric almost minimizer for the
$A$-Signorini problem  in $B_{1}$ and $x_0\in B'_{1/2}$. By
Lemma~\ref{lem:quasisym-alm-min}, $U_{x_0}^*$ satisfies the almost
$A$-Signorini property at $x_0$ in $E_{(1/2)\Ld^{-1/2}}(x_0)$. Thus
$u_{x_0}^*$ also satisfies the almost Signorini property at $0$ in
$B_{(1/2)\Ld^{-1/2}}$. By using this observation, we then have the following Weiss-type monotonicity formulas for $U_{x_0}^*$ and $u_{x_0}^*$.

\begin{theorem}[Weiss-type monotonicity formula]\label{thm:var-weiss}
Let $U$ be an $A$-quasisymmetric almost minimizer for the $A$-Signorini
problem  in $B_{1}$. Suppose $x_0\in
B'_{1/2}$ and $U(x_0)=0$. Let $0<\ka<\ka_0$ with a fixed
$\ka_0>0$. Then, for $0<t<t_0=t_0(n,\al,\ka_0,M)$,
\begin{align*}
  &\frac d{dt}W_{\ka}(t, u_{x_0}^*, 0)
  \ge \frac{e^{a t^{\al}}}{t^{n+2\ka-2}}\int_{\partial
  B_t} \left(\partial_\nu u_{x_0}^* -\frac{\kappa(1-b
  t^\alpha)}{t}u_{x_0}^*\right)^2,\\
  &\begin{multlined}
    \frac{d}{dt}W_\kappa^A(t, U_{x_0}^*, x_0)\\
\geq \frac{e^{a t^{\al}}}{t^{n+2\ka-2}}\int_{\partial
  E_t(x_0)} \left(\langle \mfa_{x_0}\D U_{x_0}^*, \nu\rangle -\frac{\kappa(1-b
    t^\alpha)}{t}U_{x_0}^*\right)^2\mu_{x_0}(x-x_0).
\end{multlined}
\end{align*}
In particular, $W_\ka(t,u_{x_0}^*, 0)$ and $W_\ka^A(t,U_{x_0}^*,x_0)$ are nondecreasing in $t$ for $0<t<t_0$.
\end{theorem}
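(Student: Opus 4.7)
The plan is to reduce the assertion to the constant-coefficient case already handled in \cite{JeoPet19a} via the deskewing procedure of Section~2, and then to transfer the result back to the variable-coefficient setting through the change-of-variables identity \eqref{eq:Weiss-def1}.

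First, I would verify that $u_{x_0}^*$ satisfies the hypotheses of the Weiss-type monotonicity formula from \cite{JeoPet19a}. By Lemma~\ref{lem:quasisym-alm-min} applied to the $A$-quasisymmetric almost minimizer $U$, the symmetrization $U_{x_0}^*$ satisfies the almost $A$-Signorini property in $E_r(x_0)\Subset B_1$, i.e.\
\[
\int_{E_r(x_0)}\langle A(x_0)\D U_{x_0}^*,\D U_{x_0}^*\rangle \le (1+Mr^\al)\int_{E_r(x_0)}\langle A(x_0)\D W,\D W\rangle
\]
for all $W\in \mfK_{0,U_{x_0}^*}(E_r(x_0),\Pi)$. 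Following the argument in Lemma~\ref{lem:u_x-alm-min}, by transferring through $\bar{T}_{x_0}$ and using \eqref{eq:u-u_x-1}, the function $u_{x_0}^*=U_{x_0}^*\circ\bar{T}_{x_0}^{-1}$ satisfies the almost Signorini property at $0$ in $B_{R_0}$, $R_0=(1/2)\Ld^{-1/2}$. Moreover, by construction $u_{x_0}^*$ is even symmetric in $y_n$, and the assumption $U(x_0)=0$ together with $U=U_{x_0}^*$ on $\Pi$ (combined with Lemma~\ref{u_ex0-comp-cond}) gives $u_{x_0}^*(0)=0$. Thus $u_{x_0}^*$ meets precisely the hypotheses of the Weiss monotonicity formula proved in the constant-coefficient, symmetric setting of \cite{JeoPet19a}.

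Applying that result verbatim to $u_{x_0}^*$ yields the first inequality
\[
\frac{d}{dt}W_\ka(t,u_{x_0}^*,0)\ge \frac{e^{at^\al}}{t^{n+2\ka-2}}\int_{\pa B_t}\left(\pa_\nu u_{x_0}^*-\frac{\ka(1-bt^\al)}{t}u_{x_0}^*\right)^2,
\]
for $0<t<t_0(n,\al,\ka_0,M)$, where $t_0$ is chosen small enough that the parameters $a=a_\ka$, $b$ produce the required sign. For the second inequality, the identity \eqref{eq:Weiss-def1} gives $W_\ka^A(t,U_{x_0}^*,x_0)=\det\mfa_{x_0}\, W_\ka(t,u_{x_0}^*,0)$, whence
\[
\frac{d}{dt}W_\ka^A(t,U_{x_0}^*,x_0)=\det\mfa_{x_0}\,\frac{d}{dt}W_\ka(t,u_{x_0}^*,0).
\]
It then remains to translate the boundary integrand on $\pa B_t$ into one on $\pa E_t(x_0)$ using the coarea-based formula \eqref{eq:u-u_x-bdry}, namely
\[
\int_{\pa E_t(x_0)} g(x)\mu_{x_0}(x-x_0)\,dS_x=\det\mfa_{x_0}\int_{\pa B_t} g(\bar{T}_{x_0}^{-1}(y))\,dS_y.
\]
The subtle point is to identify the pointwise correspondence: applying the chain rule to $u_{x_0}^*(y)=U_{x_0}^*(\mfa_{x_0}O_{x_0}y+x_0)$, one has $\nabla u_{x_0}^*(y)=O_{x_0}^{-1}\mfa_{x_0}\nabla U_{x_0}^*(x)$, so that on $\pa B_t$,
\[
\pa_\nu u_{x_0}^*(y)=\tfrac{1}{t}\langle \mfa_{x_0}\nabla U_{x_0}^*(x),\mfa_{x_0}^{-1}(x-x_0)\rangle=\langle \mfa_{x_0}\nabla U_{x_0}^*(x),\nu\rangle,
\]
with the convention that $\nu=\mfa_{x_0}^{-1}(x-x_0)/t=O_{x_0}(y/|y|)$ — the pushforward of the Euclidean outer normal of $\pa B_t$ through the rotation $O_{x_0}$, which is a unit vector on $\pa E_t(x_0)$ but not the Euclidean normal to it. Together with $U_{x_0}^*(x)=u_{x_0}^*(y)$, this matches the integrands squared and yields the claimed estimate.

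The main obstacle is precisely this last identification: one has to keep track of \emph{which} unit direction plays the role of $\nu$ in the deskewed setting, and to check that its associated line integral along the $t$-direction is consistent with the conformal weight $\mu_{x_0}$ in the coarea formula. Once the pointwise identity $\pa_\nu u_{x_0}^*(y)=\langle \mfa_{x_0}\nabla U_{x_0}^*(x),\nu\rangle$ is verified, the second monotonicity inequality follows immediately from the first by multiplying through by $\det\mfa_{x_0}$ and applying \eqref{eq:u-u_x-bdry}. Monotonicity of both $W_\ka$ and $W_\ka^A$ in $t$ is then an immediate corollary, since the derivatives are bounded below by nonnegative quantities.
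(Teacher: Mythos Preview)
Your proposal is correct and follows essentially the same approach as the paper: reduce to the constant-coefficient result of \cite{JeoPet19a} via Lemma~\ref{lem:quasisym-alm-min} and the almost Signorini property of $u_{x_0}^*$ at $0$, then transfer back through \eqref{eq:Weiss-def1}. The paper is terser, simply stating ``the part of the theorem for $U_{x_0}^*$ follows by a change of variables,'' whereas you work out explicitly that the $\nu$ appearing in $\langle \mfa_{x_0}\nabla U_{x_0}^*,\nu\rangle$ is the unit vector $\mfa_{x_0}^{-1}(x-x_0)/t$ (the $O_{x_0}$-pushforward of the Euclidean normal on $\partial B_t$), which is useful clarification. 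One minor remark: the invocation of Lemma~\ref{u_ex0-comp-cond} for $u_{x_0}^*(0)=0$ is unnecessary, since this follows directly from $u_{x_0}^*(0)=U_{x_0}^*(x_0)=U(x_0)=0$ using $U=U_{x_0}^*$ on $\Pi$.
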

\begin{proof} We note that the proof of
  \cite{JeoPet19a}*{Theorem~5.1}
for the monotonicity of $W_\kappa(t,v,x_0)$  
requires the function $v$ to be an almost minimizer for the Signorini
problem with $v(x_0)=0$ for the monotonicity of its energy. However,
it is not hard to see that the almost minimizing property of $v$ is
used only when it is compared with the $\ka$-homogeneous replacement
$w$ of $v$ on balls centered at the given point $x_0$ to obtain
$$ 
\int_{B_t(x_0)}|\D w|^2\ge \frac 1{1+t^\al}\int_{B_t(x_0)}|\D v|^2,
$$
see (5.2) in \cite{JeoPet19a}.
This means that the argument in the proof of \cite{JeoPet19a}*{Theorem~5.1} also
works in our case as long as $u_{x_0}^*(0)=U(x_0)=0$ and implies the
part of the theorem for $u_{x_0}^*$. We note that
the constants $a_\kappa$ and $b$ in our case will have an additional
factor of $M$, as we 
work with $\omega(r)=Mr^\alpha$ rather than $\omega(r)=r^\alpha$ in
our case, but this change of the constants can be easily traced.

The part of the theorem for $U_{x_0}^*$ follows by a change of variables.
\end{proof}

The families of monotonicity formulas
$\{W_{\kappa}\}_{0<\kappa<\kappa_0}$ and
$\{W_{\kappa}^A\}_{0<\kappa<\kappa_0}$ have an important feature that
their intervals of monotonicity and the constant $b$ can be taken the
same for all $0<\kappa<\kappa_0$. Because of that, their monotonicity
indirectly implies that of another important quantity that we describe
below. Namely,
recall that for a function $v$ in $B_r(x_0)$, \emph{Almgren's frequency} of $v$ at
$x_0$ is defined as
$$
N(t,v,x_0):=\frac{t\int_{B_t(x_0)}|\D v|^2}{\int_{\pa
    B_t(x_0)}v^2},\quad 0<t<r.
$$
Note that this quantity is well-defined when $v$ has an
almost Signorini property at $x_0$ and $x_0\in\Gamma(v)$, since
vanishing of $\int_{\pa B_t(x_0)}v^2$ for any $t>0$, would imply
vanishing of $v$ in $B_t(x_0)$ by taking $0$ as a competitor and
consequently that $x_0\notin\Gamma(v)$.

Next consider a modification of $N$, which we call the \emph{truncated frequency}:
$$
\widehat N_{\ka_0}(t,v,x_0):=\min\left\{\frac 1{1-bt^\al}N(t,v,x_0),\ka_0\right\},
$$
where $b$ is as in Weiss-type monotonicity formulas for $\kappa<\kappa_0$. We next define the
appropriate version of $N$, $\widehat{N}_{\ka_0}$ in our
setting.
For a function $V$ in $E_r(x_0)$, we define
\begin{align*}
N^A(t,V,x_0)&:=N(t,v_{x_0},0),\\
\widehat{N}^A_{\ka_0}(t,V,x_0)&:=\widehat{N}_{\ka_0}(t,v_{x_0},0),
\end{align*}
for $0<t<r$, where $v_{x_0}=V\circ \bar{T}_{x_0}^{-1}$.
More explicitly, we have
\begin{align*}
N^A(t,V,x_0)&:=\frac{t\int_{E_t(x_0)}\langle A(x_0)\D V,\D V\rangle}{\int_{\pa
    E_t(x_0)}V^2\mu_{x_0}(x-x_0)},\\
\widehat N_{\ka_0}^A(t,V,x_0)&:=\min\left\{\frac 1{1-bt^\al}N^A(t,V,x_0),\ka_0\right\}.
\end{align*}
As observed in \cite{JeoPet19a}*{Theorem~5.4}, the  Weiss-type
monotonicity formula implies the following monotonicity of $\widehat
N_{\ka_0}^A$. 

\begin{theorem}[Almgren-type monotonicity formula]\label{thm:var-Almgren}
  Let $U$, $\kappa_0$, and $t_0$ be as in Theorem~\ref{thm:var-weiss}, and
  $x_0\in B'_{1/2}$ a free boundary point. Then
  $$t\mapsto\widehat
  N_{\ka_0}^A(t,U_{x_0}^*,x_0)=\widehat
  N_{\ka_0}(t,u_{x_0}^*,0)
  $$ is nondecreasing for $0<t<t_0$.
\end{theorem}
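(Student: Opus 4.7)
The equality $\widehat{N}^A_{\kappa_0}(t,U^*_{x_0},x_0)=\widehat{N}_{\kappa_0}(t,u^*_{x_0},0)$ is already built into our definitions via the change of variables, so it suffices to prove monotonicity of $t\mapsto \widehat{N}_{\kappa_0}(t,u^*_{x_0},0)$ on $(0,t_0)$. My plan is to derive this from the one-parameter family of Weiss-type monotonicity formulas in Theorem~\ref{thm:var-weiss}, using the crucial fact that the constants $a=a_\kappa$ is explicit and $b$ in the definition of $W_\kappa$ are uniform in $\kappa\in(0,\kappa_0)$ and the interval of monotonicity $(0,t_0)$ is as well. Write $v:=u^*_{x_0}$ and set $D(t):=\int_{B_t}|\nabla v|^2$, $H(t):=\int_{\partial B_t}v^2$, so that $N(t,v,0)=tD(t)/H(t)$ and
\[
W_\kappa(t,v,0)=\frac{e^{at^\alpha}}{t^{n+2\kappa-2}}\Bigl[D(t)-\kappa\,\frac{1-bt^\alpha}{t}H(t)\Bigr].
\]
Note $H(t)>0$ for $0<t<t_0$ since $x_0\in\Gamma(U)$ (otherwise, using $0$ as a competitor via the almost Signorini property, $v\equiv 0$ in $B_t$, contradicting $0\in\Gamma(v)$).

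Fix $0<t_1<t_2<t_0$ and set $\kappa_1:=\widehat{N}_{\kappa_0}(t_1,v,0)\in(0,\kappa_0]$. I would establish the inequality $W_{\kappa_1}(t_1,v,0)\geq 0$ by two subcases. If $\kappa_1<\kappa_0$, then by definition $\kappa_1=\frac{N(t_1,v,0)}{1-bt_1^\alpha}$, equivalently $t_1D(t_1)=\kappa_1(1-bt_1^\alpha)H(t_1)$, so $W_{\kappa_1}(t_1,v,0)=0$. If $\kappa_1=\kappa_0$, then $\frac{N(t_1,v,0)}{1-bt_1^\alpha}\geq\kappa_0$, which rearranges to
\[
D(t_1)-\kappa_0\,\frac{1-bt_1^\alpha}{t_1}H(t_1)\geq 0,
\]
and hence $W_{\kappa_0}(t_1,v,0)\geq 0$.

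Next I would apply Theorem~\ref{thm:var-weiss} with $\kappa=\kappa_1$, which is valid since $\kappa_1\leq\kappa_0$ and both the monotonicity interval $(0,t_0)$ and the constant $b$ are independent of $\kappa$. This gives $W_{\kappa_1}(t_2,v,0)\geq W_{\kappa_1}(t_1,v,0)\geq 0$, i.e.,
\[
D(t_2)\geq\kappa_1\,\frac{1-bt_2^\alpha}{t_2}H(t_2),
\]
which is equivalent to $\frac{N(t_2,v,0)}{1-bt_2^\alpha}\geq\kappa_1$. Taking the minimum with $\kappa_0$ yields $\widehat{N}_{\kappa_0}(t_2,v,0)\geq\kappa_1=\widehat{N}_{\kappa_0}(t_1,v,0)$, which is the claimed monotonicity.

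The main (and essentially only) subtlety is that $\widehat{N}_{\kappa_0}$ need not coincide with $\frac{N}{1-b t^\alpha}$ everywhere, so one must separately handle the regime in which the truncation is active; the trick is to pick $\kappa=\kappa_1$ depending on $t_1$ so that $W_{\kappa_1}(t_1)\geq 0$ holds in both regimes, and then use the key structural feature that all members of the family $\{W_\kappa\}_{0<\kappa<\kappa_0}$ are monotone on the same interval with the same $b$.
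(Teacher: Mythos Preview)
Your proposal is correct and follows exactly the approach the paper invokes (via the citation to \cite{JeoPet19a}*{Theorem~5.4}): pick $\kappa_1=\widehat N_{\kappa_0}(t_1,v,0)$, observe that $W_{\kappa_1}(t_1,v,0)\geq 0$ by construction, and then use the uniformity of $b$ and $t_0$ across the family $\{W_\kappa\}$ to propagate this to $t_2$. One tiny technicality: Theorem~\ref{thm:var-weiss} is stated for $0<\kappa<\kappa_0$ strictly, so in the boundary case $\kappa_1=\kappa_0$ you should either apply the Weiss monotonicity with every $\kappa<\kappa_0$ and let $\kappa\nearrow\kappa_0$, or simply note that the monotonicity of $W_\kappa$ extends to $\kappa=\kappa_0$ by continuity of the formula in $\kappa$.
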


\begin{definition}[Almgren's frequency at free boundary point] For an  $A$-quasi\-sym\-met\-ric almost
  minimizer $U$ of the $A$-Signorini problem in $B_1$ and $x_0\in\Gamma(U)$ let
  $$
\kappa(x_0):=\widehat{N}^A_{\ka_0}(0+,U_{x_0}^*,x_0)=\widehat{N}_{\ka_0}(0+,u_{x_0}^*,0).
  $$
 We call $\kappa(x_0)$ \emph{Almgren's frequency} at $x_0$. 
\end{definition}
\begin{remark}\label{rem:kax_0}
Note that even though the monotonicity of the truncated frequency is
stated in Theorem~\ref{thm:var-Almgren} only for $x_0\in B_{1/2}'\cap
\Gamma(U)$, by a simple recentering and a scaling argument, it will be
monotone also at all $x_0\in\Gamma(U)$, but for a possibly shorter
interval of values $0<t<t_0(x_0)$ depending on $x_0$. Thus,
$\kappa(x_0)$ exists at all $x_0\in\Gamma(U)$.

Further note that when $\kappa(x_0)<\kappa_0$, then
$\widehat{N}_{\ka_0}^A(t,U^*_{x_0},x_0)=\frac{1}{1-bt^\alpha}N^A(t,U^*_{x_0},x_0)$
for small $t$ and therefore
$$
\kappa(x_0)=N^A(0+,U_{x_0}^*,x_0),
$$
which means that it will not change if we replace $\kappa_0$ with a
larger value.
\end{remark}


\section{Almgren rescalings and blowups}
\label{sec:var-almgr-resc-blow}

Our analysis of the free boundary is based on the analysis of blowups,
which are the limits of rescalings of the solutions at free boundary
points. In Signorini problem, there are a few types of rescalings that
use different normalizations. In this section, we look at so-called
Almgren rescalings and blowups that play well with the Almgren
frequency formula.

Let $V\in W^{1,2}(B_{1})$ and $x_0\in B'_{1/2}$ be a free
boundary point. For small $r>0$ define the \emph{Almgren rescaling} of $V$
at $x_0$ by
$$
V_{x_0,r}^A(x):=\frac{V(rx+x_0)}{\left(\frac 1{r^{n-1}}\int_{\pa E_r(x_0)}V^2\mu_{x_0}(x-x_0)\right)^{1/2}}.
$$
The Almgren rescalings have the following normalization and
scaling properties
\begin{align*} 
  &\|V^A_{x_0,r}\|_{L^2(\mfa_{x_0}\pa B_1)}=1\\
  & N^{A(x_0)}(\rho,V_{x_0,r}^A,0)=N^A(\rho r,V,x_0).
\end{align*}
Here $N^{A(x_0)}$ denotes Almgren's frequency for a constant matrix
$A(x_0)$. Thus, we also have $N^{A}(r,V,x_0)=N^{A(x_0)}(r,V,x_0)$.
                                                                                Note that when $A=I$, then
$$V_{x_0,r}^I=\frac{V(rx+x_0)}{\left(\frac1{r^{n-1}}\int_{\pa
      B_r(x_0)}V^2\right)^{1/2}}$$ is same as the Almgren rescaling in
\cite{JeoPet19a}, and satisfies
\begin{align*}
&\|V^I_{x_0,r}\|_{L^2(\pa B_1)}=1\\
& N(\rho,V_{x_0,r}^I,0)=N(\rho r,V,x_0).
\end{align*}
We will call the limits of $V_{x_0,r}^A$ over any subsequence
$r=r_j\to 0+$ \emph{Almgren blowups} of $V$ at $x_0$ and denote them
by $V_{x_0,0}^A$.

By using a change of variables, we can express Almgren
rescalings of $V$ in terms of those of $v_{x_0}=V\circ
\bar{T}^{-1}_{x_0}$ and vice versa. Namely, we have
$$
(v_{x_0})_r^I(y)=(\det \mfa_{x_0})^{1/2}V_{x_0,r}^A(\bar{\mfa}_{x_0}y),
$$
wherever they are defined.
Applied to the particular case $V=U_{x_0}^*$, we have
$$
(u_{x_0}^*)_{r}^I(y)=(\det \mfa_{x_0})^{1/2}(U_{x_0}^*)_{x_0,r}^A(\bar{\mfa}_{x_0}y).
$$

\begin{proposition}[Existence of Almgren
  blowups]\label{prop:var-exist-Alm-blowup} Let $U$ be an
  $A$-quasisymmetric almost minimizer for the $A$-Signorini problem in $B_{1}$, and $x_0\in
  B'_{1/2}\cap \Ga(U)$ be such that $\kappa(x_0)<\ka_0$. Then, every sequence of Almgren rescalings $(U_{x_0}^*)_{x_0,t_j}^A$, with $t_j\to 0+$, contains a subsequence, sill denoted $t_j$ such that for a function $(U_{x_0}^*)_{x_0,0}^A\in C^1_{\loc}(\mfa_{x_0}(B_1^\pm\cup B_1'))$
 \begin{align*}
   (U_{x_0}^*)_{x_0,t_j}^A\ra (U_{x_0}^*)_{x_0,0}^A
   &\quad\text{in }C^1_{\loc}(\mfa_{x_0}(B_1^\pm\cup B_1')).
 \end{align*}
Moreover, $(U_{x_0}^*)_{x_0,0}^A$ extends to a nonzero solution of the
$A(x_0)$-Signorini problem in $\R^n$, $(U_{x_0}^*)^A_{x_0,0}(x)=(U_{x_0}^*)^A_{x_0,0}(P_{x_0}x)$, and it is homogeneous
of degree $\ka(x_0)$ in $\R^n$.

Similarly, every sequence of Almgren
rescalings $(u_{x_0}^*)_{t_j}^I$, with $t_j\to 0+$ contains a
subsequence, sill denoted $t_j$ such that for a function
$(u_{x_0}^*)_{0}^I\in  C^1_{\loc}(B_1^\pm\cup B_1')$
 \begin{align*}
   (u_{x_0}^*)_{t_j}^I\ra (u_{x_0}^*)_{0}^I
   &\quad\text{in }C^1_{\loc}(B_1^\pm\cup B_1').
  \end{align*}
Moreover, $(u_{x_0}^*)_{0}^I$ extends to a nonzero solution of the Signorini problem in $\R^n$, even in $y_n$, and it is homogeneous of degree $\ka(x_0)$ in $\R^n$. 
\end{proposition}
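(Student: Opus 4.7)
My plan is to first reduce the $A$-case to the flat case by the deskewing transformation, establish the blowup for $u_{x_0}^*$, and then transfer back to $U_{x_0}^*$ via the relation $(u_{x_0}^*)^I_r(y) = (\det \mfa_{x_0})^{1/2}(U_{x_0}^*)^A_{x_0,r}(\bar{\mfa}_{x_0}y)$. Since Lemma~\ref{lem:quasisym-alm-min} gives that $u_{x_0}^*$ has the almost Signorini property at $0$, I will work almost entirely with the sequence $v_j := (u_{x_0}^*)^I_{t_j}$, which by construction is normalized by $\|v_j\|_{L^2(\partial B_1)} = 1$ and satisfies the almost Signorini property at $0$ in $B_{R/t_j}$ with modified gauge $\omega_j(\rho) = \omega(t_j\rho) \leq M t_j^\alpha \rho^\alpha \to 0$ as $j\to\infty$.

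The first key step is a uniform $W^{1,2}_{\loc}$ bound. From the scaling identity $N(\rho, v_j, 0) = N(\rho t_j, u_{x_0}^*, 0)$ and the Almgren monotonicity formula (Theorem~\ref{thm:var-Almgren}), we have $N(\rho, v_j, 0) \leq (1-b(\rho t_j)^\alpha)^{-1}\kappa_0 \leq 2\kappa_0$ for $j$ large and $\rho$ in any compact interval. Hence $\int_{B_\rho} |\nabla v_j|^2 \leq (2\kappa_0/\rho)\int_{\partial B_\rho} v_j^2$, and a standard subharmonicity-type argument (via the almost minimizing property of $v_j^2$ up to small errors) upgrades the boundary normalization to a uniform $W^{1,2}$ bound of $v_j$ on each $B_\rho$. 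The second step is upgrading this to uniform $C^{1,\beta}$ bounds on $B_\rho^\pm \cup B_\rho'$: applying Theorem~\ref{thm:u_e-grad-holder} to each $v_j$ (viewed as the $*$-symmetrization of its own deskewing, which is itself since we are already in the flat setting) yields compactness, and Arzelà-Ascoli produces a subsequence converging in $C^1_{\loc}(B_1^\pm\cup B_1')$ to some $v_0 \in C^1_{\loc}(\R^n_\pm \cup \R^{n-1})$ (taking a diagonal subsequence over an exhaustion by balls of increasing radii). The even symmetry in $y_n$ is preserved under the limit, and $\|v_0\|_{L^2(\partial B_1)}=1$ so $v_0 \not\equiv 0$.

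Next I will verify that $v_0$ solves the Signorini problem on $\R^n$. For any competitor $w \in \mfK_{0, v_0}(B_\rho, \Pi)$, one constructs competitors $w_j \in \mfK_{0, v_j}(B_\rho, \Pi)$ by a standard gluing-in-annulus modification near $\partial B_\rho$ (replacing $w$ by a convex combination of $w$ and $v_j$ that matches boundary data $v_j$); then the almost Signorini inequality for $v_j$ passes to the limit using $\omega_j \to 0$, yielding $\int_{B_\rho}|\nabla v_0|^2 \leq \int_{B_\rho}|\nabla w|^2$. Finally, homogeneity of degree $\kappa(x_0)$ follows from the Weiss-type formula (Theorem~\ref{thm:var-weiss}) with $\kappa = \kappa(x_0)$: since $W_\kappa(t, v_j, 0) = W_\kappa(t t_j, u_{x_0}^*, 0)$ (up to the factors $e^{at^\alpha}$ that tend to $1$), and the right-hand side tends to the monotone limit $W_\kappa(0+, u_{x_0}^*, 0)$ for every $t > 0$, the limit $v_0$ has $W_\kappa(t, v_0, 0) \equiv \text{const}$; the integral expression of $dW_\kappa/dt \geq 0$ then forces $\partial_\nu v_0 = (\kappa/t) v_0$ on each $\partial B_t$, hence $v_0$ is homogeneous of degree $\kappa$. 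The normalization $\|v_0\|_{L^2(\partial B_1)} = 1$ in particular shows the relevant denominators in Almgren's quotient are nonzero in the limit.

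The main obstacle I anticipate is passing to the limit in the almost minimizing inequality in a way that produces a genuine (not almost) solution of the Signorini problem on $\R^n$: care is needed in constructing admissible competitors $w_j$ from $w$ so that the boundary trace on $\partial B_\rho$ is matched exactly while the obstacle $w_j \geq 0$ on $\Pi$ is preserved and $\int_{B_\rho}|\nabla w_j|^2 \to \int_{B_\rho}|\nabla w|^2$. Once this technical passage is handled using the $C^1_{\loc}$ convergence of $v_j$ and a cutoff in an annulus of vanishing thickness, the remaining properties (homogeneity, nontriviality, symmetry) are straightforward consequences of the monotonicity formulas and the construction. The passage from $v_0 = (u_{x_0}^*)^I_0$ back to $(U_{x_0}^*)^A_{x_0, 0}$ is then immediate from the change-of-variable identity noted above, with $(U_{x_0}^*)^A_{x_0,0}(x) = (\det \mfa_{x_0})^{-1/2} v_0(\bar{\mfa}_{x_0}^{-1}x)$ being an $A(x_0)$-Signorini solution on $\R^n$, $P_{x_0}$-symmetric, and $\kappa(x_0)$-homogeneous.
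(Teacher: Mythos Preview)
Your overall strategy matches the paper's---reduce to the deskewed symmetrization $u_{x_0}^*$, establish compactness, identify the limit as a Signorini solution, and read off homogeneity from a monotonicity formula---but there is a genuine gap in your $C^{1,\beta}$ compactness step. You propose to apply Theorem~\ref{thm:u_e-grad-holder} directly to $v_j = (u_{x_0}^*)^I_{t_j}$, treating it as the $*$-symmetrization of itself in the flat ($A=I$) setting. The hypothesis of that theorem, however, is that the underlying function be a \emph{full} $A$-quasisymmetric almost minimizer, i.e., satisfy the almost minimizing inequality over \emph{all} admissible ellipsoids in the domain. By Lemma~\ref{lem:quasisym-alm-min}, $u_{x_0}^*$ (and hence each $v_j$) only satisfies the almost Signorini property \emph{at the single point $0$}; this one-point property does not yield $C^{1,\beta}$ estimates on a neighborhood, since the proof of Theorem~\ref{thm:var-grad-holder} uses the almost minimizing property centered at every point of the compact set $K$. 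The paper circumvents this by going back one level: it rescales $U$ itself to $U_{x_0,t}(x)=U(x_0+t(x-x_0))$, which \emph{is} a full almost minimizer for the rescaled coefficients $A_{x_0,t}$ with gauge $(tr)^\alpha$, and then applies the arguments of Theorem~\ref{thm:u_e-grad-holder} to $(U_{x_0,t})_{x_0}^*$; since $(u_{x_0}^*)_t^I$ is a constant multiple of $(U_{x_0,t})_{x_0}^*\circ\bar T_{x_0}^{-1}$, the uniform $C^{1,\beta}$ bound transfers. You need this detour through the rescaled $U$.

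Your remaining steps differ from the paper in execution but are valid. For identifying the limit as a Signorini solution, you build competitors $w_j$ for $v_j$ from a fixed competitor $w$ for $v_0$ via an annular cutoff; the paper instead takes the Signorini replacement $h_{t_j}$ of $v_j$ in $B_R$, uses the variational inequality to get $\int_{B_R}|\nabla(v_j-h_{t_j})|^2\leq C(Rt_j)^\alpha\int_{B_R}|\nabla v_j|^2\to 0$, and reads off the Signorini conditions from the $C^1_{\loc}$-limit of the $h_{t_j}$. For homogeneity, you use the Weiss functional whereas the paper uses Almgren's frequency: since $N$ scales exactly without correction factors, one gets $N(r,v_0,0)=\lim_j N(rt_j,u_{x_0}^*,0)=\kappa(x_0)$ for every $0<r<1$, and constancy of the classical frequency for a Signorini solution forces $\kappa(x_0)$-homogeneity. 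Your Weiss route can be made to work but carries the extra normalization factor $c_j^{-2}t_j^{2\kappa}$ under the scaling, whose behavior needs additional justification; the Almgren argument avoids this entirely.
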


\begin{proof}
\medskip\noindent \emph{Step 1.}
Since $\ka(x_0)<\ka_0$, we must have $N(t,u_{x_0}^*,0)<\ka_0$ for
small $t>0$. Then, for such $t$
\begin{align*}
\int_{B_1}|\D(u_{x_0}^*)_{t}^I|^2=N(1,(u_{x_0}^*)_{t}^I,0)=N(t,u_{x_0}^*,0)\le \ka_0,
\end{align*}
and combined with the normalization $\int_{\pa
  B_1}\left((u_{x_0}^*)_{t}^I\right)^2=1$, we see that the family
$(u_{x_0}^*)_{t}^I$ is bounded in $W^{1,2}(B_1)$, for small
$t>0$. Hence, for any sequence $t_j\to 0+$, there is a function
$(u_{x_0}^*)_{0}^I\in W^{1,2}(B_1)$ such that, over a subsequence,
\begin{align*}
    (u_{x_0}^*)_{t_j}^I\ra (u_{x_0}^*)_{0}^I &\quad\quad\text{weakly in }W^{1, 2}(B_1),\\
    (u_{x_0}^*)_{t_j}^I\ra (u_{x_0}^*)_{0}^I &\quad\quad\text{strongly in }L^2(\pa B_1).
  \end{align*}
In particular, $\int_{\pa B_1}\left( (u_{x_0}^*)_{0}^I\right)^2=1$, implying that $(u_{x_0}^*)_{0}^I\not\equiv 0$ in $B_1$.

\medskip\noindent \emph{Step 2.} For $0<t<1$ and  $x\in
B_{1/(2t)}(x_0)$, let
$$
U_{x_0,t}(x)=U(x_0+t(x-x_0)),\quad A_{x_0,t}(x)=A(x_0+t (x-x_0)).
$$
Then by a simple scaling argument, we have that $U_{x_0,t}$ is an almost minimizer of the
$A_{x_0,t}$-Signorini problem in $B_{1/(2t)}(x_0)$ with a gauge function
  $\mu_t(r)=(tr)^\alpha\leq r^\alpha$. 
In particular, for any $R>0$, we will have that $U_{x_0,t}\in C^{1,\beta}(E_R^\pm(x_0)\cup
E_R'(x_0))$ for $0<t<t(R,M)$ with
$$
\|U_{x_0,t}\|_{C^{1,\be}(K)}\le C\|U_{x_0,t}\|_{W^{1,2}(E_{R}(x_0))},
$$
with $C=C(n,\al,M,R,K)$, for any $K\Subset E^\pm_{R}(x_0)\cup E'_{R}(x_0)$.
Then, arguing as in the proof of Theorem~\ref{thm:u_e-grad-holder}, by
using the quasisymmetry of $U$, we obtain that
$$
\|(U_{x_0,t})^*_{x_0}\|_{C^{1,\be}(K)}\le C\|(U_{x_0,t})^*_{x_0}\|_{W^{1,2}(E_{R}(x_0))},
$$
where
$$
(U_{x_0,t})^*_{x_0}(x)=\frac{U_{x_0,t}(x)+U_{x_0,t}(P_{x_0}x)}2.
$$
Next, observing that $(u_{x_0}^*)_t^I$ is a positive constant multiple
of $(U_{x_0,t})^*_{x_0}\circ{\bar{T}_{x_0}^{-1}}$, we obtain that
$$
\|(u_{x_0}^*)_t^I\|_{C^{1,\be}(K)}\le C\|(u_{x_0}^*)_t^I\|_{W^{1,2}(B_R)},
$$
for any $K\Subset B_R^\pm\cup B_R'$.
Taking $R=1$, combined with the boundedness of $(u_{x_0}^*)_{t}^I$ in
$W^{1,2}(B_1)$ for small $t>0$, it follows that up to a subsequence, $$
  (u_{x_0}^*)_{t_j}^I\ra (u_{x_0}^*)_0^I \quad\text{in }C^1_{\loc}(B_1^\pm\cup B'_1).
$$

\medskip\noindent \emph{Step 3.}
Next, we claim that the blowup $(u_{x_0}^*)_0^I$ is a solution of the Signorini problem in $B_1$. Indeed, fix $0<R<1$, and for each $t_j$ let $h_{t_j}$ be the Signorini replacement of $(u_{x_0}^*)_{t_j}^I$ in $B_R$. Then a first variation argument gives (see \cite{JeoPet19a}*{(3.2)}) $$
\int_{B_R}\langle{\D h_{t_j},\D((u_{x_0}^*)_{t_j}^I-h_{t_j})}\rangle\ge 0.
$$
Since $(u_{x_0}^*)_{t_j}^I$ has an almost Signorini property at $0$
with a gauge function $r\mapsto C (t_jr)^\al$, it follows that
$$
\int_{B_R}|\D((u_{x_0}^*)_{t_j}^I-h_{t_j})|^2\le C (Rt_j)^\al\int_{B_R}|\D(u_{x_0}^*)_{t_j}^I|^2.
$$
This implies that $h_{t_j}\to (u_{x_0}^*)_{0}^I$ weakly in
$W^{1,2}(B_R)$. On the other hand, by the boundedness of the sequence
$h_{t_j}$ in $W^{1,2}(B_R)$, we have also boundedness in $C^{1,1/2}$ norm
locally in $(B_R^\pm\cup B_R')$ and hence, over a subsequence, $h_{t_j}\to (u_{x_0}^*)_{0}^I$ in $C_{\loc}^1(B_R^\pm\cup B_R')$. By this convergence, we then conclude that $(u_{x_0}^*)_{0}^I$ satisfies  
\begin{align*}
\La (u_{x_0}^*)_{0}^I =0&\quad\text{in }B_R\setminus B_R'\\
  (u_{x_0}^*)_{0}^I\geq 0,\quad -\partial_{y_n}^+(u_{x_0}^*)_{0}^I\geq 0,\quad
  (u_{x_0}^*)_{0}^I\partial_{y_n}^+(u_{x_0}^*)_{0}^I=0&\quad\text{on } B_R',
\end{align*}
and hence, by letting $R\to 1$, $(u_{x_0}^*)_{0}^I$ itself solves the Signorini problem in $B_1$. 

\medskip\noindent \emph{Step 4.} Recall now that the blowup
$(u_{x_0}^*)_{0}^I$ is nonzero in $B_1$. In particular, $\int_{\partial
B_r} ((u_{x_0}^*)_{0}^I)^2>0$ for any $0<r<1$, otherwise we would have
that $(u_{x_0}^*)_{0}^I$ is identically zero on $\partial B_r$ and
consequently also on $B_r$. Using this fact, combined with $C^1_{\loc}$ convergence in
$B_1^\pm\cup B_1'$, we have that for any $0<r<1$
\begin{align*}
N(r,(u_{x_0}^*)_{0}^I,0) &= \lim_{t_j\to 0}N(r,(u_{x_0}^*)_{t_j}^I,0)\\
&=\lim_{t_j\ra 0}N(r t_j,u_{x_0}^*,0)\\
&=N(0+,u_{x_0}^*,0)\\
&=\ka(x_0).
\end{align*}
Thus, Almgren's frequency of $(u_{x_0}^*)_{0}^I$ is constant
$\kappa(x_0)$ on $0<r<1$ which is possible only if $(u_{x_0}^*)_{0}^I$ is a
$\ka(x_0)$-homogeneous solution of the Signorini problem in $B_1$, see
\cite{PetShaUra12}*{Theorem~9.4}. Finally, by using the homogeneity, we
readily extend $(u_{x_0}^*)_{0}^I$ to a solution of the Signorini
problem in all of $\R^n$.
This completes the proof for
$(u_{x_0}^*)_{0}^I$.

The corresponding result for $(U_{x_0}^*)_{x_0,t_j}^A$
follows now by a change of variables.
\end{proof}

With Proposition~\ref{prop:var-exist-Alm-blowup} at hand, we can repeat the argument in the proof of Lemma~6.2 in \cite{JeoPet19a} with $u_{x_0}^*$ to obtain the following, which is possible since $u_{x_0}^*$ satisfies the complementarity condition and an Almgren-type monotonicity formula with a blowup as a nonzero solution of the Signorini problem.

\begin{lemma}[Minimal frequency]
  \label{lem:var-min-freq} Let $U$ be an $A$-quasisymmetric almost minimizer for the $A$-Signorini problem  in $B_{1}$. If $x_0\in B_{1/2}'\cap \Gamma(U)$, then
$$
\kappa(x_0)\geq\frac{3}{2}.
$$
Consequently, we also have
$$
\widehat{N}^A_{\ka_0}(t,U_{x_0}^*,x_0)=\widehat{N}_{\ka_0}(t,u_{x_0}^*,0)\geq 3/2\quad\text{for }0<t<t_0.
$$
\end{lemma}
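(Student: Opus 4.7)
The plan is to argue by contradiction, following the template of \cite{JeoPet19a}*{Lemma~6.2}. Suppose, toward a contradiction, that for some $x_0 \in B_{1/2}' \cap \Gamma(U)$ we had $\kappa(x_0) < 3/2$. Fix any $\kappa_0 \geq 2$; then $\kappa(x_0) < \kappa_0$, so by Remark~\ref{rem:kax_0} we have $\kappa(x_0) = N(0+, u_{x_0}^*, 0)$. Applying Proposition~\ref{prop:var-exist-Alm-blowup}, we extract along some sequence $t_j \to 0+$ an Almgren blowup $u_0 := (u_{x_0}^*)_0^I$, which extends to a nonzero solution of the Signorini problem on $\R^n$, even in $y_n$, and homogeneous of degree $\kappa(x_0)$. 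Moreover, from $u_{x_0}^*(0) = 0$ and $|\widehat{\D u_{x_0}^*}(0)| = 0$ (Lemma~\ref{u_ex0-comp-cond}), together with the $C^1_{\loc}(B_1^\pm \cup B_1')$-convergence of the rescalings, it follows that $u_0(0) = 0$ and $\D u_0(0) = 0$.

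This reduces the question to the following classical fact on the Signorini problem with zero thin obstacle: any nonzero $\kappa$-homogeneous global solution on $\R^n$ that is even in $y_n$ must satisfy $\kappa \geq 3/2$ (see e.g.\ \cite{AthCafSal08} or \cite{PetShaUra12}*{Chapter~9}). For completeness: $\kappa = 1$ forces $u_0(y) = a \cdot y'$ as the only even $1$-homogeneous harmonic function on $\R^n \setminus \Pi$, and then the Signorini sign condition $u_0 \geq 0$ on $\Pi$ yields $a = 0$, contradicting nontriviality. The open range $1 < \kappa < 3/2$ is excluded by the second-order tangential monotonicity (semiconvexity on $\Pi$) combined with the complementarity condition, which forces any such $u_0$ to be trivial. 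Applied to our blowup, we obtain $\kappa(x_0) \geq 3/2$, contradicting the standing assumption.

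For the final ``consequently'' part, observe that by Theorem~\ref{thm:var-Almgren} the map
$$
t \mapsto \widehat{N}^A_{\ka_0}(t, U_{x_0}^*, x_0) = \widehat{N}_{\ka_0}(t, u_{x_0}^*, 0)
$$
is nondecreasing on $(0, t_0)$, and its limit as $t \to 0+$ equals $\kappa(x_0) \geq 3/2$ by definition of the frequency. Hence the truncated frequency is bounded below by $3/2$ throughout $(0, t_0)$.

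The technical heart of the argument is the classification step, in particular ruling out the open range $1 < \kappa < 3/2$ for homogeneous global Signorini solutions. What makes this go through essentially for free in the variable-coefficient setting is precisely the deskewing construction: via the affine map $\bar{T}_{x_0}$ and the quasisymmetrization $U \mapsto U^*_{x_0}$, we have arranged for $u_{x_0}^*$ to satisfy both the almost Signorini property at the origin (Lemma~\ref{lem:quasisym-alm-min}) and the complementarity condition (Lemma~\ref{u_ex0-comp-cond}) for the \emph{flat} thin space $\Pi$; the Almgren blowup is then a bona fide homogeneous global Signorini solution with $A=I$, and the classical $3/2$-gap applies verbatim.
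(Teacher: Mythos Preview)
Your proof is correct and follows essentially the same route as the paper: reduce to the flat case via the deskewing and quasisymmetrization, invoke Proposition~\ref{prop:var-exist-Alm-blowup} to produce a nonzero $\kappa(x_0)$-homogeneous global Signorini solution (even in $y_n$), and then appeal to the classical lower bound $\kappa\geq 3/2$ for such solutions. The paper itself simply cites \cite{JeoPet19a}*{Lemma~6.2} and notes that the ingredients it requires---the complementarity condition (Lemma~\ref{u_ex0-comp-cond}), the Almgren-type monotonicity (Theorem~\ref{thm:var-Almgren}), and the existence of nonzero Signorini blowups (Proposition~\ref{prop:var-exist-Alm-blowup})---are all in place for $u_{x_0}^*$; you have unpacked this citation. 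One small imprecision in your sketch: the even $1$-homogeneous functions harmonic in $\R^n\setminus\Pi$ are $a\cdot y'+b|y_n|$, not just $a\cdot y'$; the term $b|y_n|$ is eliminated precisely by $\widehat{\nabla u_{x_0}^*}(0)=0$ passing to the blowup (which you did state), and the Signorini sign condition on $\Pi$ then kills $a$.
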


Lemma~\ref{lem:var-min-freq} readily gives the following. (see \cite{JeoPet19a}*{Corollary~6.3})

\begin{corollary}\label{cor:W32-nonneg} Let $U$ be an $A$-quasisymmetric almost minimizer for the $A$-Signorini problem  in $B_{1}$ and $x_0$ a free boundary
  point. Then
  $$
  W_{3/2}^A(t,U_{x_0}^*, x_0)=\det \mfa_{x_0}W_{3/2}(t,u_{x_0}^*,0)\geq 0,\quad\text{for }0<t<t_0.
  $$
\end{corollary}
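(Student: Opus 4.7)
\textbf{Proof plan for Corollary~\ref{cor:W32-nonneg}.} The identity $W_{3/2}^A(t,U_{x_0}^*,x_0)=\det\mfa_{x_0}\,W_{3/2}(t,u_{x_0}^*,0)$ is already available from the change of variables formula \eqref{eq:Weiss-def1}, applied in the special case $V=U_{x_0}^*$, $v_{x_0}=u_{x_0}^*$. Since $\det\mfa_{x_0}>0$, the nonnegativity statement reduces to showing $W_{3/2}(t,u_{x_0}^*,0)\geq 0$ for $0<t<t_0$, and the entire task is to turn the frequency bound of Lemma~\ref{lem:var-min-freq} into the bracketed Weiss quantity.

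Unwinding the definition, the prefactor $e^{at^\alpha}/t^{n+1}$ in $W_{3/2}$ is strictly positive, so the sign of $W_{3/2}(t,u_{x_0}^*,0)$ is the sign of
\[
\int_{B_t}|\nabla u_{x_0}^*|^2 - \tfrac{3}{2}\cdot\frac{1-bt^\alpha}{t}\int_{\partial B_t}(u_{x_0}^*)^2.
\]
Rearranged, nonnegativity of this quantity is exactly the inequality
\[
\frac{1}{1-bt^\alpha}\,N(t,u_{x_0}^*,0)\;=\;\frac{1}{1-bt^\alpha}\cdot\frac{t\int_{B_t}|\nabla u_{x_0}^*|^2}{\int_{\partial B_t}(u_{x_0}^*)^2}\;\geq\;\tfrac{3}{2},
\]
once we note that $\int_{\partial B_t}(u_{x_0}^*)^2>0$ for $0<t<t_0$ (otherwise $u_{x_0}^*\equiv0$ in $B_t$ by a standard competitor argument, contradicting $x_0\in\Gamma(U)$).

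So the plan is simply to apply Lemma~\ref{lem:var-min-freq}: it gives $\widehat{N}_{\kappa_0}(t,u_{x_0}^*,0)\geq 3/2$ for $0<t<t_0$. Fixing any $\kappa_0\geq 3/2$ (which is no loss since $\kappa(x_0)$ for $\kappa(x_0)<\kappa_0$ is independent of the choice of the truncation parameter, see Remark~\ref{rem:kax_0}, and $\kappa_0\geq 2$ is assumed in the main theorems), the truncation is active only when the first argument of the $\min$ in the definition of $\widehat{N}_{\kappa_0}$ exceeds $\kappa_0$, hence in all cases
\[
\frac{1}{1-bt^\alpha}N(t,u_{x_0}^*,0)\geq \widehat{N}_{\kappa_0}(t,u_{x_0}^*,0)\geq \tfrac{3}{2},
\]
which is the desired inequality. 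Multiplying through by the positive quantity $e^{at^\alpha}(1-bt^\alpha)\int_{\partial B_t}(u_{x_0}^*)^2/t^{n+1}$ (shrinking $t_0$ if necessary so $1-bt^\alpha>0$) yields $W_{3/2}(t,u_{x_0}^*,0)\geq 0$, and then multiplying by $\det\mfa_{x_0}$ finishes the corollary.

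There is no substantive obstacle here: the proof is a direct algebraic unwinding of the definitions combined with the already-established lower bound $\kappa(x_0)\geq 3/2$. The only mild subtlety is verifying that the truncation in $\widehat{N}_{\kappa_0}$ does not spoil the inequality, which is handled by taking $\kappa_0\geq 3/2$.
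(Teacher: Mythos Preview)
Your proposal is correct and follows essentially the same approach as the paper: the paper simply states that Lemma~\ref{lem:var-min-freq} readily gives the corollary (citing the analogous Corollary~6.3 in \cite{JeoPet19a}), and your argument is the natural algebraic unwinding of that implication---converting the truncated frequency lower bound $\widehat{N}_{\kappa_0}(t,u_{x_0}^*,0)\geq 3/2$ into nonnegativity of the bracketed Weiss quantity via the definitions.
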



\section{Growth estimates}

The first result in this section (Lemma~\ref{lem:var-almost-opt-growth}) provides growth estimates for the quasi\-sym\-met\-ric almost minimizers
near free boundary points $x_0$ with $\kappa(x_0)\geq\kappa$. Such estimates were obtained in
\cite{JeoPet19a}*{Lemmas~7.1} in the case $A\equiv I$ as
a consequence of Weiss-type monotonicity formulas. However, they contain an unwanted logarithmic term that creates difficulties in the blowup analysis of the problem.

The next two results (Lemmas~\ref{lem:var-W-gr-est} and \ref{lem:var-opt-growth}) remove the logarithmic term from these estimates for $\kappa=3/2$, by establishing first a growth rate for $W_{3/2}$. (Recall that $\kappa(x_0)\geq 3/2$ at every free boundary point $x_0$, by Lemma~\ref{lem:var-min-freq}.) These are analogous to \cite{JeoPet19a}*{Lemmas~7.3, 7.4} in the case $A\equiv I$ and follow from the so-called epiperimetric inequality for $\kappa=3/2$ (see e.g.\ \cite{JeoPet19a}*{Theorem~7.2}). Later, in Section~\ref{sec:var-singular-points}, we remove the logarithmic term also in the case $\kappa=2m<\kappa_0$, $m\in\mathbb{N}$, see Lemma~\ref{lem:var-opt-est}.

The results in this section are stated in terms of both $u_{x_0}^*$ and $U_{x_0}^*$, as we need both forms in the subsequent arguments. We note that the estimates for $u_{x_0}^*$ follow directly from
\cite{JeoPet19a}*{Lemmas~7.1, 7.3, 7.4} and the ones for $U_{x_0}^*$ are obtained by using the deskewing procedure and therefore we skip all proofs in this section.

In the estimates below, as well in the rest of the paper, we use the notation
$$
R_0:=(1/2)\Lambda^{-1/2},
$$
which is the radius of the largest ball $B_{R_0}$, where $u_{x_0}^*$ is guaranteed to exists for any $x_0\in B_{1/2}'$ for an almost minimizer $U$ in $B_1$.

\begin{lemma}[Weak growth estimate]
  \label{lem:var-almost-opt-growth}
  Let $U$ be an $A$-quasisymmetric almost minimizer for the
  $A$-Signorini problem in $B_1$ and $x_0\in B_{1/2}'\cap \Gamma(U)$. If 
  \[
  \kappa(x_0)\geq \ka\] for some $\kappa\leq\kappa_0$, then
  \begin{align*}
    \int_{\pa B_t}(u_{x_0}^*)^2&\le C\|u_{x_0}^*\|^2_{W^{1, 2}(B_{R_0})}\left(\log\frac{1}{t}\right)t^{n+2\ka-1},\\
    \int_{B_t}|\D u_{x_0}^*|^2&\le C\|u_{x_0}^*\|^2_{W^{1, 2}(B_{R_0})}\left(\log\frac{1}{t}\right)t^{n+2\ka-2},\\
      \int_{\pa E_t(x_0)}(U_{x_0}^*)^2&\le C\|U\|^2_{W^{1, 2}(B_1)}\left(\log\frac{1}{t}\right)t^{n+2\ka-1},\\
    \int_{E_t(x_0)}|\D U_{x_0}^*|^2&\le C\|U\|^2_{W^{1, 2}(B_1)}\left(\log\frac{1}{t}\right)t^{n+2\ka-2},
  \end{align*}
  for $0<t<t_0=t_0(n, \al, M,\kappa_0)$ and $C=C(n, \al,M,\kappa_0)$.
\end{lemma}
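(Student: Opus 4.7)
\emph{Plan.} The deskewing identity $u_{x_0}^* = U_{x_0}^* \circ \bar{T}_{x_0}^{-1}$ together with the change-of-variables formulas \eqref{eq:u-u_x-1}, \eqref{eq:u-u_x-bdry} and the two-sided bound \eqref{eq:mu-bounds} on the conformal factor $\mu_{x_0}$ show that the four inequalities in the statement are equivalent up to constants depending only on $n,\alpha,M$. I will therefore focus on proving the bounds on balls for $u_{x_0}^*$ and transfer them to $U_{x_0}^*$ at the end.

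For $u_{x_0}^*$, the stated bounds are exactly \cite{JeoPet19a}*{Lemma~7.1}, applied at the origin. To invoke that lemma I would first verify its hypotheses in the variable-coefficient setting: Lemma~\ref{lem:quasisym-alm-min} (with $r_1=1$ and $M_1=M$, by the remark following it) gives the almost Signorini property of $u_{x_0}^*$ at $0$ in $B_{R_0}$; Lemma~\ref{u_ex0-comp-cond} yields $u_{x_0}^*(0)=0$ and the complementarity condition $u_{x_0}^*\partial_{y_n}^+ u_{x_0}^* = 0$ on $B_{R_0}'$; the even symmetry in $y_n$ holds by construction; and the hypothesis $\kappa(x_0)\geq\kappa$ together with Theorem~\ref{thm:var-Almgren} yields $\widehat{N}_{\kappa_0}(t,u_{x_0}^*,0)\geq\kappa$ for $0<t<t_0$.

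The core of the \cite{JeoPet19a} argument, which I would reproduce at the sketch level, combines two ingredients. First, monotonicity in Theorem~\ref{thm:var-weiss} gives a uniform upper bound $W_\kappa(t,u_{x_0}^*,0) \leq W_\kappa(R_0/2,u_{x_0}^*,0) \leq C\|u_{x_0}^*\|_{W^{1,2}(B_{R_0})}^2$, which rearranges to $D(t) - \kappa(1-bt^\alpha)t^{-1}H(t)\leq Ct^{n+2\kappa-2}$, where $H(t):=\int_{\partial B_t}(u_{x_0}^*)^2$ and $D(t):=\int_{B_t}|\nabla u_{x_0}^*|^2$. Second, the frequency lower bound $\widehat{N}_{\kappa_0}(t,u_{x_0}^*,0)\geq\kappa$ gives $D(t)\geq\kappa(1-bt^\alpha)t^{-1}H(t)$. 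Combining these with the kinematic identity $H'(t)=\frac{n-1}{t}H(t) + 2\int_{\partial B_t}u_{x_0}^*\partial_\nu u_{x_0}^*$ (tied back to $D(t)$ via the complementarity condition and a Rellich-type argument) yields an approximate ODE of the form $\Phi'(t)\geq -Ct^{\alpha-1}\Phi(t) - Ct^{-1}$ for $\Phi(t):=H(t)/t^{n-1+2\kappa}$. Integrating this from $t$ up to $R_0/2$ produces the factor $\log(1/t)$ in the bound for $H(t)$, and plugging back into the Weiss inequality yields the matching bound for $D(t)$. The main technical obstacle is precisely the $t^\alpha$-error in the Weiss formula and in the frequency lower bound; these prevent the clean Almgren-type argument that would give the exponent $n+2\kappa-1$ without any logarithmic factor, and it is exactly this surplus $\log(1/t)$ that the subsequent results of Section~\ref{sec:weiss-almgren-type} and the later epiperimetric-based lemmas are designed to remove.

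Finally, the $U_{x_0}^*$ bounds on ellipsoids follow by applying $\int_{E_t(x_0)}|\nabla U_{x_0}^*|^2 \leq \lambda^{-1}\det\mfa_{x_0}\int_{B_t}|\nabla u_{x_0}^*|^2$ and $\int_{\partial E_t(x_0)}(U_{x_0}^*)^2 \leq \lambda^{-1/2}\det\mfa_{x_0}\int_{\partial B_t}(u_{x_0}^*)^2$, together with $\|u_{x_0}^*\|_{W^{1,2}(B_{R_0})}\leq C(n,M)\|U\|_{W^{1,2}(B_1)}$, which holds because $E_{R_0}(x_0)\subset B_{1/2}(x_0)\subset B_1$ for any $x_0\in B_{1/2}'$ and the symmetrization $U_{x_0}^*$ is $W^{1,2}$-bounded by $U$. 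All implicit constants can then be absorbed into $C=C(n,\alpha,M,\kappa_0)$.
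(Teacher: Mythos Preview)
Your overall plan---verify that $u_{x_0}^*$ satisfies the hypotheses of \cite{JeoPet19a}*{Lemma~7.1} via Lemmas~\ref{lem:quasisym-alm-min} and~\ref{u_ex0-comp-cond} and Theorem~\ref{thm:var-Almgren}, invoke that lemma, then transfer to $U_{x_0}^*$ by the change-of-variables formulas \eqref{eq:u-u_x-1}--\eqref{eq:u-u_x-bdry}---is exactly what the paper does (the paper in fact skips the proof entirely, citing \cite{JeoPet19a}*{Lemma~7.1} and ``the deskewing procedure'').

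There is, however, a real problem in your sketch of the \cite{JeoPet19a} argument. You propose to tie $\int_{\partial B_t}u_{x_0}^*\,\partial_\nu u_{x_0}^*$ back to $D(t)$ ``via the complementarity condition and a Rellich-type argument,'' but this step is unavailable for almost minimizers: the identity $\int_{\partial B_t}u\,\partial_\nu u=\int_{B_t}|\nabla u|^2$ requires $u$ to be a weak solution of the Signorini problem (so that integration by parts on $B_t^\pm$ leaves no interior term), whereas $u_{x_0}^*$ satisfies no PDE---it only has the almost Signorini \emph{comparison} property. The complementarity condition on $B_{R_0}'$ from Lemma~\ref{u_ex0-comp-cond} would indeed kill the thin-space boundary contribution, but an uncontrolled bulk term $\int_{B_t^\pm}u\,\Delta u$ remains, and for a genuine almost minimizer this is not small. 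The actual mechanism in \cite{JeoPet19a} bypasses any such identity: it works directly with the two-sided bound $0\le W_\kappa(t)\le W_\kappa(t_0)\le C$ together with the pointwise derivative lower bound in Theorem~\ref{thm:var-weiss}, which controls $\int_{\partial B_t}\bigl(\partial_\nu u_{x_0}^*-\kappa(1-bt^\alpha)t^{-1}u_{x_0}^*\bigr)^2$; these are precisely what survive for almost minimizers because they come from comparison with the $\kappa$-homogeneous replacement, not from a PDE. Your target ODE and the resulting $\log(1/t)$ loss are correct, but the route you name for reaching them would fail.
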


\begin{lemma}\label{lem:var-W-gr-est}  Let $U$ and $x_0$ be as above. Then, there exists $\de=\de(n,\alpha)>0$ such that
\begin{align*}
0\leq W_{3/2}(t,u_{x_0}^*,0)\leq C\|u_{x_0}^*\|^2_{W^{1, 2}(B_{R_0})} t^\de,\\
0\leq W_{3/2}^A(t,U_{x_0}^*,x_0)\leq C\|U\|^2_{W^{1, 2}(B_1)} t^\de,
\end{align*}
for $0<t<t_0=t_0(n,\alpha,M)$ and $C=C(n, \al,M)$.
\end{lemma}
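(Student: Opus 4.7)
The plan is to reduce the estimate for $U_{x_0}^*$ to the one for $u_{x_0}^*$ via the deskewing identity \eqref{eq:Weiss-def1}, which reads $W_{3/2}^A(t,U_{x_0}^*,x_0)=\det\mfa_{x_0}\,W_{3/2}(t,u_{x_0}^*,0)$. Since $\la^{n/2}\le\det\mfa_{x_0}\le\Ld^{n/2}$, and since the change of variable formulas \eqref{eq:u-u_x-0}--\eqref{eq:u-u_x-1}, combined with the decompositions \eqref{eq:U2-sym-dec}--\eqref{eq:DU2-sym-dec} and the quasisymmetry estimate \eqref{eq:Q-sym-alt}, yield $\|u_{x_0}^*\|_{W^{1,2}(B_{R_0})}\le C\|U\|_{W^{1,2}(B_1)}$, the estimate for $U_{x_0}^*$ is an immediate consequence of the one for $u_{x_0}^*$.

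For the scalar estimate, the lower bound $W_{3/2}(t,u_{x_0}^*,0)\ge 0$ is exactly Corollary~\ref{cor:W32-nonneg}. For the upper bound, I would repeat verbatim the argument of \cite{JeoPet19a}*{Lemma~7.3}, whose hypotheses $u_{x_0}^*$ satisfies: it has the almost Signorini property at $0$ in $B_{R_0}$ by Lemma~\ref{lem:quasisym-alm-min}, it is even in $y_n$ by construction, it vanishes at $0$ and satisfies the complementarity condition by Lemma~\ref{u_ex0-comp-cond}, and $\ka(x_0)\ge 3/2$ by Lemma~\ref{lem:var-min-freq}. The argument combines the Weiss-type monotonicity formula (Theorem~\ref{thm:var-weiss}) with the epiperimetric inequality at $\ka=3/2$ for the Signorini problem of Garofalo--Petrosyan--Smit Vega Garcia: for every $3/2$-homogeneous, $y_n$-even trace, one can produce a competitor $\zeta$ in $B_1$ with the same boundary values as the homogeneous extension $c$ and satisfying $W_{3/2}(1,\zeta,0)\le(1-\eta)W_{3/2}(1,c,0)$ for some universal $\eta>0$.

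Concretely, for each small $t$ let $c_t$ be the $3/2$-homogeneous extension from $\pa B_t$ of $u_{x_0}^*|_{\pa B_t}$, obtain the epiperimetric competitor $\zeta_t$ on $B_t$, and plug $\zeta_t$ into the almost Signorini inequality; after absorbing the factor $(1+Mt^\al)$ this produces a functional inequality $W_{3/2}(t,u_{x_0}^*,0)\le(1-\eta/2)W_{3/2}(t,c_t,0)+Ct^\al\|u_{x_0}^*\|_{W^{1,2}(B_{R_0})}^2$. Coupling this with the standard identity $\frac{d}{dt}W_{3/2}(t,u_{x_0}^*,0)\ge\frac{n+1}{t}\bigl[W_{3/2}(t,c_t,0)-W_{3/2}(t,u_{x_0}^*,0)\bigr]$ (modulo absorbable error terms coming from the exponential and $bt^\al$ factors in the definition of $W_\ka$) delivers a differential inequality of the form $\frac{d}{dt}W_{3/2}(t,u_{x_0}^*,0)\ge\frac{c_0}{t}W_{3/2}(t,u_{x_0}^*,0)-Ct^{\al-1}\|u_{x_0}^*\|^2_{W^{1,2}(B_{R_0})}$, whose integration on $(t,t_0)$ yields the bound $W_{3/2}(t,u_{x_0}^*,0)\le C\|u_{x_0}^*\|^2_{W^{1,2}(B_{R_0})}t^\de$ for some $\de=\de(n,\al)>0$. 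The delicate point—which is the main obstacle—is the bookkeeping of the $O(t^\al)$ gauge-error contributions: the decay rate generated by the epiperimetric constant $\eta$ must strictly dominate them, forcing $\de>0$. This balance is standard in the almost minimizer literature and is already carried out in \cite{JeoPet19a}, so the deskewing reduction introduces no genuinely new difficulty.
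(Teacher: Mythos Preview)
Your proposal is correct and follows essentially the same approach as the paper: the paper explicitly skips the proof, stating that the estimate for $u_{x_0}^*$ follows directly from \cite{JeoPet19a}*{Lemma~7.3} (the epiperimetric-inequality argument you sketch) and that the estimate for $U_{x_0}^*$ then follows by deskewing, exactly as you describe.
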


\begin{lemma}[Optimal growth estimate]
  \label{lem:var-opt-growth} Let $U$ and $x_0$ be as above. Then, 
  \begin{align*}
    \int_{\partial B_t} (u_{x_0}^*)^2
    &\leq C\|u_{x_0}^*\|^2_{W^{1, 2}(B_{R_0})}t^{n+2},\\
    \int_{B_t} |\nabla u_{x_0}^*|^2
    &\leq  C\|u_{x_0}^*\|^2_{W^{1, 2}(B_{R_0})}t^{n+1},\\
    \int_{\partial E_t(x_0)} (U_{x_0}^*)^2
    &\leq C\|U\|^2_{W^{1, 2}(B_1)}t^{n+2},\\
    \int_{E_t(x_0)} |\nabla U_{x_0}^*|^2
    &\leq  C\|U\|^2_{W^{1, 2}(B_1)}t^{n+1},
  \end{align*}
  for $0<t<t_0=t_0(n,\alpha,M)$ and $C=C(n, \al,M)$.
\end{lemma}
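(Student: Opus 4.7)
The plan is to reduce the bounds for $u_{x_0}^*$ to the corresponding estimate in the constant coefficient setting, namely \cite{JeoPet19a}*{Lemma~7.4}, and then transfer these to $U_{x_0}^*$ via the deskewing change of variables. The function $u_{x_0}^*$ verifies all hypotheses of that result: it has the almost Signorini property at $0$ in $B_{R_0}$ (Lemma~\ref{lem:quasisym-alm-min}), is even in $y_n$ by construction, vanishes together with the even extension of its gradient at the origin and satisfies the complementarity condition on $B_{R_0}'$ (Lemma~\ref{u_ex0-comp-cond}), and its frequency is bounded below by $3/2$ (Lemma~\ref{lem:var-min-freq}).

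The argument I would follow exploits the two-sided pinching of $W_{3/2}(t, u_{x_0}^*, 0)$ supplied by Corollary~\ref{cor:W32-nonneg} and Lemma~\ref{lem:var-W-gr-est}:
\[
0 \leq W_{3/2}(t, u_{x_0}^*, 0) \leq C t^\delta.
\]
Writing $H(t) := \int_{\partial B_t}(u_{x_0}^*)^2$ and $D(t) := \int_{B_t}|\nabla u_{x_0}^*|^2$, this pinches $D(t)$ to $\frac{3}{2t}H(t)$ with errors of order $t^{\alpha-1}H(t)$ and $t^{n+1+\delta}$. For the normalization $\Phi(t) := H(t)/t^{n+2}$, the standard identity $H'(t) = \frac{n-1}{t}H(t) + 2\int_{\partial B_t}u_{x_0}^*\partial_\nu u_{x_0}^*$ yields
\[
\Phi'(t) = -\frac{3}{t^{n+3}}H(t) + \frac{2}{t^{n+2}}\int_{\partial B_t}u_{x_0}^*\partial_\nu u_{x_0}^*,
\]
and an almost Rellich identity of the form $\left|\int_{\partial B_t}u_{x_0}^*\partial_\nu u_{x_0}^* - D(t)\right| \leq Ct^\alpha D(t)$, available for almost Signorini minimizers satisfying the complementarity condition, reduces this to the differential inequality $|\Phi'(t)| \leq C t^{\alpha-1}\Phi(t) + C t^{\delta-1}$. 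Feeding in the weak bound $\Phi(t) \leq C\log(1/t)$ from Lemma~\ref{lem:var-almost-opt-growth} makes the right-hand side integrable on $(0, t_0)$, so $\int_t^{t_0}|\Phi'(s)|\,ds \leq C$, whence $\Phi(t) \leq \Phi(t_0) + C$ for $0 < t < t_0$. This is the claimed bound on $H(t)$, and the bound on $D(t)$ then follows from the pinching.

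The transfer to $U_{x_0}^*$ is routine. The change of variables formulas \eqref{eq:u-u_x-1} and \eqref{eq:u-u_x-bdry}, together with the uniform bounds $\lambda^{1/2} \leq \mu_{x_0} \leq \Lambda^{1/2}$ from \eqref{eq:mu-bounds} and the ellipticity of $A(x_0)$, convert the $L^2$ bounds for $u_{x_0}^*$ over $B_t$ and $\partial B_t$ into the corresponding bounds for $U_{x_0}^*$ over $E_t(x_0)$ and $\partial E_t(x_0)$. The norm comparison $\|u_{x_0}^*\|_{W^{1,2}(B_{R_0})} \leq C\|U\|_{W^{1,2}(B_1)}$ needed on the right-hand side follows from the same change of variables combined with Theorem~\ref{thm:u_e-grad-holder}.

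The main obstacle I anticipate is the almost Rellich identity: it is not an immediate consequence of the almost minimizing property (which compares $u_{x_0}^*$ only with competitors agreeing with it on $\partial B_t$), and instead requires a first-variation argument with domain-perturbing vector fields combined with the complementarity condition on the thin space. That step has been carried out in \cite{JeoPet19a}, so the passage to variable coefficients introduces no new analytic difficulty beyond the bookkeeping above.
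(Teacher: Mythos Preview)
Your reduction strategy is exactly the paper's: verify that $u_{x_0}^*$ satisfies the hypotheses of \cite{JeoPet19a}*{Lemma~7.4} (almost Signorini property at $0$, even symmetry, complementarity, $\kappa(x_0)\ge 3/2$) and then deskew via \eqref{eq:u-u_x-1}--\eqref{eq:u-u_x-bdry} and \eqref{eq:mu-bounds} to obtain the $U_{x_0}^*$ estimates. That part is correct and complete.

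The gap is in your sketch of the cited lemma itself. The ``almost Rellich identity''
\[
\Bigl|\int_{\partial B_t} u_{x_0}^*\,\partial_\nu u_{x_0}^* - D(t)\Bigr|\le C t^\alpha D(t)
\]
is not available for almost minimizers: away from $\Pi$ the function $u_{x_0}^*$ satisfies no PDE, so integration by parts produces an uncontrolled bulk term $\int_{B_t} u_{x_0}^*\,\Delta u_{x_0}^*$, and first/inner variation arguments for almost minimizers only yield inequalities with errors of order $\omega(t)^{1/2}$ multiplied by the $W^{1,2}$ norm of the test function, which blows up as the test function approximates $u_{x_0}^*\chi_{B_t}$. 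The complementarity condition on $\Pi$ does not help here, since the obstruction is in the bulk.

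The argument in \cite{JeoPet19a} avoids this entirely. With $u=u_{x_0}^*$, $\phi=\phi_{3/2}$, one computes for $|y|=1$
\[
\partial_r u^\phi_r(y)=\frac{1}{\phi(r)}\Bigl(\partial_\nu u(ry)-\tfrac{3(1-br^\alpha)}{2r}\,u(ry)\Bigr),
\]
so the lower bound in Theorem~\ref{thm:var-weiss} gives $\|\partial_r u^\phi_r\|_{L^2(\partial B_1)}^2\le \frac{C}{r}\,W_{3/2}'(r)$. Hence $|\frac{d}{dr}\|u^\phi_r\|_{L^2(\partial B_1)}|\le (C W_{3/2}'(r)/r)^{1/2}$, and a dyadic summation using $0\le W_{3/2}(r)\le C r^\delta$ (Lemma~\ref{lem:var-W-gr-est}) shows $\int_0^{t_0}(W_{3/2}'/r)^{1/2}\,dr<\infty$. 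Thus $r\mapsto\|u^\phi_r\|_{L^2(\partial B_1)}$ is bounded, which is precisely $H(t)\le C t^{n+2}$; the bound on $D(t)$ then follows from the Weiss pinching as you wrote. No identity relating $\int_{\partial B_t}u\,\partial_\nu u$ to $D(t)$ is needed.
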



\section{$3/2$-almost homogeneous rescalings and blowups}
\label{sec:var-32-homog-blow}

In this section we study another kind of rescalings and blowups that
will play a fundamental role in the analysis of regular free
boundary points where $\kappa(x_0)=3/2$ (see the next section), namely
$3/2$-almost homogeneous blowups. The
main result that we prove in this section is the uniqueness and
H\"older continuous dependence of such blowups at a free boundary
point $x_0$ (Lemma~\ref{lem:var-blowup-est}).

For a function $v$ in $B_1$ and $x_0\in B'_{1/2}$, we
define the
$3/2$-\emph{almost homogeneous rescalings} of $v$ at $x_0$ by
$$ v^{\phi}_{x_0, t}(x)=\frac{v(tx+x_0)}{\phi(t)}, \quad \phi(t)=e^{-\left(\frac{3b}{2\al}\right)t^\al}t^{3/2},
$$
with $b$ as in the Weiss-type monotonicity formulas $W^A_{3/2}$ and $W_{3/2}$.
When $x_0=0$, we simply write $v_{0,t}^\phi=v_t^\phi$.

The name is
explained by the fact that
$$
\lim_{t\to 0}\frac{\phi(t)}{t^{3/2}}=1,
$$
and the reason to look at such rescalings instead of $3/2$-homogeneous
rescalings (that would correspond to $\phi(t)=t^{3/2}$) is how they play
well with the Weiss-type monotonicity formulas $W^A_{3/2}$ and $W_{3/2}$.

Now, if $U$ is an $A$-quasisymmetric almost minimizer and $x_0\in
B_{1/2}'\cap \Gamma(U)$, then for any fixed $R>1$, if $t=t_j>0$ is small, then by Lemma~\ref{lem:var-opt-growth}, 
\begin{align}\label{eq:hom-grad-est}
  \int_{B_R}|\D (u_{x_0}^*)^{\phi}_{t}|^2
  &=\frac{e^{{\frac{3b}{\al}t^\al}}}{t^{n+1}}\int_{B_{Rt}}|\D u_{x_0}^*|^2\le C\|u_{x_0}^*\|^2_{W^{1, 2}(B_{R_0})}R^{n+1},\\
  \int_{\pa B_R}((u_{x_0}^*)^{\phi}_{t})^2
  &= \frac{e^{{\frac{3b}{\al}t^\al}}}{t^{n+2}}\int_{\pa B_{Rt}}(u_{x_0}^*)^2\le C\|u_{x_0}^*\|^2_{W^{1, 2}(B_{R_0})}R^{n+2},\label{eq:hom-bdry-est}
\end{align}
with $C=C(n, \al,M)$, $R_0=(1/2)\Lambda^{-1/2}$.
Hence, $(u_{x_0}^*)_{t_j}^\phi$ is a bounded sequence in
$W^{1,2}(B_R)$. Next, arguing as in the proof of
Proposition~\ref{prop:var-exist-Alm-blowup}, we will have that 
\begin{align}\label{eq:hom-grad-holder-est}
\|\widehat{\D (u_{x_0}^*)_{t}^\phi}\|_{C^{0,\be}(K)}\le C\|(u_{x_0}^*)_{t}^\phi\|_{W^{1,2}(B_R)},
\end{align}
with $C=C(n,\al,M,R,K)$ for $K\Subset B_R$. Thus, by letting $R\to \infty$ and using Cantor's
diagonal argument, we can conclude that over a subsequence $t=t_j\to 0+$,
$$
(u_{x_0}^*)^{\phi}_{t_j}\ra (u_{x_0}^*)^{\phi}_{0}\quad\text{in}\quad
C^1_{\loc}(\R^{n}_\pm\cup\R^{n-1}).
$$
We call such $(u_{x_0}^*)^{\phi}_{0}$ a \emph{$3/2$-homogeneous blowup} of
$u_{x_0}^*$ at $0$. (We may skip the ``almost'' modifier here as the limit is the same
as for $3/2$-homogeneous rescalings.)
Furthermore, from the relation
$$
(u_{x_0}^*)^\phi_{t}(y)
=(U_{x_0}^*)_{x_0,t}^\phi(\bar{\mfa}_{x_0}y),
$$
we also conclude that for any sequence $t_j\to 0+$, there is a subsequence, still denoted by $t_j$, such that $$
(U_{x_0}^*)^{\phi}_{x_0, t_j}\ra (U_{x_0}^*)^{\phi}_{x_0, 0}\quad\text{in}\quad
C^1_{\loc}(\R^{n}_\pm\cup\R^{n-1}).
$$

Apriori, the blowups $(u_{x_0}^*)^\phi_0$ and
$(U_{x_0}^*)^{\phi}_{x_0,0}$ may depend on the sequence $t_j\to
0+$. However, this does not happen in the case of $3/2$-homogeneous
blowups. We start with what we call a rotation estimate for rescalings.

\begin{lemma}[Rotation estimate]
  \label{lem:var-rotation-est} Let $U$ be an $A$-quasisymmetric almost minimizer for the $A$-Signorini problem  in $B_{1}$, $x_0\in B_{1/2}'$ a free boundary point,
  and $\de$ as in Lemma~\ref{lem:var-W-gr-est}. Then, 
\begin{align*}
  \int_{\partial B_1} |(u_{x_0}^*)^\phi_{t}-(u_{x_0}^*)^\phi_{s}|&\leq C\|u_{x_0}^*\|_{W^{1,2}(B_{R_0})}
  t^{\de/2},\\
  \int_{\mfa_{x_0}\partial B_1} |(U_{x_0}^*)^\phi_{x_0, t}-(U_{x_0}^*)^\phi_{x_0, s}|&\leq C\|U\|_{W^{1,2}(B_1)}
  t^{\de/2}, 
  \end{align*}
for $s<t<t_0=t_0(n,\alpha,M)$ and $C=C(n,\alpha,M)$.
\end{lemma}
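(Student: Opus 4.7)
The plan is to exploit the fact that the profile $\phi(t) = e^{-(3b/(2\alpha))t^\alpha}t^{3/2}$ was designed so that its logarithmic derivative matches the coefficient in $W_{3/2}$, and then to absorb the short-time singularity using the Weiss decay of Lemma~\ref{lem:var-W-gr-est}.

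\emph{Step 1: identify the time-derivative with the Weiss integrand.} A direct computation gives $\phi'(\tau)/\phi(\tau) = 3(1-b\tau^\alpha)/(2\tau)$, which is precisely the coefficient appearing in $W_{3/2}$. Differentiating the rescaling in $\tau$ for $|x|=1$ yields
$$
\partial_\tau (u_{x_0}^*)^\phi_\tau(x) = \frac{1}{\phi(\tau)}\Bigl[\partial_\nu u_{x_0}^*(\tau x) - \tfrac{3(1-b\tau^\alpha)}{2\tau}\,u_{x_0}^*(\tau x)\Bigr].
$$
Squaring, integrating over $\partial B_1$, and changing variables $y = \tau x$ I get
$$
\int_{\partial B_1}\bigl|\partial_\tau (u_{x_0}^*)^\phi_\tau\bigr|^2 = \frac{e^{3b\tau^\alpha/\alpha}}{\tau^{n+2}}\int_{\partial B_\tau}\Bigl(\partial_\nu u_{x_0}^* - \tfrac{3(1-b\tau^\alpha)}{2\tau}u_{x_0}^*\Bigr)^{\!2},
$$
which by Theorem~\ref{thm:var-weiss} is bounded above by $(C/\tau)\,W_{3/2}'(\tau, u_{x_0}^*, 0)$ uniformly for $\tau\in(0,t_0)$, with $C = C(n,\alpha,M)$.

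\emph{Step 2: assemble the estimate and control the time integral.} By Cauchy--Schwarz on $\partial B_1$, Fubini, and the fundamental theorem of calculus,
$$
\int_{\partial B_1}|(u_{x_0}^*)^\phi_t-(u_{x_0}^*)^\phi_s| \leq C\int_s^t \sqrt{W_{3/2}'(\tau)/\tau}\,d\tau.
$$
The main obstacle is that a naive Cauchy--Schwarz on the right-hand side produces a $\sqrt{\log(t/s)}$ factor that diverges as $s\to 0^+$. To circumvent this I would use the weighted Cauchy--Schwarz
$$
\int_s^t \sqrt{W_{3/2}'(\tau)/\tau}\,d\tau \leq \Bigl(\int_s^t \tau^{-\delta/2}W_{3/2}'(\tau)\,d\tau\Bigr)^{\!1/2}\Bigl(\int_s^t \tau^{\delta/2-1}\,d\tau\Bigr)^{\!1/2},
$$
and integrate by parts in the first factor. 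Using Lemma~\ref{lem:var-W-gr-est} to bound $W_{3/2}(\tau)\leq C\|u_{x_0}^*\|_{W^{1,2}(B_{R_0})}^2\,\tau^\delta$ and dropping the negative boundary term at $\tau = s$,
$$
\int_s^t \tau^{-\delta/2}W_{3/2}'(\tau)\,d\tau \leq t^{-\delta/2}W_{3/2}(t) + \tfrac{\delta}{2}\int_s^t \tau^{-\delta/2 - 1}W_{3/2}(\tau)\,d\tau \leq C\|u_{x_0}^*\|_{W^{1,2}(B_{R_0})}^2\,t^{\delta/2}.
$$
The second factor is trivially at most $Ct^{\delta/4}$, so multiplying the two yields the desired bound $C\|u_{x_0}^*\|_{W^{1,2}(B_{R_0})}\,t^{\delta/2}$.

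\emph{Step 3: deskew to the $A$-version.} From $\bar{T}_{x_0}^{-1}(y) = \bar{\mfa}_{x_0}y + x_0$ one has the pointwise identity $(U_{x_0}^*)^\phi_{x_0,\tau}(\bar{\mfa}_{x_0}y) = (u_{x_0}^*)^\phi_\tau(y)$ (no Jacobian here, in contrast to the Almgren rescalings). The change of variables $x = \bar{\mfa}_{x_0}y$ maps $\partial B_1$ onto $\bar{\mfa}_{x_0}\partial B_1 = \mfa_{x_0}\partial B_1$ with a surface Jacobian bounded by a constant depending only on $M$ via \eqref{eq:M}. Combined with the bound $\|u_{x_0}^*\|_{W^{1,2}(B_{R_0})}\le C(n,M)\|U\|_{W^{1,2}(B_1)}$, which follows from the change-of-variable formulas \eqref{eq:u-u_x-0}--\eqref{eq:u-u_x-1} applied to $U_{x_0}^*$ and the ellipticity of $A$, this transfers the estimate of Step~2 to $\mfa_{x_0}\partial B_1$ and proves the second inequality of the lemma.
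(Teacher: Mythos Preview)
Your proof is correct. Both the paper and you reduce to the $u_{x_0}^*$ estimate and then deskew identically; the difference lies in how the $\partial B_1$ estimate is obtained. The paper invokes the argument of \cite{JeoPet19a}*{Lemma~8.2}, which combines the derivative identity from the proof of \cite{JeoPet19a}*{Lemma~7.1} with a \emph{dyadic} decomposition of $[s,t]$: one bounds $\|(u_{x_0}^*)^\phi_{t_{j+1}}-(u_{x_0}^*)^\phi_{t_j}\|$ at each scale $t_j=2^{-j}t$ by a quantity controlled through $W_{3/2}(t_j)^{1/2}\lesssim t_j^{\de/2}$, and sums the resulting geometric series. You instead work continuously: after identifying $\int_{\partial B_1}|\partial_\tau(u_{x_0}^*)^\phi_\tau|^2\le (C/\tau)W_{3/2}'(\tau)$ via Theorem~\ref{thm:var-weiss}, you handle the $\int_s^t\sqrt{W_{3/2}'(\tau)/\tau}\,d\tau$ integral by a weighted Cauchy--Schwarz with weight $\tau^{\pm\de/2}$ followed by integration by parts, using Corollary~\ref{cor:W32-nonneg} to drop the boundary term at $s$ and Lemma~\ref{lem:var-W-gr-est} to close. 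Your route is slightly more direct and makes the dependence on the Weiss decay rate $\de$ completely transparent; the dyadic route is the more traditional one and is arguably more robust when only discrete-scale information is available. Either way the inputs are the same: the Weiss monotonicity inequality and the $t^\de$ decay of $W_{3/2}$.

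One minor remark on Step~3: the bound $\|u_{x_0}^*\|_{W^{1,2}(B_{R_0})}\le C\|U\|_{W^{1,2}(B_1)}$ needs, beyond \eqref{eq:u-u_x-0}--\eqref{eq:u-u_x-1}, the observation that $\|U_{x_0}^*\|_{W^{1,2}(E_{R_0}(x_0))}\le C\|U\|_{W^{1,2}(E_{R_0}(x_0))}$, which follows from $P_{x_0}E_{R_0}(x_0)=E_{R_0}(x_0)$ and the boundedness of $P_{x_0}$; this is implicit in the proof of Theorem~\ref{thm:u_e-grad-holder}.
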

\begin{proof} 
This is an analogue of Lemma~8.2 in \cite{JeoPet19a}, which follows
from the computation done in the proof of \cite{JeoPet19a}*{Lemma
  7.1}, the growth estimate for $W_{3/2}$ in \cite{JeoPet19a}*{Lemma
  7.3} and a dyadic argument. The analogues of those results in our
case are stated in Lemma~\ref{lem:var-almost-opt-growth} and
\ref{lem:var-W-gr-est}. This proves the lemma for $u_{x_0}^*$.
The estimate for $(U_{x_0}^*)_{x_0,t}^\phi$ then follows from the equality
\[
(u_{x_0}^*)_{t}^\phi(y)=(U_{x_0}^*)_{x_0,t}^\phi(\bar{\mfa}_{x_0}y),\quad y\in B_{R_0/t}.\qedhere
\]
\end{proof}

The uniqueness of $3/2$-homogeneous blowup now follows.

\begin{lemma}\label{lem:var-blowup-rot-est} Let $(U_{x_0}^*)_{x_0,0}^\phi$ and $(u_{x_0}^*)_{0}^\phi$ be blowups of $(U_{x_0}^*)_{x_0,t}^\phi$ and $(u_{x_0}^*)_{t}^\phi$, respectively, at a free boundary point $x_0\in B_{1/2}'$. Then, 
\begin{align*}
  \int_{\partial B_1} |(u_{x_0}^*)_{t}^\phi - (u_{x_0}^*)_{0}^\phi|&\leq C\|u_{x_0}^*\|_{W^{1,2}(B_{R_0})}
  t^{\de/2},\\
  \int_{\mfa_{x_0}\partial B_1} |(U_{x_0}^*)_{x_0,t}^\phi - (U_{x_0}^*)_{x_0,0}^\phi|&\leq C\|U\|_{W^{1,2}(B_1)}
  t^{\de/2},  
\end{align*}
for $0<t<t_0(n,\al,M)$ and $C=C(n,\al,M)$, where $\de=\de(n,\alpha)>0$ is as in Lemma~\ref{lem:var-rotation-est}. In
particular, the blowups $(u_{x_0}^*)_{0}^\phi$ and $(U_{x_0}^*)_{x_0,0}^\phi$ are unique.
\end{lemma}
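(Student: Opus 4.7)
The plan is to deduce both the rate and the uniqueness directly from the rotation estimate in Lemma~\ref{lem:var-rotation-est}, by sending the smaller argument $s$ to $0$ along a convergent subsequence of rescalings.

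Fix $t\in(0,t_0)$ and let $s_j\to 0+$ be any sequence along which $(u_{x_0}^*)^\phi_{s_j}$ converges in $C^1_\loc(\R^n_\pm\cup\R^{n-1})$ to a blowup $(u_{x_0}^*)^\phi_0$; such a sequence exists by the uniform bounds \eqref{eq:hom-grad-est}--\eqref{eq:hom-bdry-est}, the gradient H\"older estimate \eqref{eq:hom-grad-holder-est}, and a Cantor diagonal argument, exactly as in the discussion preceding the lemma. Since $\partial B_1$ is a compact subset of $\R^n_+\cup\R^{n-1}\cup\R^n_-=\R^n$, this $C^1_\loc$ convergence is in particular uniform, and hence $L^1$, on $\partial B_1$. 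Applying Lemma~\ref{lem:var-rotation-est} with $s=s_j<t$ and passing to the limit $j\to\infty$ (via Fatou, or dominated convergence) yields
\[
\int_{\partial B_1}|(u_{x_0}^*)^\phi_t-(u_{x_0}^*)^\phi_0|\le C\|u_{x_0}^*\|_{W^{1,2}(B_{R_0})}\, t^{\de/2},
\]
which is the claimed estimate for $u_{x_0}^*$.

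For uniqueness, suppose $w$ is a second blowup obtained along a different subsequence. The same limiting argument gives $\int_{\partial B_1}|(u_{x_0}^*)^\phi_t-w|\le Ct^{\de/2}$, and the triangle inequality then yields $\int_{\partial B_1}|w-(u_{x_0}^*)^\phi_0|\le 2Ct^{\de/2}$ for every small $t$. Since the left-hand side is independent of $t$, it must vanish, so $w=(u_{x_0}^*)^\phi_0$ a.e.\ on $\partial B_1$. To upgrade this equality to all of $\R^n$, I invoke the standard fact that every such blowup is $3/2$-homogeneous: combining Corollary~\ref{cor:W32-nonneg} with Lemma~\ref{lem:var-W-gr-est} gives $W_{3/2}(0+,u_{x_0}^*,0)=0$, and then the monotonicity formula Theorem~\ref{thm:var-weiss} forces the nonnegative derivative term to vanish in the rescaling limit, making the blowup $3/2$-homogeneous on $\R^n$. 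Two $3/2$-homogeneous functions agreeing on $\partial B_1$ agree on $\R^n$, yielding uniqueness.

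The $(U_{x_0}^*)^\phi_{x_0,t}$-statement follows at once from the $u_{x_0}^*$-statement via the identity $(u_{x_0}^*)^\phi_t(y)=(U_{x_0}^*)^\phi_{x_0,t}(\bar{\mfa}_{x_0}y)$ together with the change of variables $x=\bar{\mfa}_{x_0}y$, which maps $\partial B_1$ onto $\bar{\mfa}_{x_0}\partial B_1=\mfa_{x_0}\partial B_1$ (the last equality because $\bar{\mfa}_{x_0}=\mfa_{x_0}O_{x_0}$ and $O_{x_0}$ preserves $\partial B_1$); the resulting bounded Jacobian and the comparison $\|u_{x_0}^*\|_{W^{1,2}(B_{R_0})}\le C\|U\|_{W^{1,2}(B_1)}$ coming from the change-of-variable formulas in Section~2 are absorbed into $C$. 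The only mildly delicate point in the whole argument is justifying the $3/2$-homogeneity of blowups in order to promote $L^1(\partial B_1)$-uniqueness to uniqueness on $\R^n$; every other step is a direct consequence of Lemma~\ref{lem:var-rotation-est} and the precompactness of rescalings already established in this section.
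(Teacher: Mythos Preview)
Your derivation of the rate estimate---passing $s=s_j\to0$ in Lemma~\ref{lem:var-rotation-est}---is exactly what the paper does. The difference is in how you upgrade $L^1(\partial B_1)$-agreement of two blowups to agreement on all of $\R^n$.

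The paper's route is shorter here: it observes (by the same argument as in Step~3 of Proposition~\ref{prop:var-exist-Alm-blowup}) that any $3/2$-homogeneous blowup is a solution of the Signorini problem in $B_1$. Two Signorini solutions with the same Dirichlet data on $\partial B_1$ must coincide in $B_1$ by uniqueness of the energy minimizer, and then agree on $\R^n_\pm$ by unique continuation of harmonic functions. This avoids any appeal to homogeneity.

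Your route via $3/2$-homogeneity is correct in principle, but the justification you give is thin. Knowing $W_{3/2}(0+,u_{x_0}^*,0)=0$ and that the derivative in Theorem~\ref{thm:var-weiss} is nonnegative does not by itself yield homogeneity of the blowup; you still need to (i) relate $W_{3/2}(\rho,(u_{x_0}^*)^\phi_{t_j},0)$ to $W_{3/2}(\rho t_j,u_{x_0}^*,0)$ (accounting for the $e^{at^\alpha}$ and $bt^\alpha$ corrections in the almost-homogeneous normalization), (ii) pass to the limit to get $W_{3/2}(\rho,(u_{x_0}^*)^\phi_0,0)\equiv0$ for the \emph{classical} Weiss functional, and (iii) invoke that for an exact Signorini solution, constancy of $W_{3/2}$ forces homogeneity. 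None of this is done in the paper prior to this lemma. Since establishing that blowups are Signorini solutions is needed for step~(iii) anyway, the paper's approach is the more economical one.
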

\begin{proof} If $(u_{x_0}^*)_{0}^\phi$ is the limit of $(u_{x_0}^*)^\phi_{t_j}$ for
  $t_j\to 0$, then the first part of the lemma follows immediately from
  Lemma~\ref{lem:var-rotation-est}, by taking $s=t_j\to 0$ and passing to
  the limit.

  To see the uniqueness of blowups, we observe that
  $(u_{x_0}^*)_{0}^\phi$ is
  a solution of the Signorini problem in $B_1$, by arguing as in the
  proof of Proposition~\ref{prop:var-exist-Alm-blowup} for Almgren
  blowups. Now, if
  $v_0$ is another blowup, over a possibly different sequence $t_j'\to
  0$, then passing to the limit in the first part
  of the lemma we will have
$$
\int_{\partial B_1} |v_0 - (u_{x_0}^*)_{0}^\phi|^2=0,
$$
implying that both $v_0$ and $(u_{x_0}^*)_{0}^\phi$ are
solutions of the Signorini problem in $B_1$ with the same boundary
values on $\partial B_1$. By the uniqueness of such solutions, we have
$v_0=(u_{x_0}^*)_{0}^\phi$ in $B_1$. The equality
propagates to all of $\R^n$ by the unique continuation of harmonic
functions in $\R^n_\pm$. This completes the proof for $u_{x_0}^*$. An
analogous argument holds for $U_{x_0}^*$ using the equalities
\begin{align*}
  (u_{x_0}^*)_{t}^\phi(y)=(U_{x_0}^*)_{x_0,t}^\phi(\bar{\mfa}_{x_0}y),
  &\quad y\in B_{R_0/t},\\
(u_{x_0}^*)_{0}^\phi(y)=(U_{x_0}^*)_{x_0,0}^\phi(\bar{\mfa}_{x_0}y),&\quad y\in \R^n.\qedhere
\end{align*}
\end{proof}

The rotation estimate for rescalings implies not only the uniqueness
of blowups and the convergence rate to blowups, but also the
continuous dependence of blowups on a free boundary point.  

\begin{lemma}[Continuous dependence of blowups]
  \label{lem:var-blowup-est} There exists $\rho=\rho(n,\alpha,M)>0$ such
  that if $x_0,y_0\in B_{\rho}'$ are free boundary points of $U$, then
\begin{align}\label{eq:var-blowup-est-u}
\int_{\mfa_{x_0}\partial B_1} |(U_{x_0}^*)_{x_0,0}^\phi - (U_{y_0}^*)_{y_0,0}^\phi|&\leq
C|x_0-y_0|^\g,\\
\label{eq:var-blowup-est-u_x}\int_{\pa
  B_1}|(u_{x_0}^*)_{0}^\phi-(u_{y_0}^*)_{0}^\phi|&\le C|x_0-y_0|^\g,\\
\label{eq:var-blowup-est-thin}\int_{\pa B'_1}|(u_{x_0}^*)_0^\phi-(u_{y_0}^*)_0^\phi|&\le C|x_0-y_0|^\g,
\end{align}
with $C=C(n,\al,M,\|U\|_{W^{1,2}(B_{1})})$, $\g=\g(n,\alpha,M)>0$.
\end{lemma}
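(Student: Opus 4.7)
The plan is to derive the three estimates simultaneously by comparing a blowup with its rescaling at a common intermediate scale $t$ and then optimizing in $t$. For $x_0,y_0\in B_\rho'\cap\Gamma(U)$ with $\rho$ sufficiently small, and for $0<t<t_0$, the triangle inequality yields
\[
(U_{x_0}^*)^\phi_{x_0,0}(x) - (U_{y_0}^*)^\phi_{y_0,0}(x) = I(x) + II(x) + III(x),
\]
with $I = (U_{x_0}^*)^\phi_{x_0,0} - (U_{x_0}^*)^\phi_{x_0,t}$, $II = (U_{x_0}^*)^\phi_{x_0,t} - (U_{y_0}^*)^\phi_{y_0,t}$, and $III = (U_{y_0}^*)^\phi_{y_0,t} - (U_{y_0}^*)^\phi_{y_0,0}$. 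By Lemma~\ref{lem:var-blowup-rot-est}, the $L^1(\mfa_{x_0}\pa B_1)$ norm of $I$ and the $L^1(\mfa_{y_0}\pa B_1)$ norm of $III$ are both $\le Ct^{\de/2}$. For $III$ one must transfer the bound from $\mfa_{y_0}\pa B_1$ to $\mfa_{x_0}\pa B_1$; this is achieved via the H\"older bound $\|\bar{\mfa}_{x_0}-\bar{\mfa}_{y_0}\|\le C|x_0-y_0|^\al$ together with the uniform $C^{1,\be}$-estimate \eqref{eq:hom-grad-holder-est} on rescalings, at the cost of an additive error of the same or lower order.

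The essential work lies in bounding $II$. For $x\in\mfa_{x_0}\pa B_1$,
\[
II(x) = \tfrac{1}{\phi(t)}\bigl[U_{x_0}^*(x_0+tx) - U_{y_0}^*(y_0+tx)\bigr],
\]
and inserting the intermediate term $U_{y_0}^*(x_0+tx)$ produces two pieces. For the first piece,
\[
U_{x_0}^*(x_0+tx)-U_{y_0}^*(x_0+tx) = \tfrac12\bigl[U(P_{x_0}(x_0+tx))-U(P_{y_0}(x_0+tx))\bigr],
\]
and combining $\|P_{x_0}-P_{y_0}\|\le C|x_0-y_0|^\al$ with the interior Lipschitz bound on $U$ furnished by Theorem~\ref{thm:var-grad-holder} gives a pointwise estimate of size $C|x_0-y_0|^\al(|x_0|+t)\le C|x_0-y_0|^\al$ for $\rho,t$ small. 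The second piece $U_{y_0}^*(x_0+tx)-U_{y_0}^*(y_0+tx)$ is bounded by $C|x_0-y_0|$ via the corresponding Lipschitz bound for $U_{y_0}^*$ from Theorem~\ref{thm:u_e-grad-holder}, uniform in $y_0$. Dividing by $\phi(t)\sim t^{3/2}$ and integrating yields $\|II\|_{L^1(\mfa_{x_0}\pa B_1)}\le C|x_0-y_0|^\al/t^{3/2}$. Summing the three contributions produces a total bound of $C(t^{\de/2}+|x_0-y_0|^\al/t^{3/2})$, and optimizing via $t^{3+\de}\sim|x_0-y_0|^{2\al}$ gives \eqref{eq:var-blowup-est-u} with exponent $\g=\al\de/(\de+3)$.

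The estimate \eqref{eq:var-blowup-est-u_x} follows verbatim with $(u_{x_0}^*,u_{y_0}^*)$ in place of $(U_{x_0}^*,U_{y_0}^*)$ and integration over $\pa B_1$, using the relation $(u_{x_0}^*)_t^\phi(y)=(U_{x_0}^*)_{x_0,t}^\phi(\bar{\mfa}_{x_0}y)$ and the H\"older continuity of $x_0\mapsto\bar{\mfa}_{x_0}$ to compare at a common argument $y$. The thin estimate \eqref{eq:var-blowup-est-thin} is then deduced from \eqref{eq:var-blowup-est-u_x} via a standard interpolation: the $L^1(\pa B_1)$ bound combined with the uniform $C^{0,\be}(\pa B_1)$ bound on blowups inherited from \eqref{eq:hom-grad-holder-est} yields an $L^\infty(\pa B_1)$ bound and hence an $L^1(\pa B_1')$ bound, at the price of a smaller H\"older exponent $\g$. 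The principal technical hurdle is not the algebra but the bookkeeping: the natural integration domains ($\mfa_{x_0}\pa B_1$ versus $\mfa_{y_0}\pa B_1$) themselves depend H\"older-continuously on $x_0$, so the uniform $C^{1,\be}$-bound on rescalings must be invoked to transport estimates between nearby but distinct domains without degrading the final exponent.
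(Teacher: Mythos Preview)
Your proposal is correct, and the overall architecture---split into rotation terms controlled by Lemma~\ref{lem:var-blowup-rot-est}, a ``middle'' comparison term at scale $t$, a domain-transport error, and a final optimization in $t$---matches the paper's. The genuine difference is in how you handle the middle term $II$. You estimate it \emph{pointwise}, using the uniform Lipschitz bound on $U$ and $U_{y_0}^*$ furnished by Theorems~\ref{thm:var-grad-holder} and \ref{thm:u_e-grad-holder}, to get $\|II\|_{L^1}\le C|x_0-y_0|^\al/t^{3/2}$. The paper instead averages the scale parameter over $r\in[d^\tau,2d^\tau]$ (with $d=|x_0-y_0|$), thereby converting the surface integrals into volume integrals and invoking only the $L^2$ optimal growth estimate (Lemma~\ref{lem:var-opt-growth}) together with the quasisymmetry bound; this yields contributions of orders $d^{1-\tau}$ and $d^{\al-\tau}$, and the choice $\tau=\al/2$ gives $\g=\al\de/4$. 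Your more direct route exploits the $C^{1,\be}$ regularity already in hand and even produces the slightly better exponent $\g=\al\de/(\de+3)$; the paper's averaging argument is closer in spirit to \cite{GarPetSVG16} and uses only energy-level information. Two small points to tighten: the uniform $L^\infty$ gradient bound on the rescalings $(U_{y_0}^*)^\phi_{y_0,t}$ that you need for the domain-transport step requires not just \eqref{eq:hom-grad-holder-est} but also the $W^{1,2}$ bounds \eqref{eq:hom-grad-est}--\eqref{eq:hom-bdry-est}, which in turn rest on Lemma~\ref{lem:var-opt-growth}; and your interpolation argument for \eqref{eq:var-blowup-est-thin} is valid (the paper instead cites \cite{GarPetSVG16}*{Proposition~7.4}).
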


\begin{proof}\emph{Step 1.}
Let $d=|x_0-y_0|$ and $d^\tau\leq r\leq 2 d^\tau$ with
$\tau=\tau(\al)\in(0,1)$ to be determined later.

Next note that we can incorporate the weight ${\mu_{x_0}}/{\det
  \mfa_{x_0}}$ with $\mu_{x_0}$ as
in \eqref{eq:mu} in the integral on the left hand side of
\eqref{eq:var-blowup-est-u} because of the bounds
$$
\left(\frac\lambda\Lambda\right)^{1/2}\leq\frac{\mu_{x_0}}{\det
  \mfa_{x_0}}\leq \left(\frac\Lambda\lambda\right)^{1/2}.
$$
Then, by using Lemma~\ref{lem:var-blowup-rot-est}, we
  have
  \begin{multline} \label{eq:var-blowup-est-1}
    \int_{\mfa_{x_0}\partial B_1} |(U_{x_0}^*)_{x_0,0}^\phi -
    (U_{y_0}^*)_{y_0,0}^\phi|\frac{\mu_{x_0}}{\det \mfa_{x_0}}\\*
 \begin{aligned}
    & \le \int_{\mfa_{x_0}\pa B_1}\bigg(|(U_{x_0}^*)_{x_0,0}^\phi-(U_{x_0}^*)_{x_0,r}^\phi|+|(U_{x_0}^*)_{x_0,r}^\phi-(U_{x_0}^*)_{y_0,r}^\phi|\\
    &\qquad+|(U_{x_0}^*)_{y_0,r}^\phi-(U_{y_0}^*)_{y_0,r}^\phi|+|(U_{y_0}^*)_{y_0,r}^\phi-(U_{y_0}^*)_{y_0,0}^\phi|       \bigg)\frac{\mu_{x_0}}{\det \mfa_{x_0}}\\
&\qquad +\int_{\mfa_{y_0}\pa
  B_1}|(U_{y_0}^*)_{y_0,r}^\phi-(U_{y_0}^*)_{y_0,0}^\phi|\frac{\mu_{y_0}}{\det \mfa_{y_0}}\\
&\qquad
-\int_{\mfa_{y_0}\pa B_1}|(U_{y_0}^*)_{y_0,r}^\phi-(U_{y_0}^*)_{y_0,0}^\phi|\frac{\mu_{y_0}}{\det \mfa_{y_0}}\\
&\le 2Cr^{\de/2}+I_r+II_r+III_r\\
&\le Cd^{\tau\de/2}+I_r+II_r+III_r,
\end{aligned}
\end{multline}
where
\begin{align*}
I_r&=\int_{\mfa_{x_0}\pa B_1}|(U_{x_0}^*)_{x_0,r}^\phi-(U_{x_0}^*)_{y_0,r}^\phi|\frac{\mu_{x_0}}{\det \mfa_{x_0}},\\
II_r&=\int_{\mfa_{x_0}\pa B_1}|(U_{x_0}^*)_{y_0,r}^\phi-(U_{y_0}^*)_{y_0,r}^\phi|\frac{\mu_{x_0}}{\det \mfa_{x_0}},\\
III_r&=\int_{\mfa_{x_0}\pa
       B_1}|(U_{y_0}^*)^\phi_{y_0,r}-(U_{y_0}^*)^\phi_{y_0,0}|\frac{\mu_{x_0}}{\det
       \mfa_{x_0}}\\
  &\qquad-\int_{\mfa_{y_0}\pa B_1}|(U_{y_0}^*)^\phi_{y_0,r}-(U_{y_0}^*)^\phi_{y_0,0}|\frac{\mu_{y_0}}{\det \mfa_{y_0}}.
\end{align*}

 \medskip\noindent \emph{Step 2.}
By the definition of the almost homogeneous rescalings, we have
 \begin{align*}
    I_r \le \frac C{d^{\tau(n+1/2)}}\int_{\mfa_{x_0}\pa B_r}|U_{x_0}^*(z+x_0)-U_{x_0}^*(z+y_0)|dS_z.
 \end{align*}
This gives 
\begin{align*}
   & \frac 1{d^\tau}\int_{d^\tau}^{2d^\tau}I_r\,dr\\
   &\qquad\le \frac C{d^{\tau(n+3/2)}}\int_{d^\tau}^{2d^\tau}\int_{\mfa_{x_0}\pa B_r}|U_{x_0}^*(z+x_0)-U_{x_0}^*(z+y_0)|dS_zdr\\
    &\qquad\le \frac C{d^{\tau(n+3/2)}}\int_{\mfa_{x_0}(B_{2d^\tau}\sm B_{d^\tau})}|U_{x_0}^*(z+x_0)-U_{x_0}^*(z+y_0)|dz\\
    &\qquad=\frac C{d^{\tau(n+3/2)}}\int_{\mfa_{x_0}(B_{2d^\tau}\sm B_{d^\tau})}\left|\int_0^1\frac d{ds}\left[U_{x_0}^*(z+x_0(1-s)+y_0s)      \right]ds \right|dz\\
    &\qquad\le \frac C{d^{\tau(n+3/2)}}|x_0-y_0|\int_0^1\int_{\mfa_{x_0}(B_{2d^\tau}\sm B_{d^\tau})}|\D U_{x_0}^*(z+x_0(1-s)+y_0s)|dzds\\
    &\qquad\le \frac C{d^{\tau(n+3/2)-1}}\int_0^1\int_{\mfa_{x_0}B_{2d^\tau}+[x_0(1-s)+y_0s]}|\D U_{x_0}^*|dzds.
\end{align*}
Notice that the last integral is taken over
\begin{align*}
    \mfa_{x_0}B_{2d^\tau}+[x_0(1-s)+y_0s]&=\mfa_{x_0}[B_{2d^\tau}+s\mfa_{x_0}^{-1}(y_0-x_0)]+x_0\\
    &\subset \mfa_{x_0}B_{2d^\tau+\la^{-1/2}d}+x_0\subset E_{3d^\tau}(x_0),
\end{align*}
if $\rho=\rho(n,\al,M)$ is small so that $(2\rho)^{1-\tau}\le \la^{1/2}$ which readily implies $d^{1-\tau}\le\la^{1/2}$.
Thus, 
\begin{align*}
     \frac 1{d^\tau}\int_{d^\tau}^{2d^\tau}I_r\,dr
   &\le\frac C{d^{\tau(n+3/2)-1}}\int_0^1\int_{E_{3d^\tau}(x_0)}|\D U_{x_0}^*|dzds\\
   &\le\frac C{d^{\tau(n/2+3/2)-1}}\left(\int_{E_{3d^\tau}(x_0)}|\D U_{x_0}^*|^2\right)^{1/2}\\
   &\le C\|U\|_{W^{1,2}(B_{1})}d^{1-\tau},
\end{align*}
where the third inequality follows from
Lemma~\ref{lem:var-opt-growth}.

 \medskip\noindent \emph{Step 3.}
By the definition of rescalings and symmetrizations, we have
 \begin{align*}
    II_r &\le \frac C{d^{\tau(n+1/2)}}\int_{\mfa_{x_0}\pa B_r+y_0}|U_{x_0}^*(z)-U_{y_0}^*(z)|dS_z\\
    &\le\frac C{d^{\tau(n+1/2)}}\int_{\mfa_{x_0}\pa B_r+y_0}|U(P_{x_0}z)-U(P_{y_0}z)|dS_z.
\end{align*}
This gives
\begin{align*}
     & \frac 1{d^\tau}\int_{d^\tau}^{2d^\tau}II_r\,dr\\
   &\qquad\le \frac C{d^{\tau(n+3/2)}}\int_{\mfa_{x_0}(B_{2d^\tau}\sm B_{d^\tau})+y_0}|U(P_{x_0}z)-U(P_{y_0}z)|dz\\
   &\qquad\le \frac C{d^{\tau(n+3/2)}}\int_{\mfa_{x_0}(B_{2d^\tau}\sm B_{d^\tau})+y_0}\int_0^1\left|\frac{d}{ds}[U([(1-s)P_{x_0}+sP_{y_0}]z)]\right|ds dz\\
   &\qquad\le \frac {C|P_{x_0}-P_{y_0}|}{d^{\tau(n+3/2)}}\int_0^1\int_{\mfa_{x_0}(B_{2d^\tau}\sm B_{d^\tau})+y_0}|\D U([(1-s)P_{x_0}+sP_{y_0}]z)|dzds.
\end{align*}
Now we do the change of variables
$$
y=[(1-s)P_{x_0}+sP_{y_0}]z.
$$
Since $P_{x_0}$ and $P_{y_0}$ are upper-triangular matrices with diagonal entries $1,1,\dots,1,-1$, so is $(1-s)P_{x_0}+sP_{y_0}$. Thus
$$
\left|\det[(1-s)P_{x_0}+sP_{y_0}] \right|=1.
$$
Moreover, $y\in [(1-s)P_{x_0}+sP_{y_0}](\mfa_{x_0}B_{2d^\tau}+y_0)$. Since $$
\mfa_{x_0}B_{2d^\tau}+y_0\subset \mfa_{y_0}B_{2(\Ld/\la)^{1/2}d^\tau}+y_0=E_{2(\Ld/\la)^{1/2}d^\tau}(y_0),
$$
we have $$
P_{y_0}(\mfa_{x_0}B_{2d^\tau}+y_0)\subset P_{y_0}E_{2(\Ld/\la)^{1/2}d^\tau}(y_0)=E_{2(\Ld/\la)^{1/2}d^\tau}(y_0).
$$
Similarly, since \begin{align*}
    \mfa_{x_0}B_{2d^\tau}+y_0&=E_{2d^\tau}(x_0)+(y_0-x_0)\subset B_{2\Ld^{1/2}d^\tau}(x_0)+(y_0-x_0)\\
    &\subset B_{4\Ld^{1/2}d^\tau}(x_0)\subset E_{4(\Ld/\la)^{1/2}d^\tau}(x_0),
\end{align*}
we have $$
P_{x_0}(\mfa_{x_0}B_{2d^\tau}+y_0)\subset E_{4(\Ld/\la)^{1/2}d^\tau}(x_0).
$$
Thus \begin{align*}
    &y\in (1-s)P_{x_0}(\mfa_{x_0}B_{2d^\tau}+y_0)+sP_{y_0}(\mfa_{x_0}B_{2d^\tau}+y_0)\\
    &\qquad\subset (1-s)E_{4(\Ld/\la)^{1/2}d^\tau}(x_0)+sE_{2(\Ld/\la)^{1/2}d^\tau}(y_0)\\
    &\qquad \subset B_{6(\Ld/\la^{1/2})d^\tau}+x_0+s(y_0-x_0)\\
    &\qquad \subset B_{7(\Ld/\la^{1/2})d^\tau}+x_0\subset E_{7(\Ld/\la)d^\tau}(x_0).
\end{align*}
Therefore, \begin{align*}
    \frac 1{d^\tau}\int_{d^\tau}^{2d^\tau}II_r\,dr &\le \frac C{d^{\tau(n+3/2)-\al}}\int_0^1\int_{E_{7(\Ld/\la)d^\tau}(x_0)}|\D U|dzds\\
   &\le \frac C{d^{\tau(n/2+3/2)-\al}}\left(\int_{E_{7(\Ld/\la)d^\tau}(x_0)}|\D U|^2\right)^{1/2}\\
   &\le\frac C{d^{\tau(n/2+3/2)-\al}}\left(\int_{E_{7(\Ld/\la)d^\tau}(x_0)}|\D U_{x_0}^*|^2\right)^{1/2}\\
   &\le C\|U\|_{W^{1,2}(B_{1})}d^{\al-\tau},
\end{align*}
for small $\rho$, where the third inequality follows from the quasisymmetry property and the last inequality from Lemma~\ref{lem:var-opt-growth}.

\medskip\noindent \emph{Step 4.}
By the change of variables, we have
\begin{align*}
   III_r
  &=\int_{\pa B_1}|(U_{y_0}^*)_{y_0,r}^\phi(\mfa_{x_0}z)-(U_{y_0}^*)_{y_0,0}^\phi(\mfa_{x_0}z)|\\
  &\qquad-\int_{\pa
    B_1}|(U_{y_0}^*)_{y_0,r}^\phi(\mfa_{y_0}z)-(U_{y_0}^*)_{y_0,0}^\phi(\mfa_{y_0}z)|\\
  &\le \int_{\pa B_1}|(U^*_{y_0})^\phi_{y_0,r}(\mfa_{x_0}z)-(U^*_{y_0})^\phi_{y_0,r}(\mfa_{y_0}z)|\\
  &\qquad+\int_{\pa B_1}|(U^*_{y_0})^\phi_{y_0,0}(\mfa_{x_0}z)-(U^*_{y_0})^\phi_{y_0,0}(\mfa_{y_0}z)|\\
  &\le
    C\left(\|\D(U_{y_0}^*)_{y_0,r}^\phi\|_{L^\infty(B_{\Ld^{1/2}})}+\|\D(U_{y_0}^*)_{y_0,0}^\phi\|_{L^\infty(B_{\Ld^{1/2}})}
    \right)|\mfa_{x_0}-\mfa_{y_0}|,
\end{align*}
where we have used the fact that both $\mfa_{x_0}z$ and
$\mfa_{y_0}z$ are contained in $\overline {B_{\Lambda^{1/2}}}$ for $z\in\partial
B_1$. To estimate the gradients of rescalings
we first observe that by the inclusion $B_{r\Ld^{1/2}}(y_0)\subset E_{r(\Ld/\la)^{1/2}}(y_0)\subset B_{r\Ld/\la^{1/2}}(y_0)$, we have
$$
\|\D(U_{y_0}^*)_{y_0,r}^\phi\|_{L^\infty(B_{\Ld^{1/2}})}\le \frac C{r^{1/2}}\|\D U_{y_0}^*\|_{L^\infty(B_{r\Ld^{1/2}}(y_0))}\le \frac C{r^{1/2}}\|\D U\|_{L^\infty(B_{r\Ld/\la^{1/2}}(y_0))}.
$$
Let $U_{y_0,r}(x):=U(r(x-y_0)+y_0)$. Then, arguing as in the proof of Proposition~\ref{prop:var-exist-Alm-blowup}, we have $$
\|\D U_{y_0,r}\|_{L^\infty(B_{\Ld/\la^{1/2}}(y_0))}\le C(n,\al,M)\|U_{y_0,r}\|_{W^{1,2}(B_{2\Ld/\la^{1/2}}(y_0))}.
$$
Thus
\begin{align*}
  \|\D U\|_{L^\infty(B_{r\Ld/\la^{1/2}}(y_0))}
  & =\frac 1r\|\D U_{y_0,r}\|_{L^\infty(B_{\Ld/\la^{1/2}}(y_0))}\\
  &\le \frac Cr\|U_{y_0,r}\|_{W^{1,2}(B_{2\Ld/\la^{1/2}}(y_0))}\\
  &\le \frac C{r^{n/2+1}}\|U\|_{L^2(B_{2r\Ld/\la^{1/2}}(y_0))}+\frac
    C{r^{n/2}}\|\D U\|_{L^2(B_{2r\Ld/\la^{1/2}}(y_0))}\\
    &\le \frac C{r^{{n/2}+1}}\|U_{y_0}^*\|_{L^2(E_{2r\Ld/\la}(y_0))}+\frac C{r^{n/2}}\|\D U_{y_0}^*\|_{L^2(E_{2r\Ld/\la}(y_0))}\\
    &\le
    Cr^{1/2}\|U\|_{W^{1,2}(B_{1})},
\end{align*}
where we have used the inclusion
$B_{2r\Ld/\la^{1/2}}(y_0)\subset E_{2r\Ld/\la}(y_0)$ and the
quasisymmetry property in the third inequality and
Lemma~\ref{lem:var-opt-growth} in the forth. 
Therefore,
$$
\|\D(U_{y_0}^*)_{y_0,r}^\phi\|_{L^\infty(B_{\Ld^{1/2}})}\leq\frac
C{r^{1/2}}\|\D U\|_{L^\infty(B_{r\Ld/\la^{1/2}}(y_0))}\leq C\|U\|_{W^{1,2}(B_{1})}.
$$
Moreover, by $C_{\loc}^1$ convergence of $(U_{y_0}^*)_{y_0,r}^\phi$ to $(U_{y_0}^*)_{y_0,0}^\phi$, we also have \begin{align}\label{eq:hom-blowup-L-infinity-est}
\|\D(U_{y_0}^*)_{y_0,0}^\phi\|_{L^\infty(B_{\Ld^{1/2}})}=\lim_{r_j\ra 0+}\|\D(U_{y_0}^*)_{y_0,r_j}^\phi\|_{L^\infty(B_{\Ld^{1/2}})}\le C\|U\|_{W^{1,2}(B_{1})}.
\end{align}
Therefore,
\begin{align*}
    III_r &\le C|\mfa_{x_0}-\mfa_{y_0} |\|U\|_{W^{1,2}(B_{1})}\\
    &\le C\|U\|_{W^{1,2}(B_{1})} d^{\al}.
\end{align*}

\medskip\noindent \emph{Step 5.}
Now we are ready to prove \eqref{eq:var-blowup-est-u}.
Using the estimates in Steps 2--4 and taking the average over
$d^\tau\le r\le 2d^\tau$, we have
$$
\int_{\mfa_{x_0}\pa B_1}|(U_{x_0}^*)_{x_0,0}^\phi-(U_{y_0}^*)_{y_0,0}^\phi|\le C\|U\|_{W^{1,2}(B_{1})}(d^{\tau\de/2}+d^{1-\tau}+d^{\al-\tau}+d^{\al}).
$$
If we simply take $\tau=\al/2$, then we conclude $$
\int_{\mfa_{x_0}\pa B_1}|(U_{x_0}^*)_{x_0,0}^\phi-(U_{y_0}^*)_{y_0,0}^\phi|\le C|x_0-y_0|^\g,
$$
with $\g=\al\de/4$ and $C=C(n,\alpha,M,\|U\|_{W^{1,2}(B_{1})})$.

\medskip\noindent \emph{Step 6.}
To prove \eqref{eq:var-blowup-est-u_x}, we first observe that from \eqref{eq:var-blowup-est-u},
\begin{align*}
&\int_{\pa B_1}|(u_{x_0}^*)_{0}^\phi(z)-(u_{y_0}^*)_{0}^\phi(\bar{\mfa}_{y_0}^{-1}\bar{\mfa}_{x_0}z)|\\
&\qquad =\int_{\pa B_1}|(U_{x_0}^*)_{x_0,0}^\phi(\bar{\mfa}_{x_0}z)-(U_{y_0}^*)_{y_0,0}^\phi(\bar{\mfa}_{x_0}z)|\\
  &\qquad=\int_{\mfa_{x_0}
    \pa B_1}|(U_{x_0}^*)_{x_0,0}^\phi-(U_{y_0}^*)_{y_0,0}^\phi |\frac{\mu_{x_0}}{\det \mfa_{x_0}}\\
&\qquad\le C|x_0-y_0|^\g.
\end{align*}
On the other hand, 
\begin{align*}
    &\int_{\pa B_1}|(u_{y_0}^*)_{0}^\phi(z)-(u_{y_0}^*)_{0}^\phi(\bar{\mfa}_{y_0}^{-1}\bar{\mfa}_{x_0}z)|\\
    &\qquad =\int_{\mfa_{x_0}\pa B_1}|(u_{y_0}^*)_{0}^\phi(\bar{\mfa}_{x_0}^{-1}z)-(u_{y_0}^*)_{0}^\phi(\bar{\mfa}_{y_0}^{-1}z)|\frac{\mu_{x_0}}{\det \mfa_{x_0}}\\
    &\qquad\le C\|\D(u_{y_0}^*)_{0}^\phi\|_{L^\infty(B_{(\Ld/\la)^{1/2}})}|\bar{\mfa}_{x_0}^{-1}-\bar{\mfa}_{y_0}^{-1}|\\
    &\qquad\le C\|\D(U^*_{y_0})^\phi_{y_0,0}\|_{L^\infty(B_{\Ld/\la^{1/2}})}|x_0-y_0|^\al\\
    &\qquad \le C\|U\|_{W^{1,2}(B_{1})}|x_0-y_0|^\al,
\end{align*}
where the last inequality follows from \eqref{eq:hom-blowup-L-infinity-est}. (It is easy to see that we can enlarge the domain in \eqref{eq:hom-blowup-L-infinity-est}.)
Therefore, combining the preceding two estimates, we conclude
that
\begin{align*}
       \int_{\pa B_1}|(u_{x_0}^*)_{0}^\phi-(u_{y_0}^*)_{0}^\phi|
    & \le C|x_0-y_0|^\g.
\end{align*}

\medskip\noindent\emph{Step 7.} Finally, \eqref{eq:var-blowup-est-u_x}
implies \eqref{eq:var-blowup-est-thin}, by arguing precisely as in \cite{GarPetSVG16}*{Proposition~7.4}.
\end{proof}


\section{Regularity of the regular set}
\label{sec:var-regul-regul-set}

In this section we combine the uniqueness and H\"older continuous
dependence of $3/2$-homogeneous blowups of the symmetrized almost
minimizers $(U_{x_0}^*)_{x_0,0}^\phi$
(Lemma~\ref{lem:var-blowup-est}) with a classification of
such blowups 
at so-called regular points
(Proposition~\ref{prop:var-gap-32-hom-classif}) to prove one of the main results
of this paper, the $C^{1, \g}$ regularity of the regular set
(Theorem~\ref{thm:var-C1g-regset}). While some arguments follow
directly from those in the case $A\equiv I$  by a coordinate
transformation $\bar{T}_{x_0}$, the 
dependence of these transformations on $x_0$ creates an additional difficulty.

We start by defining the regular set.

\begin{definition}[Regular points]
  \label{def:var-reg-set}
  For an $A$-quasisymmetric almost minimizer $U$ for the $A$-Signorini problem  in $B_{1}$, we
  say that a free boundary point $x_0$ of $U$ is \emph{regular} if
  $$
  \kappa(x_0)=3/2.
  $$
We denote the set of all regular points of $U$ by $\mathcal{R}(U)$ and
call it the \emph{regular set}.
\end{definition}
We explicitly observe here that $3/2<2\leq\kappa_0$, so the fact $x_0\in\cR(U)$
  is independent of the choice of $\kappa_0\geq 2$, see Remark~\ref{rem:kax_0}.

\medskip
The proofs of the following two results (Lemma~\ref{lem:var-nondeg}
and Proposition~\ref{prop:var-gap-32-hom-classif}) are established
precisely as in  \cite{JeoPet19a}*{Lemma~9.2, Proposition~9.3} for the
transformed functions $u_{x_0}^*$. The equivalent statements for 
$U_{x_0}^*$ are obtained by changing back to the original variables.

\begin{lemma}[Nondegeneracy at regular points]\label{lem:var-nondeg} Let  $x_0\in B'_{1/2}\cap\cR(U)$ for an $A$-quasisymmetric almost
  minimizer $U$ for the $A$-Signorini problem  in $B_{1}$. Then, for
  $\kappa=3/2$,
  $$
  \liminf_{t\to 0} \int_{\mfa_{x_0}\partial B_1} ((U_{x_0}^*)^\phi_{x_0,
    t})^2\mu_{x_0}=\det \mfa_{x_0}\liminf_{t\to 0}\int_{\partial B_1}
((u_{x_0}^*)_{t}^\phi)^2>0.
  $$
\end{lemma}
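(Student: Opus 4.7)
The plan is to reduce the statement to its constant-coefficient counterpart by combining an explicit change of variables with the deskewing procedure. The equality of the two quantities will in fact hold for every $t$, and the strict positivity will be inherited from the $A\equiv I$ version, \cite{JeoPet19a}*{Lemma~9.2}, applied to $u_{x_0}^*$.

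First I would prove the pointwise-in-$t$ identity
$$
\int_{\mfa_{x_0}\pa B_1}((U_{x_0}^*)^\phi_{x_0,t})^2\mu_{x_0} = \det \mfa_{x_0}\int_{\pa B_1}((u_{x_0}^*)^\phi_{t})^2.
$$
The dilation $x\mapsto tx+x_0$ carries $\mfa_{x_0}\pa B_1$ onto $\pa E_t(x_0)$ with surface Jacobian $t^{n-1}$, while $\mu_{x_0}$ is homogeneous of degree zero; applying \eqref{eq:u-u_x-bdry} with $V = U_{x_0}^*$ (so $v = u_{x_0}^*$) then reduces both sides to $\det \mfa_{x_0}\,\phi(t)^{-2} t^{-(n-1)}\int_{\pa B_t}(u_{x_0}^*)^2$. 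Passing to the liminf yields the displayed equality in the statement.

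It then remains to show $\liminf_{t\to 0}\int_{\pa B_1}((u_{x_0}^*)^\phi_t)^2 > 0$, which I would obtain by applying \cite{JeoPet19a}*{Lemma~9.2} directly to $u_{x_0}^*$ at the origin. The hypotheses are straightforward to verify: $u_{x_0}^*$ is even in $y_n$ by construction; since $P_{x_0}x_0 = x_0$, one has $u_{x_0}^*(0) = U(x_0) = 0$; by Lemma~\ref{lem:quasisym-alm-min} together with a deskewing step analogous to Lemma~\ref{lem:u_x-alm-min} (applied to $U_{x_0}^*$ instead of $U$), $u_{x_0}^*$ satisfies the almost Signorini property at $0$ in $B_{R_0}$; and because $\ka(x_0) = 3/2 < \ka_0$, Remark~\ref{rem:kax_0} gives $N(0+,u_{x_0}^*,0) = 3/2$, so $0$ is a regular free boundary point of $u_{x_0}^*$ in the constant-coefficient sense.

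The only real obstacle is the bookkeeping in the previous paragraph; the genuinely nontrivial input — namely that the $3/2$-almost homogeneous blowup of $u_{x_0}^*$ at a regular point cannot vanish identically — is inherited from \cite{JeoPet19a} and ultimately rests on the vanishing $W_{3/2}(0+,u_{x_0}^*,0)=0$ (Lemma~\ref{lem:var-W-gr-est} and Corollary~\ref{cor:W32-nonneg}) combined with a compactness-plus-contradiction argument against a nonzero Almgren blowup.
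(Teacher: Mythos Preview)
Your proposal is correct and follows essentially the same approach as the paper: establish the nondegeneracy for $u_{x_0}^*$ via the argument of \cite{JeoPet19a}*{Lemma~9.2}, then pass to $U_{x_0}^*$ by the change-of-variables identity (which, as you note, holds for every $t$). One minor caveat: since $u_{x_0}^*$ only has the almost Signorini property \emph{at the origin} rather than being a full almost minimizer in the sense of \cite{JeoPet19a}, strictly speaking you repeat the proof of \cite{JeoPet19a}*{Lemma~9.2} (whose argument only uses that property at the point in question, together with the ingredients you list) rather than invoking the lemma as a black box --- but your final paragraph already identifies the actual inputs, so this is purely a matter of wording.
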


\begin{proposition}\label{prop:var-gap-32-hom-classif}
  If $\kappa(x_0)<2$, then necessarily $\ka(x_0)=3/2$ and
\begin{align*}
  (u_{x_0}^*)_0^\phi(z)&=a_{x_0} \Re(z'\cdot \nu_{x_0}+i|z_n|)^{3/2},\\
  (U_{x_0}^*)_{x_0,0}^\phi(x)&=a_{x_0}\Re((\bar{\mfa}_{x_0}^{-1}x)'\cdot \nu_{x_0}+i|(\bar{\mfa}_{x_0}^{-1}x)_n|)^{3/2},
  \end{align*}
  for some $a_{x_0}>0$, $\nu_{x_0}\in \partial B_1'$.
\end{proposition}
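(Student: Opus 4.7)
The plan is to combine the existence of Almgren blowups from Proposition~\ref{prop:var-exist-Alm-blowup} with the classical classification of global homogeneous solutions of the Signorini problem whose frequency is strictly below $2$, and then transfer the classification to the $3/2$-almost homogeneous blowup via the nondegeneracy from Lemma~\ref{lem:var-nondeg}.

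First I would apply Proposition~\ref{prop:var-exist-Alm-blowup} to obtain, along some sequence $t_j\to 0+$, an Almgren blowup $(u_{x_0}^*)_0^I$ that is a nonzero $\kappa(x_0)$-homogeneous solution of the Signorini problem in all of $\R^n$, even in the $y_n$-variable. The classical classification of homogeneous global Signorini solutions (from \cite{AthCaf04}; see also \cite{PetShaUra12}) states that every such solution with frequency $\kappa<2$ must in fact have $\kappa=3/2$ and coincide with a nonnegative multiple of the half-plane profile $\Re(z'\cdot\nu+i|z_n|)^{3/2}$ for some $\nu\in\partial B_1'$. Since $(u_{x_0}^*)_0^I$ is nontrivial, this multiple is strictly positive, so $\kappa(x_0)=3/2$ and $(u_{x_0}^*)_0^I=c_{x_0}\Re(z'\cdot\nu_{x_0}+i|z_n|)^{3/2}$ with $c_{x_0}>0$ and some $\nu_{x_0}\in\partial B_1'$.

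To pass to the $3/2$-almost homogeneous blowup, I would directly compare the two rescalings. A short computation from the definitions (using $\int_{\partial B_1}((u_{x_0}^*)_t^\phi)^2=\phi(t)^{-2}t^{-(n-1)}\int_{\partial B_t}(u_{x_0}^*)^2$) yields
\[
(u_{x_0}^*)_t^I=\frac{(u_{x_0}^*)_t^\phi}{\bigl(\int_{\partial B_1}((u_{x_0}^*)_t^\phi)^2\bigr)^{1/2}}
\]
for all small $t>0$. By Lemma~\ref{lem:var-blowup-rot-est}, $(u_{x_0}^*)_t^\phi$ converges to the unique blowup $(u_{x_0}^*)_0^\phi$, and by Lemma~\ref{lem:var-nondeg} the denominator has a strictly positive limit. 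Passing to the limit along the subsequence $t_j$ chosen above, we obtain
\[
(u_{x_0}^*)_0^\phi=\Bigl(\int_{\partial B_1}((u_{x_0}^*)_0^\phi)^2\Bigr)^{1/2}(u_{x_0}^*)_0^I=a_{x_0}\,\Re(z'\cdot\nu_{x_0}+i|z_n|)^{3/2}
\]
with $a_{x_0}>0$, which is the desired formula. The corresponding formula for $(U_{x_0}^*)_{x_0,0}^\phi$ follows at once by passing to the limit in the identity $(u_{x_0}^*)_t^\phi(y)=(U_{x_0}^*)_{x_0,t}^\phi(\bar{\mfa}_{x_0}y)$ and substituting $y=\bar{\mfa}_{x_0}^{-1}x$.

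The main obstacle is the classical classification input for homogeneous global Signorini solutions of frequency $<2$; everything else is a bookkeeping exercise through the normalizations of the two rescalings and through the deskewing map $\bar T_{x_0}$.
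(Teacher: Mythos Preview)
Your argument is correct and follows essentially the same route that the paper (via \cite{JeoPet19a}*{Proposition~9.3}) takes: classify the Almgren blowup using the known frequency gap for homogeneous global Signorini solutions, then pass to the $3/2$-almost homogeneous blowup through the normalization identity and the nondegeneracy from Lemma~\ref{lem:var-nondeg}, with the $U_{x_0}^*$ formula obtained by deskewing. The only minor remark is that the reference for the classification of homogeneous solutions with frequency $<2$ is more precisely \cite{AthCafSal08} (or \cite{CafSalSil08}) rather than \cite{AthCaf04}.
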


The next two corollaries are obtained by repeating the same arguments
as in \cite{JeoPet19a}*{Corollaries~9.4 and 9.5}.

\begin{corollary}[Almgren's frequency gap]
  \label{cor:var-gap} Let $U$ and $x_0$ be as in Lemma~\ref{lem:var-nondeg}.  Then either
  $$
  \kappa(x_0)=3/2\quad\text{or}\quad \kappa(x_0)\geq 2.
  $$
\end{corollary}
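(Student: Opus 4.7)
The plan is to derive the gap directly from the preceding two results. By Lemma~\ref{lem:var-min-freq} we already know that $\kappa(x_0) \geq 3/2$ for every free boundary point $x_0$, so the only task is to exclude values in the open interval $(3/2, 2)$. But this exclusion is precisely the content (indeed, a \emph{consequence}) of Proposition~\ref{prop:var-gap-32-hom-classif}: the hypothesis $\kappa(x_0) < 2$ there forces $\kappa(x_0) = 3/2$ together with an explicit classification of the $3/2$-homogeneous blowup. Chaining these two facts produces the dichotomy.

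So the proof will proceed in one short step: assume for contradiction (or simply for dichotomy) that $\kappa(x_0) < 2$; then by Proposition~\ref{prop:var-gap-32-hom-classif} we conclude $\kappa(x_0) = 3/2$. Otherwise $\kappa(x_0) \geq 2$ and there is nothing to prove. Combined with $\kappa(x_0) \geq 3/2$ from Lemma~\ref{lem:var-min-freq}, this yields the stated alternatives.

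The essential work has therefore been done already in Proposition~\ref{prop:var-gap-32-hom-classif}, whose proof in turn rests on: (i) existence and $\kappa(x_0)$-homogeneity of Almgren blowups $(u_{x_0}^*)^A_0$ at $x_0\in\Gamma(U)$ (Proposition~\ref{prop:var-exist-Alm-blowup}), (ii) the fact that nonzero homogeneous solutions of the thin obstacle problem (with $A=I$) have homogeneity lying in $\{3/2\} \cup [2,\infty)$ (the classical Signorini frequency gap, transferred to our setting via the deskewing $\bar T_{x_0}$), and (iii) the explicit classification of $3/2$-homogeneous global Signorini solutions with zero obstacle as half-space solutions $a\,\mathrm{Re}(z'\cdot\nu + i|z_n|)^{3/2}$. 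The main conceptual obstacle, already resolved in Proposition~\ref{prop:var-gap-32-hom-classif}, is transferring the constant-coefficient classification to the variable-coefficient blowup via the linear change of variables $\bar T_{x_0}$, which is harmless because the blowup already lives in a single tangent space where the frozen matrix $A(x_0)$ applies.

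Thus the proof of the corollary is essentially a one-line deduction from Lemma~\ref{lem:var-min-freq} and Proposition~\ref{prop:var-gap-32-hom-classif}, and nothing new has to be computed.
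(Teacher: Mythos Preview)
Your proposal is correct and matches the paper's approach: the paper states that this corollary (and the next) is obtained by repeating the arguments in \cite{JeoPet19a}*{Corollaries~9.4 and 9.5}, which amounts precisely to the one-line deduction you give from Proposition~\ref{prop:var-gap-32-hom-classif} (combined with Lemma~\ref{lem:var-min-freq}). Your additional discussion of what underlies Proposition~\ref{prop:var-gap-32-hom-classif} is accurate context but not needed for the corollary itself.
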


\begin{corollary}\label{cor:reg-set-open} The regular set $\mathcal{R}(U)$ is a relatively
  open subset of the free boundary.
\end{corollary}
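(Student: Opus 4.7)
The plan is to prove the corollary by establishing upper semicontinuity of the frequency map $x_0\mapsto\kappa(x_0)$ along the free boundary at regular points, and then invoking the frequency gap from Corollary~\ref{cor:var-gap} to upgrade this to openness. More precisely, I would show that if $x_0\in\cR(U)$ and $y_k\in\Gamma(U)$ with $y_k\to x_0$, then $\kappa(y_k)=3/2$ for all sufficiently large $k$.

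The main step is to show that $\limsup_{k\to\iy}\kappa(y_k)\le \kappa(x_0)=3/2$. For this, I would fix a small $t>0$ (smaller than $t_0(n,\al,M,\ka_0)$ from Theorem~\ref{thm:var-Almgren}, and also such that $E_t(y_k)\Subset B_1$ for all large $k$). By the Almgren monotonicity formula (Theorem~\ref{thm:var-Almgren}) applied at each $y_k\in\Gamma(U)$,
\[
\kappa(y_k)=\widehat{N}^A_{\ka_0}(0+,U^*_{y_k},y_k)\le \widehat{N}^A_{\ka_0}(t,U^*_{y_k},y_k).
\]
Next I would argue that for the fixed $t$, the right-hand side is continuous in the base point $y_k\to x_0$: the numerator $\int_{E_t(y_k)}\langle A(y_k)\nabla U^*_{y_k},\nabla U^*_{y_k}\rangle$ depends continuously on $y_k$ through the continuous dependence of $A$, the matrix $P_{y_k}$, and the ellipsoid $E_t(y_k)$, together with the $W^{1,2}$-continuity of $U^*_{y_k}$ in $y_k$; the boundary integral $\int_{\partial E_t(y_k)}(U^*_{y_k})^2\mu_{y_k}$ is continuous for the same reason, and it is strictly positive at $x_0$ by the nondegeneracy at regular points (Lemma~\ref{lem:var-nondeg}), hence strictly positive near $x_0$. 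Passing to the limit gives
\[
\limsup_{k\to\iy}\kappa(y_k)\le \widehat{N}^A_{\ka_0}(t,U^*_{x_0},x_0),
\]
and then letting $t\to 0+$ and using $\kappa(x_0)=3/2<\ka_0$ (so the truncation is inactive for small $t$) yields $\limsup_k\kappa(y_k)\le 3/2$.

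Combined with the universal lower bound $\kappa(y_k)\ge 3/2$ from Lemma~\ref{lem:var-min-freq}, this forces $\kappa(y_k)\to 3/2$. Finally, the frequency gap of Corollary~\ref{cor:var-gap} says that the only value of $\kappa(y_k)$ in the interval $[3/2,2)$ is $3/2$ itself, so $\kappa(y_k)=3/2$ for all large $k$, i.e., $y_k\in\cR(U)$ eventually. This proves that $\cR(U)$ is relatively open in $\Gamma(U)$.

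I expect the main technical obstacle to be the continuity of $\widehat{N}^A_{\ka_0}(t,U^*_{y_k},y_k)$ in the base point $y_k$, because the symmetrization operator $U\mapsto U^*_{y_k}$, the reflection matrix $P_{y_k}$, the conformal factor $\mu_{y_k}$, and the domains of integration $E_t(y_k)$ and $\partial E_t(y_k)$ all vary with $y_k$. This can however be handled cleanly by passing to the deskewed functions $u_{y_k}^*=U^*_{y_k}\circ\bar T_{y_k}^{-1}$ via the change-of-variables identities \eqref{eq:u-u_x-0}--\eqref{eq:u-u_x-bdry}, reducing everything to the continuous dependence of integrals over fixed balls $B_t$ and spheres $\partial B_t$ of the translated/composed almost minimizers, which is a standard consequence of the $C^{1,\be}$-regularity (Theorem~\ref{thm:var-grad-holder}) and the H\"older dependence of $\bar{\mfa}_{y_k}$ on $y_k$.
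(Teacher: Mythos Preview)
Your proposal is correct and follows essentially the same approach as the paper, which defers to \cite{JeoPet19a}*{Corollary~9.5}: upper semicontinuity of $x_0\mapsto\kappa(x_0)$ (via monotonicity of $\widehat N^A_{\ka_0}$ and continuity of the frequency at fixed scale $t$ in the base point) combined with the frequency gap of Corollary~\ref{cor:var-gap}. One minor remark: invoking Lemma~\ref{lem:var-nondeg} for the positivity of $\int_{\partial E_t(x_0)}(U^*_{x_0})^2\mu_{x_0}$ is unnecessary---this integral is automatically positive for any $t>0$ whenever $x_0\in\Gamma(U)$, since its vanishing would force $U^*_{x_0}\equiv 0$ in $E_t(x_0)$ by comparison with the zero competitor.
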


The combination of Proposition~\ref{prop:var-gap-32-hom-classif} and
Lemma~\ref{lem:var-blowup-est} implies the following lemma.

\begin{lemma}\label{lem:var-blowup-hol}
  Let $U$ and $x_0$ be as in Lemma~\ref{lem:var-nondeg}. Then there exists $\rho>0$, depending on
  $x_0$ such that $B_\rho'(x_0)\cap \Gamma(U)\subset \mathcal{R}(U)$
  and if
  $$
  (u_{\bx}^*)_0^\phi(z)=a_{\bx} \Re(z'\cdot\nu_{\bx}+i|z_n|)^{3/2}
  $$
  is the unique $3/2$-homogeneous blowup of $u_{\bx}^*$ at
  $\bx \in B'_{\rho}(x_0)\cap \Gamma(u)$, then
  \begin{align*}
    |a_{\bx}-a_{\by}| \le C_0|\bx-\by|^{\g},\\
    |\nu_{\bx}-\nu_{\by}| \le C_0|\bx-\by|^{\g},
  \end{align*}
  for any $\bx, \by\in B'_{\rho}(x_0) \cap \Gamma(u)$ with a constant
  $C_0$ depending on $x_0$.
\end{lemma}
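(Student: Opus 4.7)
The plan is to combine three ingredients already established in the excerpt: the openness of the regular set (Corollary~\ref{cor:reg-set-open}), the explicit classification of $3/2$-homogeneous blowups at regular points (Proposition~\ref{prop:var-gap-32-hom-classif}), and the H\"older $L^1$-dependence of such blowups on the base point (Lemma~\ref{lem:var-blowup-est}, in particular \eqref{eq:var-blowup-est-thin}). The task reduces to converting the $L^1$-closeness of the model profiles on the thin sphere into closeness of the two parameters $(a_\bx,\nu_\bx)$.

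First, since $x_0\in\cR(U)$ and $\cR(U)$ is relatively open in $\Ga(U)$, we pick $\rho>0$ with $B'_\rho(x_0)\cap\Ga(U)\subset\cR(U)$. By Proposition~\ref{prop:var-gap-32-hom-classif}, at every $\bx\in B'_\rho(x_0)\cap\Ga(U)$ the blowup has the explicit form
\[
(u_\bx^*)_0^\phi(z)=a_\bx\,\Re(z'\cdot\nu_\bx+i|z_n|)^{3/2},\quad a_\bx>0,\ \nu_\bx\in\pa B_1',
\]
with $a_\bx>0$ provided by Lemma~\ref{lem:var-nondeg}. Using the $L^1$-continuity of $\bx\mapsto(u_\bx^*)_0^\phi$ on $\pa B_1$ given by Lemma~\ref{lem:var-blowup-est} together with the strict positivity $a_{x_0}>0$, shrinking $\rho$ if necessary, we obtain a uniform lower bound $a_\bx\ge c_*>0$ for all $\bx\in B'_\rho(x_0)\cap\Ga(U)$.

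Next, restricting the profiles to $\pa B_1'$ where $z_n=0$, they reduce to $(u_\bx^*)_0^\phi(z')=a_\bx(z'\cdot\nu_\bx)_+^{3/2}$, and \eqref{eq:var-blowup-est-thin} becomes
\[
\int_{\pa B'_1}\bigl|\,a_\bx(z'\cdot\nu_\bx)_+^{3/2}-a_\by(z'\cdot\nu_\by)_+^{3/2}\bigr|\,dS \le C|\bx-\by|^\g.
\]
Consider the map $\Phi:(0,\iy)\times\pa B_1'\to L^1(\pa B_1')$ defined by $\Phi(a,\nu)(z')=a(z'\cdot\nu)_+^{3/2}$. Its partial derivatives $\pa_a\Phi=(z'\cdot\nu)_+^{3/2}$ and $\pa_\nu\Phi[e]=(3/2)a(z'\cdot\nu)_+^{1/2}(z'\cdot e)$ for $e\in T_\nu(\pa B_1')$ are linearly independent in $L^1(\pa B_1')$ whenever $a>0$. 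Hence $\Phi$ is a local bi-Lipschitz immersion at any such point, yielding a coercivity estimate
\[
\|\Phi(a,\nu)-\Phi(a',\nu')\|_{L^1(\pa B_1')}\ge c\bigl(|a-a'|+|\nu-\nu'|\bigr)
\]
for $(a,\nu),(a',\nu')$ in a neighborhood of any fixed $(a_*,\nu_*)$ with $a_*>0$. Combining this coercivity, applied along the family $\{(a_\bx,\nu_\bx)\}_{\bx\in B'_\rho(x_0)\cap\Ga(U)}$, with the $L^1$ H\"older estimate above produces the desired bounds $|a_\bx-a_\by|+|\nu_\bx-\nu_\by|\le C_0|\bx-\by|^\g$.

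The main technical obstacle is upgrading the pointwise linear independence of $\pa_a\Phi$ and $\pa_\nu\Phi$ into a coercivity estimate for $\Phi$ that is \emph{uniform} over the family of parameters $\{(a_\bx,\nu_\bx)\}$. This requires both the uniform lower bound $a_\bx\ge c_*>0$ (secured by shrinking $\rho$ via Lemma~\ref{lem:var-nondeg} and the continuity of blowups) and a continuity/compactness argument ensuring that the local Lipschitz constant of $\Phi^{-1}$ does not degenerate as $\bx$ varies in $B'_\rho(x_0)\cap\Ga(U)$. This is the same type of argument used in \cite{GarPetSVG16}*{Proposition~7.4} that is already invoked in the proof of Lemma~\ref{lem:var-blowup-est}.
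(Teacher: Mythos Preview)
Your proposal is correct and takes essentially the same approach as the paper, which simply refers to \cite{GarPetSVG16}*{Lemma~7.5}; you have spelled out the standard ingredients of that argument (openness of $\cR(U)$, the explicit classification of blowups, the thin-sphere $L^1$ estimate \eqref{eq:var-blowup-est-thin}, and coercivity of the parameter map $(a,\nu)\mapsto a(z'\cdot\nu)_+^{3/2}$). The only minor remark is that the relevant reference for the coercivity step is \cite{GarPetSVG16}*{Lemma~7.5} rather than Proposition~7.4, but the substance of your argument is the intended one.
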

\begin{proof} The proof follows by repeating the argument in Lemma~7.5
  in \cite{GarPetSVG16} with $(u_{\bx}^*)_0^\phi$, $(u_{\by}^*)_0^\phi$.
\end{proof}

Now we are ready to prove the main result on the regularity of the
regular set.

\begin{theorem}[$C^{1,\gamma}$ regularity of the regular set]
  \label{thm:var-C1g-regset}
  Let $U$ be an $A$-quasisymmetric almost minimizer for the $A$-Signorini problem  in $B_{1}$. Then, if $x_0\in B_{1/2}'\cap\mathcal{R}(U)$, there exists
  $\rho>0$, depending on $x_0$ such that, after a possible rotation of
  coordinate axes in $\R^{n-1}$, one has
  $B'_{\rho}(x_0)\cap \Gamma(U) \subset \mathcal{R}(U)$, and
  \begin{align*}
    B'_{\rho}(x_0) \cap \Gamma(U)
    &= B'_{\rho}(x_0)\cap \{x_{n-1}=g(x_1,\dots, x_{n-2})\},
  \end{align*}
  for $g\in C^{1, \g}(\R^{n-2})$ with an exponent
  $\g=\g(n, \al,M)\in (0, 1)$.

\end{theorem}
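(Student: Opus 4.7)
The plan is to combine the unique $3/2$-homogeneous blowup with H\"older continuous dependence on the base point (Lemma~\ref{lem:var-blowup-hol}) with a Whitney-type extension argument to realize $\Gamma(U)$ locally as the zero set of a $C^{1,\gamma}$ function on $\Pi$, and then to invoke the implicit function theorem to get the graph representation. By Lemma~\ref{lem:var-blowup-hol} there is $\rho>0$ so that every $\bar x\in B'_\rho(x_0)\cap\Gamma(U)$ is regular with unique blowup
\[
(u_{\bar x}^*)_0^\phi(z)=a_{\bar x}\Re(z'\cdot\nu_{\bar x}+i|z_n|)^{3/2},\qquad |a_{\bar x}-a_{\bar y}|+|\nu_{\bar x}-\nu_{\bar y}|\le C|\bar x-\bar y|^\gamma.
\]
After rotating coordinates in $\mathbb{R}^{n-1}$, we may assume $\nu_{x_0}=e_{n-1}$, and by further shrinking $\rho$ guarantee $\nu_{\bar x}\cdot e_{n-1}\ge 1/2$ throughout $B'_\rho(x_0)\cap\Gamma(U)$. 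Nondegeneracy at $x_0$ (Lemma~\ref{lem:var-nondeg}) together with H\"older dependence keeps $a_{\bar x}$ bounded away from zero as well.

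The heart of the argument is a Whitney extension step. On the closed set $E:=\overline{B'_\rho(x_0)\cap\Gamma(U)}\subset\Pi$ prescribe $F(\bar x)=0$ and $\nabla_\Pi F(\bar x)=\nu_{\bar x}$. The Whitney compatibility condition
\[
|F(\bar y)-F(\bar x)-\nabla_\Pi F(\bar x)\cdot(\bar y-\bar x)|\le C|\bar x-\bar y|^{1+\gamma},\qquad \bar x,\bar y\in E,
\]
reduces, since both sides of the equality $F\equiv 0$ on $E$ are trivial, to the bound $|\nu_{\bar x}\cdot(\bar y-\bar x)|\le C|\bar x-\bar y|^{1+\gamma}$. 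This in turn is proved by comparing $u_{\bar x}^*$ and its blowup on a dyadic scale $r=|\bar x-\bar y|^\tau$ for a suitable $\tau\in(0,1)$, exactly as in the proof of Lemma~\ref{lem:var-blowup-est}: using the explicit trace of the blowup $a_{\bar x}\Re(z'\cdot\nu_{\bar x})_+^{3/2}$ on $\Pi$, the fact that $U$ vanishes at both $\bar x$ and $\bar y$, the optimal growth Lemma~\ref{lem:var-opt-growth}, and the H\"older estimates on $a_{\bar x},\nu_{\bar x}$, the displacement $\bar y-\bar x$ must be nearly tangent to the zero set of the model profile at $\bar x$. Applying Whitney's theorem then produces $\tilde F\in C^{1,\gamma}$ on a neighborhood of $x_0$ in $\Pi$ with $\tilde F|_E=0$ and $\nabla_\Pi\tilde F(x_0)=e_{n-1}$, and the implicit function theorem yields $g\in C^{1,\gamma}(\mathbb{R}^{n-2})$ such that $\{\tilde F=0\}$ coincides locally with $\{x_{n-1}=g(x_1,\ldots,x_{n-2})\}$.

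Finally one must verify that, after possibly shrinking $\rho$, the local graph $\{\tilde F=0\}$ actually equals $\Gamma(U)$. The inclusion $\Gamma(U)\cap B'_{\rho'}(x_0)\subset\{\tilde F=0\}$ is built into the construction; the reverse is obtained by a directional monotonicity of $U$ in an open cone around $e_{n-1}$. Concretely, Proposition~\ref{prop:var-gap-32-hom-classif} gives that the $3/2$-homogeneous blowup at every nearby regular point is strictly monotone in each direction $e$ with $e\cdot\nu_{\bar x}>0$, and the $C^{1,\beta}$ convergence of Almgren rescalings from Proposition~\ref{prop:var-exist-Alm-blowup} combined with Theorem~\ref{thm:u_e-grad-holder} transfers this monotonicity to $U$ itself on a thin ball around $x_0$, forcing $\{U=0\}\cap\Pi$ and $\{U>0\}\cap\Pi$ to lie on opposite sides of the graph. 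The principal obstacle is the verification of the Whitney compatibility inequality in the variable-coefficient setting: the blowup at $\bar y$ is expressed through the $\bar y$-dependent deskewing matrix $\bar{\mfa}_{\bar y}$, so the quantitative comparison of $u_{\bar x}^*$ and $u_{\bar y}^*$ on $\Pi$ must absorb the $\alpha$-H\"older perturbation of $\bar{\mfa}_{\bar y}$ in addition to the rotation/scaling errors; this is handled by the same dyadic scheme as in Lemma~\ref{lem:var-blowup-est}, which is why the final exponent $\gamma=\gamma(n,\alpha,M)$ depends on $\alpha$, $M$, and $n$.
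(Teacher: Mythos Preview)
Your approach via Whitney extension plus the implicit function theorem is different from the paper's, which proceeds by a direct cone/flatness argument: one first shows (Step~1) that the rescalings $(u_{\bar x}^*)^\phi_r$ converge to their blowups uniformly in $C^1$ over $\bar x\in B'_\rho(x_0)\cap\Gamma(U)$, then (Steps~2--3) that for every such $\bar x$ an open cone $\cC_\e(\nu_{\bar x})\cap B'_{r_\e}$ lies in $\{u_{\bar x}^*(\cdot,0)>0\}$ while the opposite cone lies in $\{u_{\bar x}^*(\cdot,0)=0\}$ (the latter via the complementarity condition), and finally (Steps~4--6) one transports these cones back through $\bar T_{\bar x}$ to obtain cones for $U$ with axis $\nu^A_{\bar x}=(\bar{\mfa}_{\bar x}^{-1})^{\mathrm{tr}}\nu_{\bar x}/|(\bar{\mfa}_{\bar x}^{-1})^{\mathrm{tr}}\nu_{\bar x}|$, yielding Lipschitz graphicality, and then lets $\e\to 0$ together with the H\"older continuity of $\bar x\mapsto\nu^A_{\bar x}$ to upgrade to $C^{1,\gamma}$.

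Your route can be made to work, but two points deserve attention. First, the gradient data you feed into Whitney should be the \emph{physical} normal $\nu^A_{\bar x}$, not $\nu_{\bar x}$: the latter lives in the deskewed coordinates attached to $\bar x$, while $E\subset\Pi$ lives in the original coordinates, and it is precisely $\nu^A_{\bar x}$ (not $\nu_{\bar x}$) that is normal to $\Gamma(U)$ at $\bar x$. The Whitney compatibility bound you need is therefore $|\nu^A_{\bar x}\cdot(\bar y-\bar x)|\le C|\bar x-\bar y|^{1+\gamma}$; proving this requires passing through $\bar{\mfa}_{\bar x}$ and controlling the $\bar{\mfa}_{\bar x}$-vs-$\bar{\mfa}_{\bar y}$ error, which you acknowledge but do not carry out. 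Second, and more structurally, your ``reverse inclusion'' step---showing that the Whitney zero set $\{\tilde F=0\}$ coincides with $\Gamma(U)$ and not merely contains it---relies on exactly the uniform-in-$\bar x$ cone inclusions of Steps~1--3 of the paper's proof. But once those cone inclusions are established, the paper's Steps~5--6 already deliver the full $C^{1,\gamma}$ graph \emph{without} any Whitney extension. In other words, the directional-monotonicity argument you invoke at the end is not a small add-on; it is the paper's entire proof, and it renders the Whitney step superfluous. The Whitney approach is natural for the singular set (where blowups are polynomials with possibly lower-dimensional zero sets and no cone separation is available), but for the regular set the cone argument is both simpler and self-contained.
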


\begin{proof}
  The proof of the theorem is similar to those of in \cite{JeoPet19a}*{Theorem~9.7} and 
  \cite{GarPetSVG16}*{Theorem~1.2}. However, we provide full details since there are
  technical differences.

  \medskip\noindent \emph{Step 1.}  By relative openness of $\cR(U)$
  in $\Gamma(U)$, for small $\rho>0$ we have
  $B'_{2\rho}(x_0)\cap \Ga(U)\subset \cR(U)$.  We then claim that for
  any $\e>0$, there is $r_{\e}>0$ such that for
  $\bx\in B'_{\rho}(x_0)\cap \Ga(U)$, $r<r_{\e}$, we have that 
  $$
  \|(u_{\bx}^*)^{\phi}_{r}-(u_{\bx}^*)^{\phi}_{0}\|_{C^1(\overline{B_1^{\pm}})}<\e.
  $$
  Assuming the contrary, there is a sequence of points
  $\bx_j\in B'_{\rho}(x_0)\cap \Ga(U)$ and radii $r_j\ra 0$ such that
  $$
  \|(u_{\bx_j}^*)_{r_j}^\phi-(u_{\bx_j}^*)_{0}^\phi\|_{C^1(\overline{B_1^{\pm}})}\ge \e_0,
  $$
  for some $\e_0>0$. Taking a subsequence if necessary, we may assume
  $\bx_j\ra \bx_0\in \overline{B'_{\rho}(x_0)}\cap \Ga(U)$.  Using
  estimates \eqref{eq:hom-grad-est}--\eqref{eq:hom-grad-holder-est}, we can see that $\D(u^*_{\bx_j})_{r_j}^\phi$
  are uniformly bounded in $C^{0, \be}(B_2^{\pm}\cup B'_2)$. Since $(u^*_{\bx_j})_{r_j}^\phi(0)=0$, we also have that $(u^*_{\bx_j})_{r_j}^\phi$ is uniformly bounded in $C^{1, \be}(B_2^{\pm}\cup B'_2)$.  Thus, we
  may assume that for some $w$
  \begin{align*}
    (u^*_{\bx_j})_{r_j}^\phi\ra w \quad \text{in }C^1(\overline{B_1^{\pm}}).
  \end{align*}
  By arguing as in the proof of
  Proposition~\ref{prop:var-exist-Alm-blowup}, we see that the limit $w$
  is a solution of the Signorini problem in $B_1$.  Further, by
  Lemma~\ref{lem:var-blowup-rot-est}, we have
$$
\|(u_{\bx_j}^*)_{r_j}^\phi-(u_{\bx_j}^*)_{0}^\phi\|_{L^1(\pa B_1)}\ra 0.
$$
On the other hand, by Lemma~\ref{lem:var-blowup-hol}, we have
$$
(u_{\bx_j}^*)^{\phi}_{0}\ra (u_{\bx_0}^*)^{\phi}_{0}\quad \text{in }
C^1(\overline{B^{\pm}_1}),
$$
and thus
$$
w=(u_{\bx_0}^*)^{\phi}_{0}\quad \text{on }\partial B_1.
$$
Since both $w$ and $(u_{\bx_0}^*)^{\phi}_{0}$ are solutions of the
Signorini problem, they must coincide also in $B_1$.  Therefore
$$
(u_{\bx_j}^*)_{r_j}^\phi\ra (u_{\bx_0}^*)^{\phi}_{0}\quad \text{in
}C^1(\overline{B_1^{\pm}}),
$$
implying also that
$$
\|(u_{\bx_j}^*)_{r_j}^\phi-(u_{\bx_j}^*)^{\phi}_{0}\|_{C^1(\overline{B_1^{\pm}})}\to 0,
$$
which contradicts our assumption.

\medskip\noindent \emph{Step 2.}  For a given
$\e>0$ and a unit vector $\nu\in \R^{n-1}$ define the cone
$$
\cC_{\e}(\nu)=\{x'\in \R^{n-1}: x'\cdot\nu > \e|x'|\}.
$$ By Lemma~\ref{lem:var-blowup-hol}, we may assume
$a_{\bx}\ge\frac{a_{x_0}}{2}$ for $\bx\in B'_{\rho}(x_0)\cap \Ga(U)$
by taking $\rho$ small. For such $\rho$, we then claim that for any
$\e>0$, there is $r_{\e}>0$ such that for any
$\bx\in B'_{\rho}(x_0)\cap \Ga(U)$, we have
$$\cC_{\e}(\nu_{\bx})\cap B'_{r_{\e}}\subset
\{u_{\bx}^*(\cdot, 0)>0\}.$$ Indeed, denoting
$\cK_{\e}(\nu)=\cC_{\e}\cap \partial B'_{1/2}$, we have for some
universal $C_{\e}>0$
\begin{align*}
  \cK_{\e}(\nu_{\bx})\Subset\{(u_{\bx}^*)_0^\phi(\cdot,0)>0\}
  \cap B'_1\quad \text{and}\quad (u_{\bx}^*)_0^\phi(\cdot, 0)\ge
  a_{\bx}C_{\e}\ge \frac{a_{x_0}}{2}C_{\e}\quad \text{on }
  \cK_{\e}(\nu_{\bx}). 
\end{align*}
Since $\frac{a_{x_0}}{2}C_{\e}$ is independent of $\bx$, by Step 1 we
can find $r_{\e}>0$ such that for $r<2r_{\e}$,
$$
(u_{\bx}^*)^{\phi}_{r}(\cdot, 0)>0\quad \text{on } \cK_{\e}(\nu_{\bx}).
$$
This implies that for $r<2r_{\e}$,
$$
u_{\bx}^*(\cdot, 0)>0\quad \text{on}\quad
r\cK_{\e}(\nu_{\bx})=\cC_{\e}(\nu_{\bx})\cap \partial
  B'_{r/2}.
$$
Taking the union over all $r<2r_{\e}$, we obtain
$$
    u_{\bx}^*(\cdot, 0)>0\quad \text{on }\cC_{\e}(\nu_{\bx})\cap
  B'_{r_{\e}}.
$$

\medskip\noindent \emph{Step 3.}  We claim that for given $\e>0$,
there exists $r_{\e}>0$ such that for any
$\bar{x}\in B_{\rho}'(x_0)\cap \Gamma(U)$, we have
$-\left(\cC_{\e}(\nu_{\bar{x}})\cap B'_{r_{\e}}\right)\subset
\{u_{\bx}^*(\cdot, 0)=0\}$.

Indeed, we first note that
$$
-\pa^+_{x_n}(u_{\bx}^*)_0^\phi\ge a_{\bx}C_{\e} >
\left(\frac{a_{x_0}}{2}\right)C_{\e}\quad \text{on
}-\mathcal{K}_{\e}(\nu_{\bx}),
$$ 
for a universal constant
$C_{\e}>0$. From Step 1, there exists $r_{\e}>0$ such that for
$r<2r_{\e}$,
$$
-\pa^+_{x_n}(u_{\bx}^*)^{\phi}_{r}(\cdot, 0)>0\quad \text{on
}-\mathcal{K}_{\e}(\nu_{\bx}).
$$
By arguing as in Step 2, we obtain
$$
-\pa^+_{x_n}u_{\bx}^*(\cdot, 0)>0\quad\text{on } -\left(\cC(\nu_{\bx})\cap
  B'_{r_{\e}}\right).
$$
By the complementarity condition in Lemma~\ref{u_ex0-comp-cond}, we
therefore conclude that
$$
-\left(\cC(\nu_{\bx})\cap B'_{r_{\e}}\right) \subset
\{-\pa^+_{x_n}u_{\bx}^*(\cdot, 0)>0\}\subset \{u_{\bx}^*(\cdot, 0)=0\}.
$$

\medskip\noindent \emph{Step 4.} By direct computation, we have
$$
\cC_{\Ld^{1/2}\la^{-1/2}\e}(\nu_{\bx}^A)\cap
B'_{\la^{1/2}r_\e}\subset \bar{\mfa}_{\bx}\left(\cC_\e(\nu_{\bx})\cap
  B'_{r_\e}\right), 
$$
where
$$
\nu_{\bx}^A:=\frac{(\bar{\mfa}_{x}^{-1})^{\rm{tr}}\nu_{\bx}}{|(\bar{\mfa}_{x}^{-1})^{\rm{tr}}\nu_{\bx}|}.
$$
(Here $(\cdot)^{\rm{tr}}$ stands for the transpose of the matrix.)
Indeed, if $y'\in \cC_{\Ld^{1/2}\la^{-1/2}\e}(\nu_{\bx}^A)\cap
B'_{\la^{1/2}r_\e}$, then
\begin{align*}
y'\in
  B'_{\la^{1/2}r_\e}
  &=\bar{\mfa}_{\bx}\left(\bar{\mfa}_{\bx}^{-1}B'_{\la^{1/2}r_\e}\right)\subset
    \bar{\mfa}_{\bx}B'_{r_\e},\\
\intertext{and}
  \langle \bar{\mfa}_{x}^{-1}y',\nu_{\bx}\rangle
  &=\langle y',(\bar{\mfa}_{x}^{-1})^{\rm{tr}}\nu_{\bx}\rangle=\langle
    y',\nu_{\bx}^A\rangle |(\bar{\mfa}_{x}^{-1})^{\rm{tr}}\nu_{\bx}|\\
  &\ge (\Ld^{1/2}\la^{-1/2}\e|y'|)(\Ld^{-1/2})\\
  &=\la^{-1/2}\e|y'|\ge \e|\bar{\mfa}_{\bx}^{-1}y'|.
\end{align*}
Combining this with Step 2 and Step 3, for $\bx\in B'_\rho(x_0)\cap \Ga(U)$, \begin{align*}
    \bx+\left(\cC_{\Ld^{1/2}\la^{-1/2}\e}(\nu_{\bx}^A)\cap B'_{\la^{1/2}r_\e}\right) &\subset \bx+\bar{\mfa}_{\bx}\left(\cC_\e(\nu_{\bx})\cap B'_{r_\e}\right)\\
    &\subset \{U_{\bx}^*(\cdot,0)>0\},\\
    \bx-\left(\cC_{\Ld^{1/2}\la^{-1/2}\e}(\nu_{\bx}^A)\cap B'_{\la^{1/2}r_\e}\right) &\subset\{U_{\bx}^*(\cdot,0)=0\}.
\end{align*}

\medskip\noindent \emph{Step 5.}  By rotation in $\R^{n-1}$ we may
assume $\nu_{x_0}^A=e_{n-1}$. For any $\e>0$, by
Lemma~\ref{lem:var-blowup-hol} and the H\"older continuity of $A$, we can take
$\rho_{\e}=\rho(x_0, \e, M)$, possibly smaller than $\rho$ in the
previous steps, such that
$$
\cC_{2\Ld^{1/2}\la^{-1/2}\e}(e_{n-1})\cap B'_{\la^{1/2}r_{\e}} \subset \cC_{\Ld^{1/2}\la^{-1/2}\e}(\nu_{\bx}^A)\cap
B'_{\la^{1/2}r_{\e}},
$$
for $\bx\in B'_{\rho_{\e}}(x_0)\cap \Ga(U)$.
By Step 4, we also have
\begin{align*}
  \bx+\left(\cC_{2\Ld^{1/2}\la^{-1/2}\e}(e_{n-1})\cap B'_{\la^{1/2}r_{\e}}\right) \subset \{U(\cdot, 0)>0\},\\
  \bx-\left(\cC_{2\Ld^{1/2}\la^{-1/2}\e}(e_{n-1})\cap B'_{\la^{1/2}r_{\e}}\right) \subset \{U(\cdot, 0)=0\}.
\end{align*}
Now, fixing $\e=\e_0$, by the standard arguments, we conclude that
there exists a Lipschitz function $g: \R^{n-2}\ra \R$ with
$|\D g|\le C_{n, M}/\e_0$ such that
\begin{align*}
  B'_{\rho_{\e_0}}(x_0) \cap \{U(\cdot, 0)=0\}=B'_{\rho_{\e_0}}(x_0)\cap \{x_{n-1}\le g(x'')\},\\
  B'_{\rho_{\e_0}}(x_0) \cap \{U(\cdot,
  0)>0\}=B'_{\rho_{\e_0}}(x_0)\cap
  \{x_{n-1}> g(x'')\}.
\end{align*}

\medskip\noindent \emph{Step 6.}  Taking $\e\ra 0$ in Step 5, $\Ga(U)$
is differentiable at $x_0$ with normal $\nu_{x_0}^A$. Recentering at any
$\bx\in B'_{\rho_{\e_0}}(x_0)\cap \Ga(U)$, we see that $\Ga(U)$ has a
normal $\nu_{\bx}^A$ at $\bx$. By noticing that $\bx\mapsto
\nu_{\bx}^A$ is $C^{0,\gamma}$, we conclude that the function $g$ in Step 5 is
$C^{1, \g}$. This completes the proof.
\end{proof}


\section{Singular points}
\label{sec:var-singular-points}

In this section we study another type of free boundary points for
almost minimizers, the so-called singular set $\Sigma(U)$. Because of the
machinery developed in the earlier sections, we are able to prove a
stratification type result for $\Sigma(U)$
(Theorem~\ref{thm:var-sing}), following a similar approach for the
minimizers and almost minimizers with $A=I$.

\begin{definition}[Singular points]
  Let $U$ be an $A$-quasisymmetric almost minimizer for the $A$-Signorini problem  in
  $B_{1}$. We say that a free boundary point $x_0$ is \emph{singular} if
  the coincidence set $\Lambda(U)=\{U(\cdot,0)=0\}\subset B'_{1}$ has
  zero $H^{n-1}$-density at $x_0$, i.e.,
$$
\lim_{r\ra 0+}\frac{H^{n-1}\left(\Ld(U)\cap
    B_r'(x_0)\right)}{H^{n-1}(B'_r)}=0.
$$
We denote the set of all singular points by $\Sigma(U)$ and call it
the \emph{singular set}.
\end{definition}

Denote by
$\bar{\mfa}_{x_0}'$ the $(n-1)\times(n-1)$ submatrix of $\bar{\mfa}_{x_0}$ formed by the first $(n-1)$ rows and columns. We then claim
that there are constants $C,c>0$ depending only on $n$, $\lambda$, and
$\Lambda$ such that
\begin{align}\label{eq:det-(n-1)-est}
  c\le|\det\bar{\mfa}_{x_0}'|\le C.
\end{align}
Indeed, this follows from the ellipticity of $\mfa_{x_0}$ and the
invariance of both   $\R^{n-1}\times\{0\}$ and $\{0\}\times\R$ under
$\bar{\mfa}_{x_0}$, since we have
$$
|\det\bar{\mfa}_{x_0}'(\bar{\mfa}_{x_0})_{nn}|=|\det\bar{\mfa}_{x_0}|
=|\det \mfa_{x_0}|
$$
and
$$
|(\bar{\mfa}_{x_0})_{nn}|=|\langle{\bar{\mfa}_{x_0}e_n,e_n}\rangle|=|\bar{\mfa}_{x_0}e_n|\in[\lambda^{1/2},\Lambda^{1/2}].
$$
Recall now that for $x_0\in \Ga(u)$, $u_{x_0}(y)=U(\bar{\mfa}_{x_0}y+x_0)$
and note that $\bar{\mfa}_{x_0}'B_r'+x_0=E'_r(x_0)$.
Thus,
\begin{equation}\label{eq:sing-set-measure}
  H^{n-1}(\Ld(U)\cap E_r'(x_0))
  =|\det\bar{\mfa}_{x_0}'|H^{n-1}(\Ld(u_{x_0}^*)\cap
  B'_r).
\end{equation}
Now, by \eqref{eq:sing-set-measure} and \eqref{eq:det-(n-1)-est}, together with $B_{\la^{1/2}r}(x_0)\subset E_r(x_0)\subset B_{\Ld^{1/2}r}(x_0)$, we have \begin{align*}
   \lim_{r\ra 0+}\frac{H^{n-1}\left(\Ld(U)\cap
    B_r'(x_0)\right)}{H^{n-1}(B'_r)}=0 &\Longleftrightarrow \lim_{r\ra 0+}\frac{H^{n-1}\left(\Ld(U)\cap
    E_r'(x_0)\right)}{H^{n-1}(E_r'(x_0))}=0\\
    &\Longleftrightarrow\lim_{r\ra 0+}\frac{H^{n-1}\left(\Ld(u_{x_0}^*)\cap
    B_r'\right)}{H^{n-1}(B'_r)}=0.
\end{align*}
In terms of Almgren rescalings $(u_{x_0}^*)_r^I$, we can rewrite the condition above as 
$$
\lim_{r\ra 0+}H^{n-1}\left(\Ld((u_{x_0}^*)_r^I)\cap
    B_1'\right)=0.
$$
We then have the following characterization of singular points.

\begin{proposition}[Characterization of singular points]\label{prop:var-sing-char}
  Let $U$ be an $A$-quasi\-sym\-met\-ric almost minimizer for the $A$-Signorini problem  in $B_{1}$,
  and $x_0\in B_{1/2}'\cap \Gamma(U)$ be such that
  $\ka(x_0)=\ka<\kappa_0$. Then the following statements
  are equivalent.
  \begin{enumerate}[label=\textup{(\roman*)},leftmargin=*,widest=iii]
  \item $x_0\in \Sigma(U)$.
  \item any Almgren blowup $(u_{x_0}^*)_0^I$ of $u_{x_0}^*$ at $0$ is a nonzero polynomial
    from the class
    \begin{multline*}
      \mathcal{Q}_{\ka}=\{q: \text{$q$ is homogeneous polynomial of
        degree
        $\kappa$ such that}\\
      \La q=0,\ q(y', 0)\ge 0,\ q(y', y_n)=q(y', -y_n)\}.
    \end{multline*}
  \item any Almgren blowup $(U_{x_0}^*)^A_{x_0,0}$ of $U_{x_0}^*$ at $x_0$ is a nonzero polynomial
    from the class
    \begin{multline*}
      \mathcal{Q}^{A,x_0}_{\ka}=\{p: \text{$p$ is homogeneous polynomial of
        degree
        $\kappa$ such that}\\
      \dv(A(x_0)\D p)=0,\ p(x', 0)\ge 0,\ p(x)=p(P_{x_0}x)\}.
    \end{multline*}   
  \item $\ka(x_0)=2m$ for some $m\in \mathbb{N}$.
  \end{enumerate}
\end{proposition}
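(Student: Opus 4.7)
The plan is to reduce all four equivalences to the corresponding statement for the deskewed function $u_{x_0}^*$, which falls under the constant coefficient theory already treated in \cite{JeoPet19a}. By Lemma~\ref{lem:quasisym-alm-min}, $u_{x_0}^*$ satisfies the almost Signorini property at $0$, and by Proposition~\ref{prop:var-exist-Alm-blowup}, every Almgren blowup $(u_{x_0}^*)_0^I$ is a nonzero $\kappa(x_0)$-homogeneous global solution of the standard Signorini problem, even in $y_n$. Note also that $\kappa(x_0)$ is intrinsic, so condition (iv) is trivially the same for $U$ and $u_{x_0}^*$.

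First, I would establish (ii)$\iff$(iii) by a direct change of variables. Passing to the limit in the identity $(u_{x_0}^*)_t^I(y)=(\det\mfa_{x_0})^{1/2}(U_{x_0}^*)_{x_0,t}^A(\bar{\mfa}_{x_0}y)$ from Section~\ref{sec:var-almgr-resc-blow} yields $(u_{x_0}^*)_0^I(y)=(\det\mfa_{x_0})^{1/2}(U_{x_0}^*)_{x_0,0}^A(\bar{\mfa}_{x_0}y)$. Since $\bar{\mfa}_{x_0}$ is an invertible linear map sending $\Pi$ to $\Pi$ (for $x_0\in B_1'$), one side is a homogeneous polynomial of degree $\ka$ iff the other is. Under this change, the equation $\Delta q=0$ in $y$-coordinates corresponds to $\dv(A(x_0)\D p)=0$ in $x$-coordinates, the reflection $y_n\mapsto -y_n$ corresponds to $P_{x_0}$ (see Fig.~\ref{fig:refl}), and non-negativity on $\Pi$ is preserved; hence the classes $\mathcal{Q}_\kappa$ and $\mathcal{Q}_\kappa^{A,x_0}$ are in one-to-one correspondence.

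Second, I would reduce (i) to the analogous statement for $u_{x_0}^*$. Using the volume identity \eqref{eq:sing-set-measure}, the uniform bounds \eqref{eq:det-(n-1)-est}, and the inclusions $B_{\la^{1/2}r}(x_0)\subset E_r(x_0)\subset B_{\Ld^{1/2}r}(x_0)$, the zero density condition $x_0\in\Si(U)$ is equivalent to
\[
\lim_{r\to 0+}\frac{H^{n-1}(\Ld(u_{x_0}^*)\cap B_r')}{H^{n-1}(B_r')}=0,
\]
which, after the Almgren rescaling $y\mapsto ry$, becomes $H^{n-1}(\Ld((u_{x_0}^*)_r^I)\cap B_1')\to 0$.

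Finally, the equivalence (i)$\iff$(ii)$\iff$(iv), now entirely in terms of $u_{x_0}^*$, is exactly the content of \cite{JeoPet19a}*{Proposition~11.2}. The two classical ingredients invoked there are the classification of $\ka$-homogeneous global solutions of the Signorini problem even in $y_n$ (which are polynomials precisely when $\ka=2m$, $m\in\mathbb{N}$, and in that case automatically lie in $\mathcal{Q}_\ka$) and the convergence of coincidence sets of the rescalings to that of the blowup in $H^{n-1}$-measure (which uses the $C^1_{\loc}$ convergence in $B_1^\pm\cup B_1'$ together with the complementarity condition). The main conceptual obstacle in the present variable-coefficient setting is to ensure that the (non-orthogonal, non-isometric) change of variables $\bar T_{x_0}$ does not distort either the $H^{n-1}$-density characterization of singular points or the algebraic classes of polynomials; both points are handled by the uniform bounds \eqref{eq:det-(n-1)-est} and by the linearity of $\bar{\mfa}_{x_0}$.
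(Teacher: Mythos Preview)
Your proposal is correct and follows essentially the same route as the paper's own proof: establish (ii)$\iff$(iii) by the change of variables $\bar{\mfa}_{x_0}$, reduce the density condition (i) to the corresponding one for $u_{x_0}^*$ via \eqref{eq:sing-set-measure}--\eqref{eq:det-(n-1)-est}, and then invoke the constant-coefficient result from \cite{JeoPet19a} together with Proposition~\ref{prop:var-exist-Alm-blowup} and the complementarity condition (Lemma~\ref{u_ex0-comp-cond}) for the remaining equivalences. One small correction: the relevant result in \cite{JeoPet19a} is Proposition~10.2, not Proposition~11.2.
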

\begin{proof}This is the analogue of \cite{JeoPet19a}*{Proposition~10.2} in the case $A\equiv I$.

Clearly, (ii) and (iii) are equivalent. By Proposition~\ref{prop:var-exist-Alm-blowup}, any Almgren blowup $(u_{x_0}^*)_0^I$ of $u_{x_0}^*$ at $0$ is a nonzero global solution of the Signorini problem, homogeneous of degree $\ka$. Moreover, $(u_{x_0}^*)_0^I$ is a $C^1_{\loc}$ limit of Almgren rescalings $(u_{x_0}^*)_{t_j}^I$ in $\R^n_\pm\cup \R^{n-1}$. Since $u_{x_0}^*$ also satisfies the complementarity condition in Lemma~\ref{u_ex0-comp-cond}, the equivalence among (i), (ii) and (iv) follows by repeating the arguments in \cite{JeoPet19a}*{Proposition~10.2}.
\end{proof}

In order to proceed with the blowup analysis at singular points, we need to remove the logarithmic term from the growth estimates in Lemma~\ref{lem:var-almost-opt-growth}. This was achieved in \cite{JeoPet19a}*{Lemma~10.8} in the case $A\equiv I$ by using a bootstrapping argument \cite{JeoPet19a}*{Lemmas~10.4--10.6, Corollary~10.7}, based on the $\log$-epiperimetric inequality of \cite{ColSpoVel20}. All the arguments above work directly for $u_{x_0}^*$ (and then for $U_{x_0}^*$, by deskewing) and we obtain the following optimal growth estimate.

\begin{lemma}[Optimal growth estimate at singular points]
  \label{lem:var-opt-est}
  Let $U$ be an $A$-quasi\-sym\-met\-ric almost minimizer for the
  $A$-Signorini problem  in $B_{1}$. If $x_0\in B_{1/2}'\cap \Ga(U)$
  and $\ka(x_0)=\kappa<\ka_0$, $\ka=2m$, $m\in \mathbb{N}$, then there
  are $t_0$ and $C$, depending on $n$, $\al$, $M$, $\ka$, $\ka_0$, $\|U\|_{W^{1,2}(B_{1})}$, such that for $0<t<t_0$,
\begin{alignat*}{2}
   \int_{\pa B_t}(u_{x_0}^*)^2&\le
   Ct^{n+2\ka-1},&\quad\int_{B_t}|\D u_{x_0}^*|^2&\le
   Ct^{n+2\ka-2},\\
    \int_{\pa E_t(x_0)}(U_{x_0}^*)^2&\le
    Ct^{n+2\ka-1},&\quad\int_{E_t(x_0)}|\D U_{x_0}^*|^2&\le
    Ct^{n+2\ka-2}.
  \end{alignat*}
\end{lemma}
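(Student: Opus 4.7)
The plan is to first establish the estimates for $u_{x_0}^*$ in $B_{R_0}$, and then transfer them to $U_{x_0}^*$ via the change of variables formulas \eqref{eq:u-u_x-0}--\eqref{eq:u-u_x-bdry} and the identity $u_{x_0}^* = U_{x_0}^* \circ \bar{T}_{x_0}^{-1}$. Since $U$ is $A$-quasisymmetric, by Lemma~\ref{lem:quasisym-alm-min} the symmetrization $u_{x_0}^*$ satisfies the almost Signorini property at $0$ in $B_{R_0}$ with gauge $Mr^\alpha$, and by Theorem~\ref{thm:var-weiss} together with the hypothesis $\kappa(x_0) = 2m$ we know $W_{\ka}(t,u_{x_0}^*,0)$ is monotone nondecreasing on $(0,t_0)$. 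The key point is that, in the case $A\equiv I$, the logarithmic term in the corresponding weak estimate (Lemma~\ref{lem:var-almost-opt-growth}) was removed in \cite{JeoPet19a}*{Lemma~10.8} by combining the Weiss-type monotonicity with the $\log$-epiperimetric inequality of \cite{ColSpoVel20}, and every step of that argument is purely a statement about functions on Euclidean balls satisfying the almost Signorini property and an almost homogeneity condition.

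More precisely, I would mimic the bootstrap of \cite{JeoPet19a}*{Lemmas~10.4--10.6, Corollary~10.7}. The $\log$-epiperimetric inequality for $\kappa = 2m$ provides, for any $\kappa$-homogeneous extension $v$ of $u_{x_0}^*$ restricted to $\partial B_1$, a competitor $w$ such that
\[
  W_\kappa(1,w,0) \leq W_\kappa(1,v,0) - c\, \bigl(W_\kappa(1,v,0)\bigr)^{1+\eta},
\]
with $\eta = \eta(n,\kappa)>0$. Testing the almost minimizing property of $u_{x_0}^*$ on dyadic annuli and using the gauge $\omega(r)=Mr^\alpha$ produces a discrete differential inequality of the form
\begin{equation*}
  W_\kappa(t,u_{x_0}^*,0) - W_\kappa(t/2,u_{x_0}^*,0) \geq c\, \bigl(W_\kappa(t,u_{x_0}^*,0)\bigr)^{1+\eta} - C\, t^\alpha W_\kappa(t,u_{x_0}^*,0) - C\, t^\alpha.
\end{equation*}
Combined with the nonnegativity of $W_\kappa$ from the bootstrap scheme, and with Lemma~\ref{lem:var-almost-opt-growth} as the starting point, this inequality forces $W_\kappa(t,u_{x_0}^*,0) \to 0$ as $t \to 0+$ at a polynomial rate (up to a logarithmic factor), and then a second iteration using the improved decay yields uniform boundedness $W_\kappa(t,u_{x_0}^*,0) \leq C$.

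Once $W_\kappa(t,u_{x_0}^*,0)\leq C$ is established, the optimal growth of the $L^2$ boundary integrals and Dirichlet energies follows by the standard Weiss functional arguments: integrating the monotonicity inequality and solving the resulting ODE for $\int_{\partial B_t}(u_{x_0}^*)^2$ gives exactly the $t^{n+2\kappa-1}$ bound, and the bound on $\int_{B_t}|\nabla u_{x_0}^*|^2$ follows from the definition of $W_\kappa$. These are the desired estimates for $u_{x_0}^*$. The estimates for $U_{x_0}^*$ then follow line by line from those for $u_{x_0}^*$ after applying \eqref{eq:u-u_x-1} and \eqref{eq:u-u_x-bdry} on the ellipsoid $E_t(x_0)$ (noting that the conformal factor $\mu_{x_0}$ is bounded between $\lambda^{1/2}$ and $\Lambda^{1/2}$ by \eqref{eq:mu-bounds}, so the ratio of the weighted and unweighted surface integrals is controlled).

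The main obstacle is the careful bookkeeping in the bootstrap: one must verify that the $\log$-epiperimetric gain dominates the error $Mt^\alpha$ coming from the almost minimizing property once $W_\kappa$ becomes small enough, and that the constants in the inequality can be tracked uniformly in $\kappa \in \{2,4,\ldots\} \cap (0,\kappa_0)$. Aside from replacing $\omega(r)=r^\alpha$ by $\omega(r)=Mr^\alpha$ (which only modifies the constants $a_\kappa, b$ in the Weiss functionals by a factor of $M$, as already noted in the proof of Theorem~\ref{thm:var-weiss}), no new analytic ingredient is needed beyond what is already in \cite{JeoPet19a} and \cite{ColSpoVel20}.
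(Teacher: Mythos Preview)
Your proposal is correct and follows essentially the same route as the paper: the paper's own argument is simply that the bootstrapping scheme of \cite{JeoPet19a}*{Lemmas~10.4--10.6, Corollary~10.7, Lemma~10.8}, based on the $\log$-epiperimetric inequality of \cite{ColSpoVel20}, applies verbatim to $u_{x_0}^*$ (because it satisfies the almost Signorini property at $0$ in $B_{R_0}$ with gauge $Mr^\alpha$), and the estimates for $U_{x_0}^*$ then follow by the deskewing change of variables. Your outline of the bootstrap and the transfer to $U_{x_0}^*$ via \eqref{eq:u-u_x-1}, \eqref{eq:u-u_x-bdry}, and \eqref{eq:mu-bounds} matches this exactly.
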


With this growth estimate at hand, we now proceed as in the beginning of Section~\ref{sec:var-32-homog-blow} but with $\kappa=2m<\kappa_0$ in place of $\kappa=3/2$. Namely, for such $\kappa$, let 
$$
\phi(r)=\phi_\ka(r):=e^{-\left(\frac{\ka b}\al\right)r^{\al}}r^\ka,\quad 0<r<t_0,
$$ 
where $b=\frac{M(n+2\ka_0)}\al$ is as in Weiss-type monotonicity formula. Then, define the \emph{$\ka$-almost homogeneous rescalings} of a function $v$ at $x_0$ by $$
v^\phi_{x_0,r}(x):=\frac{v(rx+x_0)}{\phi(r)}.
$$
Again, when $x_0=0$, we simply write $v_{0,r}^\phi=v_r^\phi$.

The growth estimates in Lemma~\ref{lem:var-opt-est} enable us to consider \emph{$\ka$-homogeneous blowups} \begin{align*}
(u_{x_0}^*)^\phi_{t_j}\to(u_{x_0}^*)^\phi_{0}\quad\text{in } C^1_{\loc}(\R^n_\pm\cup\R^{n-1}),\\
(U_{x_0}^*)^\phi_{x_0,t_j}\to(U_{x_0}^*)^\phi_{x_0,0}\quad\text{in } C^1_{\loc}(\R^n_\pm\cup\R^{n-1}),
\end{align*}
for $t=t_j\to 0+$, similar to $3/2$-homogeneous blowups in Section~\ref{sec:var-32-homog-blow}. 

Furthermore, the arguments in \cite{JeoPet19a}*{Proposition~10.10} also go through for $u_{x_0}^*$ (and then for $U_{x_0}^*$, by deskewing), and we obtain the following rotation estimate for almost homogeneous rescalings.

\begin{proposition}[Rotation estimate]\label{prop:var-blowup-rot-est} For $U$ and $x_0$ as in Lemma~\ref{lem:var-opt-est}, there exist $C>0$ and $t_0>0$ such
  that
\begin{align*} \int_{\pa B_1}|(u_{x_0}^*)^{\phi}_{t}-(u_{x_0}^*)^{\phi}_{s}|&\le C\left(\log\frac
  1t\right)^{-\frac{1}{n-2}},\\
   \int_{\mfa_{x_0}\pa B_1}|(U_{x_0}^*)^{\phi}_{x_0,t}-(U_{x_0}^*)^{\phi}_{x_0,s}|&\le C\left(\log\frac
    1t\right)^{-\frac{1}{n-2}},
\end{align*}
for $0<s<t<t_0$.
In particular, the blowups $(u_{x_0}^*)^{\phi}_{0}$ and $(U_{x_0}^*)^{\phi}_{x_0,0}$ are unique.
\end{proposition}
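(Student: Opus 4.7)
The plan is to derive the first estimate by applying the constant-coefficient argument of \cite{JeoPet19a}*{Proposition~10.10} verbatim to $u_{x_0}^*$, and then to transfer to $U_{x_0}^*$ via the deskewing change of variables. First I verify that all inputs required by \cite{JeoPet19a}*{Proposition~10.10} for $u_{x_0}^*$ are already in place: $u_{x_0}^*$ has the almost Signorini property at $0$ with gauge $Mr^\al$ (Lemma~\ref{lem:quasisym-alm-min}), satisfies Signorini's complementarity condition (Lemma~\ref{u_ex0-comp-cond}), has frequency $\ka(0)=\ka=2m<\ka_0$, satisfies the Weiss-type and Almgren-type monotonicity at $0$ (Theorems~\ref{thm:var-weiss} and \ref{thm:var-Almgren}), and obeys the polynomial growth estimate (Lemma~\ref{lem:var-opt-est}); moreover $u_{x_0}^*$ is $C^{1,\be}$ up to the thin space (Theorem~\ref{thm:u_e-grad-holder}), so the blowup classification in Proposition~\ref{prop:var-sing-char} applies. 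These are precisely the hypotheses of the constant-coefficient case; the only change is the extra factor $M$ in the gauge, which is absorbed into the constants. The dyadic bootstrap of \cite{JeoPet19a}*{Lemmas~10.4--10.6, Corollary~10.7}, combining Weiss-type monotonicity with the $\log$-epiperimetric inequality of \cite{ColSpoVel20}, then yields the pointwise decay $W_\ka(r,u_{x_0}^*,0)\le C(\log(1/r))^{-2/(n-2)}$, and integrating the resulting control on $\partial_r(u_{x_0}^*)^\phi_r$ in $L^2(\pa B_1)$ via Cauchy-Schwarz produces the first claimed estimate.

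For the second estimate, I use the identity $(u_{x_0}^*)^\phi_t(y)=(U_{x_0}^*)^\phi_{x_0,t}(\bar{\mfa}_{x_0}y)$, which is immediate from the definitions. The change of variables $z=\bar{\mfa}_{x_0}y$ maps $\pa B_1$ onto $\mfa_{x_0}\pa B_1$ (since $\bar{\mfa}_{x_0}y$ and $\mfa_{x_0}y$ differ only by the orthogonal factor $O_{x_0}$), and the surface Jacobian is bounded above and below by the ellipticity constants. As in the proof of Lemma~\ref{lem:var-blowup-est}, this can be packaged into a weighted integral against $\mu_{x_0}/\det \mfa_{x_0}$, whose two-sided bound \eqref{eq:mu-bounds} produces the equivalence of the two surface integrals up to a constant depending only on $\la,\Ld$; the second estimate follows with a possibly adjusted constant. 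Uniqueness of both blowups then follows by passing $s=t_j\to 0+$ along a realizing subsequence: two subsequential blowups must agree on $\pa B_1$ (resp.~$\mfa_{x_0}\pa B_1$), and $\ka$-homogeneity together with unique continuation on each half-space $\R^n_\pm$ forces them to coincide everywhere.

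The main obstacle is the first step: ensuring that the frozen-coefficient almost minimizing property of $u_{x_0}^*$ with gauge $Mr^\al$ supplies exactly the same inputs to the $\log$-epiperimetric bootstrap as in \cite{JeoPet19a}. The bootstrap only uses that the almost minimizing inequality takes the multiplicative form $(1+\om(r))$ with $\om(r)\lesssim r^\al$, so the modifications are cosmetic; nevertheless, care must be taken to ensure the exponent $1/(n-2)$ is not degraded, since a suboptimal exponent at this step would propagate through the structural theorem for the singular set (Theorem~\ref{mthm:IV}) and weaken the $C^{1,\log}$ regularity claim.
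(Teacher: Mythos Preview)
Your proposal is correct and follows essentially the same approach as the paper: the paper's own treatment simply states that the arguments of \cite{JeoPet19a}*{Proposition~10.10} go through for $u_{x_0}^*$ and then deskewing yields the result for $U_{x_0}^*$, which is exactly what you outline with the hypotheses verified explicitly. Your expanded verification of the inputs and the change-of-variables details are consistent with how the paper handles the analogous $\kappa=3/2$ case in Lemmas~\ref{lem:var-rotation-est}--\ref{lem:var-blowup-rot-est}.
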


We next show that the rotation estimate as above holds uniformly for $u_{x_0}^*$ replaced with its Almgren rescalings $(u^*_{x_0})^I_{r}$, $0<r<1$. (Note that the objects $\left[(u_{x_0}^*)_{r}^I\right]^\phi_t$ in the proposition below are $\kappa$-almost homogeneous rescalings of Almgren rescalings.)

\begin{proposition}\label{prop:var-alm-blowup-rot-est} For $U$ and $x_0$ as in Lemma~\ref{lem:var-opt-est} and $0<r<1$, there are $C>0$ and $t_0>0$, independent of $r$ such that 
$$
\int_{\pa B_1}\left|\left[(u_{x_0}^*)_{r}^I\right]^\phi_t-\left[(u_{x_0}^*)_{r}^I\right]^\phi_s\right|\le C\left(\log\frac 1t\right)^{-\frac{1}{n-2}},
$$
for $0<s<t<t_0$. In particular, the $\kappa$-homogeneous blowup $\left[(u_{x_0}^*)_{r}^I\right]^\phi_0$ is unique.
\end{proposition}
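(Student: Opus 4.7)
The plan is to verify that the Almgren rescalings $v_r := (u_{x_0}^*)_r^I$ inherit, \emph{uniformly in} $r$, all the structural properties of $u_{x_0}^*$ that were used in the proof of Proposition~\ref{prop:var-blowup-rot-est}, and then apply that same proof to $v_r$ in place of $u_{x_0}^*$. The quantity $\left[(u_{x_0}^*)_{r}^I\right]^\phi_t$ is precisely $[v_r]^\phi_t$, so the conclusion will follow once the uniformity is established.

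First, I would check the hypotheses. By a direct scaling computation: if $u_{x_0}^*$ has the almost Signorini property at $0$ in $B_{R_0}$ with gauge $\omega(\rho)=M\rho^\alpha$, then for $r<R_0$, $v_r$ has the almost Signorini property at $0$ in $B_{R_0/r}$ with gauge $\tilde\omega(\rho)=M(r\rho)^\alpha\le M\rho^\alpha$, which is bounded uniformly in $r$. Moreover $v_r(0)=0$ with $0$ a free boundary point of $v_r$, $v_r$ satisfies the complementarity condition of Lemma~\ref{u_ex0-comp-cond} on $\Pi$, and by scale invariance of the Almgren frequency,
$$\widehat N_{\ka_0}(0+,v_r,0)=\widehat N_{\ka_0}(0+,u_{x_0}^*,0)=\ka(x_0)=\ka=2m<\kappa_0.$$
For the $W^{1,2}$ control, the Almgren normalization gives $\int_{\pa B_1}v_r^2=1$, and Almgren's monotonicity (Theorem~\ref{thm:var-Almgren}) together with $\ka<\ka_0$ yields $\int_{B_1}|\D v_r|^2=N(1,v_r,0)=N(r,u_{x_0}^*,0)\le 2\ka_0$ for $r$ sufficiently small, so $\|v_r\|_{W^{1,2}(B_1)}\le C$ with $C$ independent of $r$.

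With the uniform hypotheses in hand, I would re-run the proof of Proposition~\ref{prop:var-blowup-rot-est} with $v_r$ in place of $u_{x_0}^*$. The proof chain goes through the optimal growth estimate at singular points (Lemma~\ref{lem:var-opt-est}), whose constants depend only on $n,\al,M,\ka,\ka_0$ and the $W^{1,2}$-norm of the underlying function; since these inputs are uniform in $r$, the output constants are uniform. The Weiss monotonicity (Theorem~\ref{thm:var-weiss}) and the $\log$-epiperimetric inequality used in the bootstrapping step of \cite{JeoPet19a}*{Lemmas~10.4--10.6} apply verbatim to $v_r$ at its free boundary point $0$ with the same constant $b$ and the same small threshold $t_0$. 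Combining these, the dyadic telescoping argument produces
$$\int_{\pa B_1}\bigl|[v_r]^\phi_t-[v_r]^\phi_s\bigr|\le C\Bigl(\log\tfrac 1t\Bigr)^{-\frac1{n-2}},\quad 0<s<t<t_0,$$
with $C$ and $t_0$ independent of $r$. Uniqueness of the blowup $[v_r]^\phi_0$ follows by letting $s\to 0+$: the family $\{[v_r]^\phi_t\}$ is Cauchy in $L^1(\pa B_1)$, and combined with the $C^1_{\loc}(\R^n_\pm\cup\R^{n-1})$-compactness (established as in Section~\ref{sec:var-32-homog-blow}) this forces a unique limit which extends to a $\kappa$-homogeneous global solution of the Signorini problem.

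The main obstacle is strictly bookkeeping: tracing every constant in the bootstrapping proof of Lemma~\ref{lem:var-opt-est} to confirm no hidden dependence on $r$ sneaks in through the iterated application of the $\log$-epiperimetric inequality and the Weiss energy decay. A secondary subtlety is that the almost Signorini property for $v_r$ holds on the large ball $B_{R_0/r}$, which is convenient (the relevant estimates need only $B_1$), but one must ensure the frequency truncation constant $b$ does not have to be readjusted when passing from $u_{x_0}^*$ to $v_r$; this follows from the fact that $b$ depends only on $M,n,\ka_0,\alpha$ and these are preserved under the rescaling.
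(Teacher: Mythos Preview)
Your proposal is correct and follows essentially the same approach as the paper's proof: both verify that the Almgren rescalings $(u_{x_0}^*)_r^I$ inherit the almost Signorini property at $0$ with a gauge independent of $r$, have frequency $\kappa$ at the origin, and satisfy the uniform $C^{1,\beta}$ bound needed for compactness, and then re-run the chain of arguments from \cite{JeoPet19a}*{Lemmas~10.4--10.8, Proposition~10.10} with constants depending only on the structural data. Your write-up is in fact more explicit than the paper's about the uniform $W^{1,2}(B_1)$ bound (via the normalization $\int_{\partial B_1}v_r^2=1$ and $N(1,v_r,0)=N(r,u_{x_0}^*,0)$) and about why the constant $b$ does not need readjustment, which the paper leaves implicit.
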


\begin{proof}
We first observe that since $u_{x_0}^*$ has the almost Signorini property
at $0$, $(u_{x_0}^*)^I_{r}$ also has the almost Signorini property at
$0$. This implies that $W_\ka(\rho,(u_{x_0}^*)^I_{r},0)$ and
$\widehat{N}_{\ka_0}(\rho,(u_{x_0}^*)^I_{r},0)$ are monotone
nondecreasing on $\rho$. Thus
\begin{align*}
  \widehat{N}_{\ka_0}(0+,(u_{x_0}^*)^I_{r},0)
  &=\lim_{\rho\to
    0}\widehat{N}_{\ka_0}(\rho,(u_{x_0}^*)^I_{r},0)=\lim_{\rho\to
    0}\widehat{N}_{\ka_0}(\rho r,u_{x_0}^*,0)\\
  &=\ka(x_0)=\ka.
 \end{align*}
Fix $R>1$. If $t$ is small, then we can argue as in the proof
of Proposition~\ref{prop:var-exist-Alm-blowup}
to obtain that for any $K\Subset B^\pm_R\cup B
_R'$, $$
\left\| \left[(u_{x_0}^*)^I_{r}\right]^\phi_t\right\|_{C^{1,\be}(K)}\le C(n,\al,M,R,K)\left\|\left[(u_{x_0}^*)^I_{r}\right]^\phi_t\right\|_{W^{1,2}(B_R)}.
$$
Those are all we need to proceed all the arguments with
$(u_{x_0}^*)^I_{r}$
as in Lemmas~10.4--10.6, Corollary~10.7, Lemma~10.8, and Proposition~10.10 in \cite{JeoPet19a}. This completes the proof.
\end{proof}

Once we have Proposition~\ref{prop:var-alm-blowup-rot-est}, we can argue as in \cite{JeoPet19a}*{Lemma~10.11} to obtain the nondegeneracy for $u^*_{x_0}$, and also for $U^*_{x_0}$.
\begin{lemma}[Nondegeneracy at singular points]\label{lem:var-non-deg} Let $U$ and $x_0$ be as in Lemma~\ref{lem:var-opt-est}. Then \begin{align*}
\liminf_{t\to 0}\int_{\pa B_1}((u_{x_0}^*)^\phi_{t})^2=\liminf_{t\to 0}\frac 1{t^{n+2\ka-1}}\int_{\pa B_t}(u_{x_0}^*)^2>0,\\
\liminf_{t\to 0}\int_{\mfa_{x_0}\pa B_1}((U_{x_0}^*)^\phi_{x_0,t})^2=\liminf_{t\to 0}\frac 1{t^{n+2\ka-1}}\int_{\pa E_t(x_0)}(U_{x_0}^*)^2>0.
\end{align*}
\end{lemma}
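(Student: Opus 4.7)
The plan is to establish the result for $u_{x_0}^*$ first; the statement for $U_{x_0}^*$ then follows from the change-of-variables formula $\int_{\mfa_{x_0}\pa B_1}((U_{x_0}^*)^\phi_{x_0,t})^2\mu_{x_0}=\det\mfa_{x_0}\int_{\pa B_1}((u_{x_0}^*)^\phi_t)^2$ together with the identity $(U_{x_0}^*)^\phi_{x_0,t}(\bar{\mfa}_{x_0}y)=(u_{x_0}^*)^\phi_t(y)$. The second equality in each line of the lemma is an immediate computation,
\[
\int_{\pa B_1}((u_{x_0}^*)^\phi_t)^2=\frac{e^{2\ka bt^\al/\al}}{t^{n+2\ka-1}}\int_{\pa B_t}(u_{x_0}^*)^2,
\]
and the bounded factor $e^{2\ka bt^\al/\al}\to 1$ drops out in the $\liminf$.

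For the positivity, introduce $h(r):=(r^{1-n}\int_{\pa B_r}(u_{x_0}^*)^2)^{1/2}$ and $v_r:=(u_{x_0}^*)^I_r$, so that $u_{x_0}^*(ry)=h(r)v_r(y)$ and, directly from the definitions of the two families of rescalings,
\[
(u_{x_0}^*)^\phi_{rt}(x)=\frac{h(r)\phi(t)}{\phi(rt)}\,[v_r]^\phi_t(x).
\]
Letting $t\to 0$ with $r>0$ fixed, Proposition~\ref{prop:var-blowup-rot-est} gives that the LHS converges to $(u_{x_0}^*)^\phi_0$ in $L^1(\pa B_1)$, Proposition~\ref{prop:var-alm-blowup-rot-est} gives $[v_r]^\phi_t\to P_r:=[v_r]^\phi_0$ in $L^1(\pa B_1)$, and one computes $\phi(t)/\phi(rt)\to r^{-\ka}$. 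This yields the key identity
\[
(u_{x_0}^*)^\phi_0=\frac{h(r)}{r^\ka}\,P_r\qquad\text{in } L^1(\pa B_1),\qquad(\ast)
\]
valid for every small $r>0$. I would then argue by contradiction: if $\liminf_{t\to 0}\int_{\pa B_1}((u_{x_0}^*)^\phi_t)^2=0$, then $(u_{x_0}^*)^\phi_t\to 0$ in $L^2(\pa B_1)$ along some subsequence, so $(u_{x_0}^*)^\phi_0=0$ in $L^1(\pa B_1)$. A standard scaling argument (as in Sections~\ref{sec:var-32-homog-blow} and~\ref{sec:var-singular-points}) shows that $(u_{x_0}^*)^\phi_0$ is $\ka$-homogeneous on $\R^n$, and hence vanishes identically. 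Since $h(r)>0$, identity $(\ast)$ then forces $P_r\equiv 0$ for every small $r>0$.

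To reach a contradiction, I would use the uniform-in-$r$ aspect of the rotation estimate in Proposition~\ref{prop:var-alm-blowup-rot-est}: fix $\e>0$ and pick $t_*>0$ so that $\int_{\pa B_1}|[v_r]^\phi_{t_*}-P_r|<\e$ for every small $r$. By Propositions~\ref{prop:var-exist-Alm-blowup} and~\ref{prop:var-sing-char}, along a subsequence $r_j\to 0$ one has $v_{r_j}\to q$ in $C^1(\overline{B_1^\pm})$, with $q\in\mathcal{Q}_\ka$ nonzero and $\|q\|_{L^2(\pa B_1)}=1$. By the $\ka$-homogeneity of $q$,
\[
[v_{r_j}]^\phi_{t_*}(x)=\frac{v_{r_j}(t_*x)}{\phi(t_*)}\longrightarrow\frac{t_*^\ka q(x)}{\phi(t_*)}=e^{\ka bt_*^\al/\al}q(x)\quad\text{in }C^1(\overline{B_1^\pm}),
\]
so in particular $\|[v_{r_j}]^\phi_{t_*}\|_{L^2(\pa B_1)}^2\to e^{2\ka bt_*^\al/\al}\ge 1$. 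The $C^{1,\be}$-bounds from the proofs of Theorem~\ref{thm:u_e-grad-holder} and Proposition~\ref{prop:var-exist-Alm-blowup} give a uniform $L^\infty(\pa B_1)$-bound on the family $\{[v_{r_j}]^\phi_{t_*}\}$; combined with the $L^2$-convergence and the elementary inequality $\|f\|_{L^1}\ge \|f\|_{L^2}^2/\|f\|_{L^\infty}$, this yields a uniform lower bound $\|[v_{r_j}]^\phi_{t_*}\|_{L^1(\pa B_1)}\ge c_0>0$ for $j$ large. Combining with the choice of $t_*$ then gives $\|P_{r_j}\|_{L^1(\pa B_1)}\ge c_0-\e>0$ as soon as $\e<c_0/2$, contradicting $P_{r_j}\equiv 0$. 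The chief delicacy is to confirm that the convergence rate in Proposition~\ref{prop:var-alm-blowup-rot-est} really is independent of $r$; this is precisely the uniformity built into its statement and is what drives the entire argument.
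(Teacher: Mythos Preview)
Your proof is correct and follows precisely the route the paper intends: the paper's ``proof'' simply refers to \cite{JeoPet19a}*{Lemma~10.11}, and your argument is exactly a reconstruction of that proof in the present setting, with the uniform-in-$r$ rotation estimate of Proposition~\ref{prop:var-alm-blowup-rot-est} playing the same pivotal role. One minor remark: the claim that $(u_{x_0}^*)^\phi_0$ is $\kappa$-homogeneous and hence vanishes identically is not actually needed---identity $(\ast)$ together with $(u_{x_0}^*)^\phi_0=0$ on $\partial B_1$ already forces $P_r=0$ on $\partial B_1$, which is all you use downstream; and in the final step you could more directly observe that $[v_{r_j}]^\phi_{t_*}\to e^{\kappa b t_*^\alpha/\alpha}q$ in $L^1(\partial B_1)$ (by the $C^1_{\loc}$-convergence) to get the lower bound, bypassing the $L^2/L^\infty$ interpolation.
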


\medskip To state our main result on the singular set, we need to
introduce certain subsets of $\Sigma(U)$.
For $\ka=2m<\kappa_0$, $m\in \mathbb{N}$, let
$$
\Sigma_{\ka}(U):=\{x_0\in \Si(U):\kappa(x_0)=\ka\}=\Gamma_\kappa(U).
$$
Note that the last equality follows from the implication (iv)
$\Rightarrow$ (i) in 
Proposition~\ref{prop:var-sing-char}.

\begin{lemma}\label{lem:var-sing-closed}
  The set $\Sigma_{\ka}(U)$ is of topological type $F_{\si}$; i.e., it
  is a countable union of closed sets.
\end{lemma}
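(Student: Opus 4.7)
The plan is to exhibit $\Sigma_\kappa(U)$ as a countable union of closed sets by stratifying singular points according to the quantitative size of the two-sided growth they satisfy. More precisely, for each integer $j\ge j_0$ (with $j_0$ large enough that $(1/2)-1/j>0$), I would set
\[
E_j:=\Big\{x_0\in \overline{B'_{(1/2)-1/j}}\cap \Gamma(U)\,:\,\tfrac{1}{j}\rho^{n+2\kappa-1}\le \int_{\partial E_\rho(x_0)}(U_{x_0}^*)^2\mu_{x_0}(x-x_0)\le j\,\rho^{n+2\kappa-1}\ \text{for all}\ \rho\in(0,\tfrac{1}{j})\Big\},
\]
and verify that $\Sigma_\kappa(U)\cap B'_{1/2}=\bigcup_j E_j$. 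Singular points outside $B'_{1/2}$ would be handled by a countable cover invoking Remark~\ref{rem:kax_0}, since the monotonicity formulas and blowup theory apply at every free boundary point after recentering.

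For the inclusion $\Sigma_\kappa\cap B'_{1/2}\subset\bigcup_j E_j$, I would use Lemma~\ref{lem:var-opt-est} for the uniform upper bound, and combine the uniqueness of the $\kappa$-homogeneous blowup (Proposition~\ref{prop:var-blowup-rot-est}), the characterization of the blowup as a nonzero polynomial in $\mathcal{Q}^{A,x_0}_\kappa$ (Proposition~\ref{prop:var-sing-char}), and $C^1_\loc$ convergence of rescalings to promote the liminf in Lemma~\ref{lem:var-non-deg} into a pointwise lower bound $c_{x_0}\rho^{n+2\kappa-1}$ valid for all small $\rho$. The constant $c_{x_0}$ depends on $x_0$, which is exactly why the stratification parameter $j$ is needed. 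For the reverse inclusion I would argue by a growth dichotomy: if $\kappa(x_0)<\kappa$, the appropriate nondegeneracy (Lemma~\ref{lem:var-nondeg} when $\kappa(x_0)=3/2$, or Lemma~\ref{lem:var-non-deg} when $\kappa(x_0)=2m'<\kappa$) forces growth of order $\rho^{n+2\kappa(x_0)-1}$ that exceeds the upper bound in the definition of $E_j$ as $\rho\to 0$; if $\kappa(x_0)>\kappa$, the optimal growth in Lemmas~\ref{lem:var-opt-growth}--\ref{lem:var-opt-est} (applied at the correct frequency) would give decay faster than $\rho^{n+2\kappa-1}$ and violate the lower bound. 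Hence $\kappa(x_0)=\kappa=2m$, and Proposition~\ref{prop:var-sing-char}(iv)$\Rightarrow$(i) then identifies $x_0$ as singular.

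The main technical step, and what I expect to be the real work, is showing that each $E_j$ is closed. The plan is to prove that for every fixed $\rho>0$ the map $F_\rho(x_0):=\int_{\partial E_\rho(x_0)}(U_{x_0}^*)^2\mu_{x_0}(x-x_0)$ is continuous on $\overline{B'_{(1/2)-1/j}}$; once this is in hand, each condition $\{j^{-1}\rho^{n+2\kappa-1}\le F_\rho(x_0)\le j\rho^{n+2\kappa-1}\}$ cuts out a closed set, and the (uncountable) intersection over $\rho\in(0,1/j)$ is still closed, with closedness preserved after intersecting with $\overline{B'_{(1/2)-1/j}}\cap\Gamma(U)$ (itself closed). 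To verify continuity of $F_\rho$, the cleanest route is to deskew via the identity $F_\rho(x_0)=\det\mfa_{x_0}\int_{\partial B_\rho}(u_{x_0}^*)^2$ and observe that $u_{x_0}^*(y)=\tfrac{1}{2}[U(\bar{\mfa}_{x_0}y+x_0)+U(\bar{\mfa}_{x_0}(y',-y_n)+x_0)]$ depends continuously on $x_0$, uniformly in $y\in\partial B_\rho$, using H\"older continuity of $x_0\mapsto\bar{\mfa}_{x_0}$ and of $U$ itself (Theorem~\ref{thm:var-holder}). Convergence of the integrals then follows from dominated convergence, while $\det\mfa_{x_0}$ is trivially H\"older continuous in $x_0$.
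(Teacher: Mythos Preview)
Your overall strategy---stratifying by two-sided growth bounds and checking continuity of $x_0\mapsto\int_{\partial E_\rho(x_0)}(U_{x_0}^*)^2\mu_{x_0}$---is exactly the paper's approach, and your argument for continuity and for the inclusion $\Sigma_\kappa\subset\bigcup_j E_j$ is correct.

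There is, however, a genuine gap in your reverse inclusion $E_j\subset\Sigma_\kappa$. Your dichotomy argument for $\kappa(x_0)\neq\kappa$ invokes nondegeneracy (Lemmas~\ref{lem:var-nondeg} and~\ref{lem:var-non-deg}) and optimal growth (Lemmas~\ref{lem:var-opt-growth} and~\ref{lem:var-opt-est}), but these are only available when $\kappa(x_0)\in\{3/2\}\cup 2\mathbb{N}$. The paper establishes no classification ruling out other frequency values in $[2,\kappa_0)$, so your case analysis is incomplete. For the direction $\kappa(x_0)>\kappa$, the fix is immediate: use Lemma~\ref{lem:var-almost-opt-growth} with any $\kappa'\in(\kappa,\min\{\kappa(x_0),\kappa_0\})$, which gives $\int_{\partial E_t(x_0)}(U_{x_0}^*)^2\le C\,t^{n+2\kappa'-1}\log(1/t)=o(t^{n+2\kappa-1})$, contradicting the lower bound in $E_j$. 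For $\kappa(x_0)<\kappa$, pick $\kappa'\in(\kappa(x_0),\kappa)$; then $W_{\kappa'}(t_1,u_{x_0}^*,0)<0$ for some small $t_1$, and by the Weiss monotonicity (Theorem~\ref{thm:var-weiss}) $W_{\kappa'}(t,u_{x_0}^*,0)\le W_{\kappa'}(t_1,u_{x_0}^*,0)<0$ for $t<t_1$, which unwinds to $\int_{\partial B_t}(u_{x_0}^*)^2\ge c\,t^{n+2\kappa'-1}\gg t^{n+2\kappa-1}$, contradicting the upper bound.

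The paper sidesteps half of this difficulty by defining its sets $F_j$ as subsets of $\Sigma_\kappa(U)$ rather than of $\Gamma(U)$. This makes the reverse inclusion trivial, and for closedness one only needs to show that a \emph{limit} $x_0$ of points $x_i\in\Sigma_\kappa$ has $\kappa(x_0)=\kappa$. There the inequality $\kappa(x_0)\ge\kappa$ comes for free from upper semicontinuity of the frequency (an easy consequence of $\kappa(\cdot)=\inf_r\widehat{N}_{\kappa_0}(r,\cdot)$ and the continuity you already established), and $\kappa(x_0)\le\kappa$ follows from Lemma~\ref{lem:var-almost-opt-growth} as above. Either framing works, but the paper's is slightly cleaner for exactly the reason that tripped up your argument.
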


\begin{proof}
For $j\in\mathbb{N}$, $j\geq 2$, let
\begin{multline*}
  F_j:=\Bigl\{x_0\in \Sigma_{\ka}(U)\cap \overline{B_{1-1/j}}:\\
  \frac1j\le \frac{1}{\rho^{n+2\ka-1}}\int_{\pa E_{\rho}(x_0)}(U_{x_0}^*)^2 \le
  j\ \text{for}\ 0<\rho<\frac{1}{2j}\Bigr\}.
\end{multline*}
Note that if $x_j\to x_0$, then by the local uniform continuity of $U$ and $A$, $$
\int_{\pa E_\rho(x_i)}(U_{x_i}^*)^2\to \int_{\pa E_\rho(x_0)}(U_{x_0}^*)^2.
$$
Using this, together with Lemma~\ref{lem:var-opt-est}, Lemma~\ref{lem:var-non-deg} and Lemma~\ref{lem:var-almost-opt-growth}, we can argue as in \cite{JeoPet19a}*{Lemma~10.12} to prove that $\Sigma_\ka(U)=\cup_{j=2}^\infty F_j$ and each $F_j$ is closed.
\end{proof}

Next, for $\ka=2m<\kappa_0$,
$m\in \mathbb{N}$ and $x_0\in \Sigma_\kappa(U)$, we define
$$
d^{(\ka)}_{x_0}:=\dim\{\xi\in \R^{n-1}:\xi\cdot
\D_{y'}(u_{x_0}^*)_{0}^\phi(y', 0)\equiv 0\ \text{on}\ \R^{n-1}\},
$$
which has the meaning of the dimension of $\Sigma_\kappa(u_{x_0}^*)$ at $0$,
and where $(u_{x_0}^*)_{0}^\phi$ is the unique $\kappa$-homogeneous blowup of $u_{x_0}^*$ at
$0$. We note here that $d_{x_0}^{(\kappa)}$ can only take the values
$0,1,\dots, n-2$. Indeed, otherwise $(u_{x_0}^*)_{0}^\phi$ would
vanish identically on $\Pi$ and consequently on $\R^n$, since it is a
solution of the Signorini problem, even symmetric with respect to
$\Pi$ (see \cite{GarPet09}). However, that would contradict the
nondegeneracy Lemma~\ref{lem:var-non-deg}.
Then, for $d=0,1,\dots,n-2$, let
$$
\Si^d _{\ka}(U):=\{x_0\in \Si_{\ka}(U): d^{(\ka)}_{x_0}=d\}.
$$

\begin{theorem}[Structure of the singular set]\label{thm:var-sing}
  Let $U$ be an $A$-quasisymmetric almost minimizer for the $A$-Signorini problem  in
  $B_{1}$. Then for every $\ka=2m<\kappa_0$, $m\in \mathbb{N}$, and
  $d=0,1,\dots,n-2$, the set $\Sigma_{\ka}^d(U)$ is contained in the
  union of countably many submanifolds of dimension $d$ and class
  $C^{1, \log}$.
\end{theorem}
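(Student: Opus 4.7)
The plan is to follow the Whitney-extension strategy that has by now become standard in this line of work (going back to \cite{GarPet09} for minimizers, and adapted in \cite{ColSpoVel20} and \cite{JeoPet19a} to the $\log$-epiperimetric setting), adapted to the variable-coefficient framework through the deskewing procedure.

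\medskip
\emph{Step 1: Identification of the blowup polynomial.} For $x_0\in\Sigma_\kappa(U)$ let $q_{x_0}:=(u_{x_0}^*)^\phi_0$ and $p_{x_0}:=(U_{x_0}^*)^\phi_{x_0,0}$. By Proposition~\ref{prop:var-alm-blowup-rot-est} these blowups are unique, and by Proposition~\ref{prop:var-sing-char}, $q_{x_0}\in\mathcal{Q}_\kappa$ and $p_{x_0}\in\mathcal{Q}^{A,x_0}_\kappa$; the relation between them is $q_{x_0}(y)=(\det\mfa_{x_0})^{1/2}p_{x_0}(\bar{\mfa}_{x_0}y)$. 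The ``spine'' dimension $d^{(\kappa)}_{x_0}$ is exactly the dimension of the kernel of the Hessian of $q_{x_0}(y',0)$.

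\medskip
\emph{Step 2: Continuous dependence with logarithmic modulus.} The heart of the argument is an estimate of the form
$$
\int_{\mfa_{x_0}\partial B_1}|p_{x_0}-p_{y_0}|+\int_{\partial B_1}|q_{x_0}-q_{y_0}|\le C\bigl(\log\tfrac{1}{|x_0-y_0|}\bigr)^{-\sigma},
$$
with some $\sigma=\sigma(n)>0$, for $x_0,y_0$ in a small relative neighborhood in $\Sigma_\kappa(U)$. I would derive it by reproducing the six-step scheme of Lemma~\ref{lem:var-blowup-est}, with three essential replacements: (a) the H\"older rotation estimate of Lemma~\ref{lem:var-rotation-est} is swapped for the logarithmic rotation estimate of Proposition~\ref{prop:var-blowup-rot-est} and its uniform-in-$r$ version Proposition~\ref{prop:var-alm-blowup-rot-est}; (b) the optimal growth Lemma~\ref{lem:var-opt-growth} is swapped for the polynomial growth Lemma~\ref{lem:var-opt-est} at singular points (with exponent $\kappa$ instead of $3/2$); (c) one optimizes the radius $r\sim |x_0-y_0|^\tau$ of the intermediate annulus so that the H\"older terms $d^{\alpha-\tau}$, $d^{1-\tau}$, $d^\alpha$ coming from the $I_r$, $II_r$, $III_r$ pieces are majorized by the logarithmic term $(\log 1/t)^{-\sigma}$ with $t=d^\tau$, giving the claimed logarithmic modulus.

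\medskip
\emph{Step 3: Whitney jet and extension.} For each $x_0\in\Sigma_\kappa(U)$ I would define the polynomial jet $P_{x_0}(x):=p_{x_0}(x-x_0)$ (so $P_{x_0}$ is a $\kappa$-homogeneous ``Taylor polynomial'' of $U^*_{x_0}$ at $x_0$). Combining the optimal growth Lemma~\ref{lem:var-opt-est} with the convergence $(U_{x_0}^*)^\phi_{x_0,r}\to p_{x_0}$ gives the compatibility estimate
$$
\sup_{E_r(x_0)}|U_{x_0}^*-P_{x_0}|=o(r^\kappa)\quad\text{as $r\to 0$},
$$
while Step 2 gives
$$
\sup_{B_r(x_0)}|P_{x_0}-P_{y_0}|\le C|x_0-y_0|^\kappa\,\sigma_{\log}(|x_0-y_0|),
$$
where $\sigma_{\log}(\rho)=(\log(1/\rho))^{-\sigma}$. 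By the set $F_\sigma$ decomposition Lemma~\ref{lem:var-sing-closed}, it suffices to work on each closed piece $F_j$, where the $o(r^\kappa)$ can be made uniform. These estimates are precisely the hypotheses of Whitney's extension theorem in the $C^{\kappa,\log}$ class, yielding a function $F\in C^{\kappa,\log}(\R^{n-1})$ that agrees, up to order $\kappa$, with $P_{x_0}$ at every $x_0\in F_j\cap\Pi$.

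\medskip
\emph{Step 4: Implicit function theorem and stratification.} On $\Sigma_\kappa^d(U)$ the polynomial $q_{x_0}$, hence $p_{x_0}$ and $P_{x_0}$, has a spine of dimension exactly $d$. Consequently, one can select $(n-1-d)$ directional second-order (more precisely, $\kappa$-th order) derivatives $\partial^\kappa_{\xi_1}F,\dots,\partial^\kappa_{\xi_{n-1-d}}F$ vanishing on $\Sigma_\kappa^d(U)\cap F_j$ whose gradients at each such $x_0$ are linearly independent. Those derivatives are $C^{0,\log}$ functions, and a $C^{1,\log}$ implicit function theorem (see the $\log$-version of Whitney in \cite{ColSpoVel20}) exhibits $\Sigma_\kappa^d(U)\cap F_j$ as a subset of a $d$-dimensional $C^{1,\log}$ manifold. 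Countable unions over $j$ complete the proof.

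\medskip
The main obstacle is Step 2: the deskewing $\bar{T}_{x_0}$ depends only in a H\"older fashion on $x_0$, so when comparing $p_{x_0}$ and $p_{y_0}$ through their normalizations on different ellipsoids $\mfa_{x_0}\partial B_1$ and $\mfa_{y_0}\partial B_1$, the H\"older error $|\mfa_{x_0}-\mfa_{y_0}|\sim|x_0-y_0|^\alpha$ (the $III_r$-term in Lemma~\ref{lem:var-blowup-est}) is stronger than the logarithmic modulus coming from the rotation estimate. One must therefore argue that this error is absorbed by the $\kappa$-homogeneous rescaling (since the full dependence is through a H\"older-continuous matrix acting on polynomials, it produces a H\"older-in-$x_0$ perturbation of $p_{y_0}$ only of size $|x_0-y_0|^\alpha$, which is better than the target logarithmic modulus) rather than competing with it. Once this is carefully verified, the remainder of the argument is a direct transplant of the constant-coefficient proof.
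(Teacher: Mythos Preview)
Your proposal is correct and follows the same overall Whitney-extension architecture as the paper, but your Step~2 takes a more elaborate route than necessary. The paper does \emph{not} reproduce the six-step annulus-averaging scheme of Lemma~\ref{lem:var-blowup-est}. Instead, since the target modulus is only logarithmic, it uses a much cruder direct estimate: writing out $(u_{x_1}^*)^\phi_t-(u_{x_2}^*)^\phi_t$ explicitly in terms of $U$ and bounding it pointwise by
\[
\frac{C}{t^\kappa}\,\|\nabla U\|_{L^\infty(B_1)}\bigl(|\bar{\mfa}_{x_1}-\bar{\mfa}_{x_2}|+|x_1-x_2|+|P_{x_1}-P_{x_2}|\bigr)\le \frac{C|x_1-x_2|^\alpha}{t^\kappa},
\]
using only the $C^{1,\beta}$ regularity of $U$ and the H\"older continuity of $x_0\mapsto\bar{\mfa}_{x_0},P_{x_0}$. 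Choosing $t=|x_1-x_2|^{\alpha/(2\kappa)}$ makes this term $C|x_1-x_2|^{\alpha/2}$, which is absorbed by the logarithmic term $C(\log 1/t)^{-1/(n-2)}$ coming from Proposition~\ref{prop:var-blowup-rot-est}. This bypasses the $I_r,II_r,III_r$ decomposition entirely and, in particular, renders your closing worry about the $III_r$-term moot: the H\"older error from the deskewing is a power of $|x_1-x_2|$ and is trivially dominated by any negative power of $\log(1/|x_1-x_2|)$.

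Your approach would also work (the cancellation in the $I_r$, $II_r$ estimates that gives $d^{1-\tau}$, $d^{\alpha-\tau}$ persists when the growth exponent $n+1$ is replaced by $n+2\kappa-2$), but it is heavier machinery than the situation requires. The paper then normalizes $q_{x_0}=\eta_{x_0}q_{x_0}^I$ with $\|q_{x_0}^I\|_{L^2(\partial B_1)}=1$, transfers the logarithmic continuity to both $\eta_{x_0}$ and $q_{x_0}^I$ (and hence to $p_{x_0}^A$ via the change of variables), and concludes by citing Fefferman's $C^{m,\omega}$ Whitney extension theorem and the implicit-function argument of \cite{ColSpoVel20}, exactly as in your Steps~3--4.
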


\begin{proof}
We follow the idea in \cite{JeoPet19a}*{Theorem~10.13}. For $x_0\in\Sigma_\ka(U)\cap B'_{1/2}$, let $q_{x_0}\in \mathcal{Q}_\ka$ denote the unique $\ka$-homogeneous blowup of $u_{x_0}^*$ at $0$. By the optimal growth (Lemma~\ref{lem:var-opt-est}) and the nondegeneracy (Lemma~\ref{lem:var-non-deg}), we can write 
  $$
  q_{x_0}=\eta_{x_0} q_{x_0}^I,\quad \eta_{x_0}>0,\quad
  \|q_{x_0}^I\|_{L^2(\partial B_1)}=1,
  $$
where $q_{x_0}^I\in \mathcal{Q}_\ka$ is the corresponding Almgren
blowup. If $x_1$, $x_2\in \Sigma_\ka(U)\cap B'_{1/2}$, for $t>0$, to be
chosen below, we can write
\begin{align}\label{eq:var-sing-1}
  \|q_{x_1}-q_{x_2}\|_{L^1(\pa B_1)}
  &\begin{multlined}[t]                                                            \le \|q_{x_1}-(u_{x_1}^*)_t^\phi\|_{L^1(\pa B_1)}+\|(u_{x_1}^*)_t^\phi-(u_{x_2}^*)_t^\phi\|_{L^1(\pa B_1)}\\
    +\|q_{x_2}-(u_{x_2}^*)_t^\phi\|_{L^1(\pa B_1}
  \end{multlined}
\\\nonumber
&\le C\left(\log\frac 1t\right)^{-\frac
  1{n-2}}+\|(u_{x_1}^*)_t^\phi-(u_{x_2}^*)_t^\phi\|_{L^1(\pa
  B_1)},
\end{align}
where we have used Proposition~\ref{prop:var-blowup-rot-est} in the
second inequality. Moreover, we have
\begin{multline}\label{eq:var-sing-2}
  \|(u_{x_1}^*)_t^\phi-(u_{x_2}^*)_t^\phi\|_{L^1(\pa B_1)}\\*
\begin{aligned}  
&=\frac 1{2\phi(t)}\int_{\pa B_1}|U(t\bar{\mfa}_{x_1}y+x_1)+U(P_{x_1}(t\bar{\mfa}_{x_1}y+x_1))\\
&\qquad\qquad -U(t\bar{\mfa}_{x_2}y+x_2)-U(P_{x_2}(t\bar{\mfa}_{x_2}y+x_2))|\,dS_y\\
&\le \frac C{t^\ka}\int_{\pa B_1}\Big(|U(t\bar{\mfa}_{x_1}y+x_1)-U(t\bar{\mfa}_{x_2}y+x_2)|\\
&\qquad\qquad +|U(P_{x_1}(t\bar{\mfa}_{x_1}y+x_1))-U(P_{x_1}(t\bar{\mfa}_{x_2}y+x_2))|\\
&\qquad\qquad +|U(P_{x_1}(t\bar{\mfa}_{x_2}y+x_2))-U(P_{x_2}(t\bar{\mfa}_{x_2}y+x_2))|\Big)\,dS_y\\
&\le \frac C{t^\ka}\|\D
U\|_{L^\infty(B_1)}\left(|\bar{\mfa}_{x_1}-\bar{\mfa}_{x_2}|+|x_1-x_2|+|P_{x_1}-P_{x_2}|\right)\\
&\le C\frac{|x_1-x_2|^\al}{t^\ka}= C|x_1-x_2|^{\al/2},
\end{aligned}
\end{multline}
if we choose $t=|x_1-x_2|^{\frac\al{2\ka}}$ and have $|x_1-x_2|<(1/4\Ld^{-1}\la^{1/2})^{\frac{2\ka}\al}$. Combining \eqref{eq:var-sing-1} and \eqref{eq:var-sing-2}, we obtain $$
\|q_{x_1}-q_{x_2}\|_{L^1(\pa B_1)}\le C\left(\log\frac 1{|x_1-x_2|}\right)^{-\frac 1{n-2}}.
$$
After this, we can repeat the argument in the proof of \cite{JeoPet19a}*{Theorem~10.13} to obtain the estimates that for $x_0\in\Sigma_\ka(U)\cap B'_{1/2}$, there is $\de=\de(x_0)>0$ such that \begin{align*}
    |\eta_{x_1}-\eta_{x_2}|&\le C\left(\log\frac 1{|x_1-x_2|}\right)^{-\frac 1{2(n-2)}},\\
    \|q_{x_1}^I-q_{x_2}^I\|_{L^\infty(B_1)}&\le C\left(\log\frac 1{|x_1-x_2|}\right)^{-\frac 1{2(n-2)}}, \quad x_1, x_2\in \Sigma_\ka(U)\cap B_\de(x_0).
\end{align*}
Now, we also have the similar result for $U_{x_0}^*$. For
$x_0\in\Sigma_\ka(U)\cap B'_{1/2}$, where $\ka=2m$, $m\in \mathbb{N}$,
let $p_{x_0}\in \mathcal{Q}_\ka^{A,x_0}$ be the unique $\ka$-homogeneous
blowup of $U_{x_0}^*$ at $x_0$. Then we can write
$$
p_{x_0}=\eta_{x_0}^A p_{x_0}^A,\quad \eta_{x_0}^A>0,\quad
\|p_{x_0}^A\|_{L^2(\pa B_1)}=1,
$$
where $p_{x_0}^A\in \mathcal{Q}^{A,x_0}_\ka$ is the corresponding Almgren
blowup of $U_{x_0}^*$. Using that
$$
q_{x_0}^I(z)=\left(\det\mfa_{x_0}\right)^{1/2}p_{x_0}^A(\mfa_{x_0}z),\quad
q_{x_0}(z)=p_{x_0}(\mfa_{x_0}z),
$$
together with the ellipticity and H\"older continuity of $\mfa_{x_0}$ and the homogeneity of blowups, we easily
conclude that for $x_0\in\Sigma_\ka(U)\cap B'_{1/2}$, there is
$\de=\de(x_0)>0$ such that
\begin{align*} 
    |\eta^A_{x_1}-\eta^A_{x_2}|&\le C\left(\log\frac 1{|x_1-x_2|}\right)^{-\frac 1{2(n-2)}},\\
    \|p_{x_1}^A-p_{x_2}^A\|_{L^\infty(B_1)}&\le C\left(\log\frac 1{|x_1-x_2|}\right)^{-\frac 1{2(n-2)}}, \quad x_1, x_2\in \Sigma_\ka(U)\cap B_\de(x_0).
\end{align*}
Once we have these estimates, as well as Lemma~\ref{lem:var-sing-closed}, we can apply the Whitney
Extension Theorem of Fefferman \cite{Fef09}, to complete the proof, similar to that of Theorem~1.7 in \cite{ColSpoVel20}.
\end{proof}


\appendix
\section{Example of almost minimizers}

\begin{example}\label{ex:drift}
Let $U$ be a solution of the $A$-Signorini problem in $B_1$ with velocity field $b\in L^p(B_1)$, $p>n$: 
  \begin{align*}
    -\dv(A\D U)+ \langle b(x),\nabla U\rangle=0
    \quad\text{in }B_1^{\pm},\\
    \begin{multlined}U\geq 0,\quad \langle A\D U,\nu^+\rangle+\langle A\D U,\nu^-\rangle\geq 0,\\
      \qquad\qquad\qquad\qquad\qquad U(\langle A\D U,\nu^+\rangle+\langle A\D U,\nu^-\rangle)=0\quad\text{on }B_1',
    \end{multlined}
  \end{align*}
where $\nu^\pm=\mp e_n$ and $\langle A\D U,\nu^\pm\rangle$ on $B_1'$ are understood as the limits from inside $B_1^\pm$.
We interpret this in the weak sense that
  $U$ satisfies the variational inequality
$$
\int_{B_1}\mean{A\nabla U, \nabla(W-U)}+\langle b,\nabla U\rangle (W-U)\geq 0,
$$
for any competitor $W\in\mathfrak{K}_{0,U}(B_1,\Pi)$. Then $U$ is an almost minimizer of the $A$-Signorini problem in $B_1$ with thin obstacle $\psi=0$ on $\Pi=\R^{n-1}\times\{0\}$ and a gauge function $\om(r)=Cr^{1-n/p}$, $C=C(n, p, \la, \Ld)\|b\|^2_{L^p(B_1)}$.
\end{example}

\begin{proof}
For any $E_r(x_0)\Subset B_1$ and $W\in
\mathfrak{K}_{0, U}(E_r(x_0),\Pi)$, we extend $W$ as equal to $U$ in
$B_1\sm E_r(x_0)$ to obtain
\begin{equation}\label{ex-u-var-ineq}
\int_{E_r(x_0)}\langle A\D U, \D(W-U)\rangle+\langle b,\D U\rangle (W-U)\ge 0.
\end{equation}
Let $V$ be the minimizer of the energy functional 
$$
\int_{E_r(x_0)}\langle A\D V, \D V\rangle\quad \text{on }\mathfrak{K}_{0, U}(E_r(x_0),\Pi).
$$
Then it follows from a standard variation argument that $V$ satisfies the variational inequality \begin{equation}\label{ex-v-var-ineq}
\int_{E_r(x_0)}\langle A\D V, \D(W-V)\rangle \ge 0\quad\text{for any }W\in \mathfrak{K}_{0, U}(E_r(x_0), \Pi).
\end{equation}
Taking $W=U\pm(U-V)^+$ in \eqref{ex-u-var-ineq} and $W=V+(U-V)^+$ in \eqref{ex-v-var-ineq}, we obtain $$
\int_{E_r(x_0)}\langle A\D(U-V)^+, \D(U-V)^+\rangle \le -\int_{E_r(x_0)}\langle b,\D U\rangle (U-V)^+.
$$
Similarly, taking $W=U+(V-U)^+$ in \eqref{ex-u-var-ineq} and $W=V\pm(V-U)^+$ in \eqref{ex-v-var-ineq}, we get $$
\int_{E_r(x_0)}\langle A\D(V-U)^+, \D(V-U)^+\rangle\le \int_{E_r(x_0)}\langle b,\D U\rangle (V-U)^+.
$$
These two inequalities give $$
\int_{E_r(x_0)}\langle A\D(U-V), \D(U-V)\rangle \le \int_{E_r(x_0)}|b||\D U||U-V|.
$$
Applying H\"older's inequality,
\begin{align*}
\int_{E_r(x_0)}|\D(U-V)|^2&\le \lambda^{-1}\int_{E_r(x_0)}\langle A\D(U-V), \D(U-V)\rangle\\
&\le \lambda^{-1}\|b\|_{L^p(E_r(x_0))}\|\D U\|_{L^2(E_r(x_0))}\|U-V\|_{L^{p^*}(E_r(x_0))},
\end{align*}
with $p^*=2p/(p-2)$. Since $U-V\in W_0^{1, 2}(E_r(x_0))$ and $\operatorname{diam}(E_r(x_0))\le 2\Ld^{1/2}r$, from the Sobolev's inequality, $$
\|U-V\|_{L^{p^*}(E_r(x_0))}\le C(n, p, \lambda,\Lambda)r^{1-n/p}\|\D(U-V)\|_{L^2(E_r(x_0))}.
$$
Now we have
\begin{equation}\label{ex-(u-v)-estimate}
\int_{E_r(x_0)}|\D(U-V)|^2\le Cr^{2(1-n/p)}\int_{E_r(x_0)}|\D U|^2,
\end{equation}
with $C=C(n, p, \la, \Ld)\|b\|^2_{L^p(B_1)}$.
Thus,
\begin{multline*}
\int_{E_r(x_0)}\langle A\D U, \D U\rangle-\int_{E_r(x_0)}\langle A\D V, \D
V\rangle= \int_{E_r(x_0)}\langle A\D(U+V), \D(U-V)\rangle \\
\begin{aligned}
  &\leq C\int_{E_r(x_0)}|\D(U+V)||\D(U-V)|\\
&\le Cr^{\g}\int_{E_r(x_0)}\left(|\D U|^2+|\D V|^2\right)+Cr^{-\g}\int_{E_r(x_0)}|\D(U-V)|^2\\
&
\begin{multlined}
\le Cr^{\g}\int_{E_r(x_0)}\langle A\D U, \D U\rangle+Cr^{\g}\int_{E_r(x_0)}\langle A\D V, \D V\rangle\\
+ Cr^{2(1-n/p)-\g}\int_{E_r(x_0)}\langle A\D U, \D U\rangle,
\end{multlined}
\end{aligned}
\end{multline*}
where we applied Young's inequality and used \eqref{ex-(u-v)-estimate} at the end. We choose $\g=1-n/p$ to complete the proof.
\end{proof}


\begin{bibdiv}
\begin{biblist}

\bib{Alm00}{book}{
   author={Almgren, Frederick J., Jr.},
   title={Almgren's big regularity paper},
   series={World Scientific Monograph Series in Mathematics},
   volume={1},
   note={$Q$-valued functions minimizing Dirichlet's integral and the
   regularity of area-minimizing rectifiable currents up to codimension 2;
   With a preface by Jean E. Taylor and Vladimir Scheffer},
   publisher={World Scientific Publishing Co., Inc., River Edge, NJ},
   date={2000},
   pages={xvi+955},
   isbn={981-02-4108-9},
   review={\MR{1777737}},
} 

\bib{Anz83}{article}{
   author={Anzellotti, Gabriele},
   title={On the $C^{1,\alpha }$-regularity of $\omega $-minima of
   quadratic functionals},
   language={English, with Italian summary},
   journal={Boll. Un. Mat. Ital. C (6)},
   volume={2},
   date={1983},
   number={1},
   pages={195--212},
   review={\MR{718371}},
}

\bib{AthCaf04}{article}{
   author={Athanasopoulos, I.},
   author={Caffarelli, L. A.},
   title={Optimal regularity of lower dimensional obstacle problems},
   language={English, with English and Russian summaries},
   journal={Zap. Nauchn. Sem. S.-Peterburg. Otdel. Mat. Inst. Steklov.
   (POMI)},
   volume={310},
   date={2004},
   number={Kraev. Zadachi Mat. Fiz. i Smezh. Vopr. Teor. Funkts. 35
   [34]},
   pages={49--66, 226},
   issn={0373-2703},
   translation={
      journal={J. Math. Sci. (N.Y.)},
      volume={132},
      date={2006},
      number={3},
      pages={274--284},
      issn={1072-3374},
   },
   review={\MR{2120184}},
   doi={10.1007/s10958-005-0496-1},
}

\bib{AthCafMil18}{article}{
   author={Athanasopoulos, Ioannis},
   author={Caffarelli, Luis},
   author={Milakis, Emmanouil},
   title={On the regularity of the non-dynamic parabolic fractional obstacle
   problem},
   journal={J. Differential Equations},
   volume={265},
   date={2018},
   number={6},
   pages={2614--2647},
   issn={0022-0396},
   review={\MR{3804726}},
   doi={10.1016/j.jde.2018.04.043},
}

\bib{AthCafSal08}{article}{
   author={Athanasopoulos, I.},
   author={Caffarelli, L. A.},
   author={Salsa, S.},
   title={The structure of the free boundary for lower dimensional obstacle
   problems},
   journal={Amer. J. Math.},
   volume={130},
   date={2008},
   number={2},
   pages={485--498},
   issn={0002-9327},
   review={\MR{2405165}},
   doi={10.1353/ajm.2008.0016},
}

\bib{BanSVGZel17}{article}{
   author={Banerjee, Agnid},
   author={{Smit Vega Garcia}, Mariana},
   author={Zeller, Andrew K.},
   title={Higher regularity of the free boundary in the parabolic Signorini
   problem},
   journal={Calc. Var. Partial Differential Equations},
   volume={56},
   date={2017},
   number={1},
   pages={Art. 7, 26},
   issn={0944-2669},
   review={\MR{3592762}},
   doi={10.1007/s00526-016-1103-7},
} 

\bib{Caf79}{article}{
   author={Caffarelli, L. A.},
   title={Further regularity for the Signorini problem},
   journal={Comm. Partial Differential Equations},
   volume={4},
   date={1979},
   number={9},
   pages={1067--1075},
   issn={0360-5302},
   review={\MR{542512}},
   doi={10.1080/03605307908820119},
}

\bib{CafRosSer17}{article}{
   author={Caffarelli, Luis},
   author={{Ros-Oton}, Xavier},
   author={Serra, Joaquim},
   title={Obstacle problems for integro-differential operators: regularity
   of solutions and free boundaries},
   journal={Invent. Math.},
   volume={208},
   date={2017},
   number={3},
   pages={1155--1211},
   issn={0020-9910},
   review={\MR{3648978}},
   doi={10.1007/s00222-016-0703-3},
}

\bib{CafSil07}{article}{
   author={Caffarelli, Luis},
   author={Silvestre, Luis},
   title={An extension problem related to the fractional Laplacian},
   journal={Comm. Partial Differential Equations},
   volume={32},
   date={2007},
   number={7-9},
   pages={1245--1260},
   issn={0360-5302},
   review={\MR{2354493}},
   doi={10.1080/03605300600987306},
}

\bib{CafSalSil08}{article}{
   author={Caffarelli, Luis A.},
   author={Salsa, Sandro},
   author={Silvestre, Luis},
   title={Regularity estimates for the solution and the free boundary of the
   obstacle problem for the fractional Laplacian},
   journal={Invent. Math.},
   volume={171},
   date={2008},
   number={2},
   pages={425--461},
   issn={0020-9910},
   review={\MR{2367025}},
   doi={10.1007/s00222-007-0086-6},
 }

\bib{ColSpoVel20}{article}{
   author={Colombo, Maria},
   author={Spolaor, Luca},
   author={Velichkov, Bozhidar},
   title={Direct epiperimetric inequalities for the thin obstacle problem
   and applications},
   journal={Comm. Pure Appl. Math.},
   volume={73},
   date={2020},
   number={2},
   pages={384--420},
   issn={0010-3640},
   review={\MR{4054360}},
   doi={10.1002/cpa.21859},
}

\bib{DanGarPetTo17}{article}{
   author={Danielli, Donatella},
   author={Garofalo, Nicola},
   author={Petrosyan, Arshak},
   author={To, Tung},
   title={Optimal regularity and the free boundary in the parabolic
   Signorini problem},
   journal={Mem. Amer. Math. Soc.},
   volume={249},
   date={2017},
   number={1181},
   pages={v + 103},
   issn={0065-9266},
   isbn={978-1-4704-2547-0},
   isbn={978-1-4704-4129-6},
   review={\MR{3709717}},
   doi={10.1090/memo/1181},
 }

\bib{DanPetPop18}{article}{
  author={Danielli, Donatella},
  author={Petrosyan, Arshak},
  author={Pop, Camelia},
  title={Obstacle problems for nonlocal operators},
  status={to appear},
  journal={Contemp. Math.},
  pages={22~pp.},
  date={2018},
  eprint={\arXiv{1709.10384}},
}

\bib{DavEngTor19}{article}{
   author={David, Guy},
   author={Engelstein, Max},
   author={Toro, Tatiana},
   title={Free boundary regularity for almost-minimizers},
   journal={Adv. Math.},
   volume={350},
   date={2019},
   pages={1109--1192},
   issn={0001-8708},
   review={\MR{3948692}},
   doi={10.1016/j.aim.2019.04.059},
}

\bib{DavEngSVGTor19}{article}{
  author={David, Guy},
  author={Engelstein, Max},
  author={{Smit Vega Garcia}, Mariana},
  author={Toro, Tatiana},
   title={Regularity for almost-minimizers of variable coefficient Bernoulli-type functionals},
   date={2019-09},
   eprint={\arXiv{1909.05043}},
   status={preprint},
}

\bib{DavTor15}{article}{
   author={David, G.},
   author={Toro, T.},
   title={Regularity of almost minimizers with free boundary},
   journal={Calc. Var. Partial Differential Equations},
   volume={54},
   date={2015},
   number={1},
   pages={455--524},
   issn={0944-2669},
   review={\MR{3385167}},
   doi={10.1007/s00526-014-0792-z},
}
	
\bib{DeSSav16}{article}{
   author={{De Silva}, Daniela},
   author={Savin, Ovidiu},
   title={Boundary Harnack estimates in slit domains and applications to
   thin free boundary problems},
   journal={Rev. Mat. Iberoam.},
   volume={32},
   date={2016},
   number={3},
   pages={891--912},
   issn={0213-2230},
   review={\MR{3556055}},
   doi={10.4171/RMI/902},
}

\bib{DeSSav18}{article}{
   author={{De Silva}, Daniela},
   author={Savin, Ovidiu},
   title={Thin one-phase almost minimizers},
   status={preprint},
   date={2018-12},
   eprint={\arXiv{1812.03094}},
}
 
\bib{DeSSav19}{article}{
    author={{De Silva}, Daniela},
    author={Savin, Ovidiu},
    title={Almost minimizers of the one-phase free boundary problem},
    status={preprint},
    date={2019-01},
    eprint={\arXiv{1901.02007}},
}

\bib{deQTav18}{article}{
  author={de Queiroz, Olivaine S.},
  author={Tavares, Leandro S.},
  title={Almost minimizers for semilinear free boundary problems with
    variable coefficients},
  journal={Math. Nachr.},
  volume={291},
  number={10},
  date={2018},
  pages={1486--1501},
  doi={https://doi.org/10.1002/mana.201600103},
}

\bib{DuvLio76}{book}{
   author={Duvaut, G.},
   author={Lions, J.-L.},
   title={Inequalities in mechanics and physics},
   note={Translated from the French by C. W. John;
     Grundlehren der Mathematischen Wissenschaften, 219},
   publisher={Springer-Verlag, Berlin-New York},
   date={1976},
   pages={xvi+397},
   isbn={3-540-07327-2},
   review={\MR{0521262}},
}

\bib{Fef09}{article}{
   author={Fefferman, Charles},
   title={Extension of $C^{m,\omega}$-smooth functions by linear
   operators},
   journal={Rev. Mat. Iberoam.},
   volume={25},
   date={2009},
   number={1},
   pages={1--48},
   issn={0213-2230},
   review={\MR{2514337}},
   doi={10.4171/RMI/568},
}

\bib{FocSpa18}{article}{
   author={Focardi, Matteo},
   author={Spadaro, Emanuele},
   title={On the measure and the structure of the free boundary of the lower
   dimensional obstacle problem},
   journal={Arch. Ration. Mech. Anal.},
   volume={230},
   date={2018},
   number={1},
   pages={125--184},
   issn={0003-9527},
   review={\MR{3840912}},
   doi={10.1007/s00205-018-1242-4},
}
 
\bib{FocSpa18corr}{article}{
   author={Focardi, Matteo},
   author={Spadaro, Emanuele},
   title={Correction to: on the measure and the structure of the free
   boundary of the lower dimensional obstacle problem},
   journal={Arch. Ration. Mech. Anal.},
   volume={230},
   date={2018},
   number={2},
   pages={783--784},
   issn={0003-9527},
   review={\MR{3842059}},
   doi={10.1007/s00205-018-1273-x},
}

\bib{GarPet09}{article}{
   author={Garofalo, Nicola},
   author={Petrosyan, Arshak},
   title={Some new monotonicity formulas and the singular set in the lower
   dimensional obstacle problem},
   journal={Invent. Math.},
   volume={177},
   date={2009},
   number={2},
   pages={415--461},
   issn={0020-9910},
   review={\MR{2511747}},
   doi={10.1007/s00222-009-0188-4},
}

\bib{GarPetSVG16}{article}{
   author={Garofalo, Nicola},
   author={Petrosyan, Arshak},
   author={{Smit Vega Garcia}, Mariana},
   title={An epiperimetric inequality approach to the regularity of the free
   boundary in the Signorini problem with variable coefficients},
   language={English, with English and French summaries},
   journal={J. Math. Pures Appl. (9)},
   volume={105},
   date={2016},
   number={6},
   pages={745--787},
   issn={0021-7824},
   review={\MR{3491531}},
   doi={10.1016/j.matpur.2015.11.013},
}

\bib{GarPetSVG18}{article}{
   author={Garofalo, Nicola},
   author={Petrosyan, Arshak},
   author={{Smit Vega Garcia}, Mariana},
   title={The singular free boundary in the Signorini problem for variable
   coefficients},
   journal={Indiana Univ. Math. J.},
   volume={67},
   date={2018},
   number={5},
   pages={1893--1934},
   issn={0022-2518},
   review={\MR{3875246}},
   doi={10.1512/iumj.2018.67.7464},
}

\bib{GarPetPopSVG17}{article}{
    author={Garofalo, Nicola},
    author={Petrosyan, Arshak},
    author={Pop, Camelia A.},
    author={{Smit Vega Garcia}, Mariana},
    title={Regularity of the free boundary for the obstacle problem for the
    fractional Laplacian with drift},
    journal={Ann. Inst. H. Poincar\'{e} Anal. Non Lin\'{e}aire},
    volume={34},
    date={2017},
    number={3},
    pages={533--570},
    issn={0294-1449},
    review={\MR{3633735}},
    doi={10.1016/j.anihpc.2016.03.001},
}

\bib{GarSVG14}{article}{
   author={Garofalo, Nicola},
   author={Smit Vega Garcia, Mariana},
   title={New monotonicity formulas and the optimal regularity in the
   Signorini problem with variable coefficients},
   journal={Adv. Math.},
   volume={262},
   date={2014},
   pages={682--750},
   issn={0001-8708},
   review={\MR{3228440}},
   doi={10.1016/j.aim.2014.05.021},
}

\bib{HanLin97}{book}{
   author={Han, Qing},
   author={Lin, Fanghua},
   title={Elliptic partial differential equations},
   series={Courant Lecture Notes in Mathematics},
   volume={1},
   publisher={New York University, Courant Institute of Mathematical
   Sciences, New York; American Mathematical Society, Providence, RI},
   date={1997},
   pages={x+144},
   isbn={0-9658703-0-8},
   isbn={0-8218-2691-3},
   review={\MR{1669352}},
 }
 
\bib{JeoPet19a}{article}{
   author={Jeon, Seongmin},
   author={Petrosyan, Arshak},
   title={Almost minimizers for the thin obstacle problem},
   pages={59},
   date={2019},
   status={preprint},
   eprint={\arXiv{1905.11956}}
 }

\bib{JeoPet19b}{article}{
    author={Jeon, Seongmin},
    author={Petrosyan, Arshak},
    title={Almost minimizers for certain fractional variational problems},
    date={2020},
    journal={Algebra i Analiz},
    volume={32},
    number={4},
    pages={166-199},
    eprint={\arXiv{1905.11961}},
 }

\bib{Kin81}{article}{
   author={Kinderlehrer, David},
   title={The smoothness of the solution of the boundary obstacle problem},
   journal={J. Math. Pures Appl. (9)},
   volume={60},
   date={1981},
   number={2},
   pages={193--212},
   issn={0021-7824},
   review={\MR{620584}},
}

\bib{KocPetShi15}{article}{
   author={Koch, Herbert},
   author={Petrosyan, Arshak},
   author={Shi, Wenhui},
   title={Higher regularity of the free boundary in the elliptic Signorini
   problem},
   journal={Nonlinear Anal.},
   volume={126},
   date={2015},
   pages={3--44},
   issn={0362-546X},
   review={\MR{3388870}},
   doi={10.1016/j.na.2015.01.007},
}

\bib{KocRueShi16}{article}{
   author={Koch, Herbert},
   author={R\"{u}land, Angkana},
   author={Shi, Wenhui},
   title={The variable coefficient thin obstacle problem: Carleman
   inequalities},
   journal={Adv. Math.},
   volume={301},
   date={2016},
   pages={820--866},
   issn={0001-8708},
   review={\MR{3539391}},
   doi={10.1016/j.aim.2016.06.023},
}

\bib{KocRueShi17a}{article}{
   author={Koch, Herbert},
   author={R\"{u}land, Angkana},
   author={Shi, Wenhui},
   title={The variable coefficient thin obstacle problem: higher regularity},
   journal={Adv. Differential Equations},
   volume={22},
   date={2017},
   number={11-12},
   pages={793--866},
   issn={1079-9389},
   review={\MR{3692912}},
}

\bib{KocRueShi17b}{article}{
   author={Koch, Herbert},
   author={R\"{u}land, Angkana},
   author={Shi, Wenhui},
   title={The variable coefficient thin obstacle problem: optimal regularity
   and regularity of the regular free boundary},
   journal={Ann. Inst. H. Poincar\'{e} Anal. Non Lin\'{e}aire},
   volume={34},
   date={2017},
   number={4},
   pages={845--897},
   issn={0294-1449},
   review={\MR{3661863}},
   doi={10.1016/j.anihpc.2016.08.001},
}

\bib{PetPop15}{article}{
   author={Petrosyan, Arshak},
   author={Pop, Camelia A.},
   title={Optimal regularity of solutions to the obstacle problem for the
   fractional Laplacian with drift},
   journal={J. Funct. Anal.},
   volume={268},
   date={2015},
   number={2},
   pages={417--472},
   issn={0022-1236},
   review={\MR{3283160}},
   doi={10.1016/j.jfa.2014.10.009},
}

\bib{PetShaUra12}{book}{
   author={Petrosyan, Arshak},
   author={Shahgholian, Henrik},
   author={Uraltseva, Nina},
   title={Regularity of free boundaries in obstacle-type problems},
   series={Graduate Studies in Mathematics},
   volume={136},
   publisher={American Mathematical Society, Providence, RI},
   date={2012},
   pages={x+221},
   isbn={978-0-8218-8794-3},
   review={\MR{2962060}},
   doi={10.1090/gsm/136},
 }

\bib{PetZel19}{article}{
   author={Petrosyan, Arshak},
   author={Zeller, Andrew},
   title={Boundedness and continuity of the time derivative in the
     parabolic Signorini problem},
   journal={Math. Res. Let.},
   volume={26},
   number={1},
   pages={281--292},
   date={2019},
   doi={10.4310/MRL.2019.v26.n1.a13},
   eprint={\arXiv{1512.09173}},
}

\bib{RueShi17}{article}{
   author={R\"{u}land, Angkana},
   author={Shi, Wenhui},
   title={Optimal regularity for the thin obstacle problem with
   $C^{0,\alpha}$ coefficients},
   journal={Calc. Var. Partial Differential Equations},
   volume={56},
   date={2017},
   number={5},
   pages={Art. 129, 41},
   issn={0944-2669},
   review={\MR{3689152}},
   doi={10.1007/s00526-017-1230-9},
}

\bib{Sig59}{article}{
   author={Signorini, A.},
   title={Questioni di elasticit\`{a} non linearizzata e semilinearizzata},
   language={Italian},
   journal={Rend. Mat. e Appl. (5)},
   volume={18},
   date={1959},
   pages={95--139},
   review={\MR{0118021}},
}

\bib{Sil07}{article}{
   author={Silvestre, Luis},
   title={Regularity of the obstacle problem for a fractional power of the
   Laplace operator},
   journal={Comm. Pure Appl. Math.},
   volume={60},
   date={2007},
   number={1},
   pages={67--112},
   issn={0010-3640},
   review={\MR{2270163}},
   doi={10.1002/cpa.20153},
 }

 \bib{Ura85}{article}{
   author={Ural\cprime tseva, N. N.},
   title={H\"{o}lder continuity of gradients of solutions of parabolic equations
   with boundary conditions of Signorini type},
   language={Russian},
   journal={Dokl. Akad. Nauk SSSR},
   volume={280},
   date={1985},
   number={3},
   pages={563--565},
   issn={0002-3264},
   review={\MR{775926}},
}

\bib{Ura86}{article}{
   author={Ural\cprime tseva, N. N.},
   title={Estimation on the boundary of the domain of derivatives of
   solutions of variational inequalities},
   language={Russian},
   note={Translated in J. Soviet Math. {\bf 45} (1989), no. 3, 1181--1191},
   conference={
      title={Linear and nonlinear boundary value problems. Spectral theory
      (Russian)},
   },
   book={
      series={Probl. Mat. Anal.},
      volume={10},
      publisher={Leningrad. Univ., Leningrad},
   },
   date={1986},
   pages={92--105, 213},
   review={\MR{860572}},
}


\end{biblist}
\end{bibdiv}

\end{document}